\newcommand{\euscr}[1]{\EuScript{#1}} 
\newcommand{\acat}{\euscr{A}} 
\newcommand{\ccat}{\euscr{C}} 
\newcommand{\dcat}{\euscr{D}} 
\newcommand{\Fun}{\textnormal{Fun}} 
\newcommand{\Hom}{\textnormal{Hom}} 
\newcommand{\Ext}{\textnormal{Ext}} 
\newcommand{\map}{\textnormal{map}} 
\newcommand{\Map}{\textnormal{Map}} 
\newcommand{\spaces}{\euscr{S}} 
\newcommand{\abeliangroups}{\euscr{A}b} 
\newcommand{\Ab}{\abeliangroups} 
\newcommand{\Perf}{\mathrm{Perf}} 
\newcommand{\inftycatcat}{\euscr{C}at_{\infty}} 
\newcommand{\catinfty}{\inftycatcat}
\newcommand{\largecatinfty}{\widehat{\euscr{C}at_{\infty}}}
\newcommand{\largecatinftyL}{\widehat{\euscr{C}at^{L}_{\infty}}}
\newcommand{\spectra}{\euscr{S}p} 
\newcommand{\Mod}{\euscr{M}od} 
\def\MU{\mathrm{MU}}
\newcommand{\nocontentsline}[3]{}
\newcommand{\tocless}[2]{\bgroup\let\addcontentsline=\nocontentsline#1{#2}\egroup}
\newcommand{\coker}{\mathrm{coker}}
\newcommand{\Hrm}{\mathrm{H}} 
\newcommand{\Alg}{\mathrm{Alg}}
\newcommand{\cofib}{\mathrm{cofib}}
\newcommand{\ZZ}{\mathbb{Z}}
\newcommand{\Fil}{\mathrm{Fil}}
\DeclareMathOperator{\Syn}{Syn}
\theoremstyle{plain}
\newtheorem{theorem}{Theorem}[section]
\newtheorem{lemma}[theorem]{Lemma}
\newtheorem{proposition}[theorem]{Proposition}
\newtheorem{corollary}[theorem]{Corollary}
\theoremstyle{definition}
\newtheorem{example}[theorem]{Example}
\newtheorem{warning}[theorem]{Warning}
\newtheorem{recollection}[theorem]{Recollection}
\newtheorem{definition}[theorem]{Definition}
\newtheorem{remark}[theorem]{Remark}
\newtheorem{notation}[theorem]{Notation}
\newtheorem{construction}[theorem]{Construction}
\newtheorem*{remark*}{Remark}
\newtheorem*{terminology*}{Terminology}
\newtheorem*{interpretation*}{Interpretation}
\newtheorem*{definition*}{Definition}
\newtheorem*{conjecture*}{Conjecture}
\newtheorem*{notation*}{Notation}
\newtheorem*{convention*}{Convention}
\theoremstyle{remark}
\numberwithin{equation}{section}
  \def\subsection{\@startsection{subsection}{1}%
  \z@{.7\linespacing\@plus\linespacing}{.5\linespacing}%
  {\normalfont\bfseries\centering}}
\let\oldtocsection=\tocsection
\let\oldtocsubsection=\tocsubsection
\let\oldtocsubsubsection=\tocsubsubsection
\renewcommand{\tocsection}[2]{\hspace{0em}\oldtocsection{#1}{#2}}
\renewcommand{\tocsubsection}[2]{\hspace{1em}\oldtocsubsection{#1}{#2}}
\renewcommand{\tocsubsubsection}[2]{\hspace{2em}\oldtocsubsubsection{#1}{#2}}
\newcommand{\evensheaf}{\mathcal{F}}
\newcommand{\evencoh}{\mathrm{H}_{ev}}
\newcommand{\BPn}{\mathrm{BP}\langle n \rangle}
\newcommand{\PrL}{\mathrm{Pr}^{L}}
\newcommand{\presheaves}{\mathcal{P}}
\DeclareMathOperator{\sheaves}{Shv}
\DeclareMathOperator{\fil}{fil}
\DeclareMathOperator{\Tor}{Tor}
\DeclareMathOperator{\gr}{gr}
\DeclareMathOperator{\THH}{THH}
\DeclareMathOperator{\TC}{TC}
\DeclareMathOperator{\TP}{TP}
\mathchardef\mhyphen="2D
\begin{document}

\title[Perfect even modules and the even filtration]{Perfect even modules and the even filtration}

\author{Piotr Pstr\k{a}gowski}
\address{Harvard University, USA, and Institute for Advanced Study, USA}
\email{pstragowski.piotr@gmail.com}

\thanks{The author received support from NSF Grant number DMS-1926686 and Deutsche Forschungsgemeinschaft under Germany's Excellence Strategy – EXC-2047/1 – 390685813.}

\begin{abstract}
Inspired by the work of Hahn-Raksit-Wilson, we introduce a variant of the even filtration which is naturally defined on $\mathbf{E}_{1}$-rings and their modules. We show that our variant satisfies flat descent and so agrees with the Hahn-Raksit-Wilson filtration on ring spectra of arithmetic interest, showing that various ``motivic'' filtrations are in fact invariants of the $\mathbf{E}_{1}$-structure alone. We prove that our filtration can be calculated via appropriate resolutions in modules and apply it to the study of even cohomology of connective $\mathbf{E}_{1}$-rings, proving vanishing above the Milnor line, base-change formulas, and explicitly calculating cohomology in low weights. 
\end{abstract}

\maketitle
\setcounter{tocdepth}{2}
\tableofcontents

\section{Introduction}

In \cite{hahn2022motivic}, Hahn-Raksit-Wilson introduced the \emph{even filtration} attached to a commutative algebra in spectra. They show that when applied to ring spectra of arithmetic interest, such as the sphere or $\THH$ of commutative rings, their construction recovers various important filtrations, such as the Bhatt-Morrow-Scholze filtration of \cite{bhatt2019topological}, implying that these filtrations are invariants of the $\mathbf{E}_{\infty}$-ring structure alone. The Bhatt-Morrow-Scholze filtration can be used to define prismatic and syntomic cohomology of commutative rings, and the even filtration allows one to extend this construction to the context of $\mathbf{E}_{\infty}$-ring spectra. This provides strong evidence towards the long-standing conjecture of Rognes on the existence of a motivic filtration on algebraic $K$-theory of commutative ring spectra \cite{rognes2014algebraic}.

In this paper, we introduce a variant of the even filtration which informally measures the complexity of the $\infty$-category of perfect complexes with even cells. The filtration we construct, which one could call the \emph{perfect even filtration}, has the following properties: 
\begin{enumerate}
    \item it is naturally defined on $\mathbf{E}_{1}$-ring spectra and their modules, rather than only in $\mathbf{E}_{\infty}$-case,
    \item it satisfies flat descent, and as a consequence agrees with the even filtration in the examples considered in \cite{hahn2022motivic}, showing in particular that various motivic filtrations are invariants of the $\mathbf{E}_{1}$-ring structure alone, 
    \item in the examples where they disagree, for example on free $\mathbf{E}_{\infty}$-algebras, the perfect even filtration gives more reasonable answers than the even filtration, 
    \item the perfect even filtration has an essentially linear definition and so can be efficiently computed by resolutions of modules,
    \item the perfect even cohomology groups have excellent formal properties, especially in the case of connective $\mathbf{E}_{1}$-rings: they vanish above the Milnor line, satisfy a base-change formula in a neighbourhood of it, and in low weights can be calculated explicitly. 
\end{enumerate}

Our construction of the perfect even filtration is of categorical nature: we work with the $\infty$-category of modules, together with its notion of evenness, rather than the ring itself. This has the advantage that it naturally lends itself to generalization into other contexts, such as equivariant or motivic homotopy theory. We briefly discuss the possible generalizations at the end of the introduction.

Note that for $\THH(R)$ to have an $\mathbf{E}_{1}$-ring structure and hence have an induced perfect even filtration, $R$ needs to be only an $\mathbf{E}_{2}$-ring. Thus, the main construction of this paper suggests the existence of a good theory of prismatic cohomology in the context of $\mathbf{E}_{2}$-ring spectra, a topic we will pursue in the sequel to the current work written jointly with Raksit \cite{motivic_cohomology_of_e2_rings}.

\begin{terminology*} 
The above few paragraphs are the only part of the paper where we use the term \emph{perfect even filtration}. This is an apt name, but for brevity in the main body of the text we refer to the filtration introduced in the current work simply as the \emph{even filtration}. To distinguish it from the one introduced by Hahn-Raksit-Wilson, we will refer to the latter as the \emph{$\mathbf{E}_{\infty}$-even filtration}. 
\end{terminology*}

We now discuss our results in more detail. Let $R$ be an $\mathbf{E}_{1}$-algebra in spectra. We say that a left $R$-module $A$ is \emph{perfect even} if it belongs to the smallest subcategory 
\[
\Perf(R)_{ev} \subseteq \Perf(R) 
\]
containing $R$ and closed under even (de)suspensions, retracts and extensions. We say that a map of perfect even $R$-modules is an \emph{even epimorphism} if its fibre is again perfect even. Declaring even epimorphisms as coverings endows $\Perf(R)_{ev}$ with a Grothendieck topology. An $R$-module $M$ determines through the spectral Yoneda embedding an additive spectral sheaf 
\[
Y_{R}(M) \in \sheaves_{\Sigma}(\Perf(R)_{ev}, \spectra) 
\]
given by the formula
\begin{equation}
\label{equation:definition_of_spectraL_yoneda_in_introduction}
Y_{R}(M)(A) \colonequals \map_{\Mod_{R}}(A, M),
\end{equation}
where the right hand side is the mapping spectrum in $R$-modules. 

\begin{definition}
\label{definition:introduction_even_filtration}
The \emph{even filtration} on $M$ is the filtered spectrum given by sections
\[
\fil^{q}_{ev/R}(M) \colonequals \Gamma_{\Perf(R)_{ev}}(R, \tau_{\geq 2q} Y_{R}(M)), 
\]
where $\tau_{\geq 2q}(-)$ denotes the connective cover in sheaves of spectra. 
\end{definition}

\begin{notation}
To avoid cluttering notation, we write $\fil^{\ast}_{ev}(R) \colonequals \fil^{\ast}_{ev/R}(R)$ for the even filtration of an $\mathbf{E}_{1}$-ring considered as a module over itself. 
\end{notation}

Our first result shows that this filtration is compatible with the ring structure and is suitably multiplicative: 

\begin{theorem}[{\ref{theorem:even_filtration_is_lax_symmetric_monoidal_as_a_functor_on_pairs}}]
\label{theorem:introduction_the_even_filtration_is_multiplicative}
Let $R$ be an $\mathbf{E}_{1}$-ring and let $M$ be a left $R$-module. Then, $\fil^{\ast}_{ev}(R)$ has a canonical structure of an $\mathbf{E}_{1}$-algebra in filtered spectra over which $\fil^{\ast}_{ev/R}(M)$ is a left module. Moreover, the construction 
\[
(R, M) \mapsto (\fil^{\ast}_{ev}(R), \fil^{\ast}_{ev/R}(M))
\]
can be refined to a lax symmetric monoidal functor 
\[
\Mod(\spectra) \rightarrow \Mod(\Fil \spectra)
\]
between the $\infty$-categories of pairs of an $\mathbf{E}_{1}$-algebra and a left module in, respectively, spectra and filtered spectra. 
\end{theorem}
In particular, lax symmetric monoidality implies that if $R$ is an $\mathbf{E}_{n}$-algebra in spectra, then 
\begin{enumerate}
    \item $\fil^{\ast}_{ev}(R)$ is a $\fil^{\ast}_{ev}(S^{0})$-$\mathbf{E}_{n}$-algebra, 
    \item the $R$-module even filtration 
    \[
\fil^{\ast}_{ev/R} \colon \Mod_{R}(\spectra) \rightarrow \Mod_{\fil^{\ast}_{ev}(R)}(\Fil \spectra) 
    \]
    is $\mathbf{E}_{n-1}$-monoidal. 
\end{enumerate}
As we discuss below, $\fil^{\ast}_{ev}(S^{0})$ can be identified with the Adams-Novikov filtration of the sphere, so that the first property implies that $\fil^{*}_{ev}(R)$ has a canonical lift to an $\mathbf{E}_{n}$-algebra in synthetic spectra or, after $p$-completion, to an algebra in the $\infty$-category of $\mathbb{C}$-motivic spectra \cite{gheorghe2022c, pstrkagowski2018synthetic}. 

We show that the even filtration is given by the Whitehead tower $M \mapsto \tau_{\geq 2*} M$ if either the ring $R$ or the module $M$ has homotopy groups concentrated in even degrees. In particular, it follows that when restricted to $\mathbf{E}_{\infty}$-rings, there is a canonical comparison natural transformation
\[
\fil^{*}_{ev}(-) \rightarrow \fil^{*}_{\mathbf{E}_{\infty} \mhyphen ev}(-)
\]
into the Hahn-Raksit-Wilson $\mathbf{E}_{\infty}$-even filtration. As the main method of calculating the $\mathbf{E}_{\infty}$-even filtration is via flat descent, to establish that the comparison map is an equivalence for a large class of rings we first prove descent for the even filtration of \cref{definition:introduction_even_filtration}. 

We say that an $R$-module $M$ is \emph{even flat} if it can be written as a filtered colimit of perfect evens; these modules can be characterized in several different ways, see \cref{proposition:tensor_characterization_of_even_flat_modules}, and so can be effectively detected. If $R$ is an $\mathbf{E}_{2}$-ring\footnote{In the main body of the text we prove descent for maps of $\mathbf{E}_{1}$-rings, see \cref{theorem:faithfully_flat_descent_for_modules}, but we stick to algebras in the introduction for simplicity.}, then we say that an $\mathbf{E}_{1}$-$R$-algebra is \emph{faithfully even flat} if both $S$ and $\mathrm{cofib}(R \rightarrow S)$ are even flat as $R$-modules. This definition is motivated by a classical observation that a monomorphism of classical commutative rings is faithfully flat if both the target and the cokernel are flat. In \cref{proposition:fef_maps_are_hrw_eff}, we show that in the $\mathbf{E}_{\infty}$-context, our notion of faithfully even flat is equivalent to that of Hahn-Raksit-Wilson for connective rings, and strictly stronger in general. 

\begin{theorem}[{\ref{theorem:fef_descent_for_algebras_over_e2_rings}}]
\label{theorem:introduction_fef_descent}
Let $R$ be an $\mathbf{E}_{2}$-ring and let $S$ be a faithfully even flat $\mathbf{E}_{1}$-$R$-algebra. Then for any $R$-module $M$ the canonical map 
\[
\fil^{*}_{ev / R}(M) \rightarrow \varprojlim\fil^{*}_{ev / S^{\otimes_{R} \bullet}}(S^{\otimes_{R} \bullet} \otimes_{R} M)
\]
is an equivalence of filtered spectra after completion.
\end{theorem}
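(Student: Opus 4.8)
The plan is to reduce the statement to the associated graded, where it becomes a descent statement for the cohomology of sheaves on the even site along the cover provided by the cobar complex of $S$, and where the passage to completions is harmless. Throughout write $S^{\otimes_{R}\bullet}$ for the Amitsur cobar cosimplicial $\mathbf{E}_{1}$-$R$-algebra, which exists because $R$ is $\mathbf{E}_{2}$. Base change $-\otimes_{R}S^{\otimes_{R}m}$ sends the unit $R$ to the unit of $\Mod_{S^{\otimes_{R}m}}$ and preserves even (de)suspensions, retracts, extensions, perfectness and even epimorphisms, so it restricts to a morphism of sites $\phi_{m}\colon\Perf(R)_{ev}\to\Perf(S^{\otimes_{R}m})_{ev}$; because $S^{\otimes_{R}m}$ is even flat over $R$ this morphism is flat, in the sense that the pullback $\phi_{m}^{*}$ on additive sheaves of spectra is t-exact, so it commutes with the connective covers $\tau_{\geq 2n}$ of \cref{definition:introduction_even_filtration} and matches the homotopy sheaves of $Y_{R}(M)$ with those of $Y_{S^{\otimes_{R}m}}(M\otimes_{R}S^{\otimes_{R}m})$.

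The comparison map itself comes from \cref{theorem:introduction_the_even_filtration_is_multiplicative}: the even filtration is functorial in the $\mathbf{E}_{1}$-ring and its modules, so applying it to the cobar complex and totalizing produces the map in the statement. By the standard fact that a map of filtered spectra inducing an equivalence on associated graded pieces induces an equivalence after completion, and since $\varprojlim_{\thickdelta}$ commutes with the fibre sequences defining the graded pieces, it suffices to prove that $\gr^{n}_{ev/R}(M)\to\varprojlim_{[m]\in\thickdelta}\gr^{n}_{ev/S^{\otimes_{R}m}}(S^{\otimes_{R}m}\otimes_{R}M)$ is an equivalence for every $n$.

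Here I would use the description of the even filtration via resolutions in modules to identify $\gr^{n}_{ev/R}(M)$ with the sheaf cohomology $\mathrm{R}\Gamma_{\Perf(R)_{ev}}(R,-)$ of (a finite truncation of) the homotopy sheaves of $Y_{R}(M)$ on the even site, and likewise over each $S^{\otimes_{R}m}$. By the flatness of $\phi_{m}$ recorded above and the identity $\Gamma_{\Perf(S^{\otimes_{R}m})_{ev}}(S^{\otimes_{R}m},-)=\Gamma_{\Perf(R)_{ev}}(R,\phi_{m*}-)$, the right-hand side is then $\varprojlim_{\thickdelta}\mathrm{R}\Gamma_{\Perf(R)_{ev}}(R,\phi_{m*}\phi_{m}^{*}\mathcal{G})$ for $\mathcal{G}$ the relevant homotopy sheaf, and the claim becomes: sheaf cohomology of a discrete sheaf on $\Perf(R)_{ev}$ satisfies cobar descent along $R\to S$, i.e.\ $\mathrm{R}\Gamma_{\Perf(R)_{ev}}(R,\mathcal{G})\to\varprojlim_{\thickdelta}\mathrm{R}\Gamma_{\Perf(R)_{ev}}(R,\phi_{m*}\phi_{m}^{*}\mathcal{G})$ is an equivalence.

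This cobar descent is the main obstacle, and it is exactly where the faithfully even flat hypothesis enters — it is the analogue, one categorical level up, of the classical observation recalled in the introduction that a ring monomorphism with flat target and flat cokernel is faithfully flat. The point is that although $R\to S$ need not itself present $R$ as covered by $S$ inside $\Perf(R)_{ev}$, one may pass to the larger site of even flat $R$-modules — which, since additive sheaves are continuous, changes neither the sheaf $\infty$-topos nor its cohomology — and there the even flatness of $S$ and of $\cofib(R\to S)$ makes the augmented cobar complex into a descent diagram, by filtering it so that the fibre of the augmentation is built from even flat modules and invoking continuity of sheaf cohomology under the filtered colimits witnessing even flatness together with descent along the resulting genuine covering. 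Threading this back through the previous step matches the two sides of the $\gr^{n}$-comparison, so the original map is an equivalence after completion. Completion cannot be dropped: the underlying spectrum of the right-hand side is $\colim_{n}\varprojlim_{\thickdelta}\fil^{-n}_{ev/S^{\otimes_{R}\bullet}}(S^{\otimes_{R}\bullet}\otimes_{R}M)$, and as this sequential colimit need not commute with $\varprojlim_{\thickdelta}$ it need not agree with the underlying spectrum $M$ of the left-hand side; only after completion, where the two (now matching) sets of graded pieces are all that survive, do the filtered spectra agree.
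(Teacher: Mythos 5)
Your reduction to the associated graded is fine, but the two load-bearing steps are asserted rather than proved, and the central one does not work as written. The ``cobar descent'' claim --- that $\mathrm{R}\Gamma_{\Perf(R)_{ev}}(R,\mathcal{G})\to\varprojlim\mathrm{R}\Gamma_{\Perf(R)_{ev}}(R,\phi_{m*}\phi_{m}^{*}\mathcal{G})$ is an equivalence because passing to the site of even flat modules turns the cobar complex into ``a genuine covering'' --- has no content in this topology: the covers of the even site are even \emph{epimorphisms}, i.e.\ maps $B\to A$ with perfect even fibre, and the unit $R\to S$ points the wrong way and is not a cover of $R$ in $\Perf(R)_{ev}$ or any evident enlargement of it. Faithful even flatness is not a covering condition here, and the paper does not use topos-theoretic descent for this step at all. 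Instead it proves (\cref{theorem:faithfully_flat_descent_for_modules}, with \emph{all} filtrations relative to $R$) that the augmented cobar complex $M\to S\otimes_{R}M\to\cdots$ is a \emph{homological resolution} in the sense of \cref{definition:homlogical_resolution}: after tensoring with $S$ the cobar complex splits, hence its Moore complex of even sheaves is exact, and the hypothesis that $S$ and $\cofib(R\to S)$ are even flat (right) and homologically even (left) yields the short exact sequence of \cref{lemma:for_even_faithfully_map_tensoring_gives_ses_of_homology} and hence detection of $\evensheaf$-monomorphisms (\cref{corollary:even_faithfully_flat_maps_detect_and_preserve_homology_monos}), which lets one descend exactness back to the untensored Moore complex; \cref{proposition:homological_resolutions_are_limits_on_even_filtrations_up_to_completion} then gives the limit statement on associated graded. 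Your sentence ``filtering it so that the fibre of the augmentation is built from even flat modules\ldots'' is essentially a restatement of the conclusion, and this is exactly the point where an actual argument is needed.

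The change-of-ring step is also off. You claim the pullback $\phi_{m}^{*}$ is $t$-exact because $S^{\otimes_{R}m}$ is even flat, and that it ``matches the homotopy sheaves of $Y_{R}(M)$ with those of $Y_{S^{\otimes_{R}m}}(S^{\otimes_{R}m}\otimes_{R}M)$''; neither is justified, and the latter identification fails for general $M$ (base change does not even preserve homological evenness, cf.\ \cref{warning:base_change_of_homological_even_along_truncation_need_not_be_homologically_even}). What the paper actually uses (\cref{theorem:even_filtration_commutes_with_restriction_of_scalars_for_homologically_even_maps}) is a statement about the \emph{pushforward}: for a left homologically even map the functor $\phi_{m*}$ is left $t$-exact because $S^{\otimes_{R}m}\otimes_{R}-$ has the covering lifting property (\cref{lemma:homologically_even_maps_of_rings_have_covering_lifting_property_on_perfect_evens}), and this is applied only to modules already defined over $S^{\otimes_{R}m}$ (the cosimplicial terms), giving $\fil^{*}_{ev/R}(S^{\otimes_{R}m}\otimes_{R}M)\simeq\fil^{*}_{ev/S^{\otimes_{R}m}}(S^{\otimes_{R}m}\otimes_{R}M)$; here one needs $S^{\otimes_{R}m}$ homologically even over $R$, which holds because it is a tensor product of even flat $R$-modules. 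So the overall two-step architecture (descend relative to $R$, then change the ring levelwise) is the right one, but both steps need the paper's actual mechanisms, which your proposal replaces with unproved flatness and descent assertions.
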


Since the $\mathbf{E}_{\infty}$-even filtration also satisfies flat descent, we deduce the following:

\begin{theorem}[{\ref{theorem:comparison_between_ev_and_einfty_ev_filtrations}}]
Let $R$ be an $\mathbf{E}_{\infty}$-ring which admits a faithfully even flat map $R \rightarrow S$ into an $\mathbf{E}_{\infty}$-ring with $\pi_{*}S$ even. Then the comparison functor
\[
\fil^{*}_{ev}(R) \rightarrow \fil^{*}_{\mathbf{E}_{\infty} \mhyphen ev}(R)
\]
is an equivalence of filtered spectra after completion. 
\end{theorem}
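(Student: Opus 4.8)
The plan is to prove this by faithfully even flat descent: both filtrations satisfy such descent, and both are computed by the double-speed Whitehead tower on rings with even homotopy, so the comparison map will become an equivalence after restricting along the resolution $S^{\otimes_{R} \bullet}$, all of whose terms are even.

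First I would set up the descent. Since an $\mathbf{E}_{\infty}$-ring is in particular $\mathbf{E}_{2}$ and $R \to S$ is faithfully even flat, \cref{theorem:introduction_fef_descent} identifies the completion of $\fil^{*}_{ev / R}(R)$ with $\varprojlim_{\Delta}\fil^{*}_{ev / S^{\otimes_{R} \bullet}}(S^{\otimes_{R} \bullet})$. By \cref{proposition:fef_maps_are_hrw_eff} the map $R \to S$ is moreover even faithfully flat in the sense of Hahn--Raksit--Wilson, so their flat descent identifies $\fil^{*}_{\mathbf{E}_{\infty} \mhyphen ev}(R)$ with $\varprojlim_{\Delta}\fil^{*}_{\mathbf{E}_{\infty} \mhyphen ev}(S^{\otimes_{R} \bullet})$. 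The comparison transformation $\fil^{*}_{ev}(-) \to \fil^{*}_{\mathbf{E}_{\infty} \mhyphen ev}(-)$ is natural in the $\mathbf{E}_{\infty}$-ring, hence post-composing with $S^{\otimes_{R} \bullet} \colon \Delta \to \CAlg(\spectra)$ and its augmentation from $R$ yields a commuting square whose horizontal arrows are the two descent maps above. As completion of filtered spectra preserves limits, it then suffices to check that the comparison map becomes an equivalence after completion on each term $S^{\otimes_{R} n}$ with $n \geq 1$.

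Second I would verify that every $S^{\otimes_{R} n}$ has homotopy concentrated in even degrees. A tensor product over $R$ of perfect even $R$-modules is again perfect even, since $\Perf(R)_{ev}$ is closed under even (de)suspensions, retracts, and extensions and $(-)\otimes_{R}(-)$ preserves these operations; hence a tensor product of even flat $R$-modules is even flat, and by induction each $S^{\otimes_{R} m}$ is even flat over $R$. Consequently $S^{\otimes_{R} n} \simeq S \otimes_{R} S^{\otimes_{R} (n-1)}$ is even flat over $S$, because base change along $R \to S$ carries $\Perf(R)_{ev}$ into $\Perf(S)_{ev}$. Finally, as $\pi_{*}S$ is even and the class of $S$-modules with even homotopy is closed under even (de)suspensions, retracts, and extensions — the last by the long exact sequence of a cofibre sequence — every perfect even, hence every even flat, $S$-module has even homotopy; in particular $\pi_{*}(S^{\otimes_{R} n})$ is even. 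On a ring with even homotopy both filtrations coincide with the double-speed Whitehead tower $\tau_{\geq 2*}$: for $\fil^{*}_{ev}$ this is the identification recalled in the introduction, and for $\fil^{*}_{\mathbf{E}_{\infty} \mhyphen ev}$ it is a computation of \cite{hahn2022motivic}. So on each $S^{\otimes_{R} n}$ the comparison map is the identity of $\tau_{\geq 2*}(S^{\otimes_{R} n})$, in particular an equivalence, and these filtered spectra are already complete. Passing to $\varprojlim_{\Delta}$ and invoking the two descent identifications of the first step gives the claim.

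The step I expect to carry the weight is the reduction in the first two steps: one must know that $R \to S$ verifies Hahn--Raksit--Wilson's descent hypotheses, which is \cref{proposition:fef_maps_are_hrw_eff}, and that the entire resolution $S^{\otimes_{R} \bullet}$ stays inside the class of rings with even homotopy. This is exactly the place where the hypothesis that $\pi_{*}S$ is even, rather than merely that $S$ is even flat over $R$, does real work; once it is in hand, the comparison is transported to the already-understood case of even rings, and the two flat-descent theorems are then cited inputs.
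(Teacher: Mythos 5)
Your proposal is correct and follows essentially the same route as the paper: cite faithfully even flat descent for both filtrations (\cref{theorem:fef_descent_for_algebras_over_e2_rings} on one side, \cref{proposition:fef_maps_are_hrw_eff} plus Hahn--Raksit--Wilson's descent on the other), use naturality of the comparison map over the cobar resolution, and reduce to the terms $S^{\otimes_{R} m}$, which have even homotopy, where both filtrations are the double-speed Whitehead tower. The only cosmetic difference is that the paper exploits completeness of the $\mathbf{E}_{\infty}$-even filtration to check the equivalence on associated graded objects, while you check completeness of the cosimplicial terms and pass to completed limits directly; your explicit verification that each $S^{\otimes_{R} m}$ is even flat with even homotopy is the same point the paper leaves implicit.
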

Note that the even filtration considered in this paper is always exhaustive in the sense that $\varinjlim \fil^{*}_{ev}(R) \simeq R$, but it is not always complete. Thus, even in cases where the two filtrations agree up to completion, $\fil^{*}_{ev}(R)$ can be considered as a naturally occuring ``decompletion'' of the Hahn-Raksit-Wilson filtration. 

The comparison of \cref{theorem:introduction_fef_descent} covers most of examples considered in \cite{hahn2022motivic}. In particular, it applies to $S^{0}, \mathrm{HH}(R/k), \THH(R)$ and $\THH(R)^{\wedge}_{p}$, where by results of Hahn-Raksit-Wilson one recovers, respectively, the Adams-Novikov filtration, the Hochschild-Kostant-Rosenberg filtration, the Bhatt-Lurie filtration and the filtration of Bhatt-Morrow-Scholze. 

However, we warn the reader that \cref{theorem:introduction_fef_descent} does not apply to the the motivic filtrations on $\TC^{-}, \TP$ and $\TC$, as here an appropriate definition of the $\mathbf{E}_{\infty}$-even filtration requires one to take the circle action and the cyclotomic structure into account. In upcoming joint work with Raksit \cite{motivic_cohomology_of_e2_rings}, we show how to recover the filtrations on $\TC^{-}$ and $\TP$ using a spherical lift of the filtered circle of \cite{raksit2020hochschild}, but we do not touch on this subject in the current work. 

The even filtration and the $\mathbf{E}_{\infty}$-even filtration do not agree in general. In \cref{example:roberts_example_of_even_and_einfty_even_being_different}, due to Robert Burklund, we give an instructive instance of this phenomena for a connective $\mathbf{E}_{\infty}$-ring. However, one could argue that when they disagree, it is the even filtration of \cref{definition:introduction_even_filtration} which gives more reasonable answers: in Burklund's example, we are able to determine the structure of the even filtration completely, but the nature of the $\mathbf{E}_{\infty}$-even filtration seems somewhat difficult. 

Outside of the connective context, the situation is even more striking: in \cref{warning:more_extreme_example_of_even_and_einfty_even-filtration_diverging}, we describe a periodic $\mathbf{E}_{\infty}$-ring whose $\mathbf{E}_{\infty}$-even filtration is identically zero, but whose even filtration is exhaustive, complete and easy to calculate. This simplicity comes down to the fact that even if one starts with an $\mathbf{E}_{\infty}$-ring $R$, it is much easier to produce $\mathbf{E}_{1}$-$R$-algebras rather than $\mathbf{E}_{\infty}$-$R$-algebras, which allows one to apply \cref{theorem:introduction_fef_descent}. 

We now describe how one can calculate the even filtration in practice. Since the even filtration is defined in terms of Postnikov towers in sheaves of spectra, it is controlled by sheaf cohomology. For any half-integer $q$, we write 
\[
\evensheaf_{M}(q) \colonequals \pi_{2q} Y_{R}(M)
\]
for the sheaf of homotopy groups of (\ref{equation:definition_of_spectraL_yoneda_in_introduction}) and call it the \emph{even sheaf of weight $q$}. We say that $M$ is \emph{homologically even} if these sheaves vanish for $q \in \mathbb{Z} + \nicefrac{1}{2}$. For example, all perfect even modules are homologically even, in particular $R$ itself.

\begin{definition}
The \emph{even cohomology} of $R$ with coefficients in $M$ is given by sheaf cohomology 
\[
\evencoh^{p, q}(R, M) \colonequals \mathrm{H}^{p}_{\Perf(R)_{ev}}(R, \evensheaf_{M}(q)).  
\]
Here, the right hand side is given by derived functors of $\Hrm^{0}(R, -) \colon \sheaves(\Perf(R)_{ev}, \abeliangroups) \rightarrow \abeliangroups$. 
\end{definition}
We show that in \cref{theorem:associated_graded_of_even_filtration_and_even_cohomology} that if $M$ is homologically even\footnote{The assumption that $M$ is homologically even can be removed, but to get the most elegant results in this case it is preferable to introduce a refinement of the even filtration into a filtration indexed by half-integers, see \cref{remark:integer_and_half_integer_grading}. We mainly work in the homologically even case as it covers the examples we are after, since $R$ is homologically even when considered as a module over itself.}, then the associated graded object of the even filtration can be described in terms of even cohomology in the sense that 
\begin{equation}
\label{equation:introduction_sheaf_cohomology_calculates_homotopy_of_associated_graded_of_even_filtration}
\pi_{2q-p} (\mathrm{gr}^{q}_{ev}(M))  \simeq \evencoh^{p, q}(R, M).
\end{equation}
The even filtration thus induces a spectral sequence of signature 
\[
\evencoh^{p, q}(R, M) \Rightarrow \pi_{2q-p}(M),
\]
which we call the \emph{even spectral sequence}. 

The identification of (\ref{equation:introduction_sheaf_cohomology_calculates_homotopy_of_associated_graded_of_even_filtration}) is useful since, as any form of sheaf cohomology, even cohomology can be efficiently computed using injective resolutions inside the category of sheaves of abelian groups. To do so, one needs access to injective sheaves, a bountiful source of which are $R$-modules themselves. This means that even cohomology (and hence the even filtration) can be efficiently computed through resolutions in the $\infty$-category of $R$-modules, as we now explain. 

Let $M$ be homologically even. By iteratively attaching even cells in $R$-modules along each odd degree homotopy class we construct a map $M \rightarrow E_{0}$ into a module with even homotopy groups with the property that the cofibre $C_{0} \colonequals \mathrm{cofib}(M \rightarrow E_{0})$ is homologically even. We can then attach even cells to $C_{0}$ to obtain a map $C_{0} \rightarrow E_{1}$ with $\pi_{*}E_{1}$ even and $C_{2} \colonequals \mathrm{cofib}(C_{0} \rightarrow E_{1})$ homologically even. Proceeding inductively in this form we produce a diagram of $R$-modules 
\begin{equation}
\label{equation:introduction_homological_resolution_complex_diagram}
\begin{tikzcd}
	& {E_{0}} & {E_{1}} & {E_{2}} & \ldots \\
	M & {C_{0}} & {C_{1}} & {C_{2}}
	\arrow[from=2-1, to=1-2]
	\arrow[from=1-2, to=2-2]
	\arrow[from=2-2, to=1-3]
	\arrow[from=1-2, to=1-3]
	\arrow[from=1-3, to=2-3]
	\arrow[from=2-3, to=1-4]
	\arrow[from=1-3, to=1-4]
	\arrow[from=1-4, to=2-4]
	\arrow[from=2-4, to=1-5]
	\arrow[from=1-4, to=1-5]
\end{tikzcd}
\end{equation}
whose top row is a chain complex in the homotopy category. The following shows that this diagram encodes the even cohomology of $M$. 

\begin{theorem}
[{\ref{proposition:even_cohomology_computed_using_cochain_complex_of_r_modules}}]
\label{theorem:introduction_calculation_of_even_coh_through_a_cpx}
If $M$ is homologically even and (\ref{equation:introduction_homological_resolution_complex_diagram}) a diagram as described above, then there is a canonical isomorphism
\[
\evencoh^{p, q}(R, M) \simeq \mathrm{H}^{p}(\pi_{2q}E_{\bullet}),
\]
between the $(p,q)$-th even cohomology of $M$ and the cohomology of the cochain complex
\[
\pi_{2q} E_{0} \rightarrow \pi_{2q} E_{1} \rightarrow \pi_{2q} E_{2} \rightarrow \ldots 
\]
computed in the $p$-th spot. 
\end{theorem}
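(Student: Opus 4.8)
The plan is to promote the diagram (\ref{equation:introduction_homological_resolution_complex_diagram}) to an injective resolution of the even sheaf $\evensheaf_M(q)$ and then compute sheaf cohomology from it in the standard way. First I would apply the Yoneda functor $Y$ to the cofibre sequences implicit in (\ref{equation:introduction_homological_resolution_complex_diagram}), namely $M \to E_0 \to C_0$, $C_0 \to E_1 \to C_1$, and so on. Since $\Map_{\Mod_R}(A,-)$ is exact, each of these becomes a fibre sequence of sheaves of spectra, e.g.\ $Y(M) \to Y(E_0) \to Y(C_0)$. Taking the long exact sequence of homotopy sheaves in degree $2q$ and using that $M$ and the $C_i$ are homologically even — so that $\pi_{2q\pm 1}$ of their Yoneda sheaves vanish — each long exact sequence collapses to a short exact sequence
\[
0 \to \evensheaf_M(q) \to \evensheaf_{E_0}(q) \to \evensheaf_{C_0}(q) \to 0, \qquad 0 \to \evensheaf_{C_i}(q) \to \evensheaf_{E_{i+1}}(q) \to \evensheaf_{C_{i+1}}(q) \to 0 .
\]
Splicing these short exact sequences yields an exact sequence of sheaves of abelian groups
\[
0 \to \evensheaf_M(q) \to \evensheaf_{E_0}(q) \to \evensheaf_{E_1}(q) \to \evensheaf_{E_2}(q) \to \cdots ,
\]
in which the differential $\evensheaf_{E_i}(q) \to \evensheaf_{E_{i+1}}(q)$ is $\pi_{2q}$ of $Y$ applied to the map $E_i \to E_{i+1}$ of the complex (it factors as the epimorphism onto $\evensheaf_{C_i}(q)$ followed by the monomorphism into $\evensheaf_{E_{i+1}}(q)$, exactly as in the classical splicing argument).

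Second, I would invoke the fact (established earlier in the paper, as the source of ``enough injectives'' coming from $R$-modules) that for a module $E$ with even homotopy groups the even sheaf $\evensheaf_E(q)$ is acyclic for $\Gamma_{\Perf(R)_{ev}}(R,-)$ — indeed injective in the category of sheaves of abelian groups when $E$ is perfect even, the acyclicity then propagating to the filtered colimits that appear in the cellular construction of the $E_i$. One also checks, by induction over the cells of an arbitrary $A \in \Perf(R)_{ev}$ (base case $R$, closure under even shifts, extensions, retracts), that $\Map_{\Mod_R}(A,E)$ has even homotopy, so that $Y(E)$ has homotopy sheaves concentrated in even degrees. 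Granting this, the displayed exact sequence is an acyclic (indeed injective) resolution of $\evensheaf_M(q)$, and therefore
\[
\evencoh^{p,q}(R,M) = \mathrm{H}^{p}_{\Perf(R)_{ev}}\bigl(R, \evensheaf_M(q)\bigr) \;\cong\; \mathrm{H}^{p}\Bigl( \Gamma(R,\evensheaf_{E_0}(q)) \to \Gamma(R,\evensheaf_{E_1}(q)) \to \cdots \Bigr).
\]

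Third, it remains to identify $\Gamma_{\Perf(R)_{ev}}(R,\evensheaf_{E_i}(q))$ with $\pi_{2q}E_i$ compatibly with the differentials. Since the topology is subcanonical, $Y(E_i)$ is a sheaf and $\Gamma(R,Y(E_i)) = Y(E_i)(R) = \Map_{\Mod_R}(R,E_i) = E_i$; moreover the descent spectral sequence $\mathrm{H}^{s}(R,\pi_t Y(E_i)) \Rightarrow \pi_{t-s}E_i$ degenerates onto the line $s=0$, because $\pi_t Y(E_i)$ vanishes for $t$ odd and is acyclic for $t$ even. This gives a natural isomorphism $\pi_{2q}E_i \cong \mathrm{H}^{0}(R,\evensheaf_{E_i}(q)) = \Gamma(R,\evensheaf_{E_i}(q))$, under which the differential $\Gamma(R,\evensheaf_{E_i}(q)) \to \Gamma(R,\evensheaf_{E_{i+1}}(q))$ becomes $\pi_{2q}$ of the map $E_i \to E_{i+1}$. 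Assembling the three steps produces the asserted isomorphism $\evencoh^{p,q}(R,M) \simeq \mathrm{H}^{p}(\pi_{2q}E_\bullet)$. I expect the essential input to be the second step — the fact that $R$-modules with even homotopy furnish acyclic (injective) even sheaves, which is precisely where the Grothendieck topology defined by even epimorphisms is used — together with the bookkeeping needed to guarantee that the spliced sequence is genuinely exact and that all of the identifications are natural enough to track differentials; the remaining manipulations (long exact sequences, splicing, the collapse of the descent spectral sequence) are routine.
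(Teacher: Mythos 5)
Your proposal is correct and takes essentially the same route as the paper: it uses the same short exact sequences of even sheaves arising from homological evenness of $M$ and the $C_{i}$, and the same key input, namely \cref{corollary:even_cohomology_of_even_modules} (a $\pi_{*}$-even module has even cohomology concentrated in cohomological degree zero, where it equals $\pi_{2q}$), the only difference being that the paper packages the final step as a collapsing exact-couple spectral sequence rather than your acyclic-resolution computation of derived functors. One small caveat: your parenthetical mechanism for acyclicity (injectivity of $\evensheaf_{A}(q)$ for perfect even $A$, propagated along the filtered colimits of the cellular construction) is neither needed nor how the paper argues — the modules $M_{n}$ appearing in the small-object construction of the $E_{i}$ are not perfect even — but the $\Gamma_{\Perf(R)_{ev}}(R,-)$-acyclicity you actually invoke is exactly \cref{corollary:even_cohomology_of_even_modules}, proved via \cref{lemma:pistar_even_means_homologically_even_and_even_filtration_is_whitehead}, so no gap results.
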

We also prove a more refined version of \cref{theorem:introduction_calculation_of_even_coh_through_a_cpx} which gives a description of the even filtration itself as a d\'{e}calage of an appropriate cosimplicial resolution through modules with even homotopy, see \cref{proposition:homological_resolutions_are_limits_on_even_filtrations_up_to_completion} and \cref{remark:even_filtration_of_a_homological_even_as_decalage_of_its_resolution}. 

We show that \cref{theorem:introduction_calculation_of_even_coh_through_a_cpx} gives an effective way of calculating even cohomology, particularly in the connective case. To see this, note that if both $R$ and $M$ are connective, then in the construction of $E_{0}$ of (\ref{equation:introduction_homological_resolution_complex_diagram}) one needs to use only positive-dimensional cells. Iterating this construction thus leads to a raising connectivity of the diagram, giving a vanishing line. 

\begin{theorem}[{\ref{theorem:vanishing_of_even_coh_of_connective_ring_above_milnor_line}, \ref{theorem:completness_of_the_even_filtration}, \ref{corollary:complete_convergence_of_the_even_spectral_sequence_for_homologically_even_connectives}}]
Let $R$ be a connective $\mathbf{E}_{1}$-ring and $M$ a connective, homologically even $R$-module. Then the even cohomology groups of $M$ vanish above the Milnor line; that is, we have 
\[
\evencoh^{p, q}(R, M) = 0
\]
for $p > q$. In particular, the even filtration $\fil^{*}_{ev/R}(M)$ is complete and  the even spectral sequence
\[
\evencoh^{p, q}(R, M) \Rightarrow \pi_{2q-p}M
\]
is strongly convergent. 
\end{theorem}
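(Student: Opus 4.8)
The plan is to reduce everything to the cochain–complex description of even cohomology from \cref{theorem:introduction_calculation_of_even_coh_through_a_cpx} and then run a connectivity induction on a homological resolution. Assume $R$ and $M$ are connective with $M$ homologically even, and build a diagram as in (\ref{equation:introduction_homological_resolution_complex_diagram}). Since $M$ is connective and $\pi_{0}M$ sits in (even) degree $0$, the lowest \emph{odd}-degree homotopy class of $M$ lives in degree $\geq 1$, so the map $M \to E_{0}$ is obtained by attaching only even cells $\Sigma^{2k}R$ with $2k \geq 2$ (each attached along a map factoring through a sphere $\Sigma^{2k-1}R$, i.e.\ along an odd-degree class). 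Hence $C_{0} = \mathrm{cofib}(M \to E_{0})$, being a colimit of such cells, is $2$-connective, and from $M \to E_{0} \to C_{0}$ we read off that $E_{0}$ is connective with $\pi_{0}E_{0} \simeq \pi_{0}M$. Now $C_{0}$ is $2$-connective, homologically even, and has even $\pi_{2}$, so its lowest odd-degree class is in degree $\geq 3$; repeating the argument gives $C_{1}$ is $4$-connective and $E_{1}$ is $2$-connective, and inductively $C_{m}$ is $(2m+2)$-connective while $E_{m}$ is $2m$-connective. One has to check, via the long exact sequence, that the transfinite cell-attachment producing each $E_{m}$ keeps the intermediate modules connective with the relevant low-degree homotopy unchanged, so that every attached cell really does fall in the expected range; this is a routine bookkeeping step, but it is where connectivity of $R$, connectivity of $M$, and homological evenness of the $C_{i}$ all get used.

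With this in hand the vanishing line is immediate: $\pi_{2q}E_{m} = 0$ whenever $m > q$, so in the cochain complex $\pi_{2q}E_{0} \to \pi_{2q}E_{1} \to \pi_{2q}E_{2} \to \cdots$ of \cref{theorem:introduction_calculation_of_even_coh_through_a_cpx} every term in cohomological position $> q$ vanishes, and therefore
\[
\evencoh^{p,q}(R,M) \;\cong\; \mathrm{H}^{p}(\pi_{2q}E_{\bullet}) \;=\; 0 \qquad\text{for } p > q .
\]

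For the remaining assertions I would feed the vanishing line back into the filtration. Using $\pi_{2q-p}\gr^{q}_{ev}(M) \simeq \evencoh^{p,q}(R,M)$, the vanishing forces $\pi_{j}\gr^{q}_{ev}(M) = 0$ for $j < q$, i.e.\ each associated graded piece $\gr^{q}_{ev}(M)$ is $q$-connective; unwinding \cref{definition:introduction_even_filtration} and running the descent spectral sequence for $\Gamma_{\Perf(R)_{ev}}(R, \tau_{\geq 2q}Y(M))$ together with the vanishing $\evencoh^{p,q'}(R,M) = 0$ for $p > q'$ and all $q' \geq q$, one upgrades this to the statement that $\fil^{q}_{ev}(M)$ itself is $q$-connective. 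Consequently the tower $\{\fil^{q}_{ev}(M)\}_{q}$ has $\varprojlim$ and $\varprojlim^{1}$ vanishing degreewise (for fixed $j$ one has $\pi_{j}\fil^{q}_{ev}(M) = 0$ once $q > j$), so $\varprojlim_{q}\fil^{q}_{ev}(M) \simeq 0$ and the even filtration is complete. Strong convergence of the even spectral sequence is then formal: the vanishing line restricts the nonzero entries of $\evencoh^{p,q}(R,M)$ to the range $0 \leq p \leq q$, so only the finitely many terms with $q$ between $t/2$ and $t$ contribute in each total degree $t = 2q-p$; combined with exhaustiveness and the completeness just established, the usual convergence criterion gives strong convergence.

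The only substantive work, then, is in the first paragraph — verifying that ``attach even cells along every odd homotopy class'' can be carried out so that at the $m$-th stage one uses nothing below dimension $2m$ — and a secondary point requiring care is the passage from connectivity of the graded pieces $\gr^{q}_{ev}$ to connectivity of the $\fil^{q}_{ev}$ themselves, since $\Gamma_{\Perf(R)_{ev}}(R,-)$ does not preserve connectivity and one needs the relevant descent spectral sequence to converge (which the diagonal vanishing makes routine). Once these are settled, the vanishing line, completeness, and strong convergence all follow with no further difficulty.
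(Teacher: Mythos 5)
The first half of your argument --- choosing $\pi_{*}$-even envelopes of connective modules with $2$-connective cofibres, so that $E_{m}$ is $2m$-connective, and then reading off $\evencoh^{p,q}=\mathrm{H}^{p}(\pi_{2q}E_{\bullet})=0$ for $p>q$ --- is exactly the paper's proof of \cref{theorem:vanishing_of_even_coh_of_connective_ring_above_milnor_line} (via \cref{proposition:pi_star_even_envelopes_exist} and \cref{proposition:even_cohomology_computed_using_cochain_complex_of_r_modules}), and your deduction of strong convergence from completeness plus the vanishing line is also the paper's. The genuine gap is in your proof of completeness. You pass from $q'$-connectivity of the graded pieces $\gr^{q'}_{ev}(M)$ to $q$-connectivity of $\fil^{q}_{ev}(M)$ by ``running the descent spectral sequence for $\Gamma_{\Perf(R)_{ev}}(R,\tau_{\geq 2q}Y(M))$,'' asserting that the diagonal vanishing makes its convergence routine. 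But that spectral sequence is only conditionally convergent to the homotopy of $\varprojlim_{n}\Gamma(R,\tau_{[2q,n]}Y(M))$, i.e.\ to the sections of the Postnikov completion of $\tau_{\geq 2q}Y(M)$; identifying this abutment with $\fil^{q}_{ev}(M)$ requires exactly the vanishing of $\varprojlim_{m}\Gamma(R,\tau_{\geq 2m}Y(M))\simeq\varprojlim_{m}\fil^{m}_{ev}(M)$, which is the completeness statement you are trying to prove. The vanishing line controls the $E_{2}$-page (finitely many entries in each total degree) but says nothing about this identification, so the step is circular; and connectivity of the graded pieces alone cannot formally imply completeness (a constant nonzero filtration has vanishing graded pieces).

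The paper closes this gap by a different, two-stage argument in \cref{theorem:completness_of_the_even_filtration}. First, since each $\gr^{n}_{ev}(M)$ is connective, induction along the fibre sequences $\Sigma^{-1}\gr^{n}_{ev}(M)\rightarrow\fil^{n+1}_{ev}(M)\rightarrow\fil^{n}_{ev}(M)$ shows every $\fil^{n}_{ev}(M)$ is $(-1)$-connective, whence $\varprojlim\fil^{*}_{ev}(M)$ is $(-2)$-connective by the Milnor sequence. Second, one bootstraps: choosing a $\pi_{*}$-even envelope $M\rightarrow E$ with $2$-connective cofibre $C$, the filtration of $E$ is its Whitehead tower (\cref{lemma:pistar_even_means_homologically_even_and_even_filtration_is_whitehead}), hence complete, so $\varprojlim\fil^{*}_{ev}(M)\simeq\Sigma\bigl(\varprojlim\fil^{*}_{ev}(\Sigma^{-2}C)\bigr)$ with $\Sigma^{-2}C$ connective; induction on connectivity then shows the limit is $k$-connective for all $k$ and therefore zero. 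To repair your write-up you would either need to reproduce an argument of this kind, or independently prove that $\tau_{\geq 2q}Y(M)$ is Postnikov-complete as an additive sheaf --- something the diagonal vanishing by itself does not give you.
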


Strikingly, in low weights we are able to calculate the even cohomology groups explicitly with virtually no assumptions on the ring $R$.

\begin{theorem}[{\ref{proposition:weight_one_cohomology_of_a_connective_homologically_even_module}, \ref{corollary:bidegree_two_two_even_coh_of_connective_hom_even_module}}]
Let $R$ be a connective $\mathbf{E}_{1}$-ring and $M$ a connective, homologically even $R$-module. Then there are canonical isomorphisms
\begin{enumerate}
\item $\evencoh^{0,0}(R, M) \simeq \pi_{0} M$, 
\item $\evencoh^{0, 1}(R, M) \simeq \mathrm{coker}(\pi_{1} R \otimes_{\mathbb{Z}} \pi_{1} M \rightarrow \pi_{2} M)$, 
\item $\evencoh^{1, 1}(R, M) \simeq \pi_{1} M$, 
\item $\evencoh^{2, 2}(R, M) \simeq \mathrm{im} (\pi_{1} R \otimes_{\mathbb{Z}} \pi_{1} M \rightarrow \pi_{2} M)$.
\end{enumerate}
\end{theorem}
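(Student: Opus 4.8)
The plan is to deduce all four identifications from \cref{theorem:introduction_calculation_of_even_coh_through_a_cpx} by making one convenient choice of the resolution $(\ref{equation:introduction_homological_resolution_complex_diagram})$ and reading off the cohomology of the complexes $\pi_{2q}E_{\bullet}$ in low cohomological degree. Concretely, I would take $E_{0}$ to be obtained from $M$, and each $E_{i}$ for $i \geq 1$ to be obtained from $C_{i-1}$, by iteratively attaching even cells so as to kill \emph{all} odd homotopy groups, and for the bottom odd homotopy group I insist that the attaching maps be indexed by a set of elements generating it as an abelian group. Since $R$ and $M$ are connective, this uses only cells of dimension $\geq 2$, so $\pi_{0}E_{0} \cong \pi_{0}M$ and $\pi_{1}E_{0} = 0$; hence $C_{0} = \cofib(M \to E_{0})$ is $2$-connective by its long exact sequence. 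Inductively, if $C_{n-1}$ is $2n$-connective then $E_{n}$, built from it by cells of dimension $\geq 2n+2$, is $2n$-connective with $\pi_{2n}E_{n} \cong \pi_{2n}C_{n-1}$ and $\pi_{2n+1}E_{n} = 0$, so $C_{n} = \cofib(C_{n-1} \to E_{n})$ is $(2n+2)$-connective. In particular $E_{n}$ is $2n$-connective for $n \geq 1$, so the complex $\pi_{2q}E_{0} \to \pi_{2q}E_{1} \to \cdots$ is concentrated in cohomological degrees $0, \dots, q$ — recovering the vanishing of $\evencoh^{p,q}(R,M)$ above the Milnor line — and the low-weight groups are computed by very short complexes.

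\textbf{Weights $0$ and $1$.} For weight $0$ the complex is $\pi_{0}E_{0} \to \pi_{0}E_{1} = 0$, so $\evencoh^{0,0}(R,M) = \pi_{0}E_{0} \cong \pi_{0}M$, which is (1). For weight $1$ the complex is $\pi_{2}E_{0} \xrightarrow{d^{0}} \pi_{2}E_{1} \to 0$. Attaching cells of dimension $\geq 4$ to the $2$-connective module $C_{0}$ does not affect $\pi_{2}$, so $\pi_{2}E_{1} \cong \pi_{2}C_{0}$ and, via the top row $E_{0} \to C_{0} \to E_{1}$, the differential $d^{0}$ is identified with the map $\pi_{2}E_{0} \to \pi_{2}C_{0}$ from the cofibre sequence $M \to E_{0} \to C_{0}$. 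Using $\pi_{1}E_{0} = 0 = \pi_{3}E_{0}$, the long exact sequence of that cofibre sequence gives $\ker(d^{0}) = \im(\pi_{2}M \to \pi_{2}E_{0})$ and an exact sequence $\pi_{2}E_{0} \to \pi_{2}C_{0} \to \pi_{1}M \to 0$, whence $\coker(d^{0}) \cong \pi_{1}M$; this is (3). For (2), let $\{x_{i}\}_{i \in I}$ be the chosen generators of $\pi_{1}M$ and $\alpha \colon \bigoplus_{I}\pi_{1}R \to \pi_{2}M$ the map induced on $\pi_{2}$ by $\bigoplus_{I}\Sigma R \xrightarrow{(x_{i})} M$; its $i$-th component is $\rho \mapsto \rho \cdot x_{i}$. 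The long exact sequence of this cofibre sequence gives $\im(\pi_{2}M \to \pi_{2}E_{0}) \cong \coker(\alpha)$, and since the $x_{i}$ generate $\pi_{1}M$ as an abelian group, $\im(\alpha)$ is exactly the image of the module multiplication $\pi_{1}R \otimes_{\mathbb{Z}} \pi_{1}M \to \pi_{2}M$. Hence $\evencoh^{0,1}(R,M) = \ker(d^{0}) \cong \coker(\pi_{1}R \otimes_{\mathbb{Z}} \pi_{1}M \to \pi_{2}M)$.

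\textbf{Weight $2$.} Only the bidegree $(2,2)$ group is asserted here, and $\evencoh^{2,2}(R,M) = \coker(d^{1} \colon \pi_{4}E_{1} \to \pi_{4}E_{2})$. Attaching cells of dimension $\geq 6$ to the $4$-connective module $C_{1}$ does not change $\pi_{4}$, so $\pi_{4}E_{2} \cong \pi_{4}C_{1}$, and the long exact sequence of $C_{0} \to E_{1} \to C_{1}$ together with $\pi_{3}E_{1} = 0$ identifies $\coker(d^{1})$ with $\pi_{3}C_{0}$. Finally, the long exact sequence of $M \to E_{0} \to C_{0}$ together with $\pi_{3}E_{0} = 0$ gives $\pi_{3}C_{0} \cong \ker(\pi_{2}M \to \pi_{2}E_{0}) = \im(\alpha)$, which as above equals $\im(\pi_{1}R \otimes_{\mathbb{Z}} \pi_{1}M \to \pi_{2}M)$; this is (4).

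\textbf{Main obstacle.} There is no new idea beyond the explicit resolution; the only real work is bookkeeping. One must check precisely which cell attachments affect which homotopy groups — this is what underlies the stabilisation isomorphisms $\pi_{2n}E_{n} \cong \pi_{2n}C_{n-1}$ and $\pi_{4}E_{2} \cong \pi_{4}C_{1}$ and the identifications of $d^{0}, d^{1}$ — and one must match the connecting maps appearing in the various long exact sequences with the module multiplication $\pi_{1}R \otimes_{\mathbb{Z}} \pi_{1}M \to \pi_{2}M$, keeping track of suspension conventions (though for the final statements only the image and cokernel of this map intervene, which are convention-independent). That the resulting isomorphisms do not depend on the chosen resolution, and are therefore canonical, is automatic from \cref{theorem:introduction_calculation_of_even_coh_through_a_cpx}.
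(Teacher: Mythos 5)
Your argument is correct, and it overlaps with the paper's own proofs only in part. The shared foundation is what you both use: envelopes with cofibres of controlled connectivity (\cref{proposition:pi_star_even_envelopes_exist}) and the resulting Milnor-line vanishing (\cref{theorem:vanishing_of_even_coh_of_connective_ring_above_milnor_line}); for item (1) the argument is the same. For items (2) and (3), however, the paper does not pass through the full resolution complex: in \cref{proposition:weight_one_cohomology_of_a_connective_homologically_even_module} it cones off \emph{all} classes of $\pi_1 M$ in a single step, obtaining a cofibre sequence $M \to E \to C$ with $C \simeq \bigoplus \Sigma^2 R$, and then runs the weight-one long exact sequence of even cohomology coming from the short exact sequence of even sheaves, feeding in the low-weight computations for $E$ and $C$ (the proof there asserts $E$ is $2$-connective, whereas in fact only $\pi_1 E = 0$; the facts needed about $E$ remain true, by an argument in the spirit of yours, so your route incidentally sidesteps this point). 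For item (4) the paper never looks at $\pi_4 E_\bullet$: \cref{corollary:bidegree_two_two_even_coh_of_connective_hom_even_module} combines complete convergence of the even spectral sequence (\cref{corollary:complete_convergence_of_the_even_spectral_sequence_for_homologically_even_connectives}) with the vanishing line to see that nothing in weight $\leq 2$ supports or receives a differential, yielding the short exact sequence $0 \to \evencoh^{2,2}(M) \to \pi_2 M \to \evencoh^{0,1}(M) \to 0$ and hence (4) from (2). Your uniform use of \cref{proposition:even_cohomology_computed_using_cochain_complex_of_r_modules} buys one mechanism for all four groups, staying entirely at the level of homotopy groups of a single resolution; the paper's route buys a shorter proof of (4) (no identification of $d^1$) and, for (2)--(3), trades differential bookkeeping for a long exact sequence in even cohomology.

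One step of yours should be written more carefully: the cofibre of $\bigoplus_{I}\Sigma R \xrightarrow{(x_i)} M$ is not $E_0$ but the first stage $M_1$ of the iterative construction of the envelope, so the long exact sequence you invoke gives $\ker(\pi_2 M \to \pi_2 M_1) = \im(\alpha)$; you then need the observation that, because the $x_i$ generate $\pi_1 M$, one has $\pi_1 M_1 = 0$, so all subsequent cells have dimension at least $4$ and $\pi_2 M_1 \to \pi_2 E_0$ is an isomorphism (cells attached along higher odd-degree classes in the first stage contribute nothing to $\pi_2$, as $R$ is connective). With that line added, $\ker(\pi_2 M \to \pi_2 E_0) = \im(\alpha) = \im(\pi_1 R \otimes_{\mathbb{Z}} \pi_1 M \to \pi_2 M)$, and items (2) and (4) follow exactly as you say; this is the bookkeeping you already flag, so it is a matter of writing it out rather than a gap. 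The claim that the resulting isomorphisms are canonical deserves a word on independence of the chosen resolution, but the paper's proofs make analogous choices, so this is not a point of difference.
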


Before stating our last main result, we mention that the various notions of evenness one can attach to an $R$-module introduced in this article (perfect even, even flat, homologically even, as well as the classical notion of having homotopy groups concentrated in even degrees) interact with each other in interesting ways, and \S\ref{section:calculus_of_evenness} is devoted to the study of their relationships. For an example of the kind of result we prove, we show that if $E$ is a right $R$-module with $\pi_{*}E$ even and $M$ is homologically even, then 
\[
\pi_{*}(E \otimes_{R} M)
\]
is concentrated in even degrees if at least one of $E$ or $M$ is even flat, see \cref{proposition:tensor_characterization_of_even_flat_modules} and \cref{theorem:characterization_of_homologically_even_modules}. Moreover, these kind of tensor properties characterize these classes with respect to each other, giving more evidence that our notion of even flatness is the right one. 

These results are important in showing how even cohomology behaves under either extension or restriction of scalars attached to a map of $\mathbf{E}_{1}$-rings, which we do in detail in \S\ref{subsection:evenness_and_extension_restriction_of_scalars}. As an application of these methods, we prove the following base-change result which describes behaviour of even cohomology in a neighbourhood of the ``Milnor line'' $p = q$. 

\begin{theorem}[{\ref{theorem:base_change_around_milnor_diagonal}}]
Let $f \colon R \rightarrow S$ be a map of connective $\mathbf{E}_{1}$-rings such that $S$ is homologically even as a right $R$-module and let $M$ be an even flat $R$-module. Then the base-change of the canonical comparison map 
\[
\pi_{0}S \otimes_{\pi_{0} R} \evencoh^{p, q}(R, M) \rightarrow \evencoh^{p, q}(S, S \otimes_{R} M)
\]
is a surjection for $p \geq q-1$. 
\end{theorem}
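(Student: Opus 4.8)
The plan is to reduce to the cochain-complex model of even cohomology and then exploit that, in a neighbourhood of the Milnor line, this complex is concentrated in its top two degrees, where base change along $f$ is especially well-behaved.

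First I would set up the resolutions. Since $M$ is even flat it is homologically even, so it admits a homological resolution
\[
M \to E_0, \qquad C_i \to E_{i+1}, \qquad C_i := \cofib(C_{i-1}\to E_i),
\]
built as in the discussion preceding \cref{proposition:even_cohomology_computed_using_cochain_complex_of_r_modules}, with the $E_i$ having even homotopy and the $C_i$ homologically even. Because $M$ is even flat and attaching even cells to an even-flat module again yields an even-flat module --- a cell $\Sigma^{2k-1}R$ is compact, hence maps into some stage of the defining filtered colimit, and one takes cofibres termwise --- all the $E_i$ and $C_i$ can be chosen even flat as well. Applying $S\otimes_R(-)$: base change of a perfect even $R$-module is a perfect even $S$-module, so $S\otimes_R M$ and all the $S\otimes_R C_i$ are even flat over $S$; and since $S$ is homologically even as a right $R$-module and the $E_i$ are even flat, the tensor lemmas of \S\ref{section:calculus_of_evenness} (notably \cref{proposition:tensor_characterization_of_even_flat_modules} and \cref{theorem:characterization_of_homologically_even_modules}) give that $\pi_*(S\otimes_R E_i)$ is even. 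Hence the base-changed diagram is a homological resolution of $S\otimes_R M$ over $S$, and by \cref{proposition:even_cohomology_computed_using_cochain_complex_of_r_modules} the canonical comparison map is $H^p$ of the evident map of cochain complexes $C^\bullet := \pi_{2q}E_\bullet \to D^\bullet := \pi_{2q}(S\otimes_R E_\bullet)$.

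Next I would use connectivity. As in \cref{theorem:vanishing_of_even_coh_of_connective_ring_above_milnor_line}, since $R$ and $M$ are connective the resolution is built with cells of strictly increasing dimension, so $E_i$ is $2i$-connective; as $S$ is connective, $S\otimes_R E_i$ is $2i$-connective too. Thus $C^\bullet$ and $D^\bullet$ vanish outside cohomological degrees $0,\dots,q$, and the bottom homotopy group of the $2q$-connective module $S\otimes_R E_q$ identifies $\pi_0 S\otimes_{\pi_0 R}C^q\to D^q$ as an isomorphism. The base-changed comparison map now factors as
\[
\pi_0 S\otimes_{\pi_0 R}H^p(C^\bullet) \xrightarrow{\ \alpha\ } H^p\big(\pi_0 S\otimes_{\pi_0 R}C^\bullet\big) \xrightarrow{\ H^p(\phi)\ } H^p(D^\bullet),
\]
with $\phi$ the canonical map $\pi_0 S\otimes_{\pi_0 R}C^\bullet\to D^\bullet$, an isomorphism in degree $q$. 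By right-exactness of $\pi_0 S\otimes_{\pi_0 R}(-)$ the map $\alpha$ is surjective for all $p\ge q-1$ (an isomorphism for $p=q$, and, since $C^\bullet$ is concentrated in degrees $\le q$, surjective for $p=q-1$), so it remains to show $H^p(\phi)$ is surjective for $p=q-1$ and $p=q$.

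The case $p = q$ is immediate: $H^q$ of each complex is the cokernel of the last differential into the common group $D^q$, and $\phi^{q-1}$ sends $\im(1\otimes d^{q-1})$ into $\im(d^{q-1}\colon D^{q-1}\to D^q)$, so the induced map of cokernels is surjective. For $p = q-1$ one has $H^{q-1}(D^\bullet) = \ker(D^{q-1}\xrightarrow{d}D^q)/\im(D^{q-2}\to D^{q-1})$; since $\phi^q$ is injective, a cycle $y$ lifts to $\ker(1\otimes d^{q-1})$ as soon as $y\in\im(\phi^{q-1}) + \im(D^{q-2}\to D^{q-1})$, so the entire statement reduces to the identity
\[
D^{q-1} = \im\big(\phi^{q-1}\colon \pi_0 S\otimes_{\pi_0 R}\pi_{2q}E_{q-1}\to \pi_{2q}(S\otimes_R E_{q-1})\big) + \im\big(d\colon D^{q-2}\to D^{q-1}\big),
\]
that is, that the cokernel of $\phi^{q-1}$ lies in the image of the previous differential. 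This cokernel is built from the positive-filtration contributions to $\pi_{2q}(S\otimes_R E_{q-1})$ in the base-change spectral sequence for $E_{q-1}$; as $E_{q-1}$ is $(2q-2)$-connective these are governed by $\pi_{2q-2}E_{q-1}$, which via the fibre sequence $C_{q-2}\to E_{q-1}\to C_{q-1}$ and the relation $C_{q-2} = \cofib(C_{q-3}\to E_{q-2})$ is in turn controlled by $E_{q-2}$. I expect this identity to be the main obstacle: one must trace, using the tensor properties of \S\ref{section:calculus_of_evenness} and the explicit low-degree analysis underlying \cref{proposition:weight_one_cohomology_of_a_connective_homologically_even_module} and \cref{corollary:bidegree_two_two_even_coh_of_connective_hom_even_module}, that each such class in $D^{q-1}$ is hit by $d$ from $D^{q-2}$. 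With this in hand the diagram chase closes and surjectivity for $p\ge q-1$ follows.
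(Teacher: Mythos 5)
Your reduction to the cochain-complex model is fine as far as it goes (all terms of the resolution are even flat, base change preserves this, and $\pi_*(S\otimes_R E_i)$ is even because $S$ is right homologically even, so the spliced long exact sequences identify $\evencoh^{p,q}(S,S\otimes_R M)$ with $H^p(\pi_{2q}(S\otimes_R E_\bullet))$). But the argument does not close, for two reasons. First, the claim that $\alpha\colon \pi_0S\otimes_{\pi_0R}H^p(C^\bullet)\to H^p(\pi_0S\otimes_{\pi_0R}C^\bullet)$ is surjective for $p=q-1$ ``by right-exactness, since $C^\bullet$ is concentrated in degrees $\le q$'' is false in general: right-exactness only controls the top cohomological degree. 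For the complex $\mathbb{Z}\xrightarrow{2}\mathbb{Z}$ placed in degrees $q-1,q$ and base change along $\mathbb{Z}\to\mathbb{Z}/2$, one has $H^{q-1}(C^\bullet)=0$ while $H^{q-1}(\mathbb{Z}/2\otimes C^\bullet)=\mathbb{Z}/2$; the discrepancy is a $\Tor_1$-term, exactly the kind of obstruction your factorization through $H^p(\pi_0S\otimes C^\bullet)$ cannot see. Second, even granting that step, you yourself flag that the surjectivity of $H^{q-1}(\phi)$ rests on the identity $D^{q-1}=\im(\phi^{q-1})+\im(d\colon D^{q-2}\to D^{q-1})$, which you do not prove; this is precisely the content of the theorem in the critical case $p=q-1$, so the proof is circular/incomplete at its key point.

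The paper avoids this entirely by inducting on the weight $q$ rather than analyzing a full resolution at fixed weight. One takes a single $\pi_*$-even envelope $M\to E$ with $2$-connective (even flat) cofibre $C$; since $\pi_*(S\otimes_R E)$ is even, the groups $\evencoh^{p+1,q}(-,E)$ and $\evencoh^{p+1,q}(-,S\otimes_R E)$ vanish, so the boundary maps $\evencoh^{p,q}(R,C)\twoheadrightarrow\evencoh^{p+1,q}(R,M)$ and $\evencoh^{p,q}(S,S\otimes_R C)\twoheadrightarrow\evencoh^{p+1,q}(S,S\otimes_R M)$ are surjective. Base change of a surjection is a surjection, and $\evencoh^{p,q}(R,C)$ is a weight-$(q-1)$ even cohomology group of the connective even flat module $\Sigma^{-2}C$, so the inductive hypothesis (in the band $p\ge q-2$ at weight $q-1$) gives surjectivity of the left vertical map and hence of the right one. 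If you want to salvage your approach, you would need to control the $\Tor_1(\pi_0S,-)$ contributions to $\pi_{2q}(S\otimes_R E_{q-1})$ and show they are hit by the previous differential, which is essentially re-proving the theorem; the weight induction is the cleaner route.
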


As mentioned at the beginning of the introduction, a tantalizing prospect of having an $\mathbf{E}_{1}$-even filtration is that it allows one to define a ``motivic'' filtration on $\THH(R)$ and its variants as soon as $R$ is an $\mathbf{E}_{2}$-ring. This would be important in applications, since many chromatically important spectra (such as the Brown-Peterson spectrum or its truncated variants) cannot be made $\mathbf{E}_{\infty}$, but can often be made $\mathbf{E}_{2}$ \cite{basterra2013multiplication, senger2017brown, lawson2018secondary, hahn2022redshift}. 

As one piece of evidence that the even filtration introduced in  the present work is the right way to define prismatic cohomology of $\mathbf{E}_{2}$-ring spectra, we observe in \cref{example:even_filtration_of_thh_bpn} that the descent filtration associated to $\THH(\BPn) \rightarrow \THH(\BPn/\MU)$, used by Hahn-Wilson in their proof of Lichtenbaum-Quillen conjectures for $\BPn$ \cite{hahn2022redshift}, coincides with the even filtration $\fil^{*}_{ev}(\THH(\BPn))$. In particular, it is canonically attached to $\BPn$ as a $\mathbf{E}_{2}$-ring spectrum and does not depend on the structure of an $\MU$-algebra. To keep this article focused and at manageable length, we do not pursue the idea of prismatic cohomology of $\mathbf{E}_{2}$-ring spectra in the current work and instead will pick it up in upcoming joint work with Raksit \cite{motivic_cohomology_of_e2_rings}. 

With a nod towards future applications, observe that the only input needed to define the even filtration of \cref{definition:even_filtration_of_an_r_module} is the $\infty$-category of modules, together with the notion of a perfect even. This suggests a natural generalization of our construction to filtrations defined in other contexts, using an appropriate notion of ``evenness''. For example:

\begin{enumerate}
    \item In equivariant homotopy theory, the role of the Postnikov filtration of a spectrum is played by the equivariant slice filtration \cite{dugger2005atiyah, hill2012equivariant}. As the generating objects for the slice filtration are essentially representation spheres, this suggests that in $C_{2}$-equivariant homotopy theory, the role of perfect even $R$-modules should be played by modules built out of $\Sigma^{n \rho} R$, where $\rho$ is the regular representation of $C_{2}$. 
    \item As shown by Hahn-Raksit-Wilson, the even filtration relative to the $\infty$-category of spectra is essentially the Adams-Novikov filtration relating stable homotopy theory to formal groups. The work of Bachmann-Kong-Wang-Xu on the Chow-Novikov $t$-structure shows that similar phenomena are visible in the stable motivic category $\mathrm{SH}(k)$ if as generating objects one takes the suspension spectra of smooth projective varieties \cite{bachmann2020chow}. This suggests that the right notion of a ``perfect even'' in the motivic world should perhaps be that of a motive of a smooth projective variety. 
\end{enumerate}
Ideas related to the second observation are applied in joint work with Haine to construct spectral refinements of the weight filtration on cohomology of algebraic varieties \cite{haine2023spectral}. 

\tocless\subsection{Notation and terminology} 

If $\ccat$ is a stable $\infty$-category, we denote the corresponding mapping spaces by $\Map_{\ccat}$ and mapping spectra by $\map_{\ccat}$. If $\ccat$ is clear from the context, we sometimes drop the subscript. 

We sometimes refer to monoidal functors as \emph{strongly} monoidal to emphasize that they are monoidal and not just lax monoidal. 

If $R$ is an $\mathbf{E}_{1}$-algebra, by a module $M$ we generally mean a \emph{left} $R$-module. When we work with right $R$-modules, we will be explicit about it. We will generally identify right $R$-modules with left modules over $R^{op}$, the opposite algebra. In particular, any of the various properties of left modules we introduce (such as even flat, homologically even or perfect even) apply to right $R$-modules by considering them as left modules over $R^{op}$. 

We write $\catinfty$ for the $\infty$-category of small $\infty$-categories and $\largecatinfty$ for the $\infty$-category of large $\infty$-categories. We write $\largecatinftyL \subseteq \largecatinfty$ for the (non-full) subcategory spanned by cocomplete large $\infty$-categories and cocontinuous functors and $\PrL \subseteq \largecatinftyL$ for its full subcategory spanned by presentable $\infty$-categories. 

\tocless\subsection{Acknowledgments}

I would like to thank Ben Antieau, Robert Burklund, Jeremy Hahn, Lars Hesselholt, Jacob Lurie, Arpon Raksit, Noah Riggenbach and Dylan Wilson for insightful conversations related to this work. I would like to thank the anonymous referees for suggesting improvements. I would also like to thank Princebucks\footnote{100 Nassau St, Princeton, NJ 08542.}, where most of this paper was written. 

\vspace{8px}

\section{The even filtration} 
\label{section:the_even_filtration}

This section is devoted to the construction of the even filtration and its most basic properties. 

\begin{notation}
Throughout, $R$ denotes an $\mathbf{E}_{1}$-algebra in spectra. By an $R$-module we mean a \emph{left} module in spectra. 
\end{notation}

\subsection{Perfect even modules} 
\label{subsection:perfect_even_modules}

\begin{definition}
\label{definition:perfect_even_module}
We say that an $R$-module $A$ is \emph{perfect even} if it belongs to the smallest subcategory 
\[
\Perf(R)_{ev} \subseteq \Mod_{R}(\spectra)
\]
which contains $\Sigma^{2k} R$ for $k \in \mathbb{Z}$ and is closed under extensions and retracts. 
\end{definition} 

\begin{warning}
Beware that $\Perf(R)_{ev}$ is additive and admits even (de)suspensions, but it is usually not a stable $\infty$-category. 
\end{warning}

If the ring $R$ is understood, we write $\Perf_{ev} \colonequals \Perf(R)_{ev}$. 

\begin{definition}
\label{definition:even_epimorphism}
We say that 

\begin{enumerate}
    \item a map $f \colon A \rightarrow B$ of perfect even $R$-modules is an \emph{even epimorphism} if its fibre in $R$-modules is also perfect even, 
    \item a family of maps $\{ f_{i} \colon A_{i} \rightarrow B \}$ of perfect even modules is \emph{covering} if it consists of a single even epimorphism. 
\end{enumerate}
\end{definition}

\begin{lemma}
The notion of a covering family defines a Grothendieck pretopology on $\Perf(R)_{ev}$. 
\end{lemma}

\begin{proof}
Consulting the axioms of a pretopology, we see that we have to verify that 
\begin{enumerate}
    \item equivalences are even epimorphisms, 
    \item even epimorphisms are stable under pullback, 
    \item even epimorphisms are closed under composition. 
\end{enumerate}

The first property follows from the fact that the fibre of an equivalence is zero and hence perfect even. The second is a consequence of the fact that taking a pullback along a map does not change fibres; that is, that 
\[
\mathrm{fib}(B \rightarrow A) \simeq \mathrm{fib}(B \times_{A} C \rightarrow C).
\]
For the third property, suppose that we have two composable even epimorphisms 
\[
C \rightarrow B \rightarrow A. 
\]
We then have a cofibre sequence 
\[
\mathrm{fib}(C \rightarrow B) \rightarrow \mathrm{fib}(C \rightarrow A) \rightarrow \mathrm{fib}(B \rightarrow A).
\]
Since the outer terms are perfect even and the latter are closed under extensions, we deduce that $\mathrm{fib}(C \rightarrow A)$ is perfect even as needed. 

\end{proof}

As a consequence of the fact that covering families are singleton, we have the following elegant characterization of additive sheaves:

\begin{theorem}
\label{theorem:additive_presheaves_on_perf_ev_well_behaved}
If $\acat$ is an additive, presentable $\infty$-category, then the following holds: 
\begin{enumerate}
    \item an $\acat$-valued presheaf $X \colon \Perf_{ev}^{op} \rightarrow \acat$ is additive and a sheaf with respect to the even epimorphism topology if and only if for every even epimorphism $f \colon A \rightarrow B$, the sequence 
    \[
    X(B) \rightarrow X(A) \rightarrow X(\mathrm{fib}(f))
    \]
    is fibre, 
    \item the sheafication functor on $\acat$-valued presheaves preserves additive presheaves so that its restriction
    \[
    L_{\Sigma} \colon \presheaves_{\Sigma}(\Perf_{ev}, \acat) \rightarrow \sheaves_{\Sigma}(\Perf_{ev}, \acat)
    \]
    to additive presheaves is an exact accessible localization.
\end{enumerate}
\end{theorem}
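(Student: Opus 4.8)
The plan is to recognize the even epimorphism topology as a finitary topology whose covering sieves are generated by the single maps $f\colon A\to B$, and then to bootstrap from the general theory of sheaves on such a site. First I would verify the equivalence in (1). The ``only if'' direction is immediate: a single even epimorphism $f\colon A\to B$ is a covering, and since covering families are singletons, the \v{C}ech nerve of $f$ in the category of presheaves is computed levelwise; for an additive presheaf $X$ the relevant limit diagram collapses, because in an additive $\infty$-category a square is cartesian if and only if it is cocartesian, and the \v{C}ech nerve of $f$ after applying the additive (hence fibre-sequence-preserving) $X$ must therefore be the total fibre of the two-term complex $X(B)\to X(A)$, which is exactly the assertion that $X(B)\to X(A)\to X(\mathrm{fib}(f))$ is a fibre sequence once one identifies $\mathrm{fib}(f)$ with the relevant object via \cref{definition:even_epimorphism} (here one uses that $\mathrm{fib}(f)$ is again perfect even, so it represents an object of the site, and that pullbacks of even epimorphisms stay even epimorphisms, so the topology is well-posed). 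Conversely, if $X$ is an additive presheaf satisfying the fibre-sequence condition for every even epimorphism, then $X$ satisfies descent along every covering sieve — since every covering sieve is generated by a single even epimorphism, and the fibre-sequence condition is precisely the \v{C}ech descent condition in the additive setting — so $X$ is a sheaf.

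Next, for (2), the key point is that sheafification of an additive presheaf stays additive. I would argue this by the standard plus-construction / transfinite localization: the localization functor $L$ is built by iterating the ``plus'' functor $X\mapsto X^{+}$, where $X^{+}(A)=\colim_{(\,A'\to A\,)}X(A')$ over covering sieves of $A$. Since every covering sieve is generated by a single even epimorphism, this colimit is a filtered (in fact sifted, being built from \v{C}ech nerves) colimit of finite limits of the form computing total fibres of complexes $X(B)\to X(A)$. Because $\acat$ is presentable and additive, finite limits and filtered colimits both preserve additivity — the full subcategory $\presheaves_{\Sigma}(\Perf_{ev},\acat)\subseteq\presheaves(\Perf_{ev},\acat)$ of product-preserving (additive) presheaves is closed under limits and filtered colimits, being defined by the condition that a finite-product-preservation map be an equivalence. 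Hence each stage of the transfinite construction of $L$ stays inside $\presheaves_{\Sigma}$, and so does the (transfinite-composition) colimit. Therefore $L$ restricts to a functor $\presheaves_{\Sigma}(\Perf_{ev},\acat)\to\sheaves_{\Sigma}(\Perf_{ev},\acat)$; since $L$ on all presheaves is a left exact localization and $\presheaves_{\Sigma}$ is closed under finite limits in $\presheaves$, the restriction is again a left exact (exact) localization onto its image, which is precisely the additive sheaves.

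I expect the main obstacle to be the careful bookkeeping in the ``only if'' direction of (1): one must show that for an additive presheaf the single even epimorphism $f\colon A\to B$ already generates the correct descent datum, i.e.\ that higher terms of the \v{C}ech nerve contribute nothing beyond the total fibre of $X(B)\to X(A)$. This relies crucially on the equivalence of cartesian and cocartesian squares in additive $\infty$-categories together with the explicit identification of the iterated fibre products $A\times_{B}A\times_{B}\cdots$, and on the fact that $\mathrm{fib}(f)$ lies in $\Perf_{ev}$ (so it is a legitimate test object) — this is exactly where \cref{definition:even_epimorphism} and the closure of $\Perf_{ev}$ under fibres of even epimorphisms are used. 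The remaining arguments, by contrast, are formal manipulations with presentable $\infty$-categories and the standard theory of topological localizations, and I would treat them briskly by citing the relevant facts about $\presheaves_{\Sigma}$ being an accessible left exact localization-stable subcategory.
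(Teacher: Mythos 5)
Note first that the paper does not reprove this statement at all: its ``proof'' is a citation to Theorem~2.8 and Proposition~2.5 of the synthetic spectra paper, so the comparison has to be with the arguments given there, and your sketch does not supply them. The crux is part (1), and the justifications you offer for it are incorrect. ``Cartesian if and only if cocartesian'' holds in \emph{stable} $\infty$-categories, not in merely additive ones ($\Perf(R)_{ev}$ is explicitly not stable, and $\acat$ is only assumed additive); more seriously, an additive presheaf preserves finite products but emphatically does \emph{not} preserve fibre sequences --- if it did, the condition ``$X(B) \to X(A) \to X(\mathrm{fib}(f))$ is a fibre sequence'' would hold for every additive presheaf and the sheaf condition would be vacuous. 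The genuine content of (1) is precisely that, for a merely additive $X$, the totalization of $X$ applied to the \v{C}ech nerve of $f$ agrees with $\mathrm{fib}\bigl(X(A) \to X(\mathrm{fib}(f))\bigr)$; this needs an actual argument (for instance, using the diagonal section to split $A \times_{B} A \simeq A \oplus \mathrm{fib}(f)$ in $\Mod_{R}$, identifying the higher terms of the \v{C}ech nerve, and analyzing the resulting cosimplicial object), and your converse direction simply asserts that ``the fibre-sequence condition is precisely the \v{C}ech descent condition in the additive setting,'' which is the statement to be proved.

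Part (2) also has a gap. Closure of $\presheaves_{\Sigma}(\Perf_{ev},\acat)$ under limits and filtered colimits is not by itself enough: the plus construction at an object $A$ is a colimit over the covering sieves \emph{of $A$} of the limits of $X$ over those sieves (which are totalizations, not finite limits), and since the indexing category varies with $A$, the presheaf $X^{+}$ is not literally presented as a filtered colimit of additive presheaves in the presheaf category. What is actually needed is compatibility of the topology with biproducts: every covering sieve of $A \oplus A'$ is refined by one generated by a direct sum $f \oplus f'$ of even epimorphisms (pull the given even epimorphism back along the two summand inclusions and add the results), so the sieves relevant for $A \oplus A'$ are cofinally ``product'' sieves; combining this cofinality with the fact that finite products commute with filtered colimits in an additive presentable $\acat$ yields $X^{+}(A \oplus A') \simeq X^{+}(A) \times X^{+}(A')$. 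That cofinality step, which uses that even epimorphisms are closed under direct sums, is the substance of the cited Proposition~2.5 and is missing from your argument; the remaining points about left exactness of the restricted localization are fine once it is in place.
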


\begin{proof}
If $\acat = \spectra_{\geq 0}$ is the $\infty$-category of connective spectra (or equivalently, the $\infty$-category of spaces, by \cite[Lemma 2.1]{pstrkagowski2018synthetic}), this is a combination of \cite[Proposition 2.5, Corollary 2.7, Theorem 2.8]{pstrkagowski2018synthetic}. 

We now argue that these two properties hold if $\acat$ is an arbitrary presentable additive $\infty$-category. For the first one, notice that $X \in \presheaves_{\Sigma}(\Perf_{ev}, \acat)$ is a sheaf if and only if the mapping space functor 
\[
\Map_{\acat}(a, X(-)) \colon \Perf_{ev}^{op} \rightarrow \spectra_{\geq 0}
\]
is a sheaf for all $a \in \acat$. Since the same is true for the condition of sending even epimorphisms to fibre sequences, we deduce the statement for $\acat$-valued sheaves. Here, we use that in an additive $\infty$-category mapping spaces canonically lift to connective spectra, and we do not need to assume that $\acat$ is presentable. 

The second statement for $\spectra_{\geq 0}$ implies that the left adjoint $L_{\Sigma}$ of the inclusion 
\[
\sheaves_{\Sigma}(\Perf_{ev}, \spectra_{\geq 0}) \rightarrow \presheaves_{\Sigma}(\Perf_{ev}, \spectra_{\geq 0})
\]
is the ordinary sheafication functor $L$ restricted to additive presheaves. It follows that we have a commutative diagram in $\PrL$, the $\infty$-category of presentable $\infty$-categories and left adjoints, of the form 
\[
\begin{tikzcd}
	\presheaves(\Perf_{ev}, \spectra_{\geq 0}) & \sheaves(\Perf_{ev}, \spectra_{\geq 0}) \\
	\presheaves_{\Sigma}(\Perf_{ev}, \spectra_{\geq 0}) & \sheaves_{\Sigma}(\Perf_{ev}, \spectra_{\geq 0})
	\arrow["{L_{\Sigma}}", from=2-1, to=2-2]
	\arrow[hook, from=2-2, to=1-2]
	\arrow["L", from=1-1, to=1-2]
	\arrow[hook, from=2-1, to=1-1]
\end{tikzcd},
\]
where the vertical arrows are the inclusions of additive (pre)sheaves into (pre)sheaves. Since an additive presentable $\infty$-category is (uniquely) a $\spectra_{\geq 0}$-module in $\PrL$ by \cite[Theorem 4-6]{gepner2016universality}, applying $- \otimes_{\spectra_{\geq 0}} \acat$ to the above commutative square we obtain a commutative diagram 
\[
\begin{tikzcd}
	\presheaves(\Perf_{ev}, \acat) & \sheaves(\Perf_{ev}, \acat) \\
	\presheaves_{\Sigma}(\Perf_{ev}, \acat) & \sheaves_{\Sigma}(\Perf_{ev}, \acat)
	\arrow[from=2-1, to=2-2]
	\arrow[hook, from=2-2, to=1-2]
	\arrow[from=1-1, to=1-2]
	\arrow[hook, from=2-1, to=1-1]
\end{tikzcd}
\]
which is the needed claim as the horizontal arrows are given by sheafication functors on $\acat$-valued (additive) presheaves. 
\end{proof}

\begin{remark}
\label{remark:universal_property_of_additive_sheaves}
Suppose $\dcat$ is an additive, cocomplete $\infty$-category and that we have a functor 
\[
F \colon \Perf(R)_{ev} \rightarrow \dcat 
\]
with left Kan extension $\mathrm{Lan}_{F} \colon \presheaves(\Perf(R)_{ev}) \rightarrow \dcat$. As a consequence of \cref{theorem:additive_presheaves_on_perf_ev_well_behaved}, $\mathrm{Lan}_{F}$ factors (necessarily uniquely) through a cocontinuous functor 
\[
\sheaves_{\Sigma}(\Perf(R)_{ev}) \rightarrow \dcat 
\]
on additive sheaves if and only if $F$ is additive and preserves cofibre sequences. 
\end{remark}

We will be mainly interested in the situation where the additive $\infty$-category is given either by abelian groups or spectra. The two are closely related: 

\begin{corollary}
\label{corollary:properties_of_the_t_structure_on_sheaves}
The $\infty$-category of additive sheaves of spectra on $\Perf(R)_{ev}$ admits a unique $t$-structure in which $X \colon \Perf(R)_{ev}^{op} \rightarrow \spectra$ is coconnective if and only if $X(A) \in \spectra_{\leq 0}$ for all perfect even $A$. Moreover:

\begin{enumerate}
    \item the sheafication functor 
    \[
    L_{\Sigma} \colon \presheaves_{\Sigma}(\Perf(R)_{ev}, \spectra) \rightarrow \sheaves_{\Sigma}(\Perf(R)_{ev}, \spectra)
    \]
    is both left and right $t$-exact, 
    \item taking homotopy groups induces a canonical equivalence
    \[
    \sheaves_{\Sigma}(\Perf(R)_{ev}, \spectra)^{\heartsuit} \simeq \sheaves_{\Sigma}(\Perf(R)_{ev}, \Ab)
    \]
    between the heart and the category of additive sheaves of abelian groups and 
    \item the $t$-structure on $\sheaves_{\Sigma}(\Perf(R)_{ev}; \spectra)$ is compatible with filtered colimits and accessible; that is, coconnective objects are closed under filtered colimits and $\sheaves_{\Sigma}(\Perf(R)_{ev}, \spectra)_{\geq 0}$ is presentable. 
\end{enumerate}
\end{corollary}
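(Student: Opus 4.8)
The plan is to realize the $t$-structure by transport along the sheafification $L$ of \cref{theorem:additive_presheaves_on_perf_ev_well_behaved}, and then to verify the three extra properties by hand. Abbreviate $\presheaves_\Sigma = \presheaves_\Sigma(\Perf(R)_{ev},\spectra)$ and $\sheaves_\Sigma = \sheaves_\Sigma(\Perf(R)_{ev},\spectra)$; the latter is presentable and stable since $L$ is an exact localization. The category $\presheaves_\Sigma$ carries the \emph{pointwise} $t$-structure, whose coconnective objects are the additive presheaves taking pointwise coconnective values — this is a $t$-structure because $\tau_{\leq 0}$ on spectra preserves products, hence preserves additivity. For $A\in\Perf(R)_{ev}$ write $h_A\in\presheaves_\Sigma$ for the additive presheaf corepresenting evaluation at $A$, so that $\Map(h_A,X)\simeq X(A)$; these objects generate $\presheaves_\Sigma$ under colimits and are connective for the pointwise $t$-structure, since mapping spectra in an additive $\infty$-category are connective.

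I would then define the $t$-structure on $\sheaves_\Sigma$ to be the one generated by the set $\{Lh_A\}_A$. Its coconnective part is read off from $\Map_{\sheaves_\Sigma}(Lh_A,Y)\simeq\Map_{\presheaves_\Sigma}(h_A,Y)\simeq Y(A)$: a sheaf $Y$ is coconnective exactly when $Y(A)\in\spectra_{\leq 0}$ for all $A$, which is the asserted description, and uniqueness is automatic since the coconnective part determines a $t$-structure. Right $t$-exactness of $L$ is now formal, as $L$ preserves colimits, extensions and suspensions and carries the connective generators $h_A$ of $\presheaves_\Sigma$ to the chosen generators $Lh_A$ of $\sheaves_{\Sigma,\geq 0}$. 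The real content is \emph{left} $t$-exactness, equivalently the statement that sheafification preserves pointwise coconnectivity. To prove this I would use that, by \cref{theorem:additive_presheaves_on_perf_ev_well_behaved}, sheaves are precisely the presheaves sending each even epimorphism $f\colon A'\to A$ with fibre $F$ to a fibre sequence $X(A)\to X(A')\to X(F)$, so that $L$ is computed by the \cech/plus construction for this topology. The key point is that the \cech\ nerve of an even epimorphism is an additive bar construction: since $F=\fib(f)$ again lies in $\Perf(R)_{ev}$, one computes $A'\times_A A'\simeq A'\oplus F$ and, inductively, the $n$-th term of the nerve is $A'\oplus F^{\oplus n}$; an additive presheaf $X$ therefore sends it to the cosimplicial spectrum $[n]\mapsto X(A')\oplus X(F)^{\oplus n}$, whose totalization is $\fib(X(A')\to X(F))$. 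Fibres preserve coconnectivity, as do the filtered colimits over the even epimorphisms of $A$ that assemble these contributions — such even epimorphisms being stable under base change and admitting common refinements — and hence $LX$ stays pointwise coconnective.

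The step I expect to be the main obstacle is exactly the assertion that the sheafification has this finitary form, i.e.\ that the even-epimorphism coverage is ``nice'' enough for the plus construction to compute $L$ in one round of \cech\ nerves rather than transfinitely; this is essentially the content extracted in \cite{pstrkagowski2018synthetic}, which I would cite, the alternative being to reprove it from the facts that covering families are singletons and even epimorphisms are closed under composition and base change. Granting $t$-exactness of $L$, items (2) and (3) follow quickly. For (2): the heart of the pointwise $t$-structure on $\presheaves_\Sigma$ is $\presheaves_\Sigma(\Perf(R)_{ev},\Ab)$ via $\pi_0$, since a product-preserving functor valued in $\spectra^\heartsuit\simeq\Ab$ is the same as an additive $\Ab$-valued presheaf; because $L$ is $t$-exact it transports this identification to one between $\sheaves_\Sigma^\heartsuit$ and the corresponding localization of $\presheaves_\Sigma(\Perf(R)_{ev},\Ab)$, which by \cref{theorem:additive_presheaves_on_perf_ev_well_behaved} applied with $\acat=\Ab$ is $\sheaves_\Sigma(\Perf(R)_{ev},\Ab)$. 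For (3): a filtered colimit of sheaves, computed pointwise, is again a sheaf because a filtered colimit of fibre sequences of spectra is a fibre sequence; hence filtered colimits in $\sheaves_\Sigma$ agree with those in $\presheaves_\Sigma$, and — since $\spectra_{\leq 0}$ is closed under filtered colimits and additivity is preserved under them — coconnective sheaves are closed under filtered colimits.
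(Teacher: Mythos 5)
Your proposal is correct in substance, but it takes a more explicit, self-contained route than the paper does. The paper's own proof is a three-line deduction from \cref{theorem:additive_presheaves_on_perf_ev_well_behaved}, i.e.\ it defers the real content (existence of the $t$-structure and $t$-exactness of $L$) to the cited results of \cite{pstrkagowski2018synthetic}; you instead rebuild that content: you generate the $t$-structure on sheaves from the objects $L h_A$, read off the coconnective part from $\map(Lh_A, Y) \simeq Y(A)$, get right $t$-exactness formally, and prove left $t$-exactness by hand using the additive splitting $A' \times_A \cdots \times_A A' \simeq A' \oplus F^{\oplus n}$ of the \v{C}ech nerve of an even epimorphism, so that the plus construction only involves fibres and filtered colimits, both of which preserve pointwise coconnectivity. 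This is essentially the argument of the cited source, so the two approaches agree mathematically; what yours buys is transparency about where the special features of the even-epimorphism topology (singleton covers, stability under pullback and composition, additivity) enter, at the cost of having to know that sheafification of spectrum-valued additive presheaves is computed by the (possibly iterated) plus construction --- the dependency you flag, and which it is legitimate to take from \cite{pstrkagowski2018synthetic}, exactly as the paper does.

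Three small points would tighten the write-up. First, the worry about needing a ``one-round'' finitary form of sheafification is moot for your purpose: a transfinite iteration of plus constructions is a filtered colimit of functors each built from limits over covering sieves and filtered colimits over refinements, and all of these preserve pointwise coconnectivity, so left $t$-exactness follows regardless of how many rounds are needed. Second, connectivity of $h_A$ in the pointwise $t$-structure does not require identifying its values with the intrinsic (connective) mapping spectra of the additive $\infty$-category $\Perf(R)_{ev}$: it is immediate from corepresentability, since for $X$ pointwise $(\leq -1)$-coconnective the mapping space $\Map(h_A, X) \simeq \Omega^{\infty}X(A)$ is contractible. Third, ``the $h_A$ generate $\presheaves_{\Sigma}$ under colimits'' should read that the pointwise-connective part is generated under colimits and extensions by the $h_A$; this is what the right $t$-exactness argument uses, and it holds because the $t$-structure generated by the $h_A$ and the pointwise one have the same coconnective part and a $t$-structure is determined by its coconnective part. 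With these adjustments the argument is complete at the same level of rigour as the paper's parts (2) and (3), which you prove in the same way.
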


\begin{proof}
The first property follows from $(2)$ of \cref{theorem:additive_presheaves_on_perf_ev_well_behaved}. The second is a consequence of the first and the description $\presheaves_{\Sigma}(\Perf_{ev}, \spectra)^{\heartsuit} \simeq \presheaves_{\Sigma}(\Perf_{ev}, \Ab)$. 

The fact that the t-structure is compatible with filtered colimits follows from part $(1)$ of \cref{theorem:additive_presheaves_on_perf_ev_well_behaved}, since fibre sequences are stable under filtered colimits. It follows from \cite[1.4.4.13, (5)]{higher_algebra} that it is also accessible. 
\end{proof}

\subsection{Even sheaves and homology of perfect evens}  

The spectral Yoneda embedding associated to an $R$-module $M$ is the presheaf
\[
Y_{R}(M) \colon \Perf(R)_{ev}^{op} \rightarrow \spectra
\]
of spectra given by the formula
\[
Y_{R}(M)(A) \colonequals \map_{\Mod_{R}}(A, M),
\]
where the right hand side is the mapping spectrum in $R$-modules. As a consequence of \cref{theorem:additive_presheaves_on_perf_ev_well_behaved}, this is in fact a sheaf with respect to the even epimorphism topology. Of particular importance are its sheaves of homotopy groups, so that we give them a dedicated name: 

\begin{definition}
\label{definition:even_sheaf_associated_to_a_module}
The \emph{even sheaf} associated to $M$ is given by 
\[
\evensheaf_{M} \colonequals \pi_{0} Y_{R}(M)
\]
It is an additive sheaf of abelian groups on perfect even $R$-modules. More generally, for any $q \in \nicefrac{1}{2} \mathbb{Z}$, the \emph{even sheaf of weight $q$} is given by
\[
\evensheaf_{M}(q) \colonequals \pi_{2q} Y_{R}(M).
\]
\end{definition}

\begin{remark}
Our grading convention in terms of half-integer Serre twists is inspired by algebraic geometry, and it is compatible with the one employed in joint work with Hesselholt on geometry of graded-commutative rings \cite{diracgeometry1, diracgeometry2}.

In more detail, in various cohomology theories of algebraic geometry, a single twist $\mathbb{Z}(1)$ usually denotes the reduced cohomology of $\mathbb{P}^{1}$. Since the latter is topologically a $2$-sphere, it follows that weight should be in correspondence with \emph{twice} the topological dimension. 
\end{remark}

\begin{remark}
\label{remark:concrete_description_of_even_sheaves_and_suspension_isomorphism}
Concretely, $\evensheaf_{M} \colonequals L([-, M])$ is given by the sheafication of the presheaf of abelian groups given by 
\[
A \mapsto \pi_{0} \map_{\Mod_{R}}(A, M).
\]
More generally, $\evensheaf_{M}(q)$ is given by the sheafication of the presheaf 
\[
A \mapsto \pi_{2q} \map_{\Mod_{R}}(A, M) \simeq \pi_{0} \map_{\Mod_{R}}(\Sigma^{2q} A, M) \simeq \pi_{0} \map_{\Mod_{R}}(A, \Sigma^{-2q} M). 
\]
The last equivalence shows that even sheaves of non-zero weight can be identified with the even sheaf associated to (de)suspensions of $M$; that is, 
\[
\evensheaf_{M}(q) \simeq \evensheaf_{\Sigma^{-2q} M}
\]
\end{remark}

\begin{proposition} 
\label{proposition:even_sheaf_functor_is_homological}
The even sheaf functor 
\[
\evensheaf_{-} \colon \Mod_{R} \rightarrow \sheaves_{\Sigma}(\Perf_{ev}; \Ab)
\]
is homological; that is, if $M_{1} \rightarrow M_{2} \rightarrow M_{3}$ is a cofibre sequence of $R$-modules, then 
\[
\evensheaf_{M_{1}} \rightarrow \evensheaf_{M_{2}} \rightarrow \evensheaf_{M_{3}}
\]
is exact in the middle as a sequence of sheaves of abelian groups.
\end{proposition}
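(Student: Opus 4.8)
The plan is to reduce the statement to a pointwise statement about presheaves together with the exactness properties of the sheafification functor established in \cref{corollary:properties_of_the_t_structure_on_sheaves}. First I would recall that the spectral Yoneda embedding $M \mapsto Y(M)$ is exact: a cofibre sequence $M_{1} \rightarrow M_{2} \rightarrow M_{3}$ of $R$-modules induces, for each perfect even $A$, a fibre sequence of mapping spectra $\Map(A, M_{1}) \rightarrow \Map(A, M_{2}) \rightarrow \Map(A, M_{3})$, since $\Map(A, -)$ is exact. Hence $Y(M_{1}) \rightarrow Y(M_{2}) \rightarrow Y(M_{3})$ is a fibre sequence of presheaves of spectra, and in fact of sheaves of spectra by \cref{theorem:additive_presheaves_on_perf_ev_well_behaved}, since each $Y(M_{i})$ is an additive sheaf. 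Now $\evensheaf_{M_{i}} = \pi_{0} Y(M_{i})$, and passing to the long exact sequence of homotopy sheaves of this fibre sequence in $\sheaves_{\Sigma}(\Perf_{ev}, \spectra)$ — which is valid because the category of sheaves of spectra with its $t$-structure from \cref{corollary:properties_of_the_t_structure_on_sheaves} is a stable $\infty$-category with a $t$-structure, so every fibre sequence yields a long exact sequence on $\pi_{*}$ — gives exactness of
\[
\pi_{0} Y(M_{1}) \rightarrow \pi_{0} Y(M_{2}) \rightarrow \pi_{0} Y(M_{3})
\]
in the middle, which is precisely exactness of $\evensheaf_{M_{1}} \rightarrow \evensheaf_{M_{2}} \rightarrow \evensheaf_{M_{3}}$ in the heart, identified with $\sheaves_{\Sigma}(\Perf_{ev}, \Ab)$ via part $(2)$ of \cref{corollary:properties_of_the_t_structure_on_sheaves}.

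Alternatively, and perhaps more in the spirit of the rest of the section, I would phrase this via sheafification directly. At the level of presheaves of abelian groups, the sequence $A \mapsto \pi_{0}\Map(A, M_{1}) \rightarrow \pi_{0}\Map(A, M_{2}) \rightarrow \pi_{0}\Map(A, M_{3})$ is exact in the middle: this follows from the long exact sequence $\cdots \rightarrow \pi_{0}\Map(A, M_{1}) \rightarrow \pi_{0}\Map(A, M_{2}) \rightarrow \pi_{0}\Map(A, M_{3}) \rightarrow \cdots$ associated to the fibre sequence of mapping spectra. Then one applies the sheafification functor $L$, which by part $(1)$ of \cref{corollary:properties_of_the_t_structure_on_sheaves} — together with the identification of $L$ on discrete presheaves with the sheafification of $\Ab$-valued additive presheaves via part $(2)$ of \cref{theorem:additive_presheaves_on_perf_ev_well_behaved} and \cref{corollary:properties_of_the_t_structure_on_sheaves}(2) — is exact. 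Exact functors preserve exactness in the middle of a three-term sequence, so $L$ carries the exact presheaf sequence to the exact sheaf sequence $\evensheaf_{M_{1}} \rightarrow \evensheaf_{M_{2}} \rightarrow \evensheaf_{M_{3}}$, as claimed.

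The only genuinely delicate point — and the step I would be most careful about — is the interaction between taking $\pi_{0}$ and sheafification: a priori $\evensheaf_{M}$ is the sheafification of the presheaf $A \mapsto \pi_{0}\Map(A, M)$, which is not literally $\pi_{0}$ of the sheaf $Y(M)$ computed sectionwise, since $Y(M)$ is already a sheaf but its naive sectionwise $\pi_{0}$ need not be. The resolution is exactly \cref{remark:concrete_description_of_even_sheaves_and_suspension_isomorphism} together with $t$-exactness of $L$ from \cref{corollary:properties_of_the_t_structure_on_sheaves}(1): because $L$ is $t$-exact, $\pi_{0}$ of the sheaf $Y(M)$ (in the sheaf $t$-structure) agrees with $L$ applied to the presheaf $\pi_{0}$, so both formulations above compute the same object and the argument goes through. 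With this compatibility in hand, the proof is essentially the one-line observation that a homological functor composed with an exact functor is homological; no further computation is needed.
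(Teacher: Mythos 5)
Your proposal is correct and its second argument is exactly the paper's proof: the sequence of presheaves $A \mapsto \pi_{0}\Map(A, M_{i})$ is exact in the middle by the long exact sequence of the fibre sequence of mapping spectra, and sheafification is exact (\cref{theorem:additive_presheaves_on_perf_ev_well_behaved}, \cref{remark:concrete_description_of_even_sheaves_and_suspension_isomorphism}). The first argument via the long exact sequence of homotopy sheaves in the $t$-structure is just a repackaging of the same point, using the $t$-exactness of $L$ to identify $\pi_{0}Y(M)$ with the sheafified presheaf $\pi_{0}$, as you note.
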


\begin{proof}
This is clear from \cref{remark:concrete_description_of_even_sheaves_and_suspension_isomorphism}, since sheafication is exact. 
\end{proof}

\begin{remark}
\label{remark:long_exact_sequence_of_even_sheaves}
Note that by extending $M_{1} \rightarrow M_{2} \rightarrow M_{3}$ using (de)suspensions, we see that associated to a cofibre sequence we in fact have a long exact sequence of the form
\[
\ldots \rightarrow \evensheaf_{M_{2}}(\nicefrac{1}{2}) 
 \rightarrow \evensheaf_{M_{3}}(\nicefrac{1}{2}) \rightarrow \evensheaf_{M_1} \rightarrow \evensheaf_{M_2} \rightarrow \evensheaf_{M_3} \rightarrow \evensheaf_{M_1}(\nicefrac{-1}{2}) \rightarrow \evensheaf_{M_2}(\nicefrac{-1}{2}) \rightarrow \ldots,
\]
where we use the isomorphism of \cref{remark:concrete_description_of_even_sheaves_and_suspension_isomorphism}. 
\end{remark}

\begin{construction}[Local grading of the sheaf $\infty$-category] 
The double suspension functor on $\Perf(R)_{ev}$ induces via precomposition a $t$-exact autoequivalence of the $\infty$-category of sheaves which we denote by 
\[
(-)(1) \colon \sheaves_{\Sigma}(\Perf_{ev}(R), \spectra) \rightarrow \sheaves_{\Sigma}(\Perf_{ev}(R), \spectra). 
\]
For $n \in \mathbb{Z}$, we denote the $n$-fold composite of this functor with itself by $(-)(n)$. Explicitly, for any sheaf $X$ and any $A \in \Perf_{ev}(R)$ we have 
\[
(X(n))(A) \colonequals X(\Sigma^{2n} A). 
\]
\end{construction}

\begin{remark}
\label{remark:local_grading_of_even_sheaves_compatible_with_weight_notation}
Our notation concerning the local grading is compatible with that of \cref{definition:even_sheaf_associated_to_a_module} in the sense that if $M$ is an $R$-module then 
\[
\evensheaf_{M}(\nicefrac{k}{2})(n) \simeq \evensheaf_{M}(\nicefrac{k}{2}+n).
\]
\end{remark}

We will be mainly interested in the even filtration in case of the following class of modules: 

\begin{definition}
\label{definition:homologically_even_module}
We say that an $R$-module $M$ is \emph{homologically even} if $\evensheaf_{M}(q) = 0$ for any half-weight $q \in \nicefrac{1}{2} + \mathbb{Z}$. 
\end{definition}

\begin{remark}
\label{remark:homologically_even_is_vanishing_of_one_half_weight_homology}
By \cref{remark:local_grading_of_even_sheaves_compatible_with_weight_notation}, $M$ is homologically even if and only if  $\evensheaf_{M}(\nicefrac{-1}{2}) = 0$. 
\end{remark}

\begin{lemma}
\label{lemma:every_perfect_even_is_homogically_even}
Let $A$ be a perfect even $R$-module. Then $A$ is homologically even. In particular, $R$ is homologically even as a module over itself. 
\end{lemma}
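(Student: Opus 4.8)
The plan is to reduce, via the suspension isomorphism, to a single local statement about maps into an odd suspension of $A$, and then resolve such maps by even epimorphisms. Concretely, by \cref{remark:homologically_even_is_vanishing_of_one_half_weight_homology} it suffices to check that $\evensheaf_{A}(\nicefrac{-1}{2}) = 0$, and by the suspension isomorphism of \cref{remark:concrete_description_of_even_sheaves_and_suspension_isomorphism} this sheaf is identified with $\evensheaf_{\Sigma A}$. Since $\evensheaf_{\Sigma A}$ is by definition the sheafification of the additive presheaf $B \mapsto \pi_{0}\Map_{\Mod_{R}}(B, \Sigma A)$ on $\Perf(R)_{ev}$, it is enough to show that this presheaf is locally zero, i.e. that every section is annihilated after restriction along some covering. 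As covering families are singletons consisting of even epimorphisms (\cref{definition:even_epimorphism}), this amounts to the following: for every perfect even $B$ and every map $\beta \colon B \to \Sigma A$, there is an even epimorphism $g \colon B' \to B$ with $\beta \circ g \simeq 0$.

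I would produce such a $g$ directly from $\beta$. Set $B' \colonequals \fib(\beta)$ and let $g \colon B' \to B$ be the canonical map, so that $\beta \circ g \simeq 0$ by definition of the fibre. Rotating the fibre sequence $B' \to B \xrightarrow{\beta} \Sigma A$ and using $\Omega\Sigma A \simeq A$ yields a fibre sequence $A \to B' \to B$, which in the stable $\infty$-category $\Mod_{R}(\spectra)$ is simultaneously a cofibre sequence. This exhibits $B'$ as an extension of the perfect even module $B$ by the perfect even module $A$, hence $B' \in \Perf(R)_{ev}$ by the closure properties in \cref{definition:perfect_even_module}. The same fibre sequence identifies the fibre of $g \colon B' \to B$ with $A$, which is again perfect even, so $g$ is an even epimorphism by \cref{definition:even_epimorphism}. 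Therefore $\{g \colon B' \to B\}$ is a covering along which $\beta$ restricts to zero; as $B$ and $\beta$ were arbitrary, the presheaf $B \mapsto \pi_{0}\Map_{\Mod_{R}}(B, \Sigma A)$ is locally zero, so $\evensheaf_{\Sigma A} = 0$ and $A$ is homologically even. The final assertion is the special case $A = R$.

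The argument is essentially formal once one notices the rotation trick, so there is no deep obstacle; the point that deserves care is the passage from ``the presheaf is locally zero'' to ``its sheafification vanishes'', which I would justify from the general behaviour of the associated-sheaf functor on additive presheaves (its kernel is precisely the locally zero presheaves, and here this is especially transparent since every cover is a singleton, cf. \cref{theorem:additive_presheaves_on_perf_ev_well_behaved}). For completeness I note an alternative route: one can instead induct over the generators of $\Perf(R)_{ev}$, using that the class of homologically even modules is closed under retracts (as $\evensheaf_{-}$ sends retracts to retracts), is closed under extensions (by the long exact sequence of even sheaves in \cref{remark:long_exact_sequence_of_even_sheaves}, since an odd weight of the two outer terms forces the corresponding weight of the middle to vanish), and contains each $\Sigma^{2k}R$ by the suspension isomorphism; but this still needs the direct computation above in the base case $A = R$, so the streamlined version is the one I would write down.
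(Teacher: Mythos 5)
Your argument is correct and is essentially the paper's own proof: both reduce to showing $\evensheaf_{A}(\nicefrac{-1}{2})=0$ by checking that any class $B \to \Sigma A$ is locally zero, and both kill it along the even epimorphism $\fib(\beta) \to B$, whose fibre (up to rotation) is the perfect even module $A$. The extra remarks on sheafification of locally zero presheaves and the alternative induction over generators are fine but not needed beyond what the paper records.
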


\begin{proof}
By \cref{remark:homologically_even_is_vanishing_of_one_half_weight_homology}, it is enough to show the vanishing of $\evensheaf_{A}(\nicefrac{-1}{2})$, which we can identify with the sheafication of the presheaf 
\[
(B \in \Perf_{ev}) \mapsto (\pi_{0} \map_{\Mod_{R}}(B, \Sigma A) \in Ab). 
\]
Thus, we have to show that every homotopy class of maps $B \rightarrow \Sigma A$ is locally zero in the even epimorphism topology. However, we have a cofibre sequence 
\[
A \rightarrow F \rightarrow B \rightarrow \Sigma A
\]
and $F \rightarrow B$ is the required even epimorphism, since its fibre is perfect even. 
\end{proof}

We also verify that the notion of an even epimorphism introduced in \cref{definition:even_epimorphism} is the one detected by even sheaves. 

\begin{lemma}
\label{lemma:characterization_of_even_epimorphisms_as_even_homology_epimorphism}
A map $B \rightarrow A$ of perfect even $R$-modules is an even epimorphism (that is, has a perfect even fibre) if and only if $\evensheaf_{B} \rightarrow \evensheaf_{A}$ is an epimorphism in $\sheaves_{\Sigma}(\Perf_{ev}, \Ab)$. 
\end{lemma}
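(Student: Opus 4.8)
The plan is to prove both implications by interpreting the condition "$\mathcal{F}_B \to \mathcal{F}_A$ is an epimorphism of sheaves" in terms of the topology. Recall that a map of additive sheaves of abelian groups on a site is an epimorphism if and only if it is surjective after sheafification, i.e. every section of $\mathcal{F}_A$ lifts to a section of $\mathcal{F}_B$ locally in the even epimorphism topology. Since covering families are singletons, "locally" here means: for every $C \in \Perf_{ev}$ and every $x \in \pi_0\Map(C,A)$, there is an even epimorphism $D \to C$ such that the composite $D \to C \to A$ (i.e. the image of $x$) factors through $B \to A$ up to homotopy.

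For the "only if" direction, suppose $f\colon B \to A$ is an even epimorphism, so $\mathrm{fib}(f)$ is perfect even; write $F \colonequals \mathrm{fib}(f)$. I would use the cofibre sequence $F \to B \xrightarrow{f} A$. Given $C \in \Perf_{ev}$ and a class $x\colon C \to A$, form the pullback $P \colonequals C \times_A B$; then $P \to C$ has fibre $F$, which is perfect even, so $P \to C$ is an even epimorphism, and by construction the composite $P \to C \xrightarrow{x} A$ lifts along $f$ to the map $P \to B$. Hence $x$ maps into the image of $\mathcal{F}_B(P) \to \mathcal{F}_A(P)$, which shows surjectivity locally, so $\mathcal{F}_B \to \mathcal{F}_A$ is an epimorphism. (One should note that $P$ is indeed perfect even: it sits in a fibre sequence $F \to P \to C$ with $F, C$ perfect even, and perfect evens are closed under extensions — equivalently $P$ is the fibre of $C \to A \to \mathrm{cofib}(f)$ shifted, and $\mathrm{cofib}(f) \simeq \Sigma F$ is a suspension of a perfect even.)

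For the converse, suppose $\mathcal{F}_B \to \mathcal{F}_A$ is an epimorphism of sheaves. Applying the local surjectivity to the object $C = A$ itself and the identity class $\mathrm{id}_A \in \pi_0\Map(A,A)$, we obtain an even epimorphism $g\colon D \to A$ with $D$ perfect even such that $g$ factors (up to homotopy) as $D \xrightarrow{h} B \xrightarrow{f} A$. Now I want to conclude that $f$ itself has perfect even fibre. The key observation is that $g$ having perfect even fibre, together with the factorization $g = f \circ h$, forces $\mathrm{fib}(f)$ to be a retract of something perfect even — this is where I expect the main obstacle to lie, since a priori $h$ need not be an even epimorphism. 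The cleanest route is: consider the fibre sequence $\mathrm{fib}(f) \to B \xrightarrow{f} A$ and the class in $\pi_0\Map(D, \mathrm{fib}(f))$ obtained from the nullhomotopy of $f\circ g = f \circ f \circ h$... more robustly, I would argue that the map $\mathrm{fib}(g) \to \mathrm{fib}(f)$ induced by $h$ is split after a further even epimorphism: since $g = f h$, there is a fibre sequence $\mathrm{fib}(h) \to \mathrm{fib}(g) \to \mathrm{fib}(f)$, and pulling the whole situation back along $g$ (which is an even epimorphism onto $A$, hence $A$ becomes a retract in the relevant sense) one trivialises $f$. Concretely: after base change along $D \to A$, the map $f$ acquires a section (namely the lift of $\mathrm{id}_A$ factors through the base-changed $B$), so $\mathrm{fib}(f) \times_A D$ is a retract of $B \times_A D$; but $B \times_A D$ sits in a fibre sequence with $\mathrm{fib}(g)$ and $D$, both perfect even, hence $B \times_A D$ and therefore its retract $\mathrm{fib}(f)\times_A D$ are perfect even; and $\mathrm{fib}(f) \times_A D \to \mathrm{fib}(f)$ has fibre $\mathrm{fib}(g)$, perfect even, so it is an even epimorphism. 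Finally, $\mathrm{fib}(f)$ is covered by a perfect even via an even epimorphism; combined with $\mathrm{fib}(f) \in \Perf(R)$ and the fact (from \cref{lemma:every_perfect_even_is_homogically_even} and the long exact sequence of even sheaves, \cref{remark:long_exact_sequence_of_even_sheaves}) that the even sheaves of $\mathrm{fib}(f)$ are concentrated in integral weights, a descent/closure argument identifies $\mathrm{fib}(f)$ as perfect even — one uses that $\Perf_{ev}$ is closed under retracts and that a perfect $R$-module admitting an even epimorphism from a perfect even, with the right vanishing of odd even-sheaves, must itself be perfect even.

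The main obstacle is precisely this last step of the converse: deducing from "$\mathrm{fib}(f)$ receives an even epimorphism from a perfect even module, and $\mathrm{fib}(f)$ is perfect" that $\mathrm{fib}(f)$ is itself perfect even. This requires knowing that $\Perf(R)_{ev} \subseteq \Perf(R)$ is closed under the operation "if $X' \to X$ is an even epimorphism with $X'$ perfect even and $X$ perfect, then $X$ is perfect even" — i.e. a form of "saturation" of the perfect even subcategory under extensions in the reverse direction. Since $\mathrm{fib}(f) \to \mathrm{cofib}(\mathrm{fib}(f) \to X') \simeq \Sigma(\text{fibre})$ exhibits $X$ up to suspension and extension by perfect evens, and perfect evens are closed under extensions and even desuspensions by definition, this should go through; I would isolate it as a short auxiliary lemma before the main argument.
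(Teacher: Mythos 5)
Your overall strategy is the same as the paper's: the forward direction (a covering is a sheaf epimorphism, checked by pulling back along the map) is fine, and for the converse you correctly lift $\mathrm{id}_A$ locally to obtain an even epimorphism $g \colon D \rightarrow A$ factoring as $f \circ h$ and then base change along $g$ to split $f$. The gap is in how you finish. First, a small but real slip: to see that $B \times_A D$ is perfect even you invoke ``a fibre sequence with $\mathrm{fib}(g)$ and $D$''; the sequence you need is $\mathrm{fib}(g) \rightarrow B \times_A D \rightarrow B$ (the base change of $g$ along $f$), which uses $B$, not $D$ --- the fibre sequence over $D$ has fibre $\mathrm{fib}(f)$, the very thing you are trying to prove is perfect even, so it cannot be used at this point. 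Second, and more seriously, your concluding step rests on an unproved ``saturation'' claim (a perfect module receiving an even epimorphism from a perfect even, with odd even-sheaves vanishing, is perfect even), and the justification you sketch for it does not work: the cofibre sequence $\mathrm{fib} \rightarrow X' \rightarrow X$ exhibits $X$ as an extension of $\Sigma\,\mathrm{fib}$, an \emph{odd} suspension of a perfect even, by $X'$, so closure under extensions and even (de)suspensions does not apply. Without your extra vanishing hypothesis the claim is simply false (over the sphere, $0 \rightarrow S^1$ has perfect even fibre $S^0$, yet $S^1$ is not perfect even), and with it you give no argument; you flag this as the main obstacle, and it remains unresolved in your write-up.

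The irony is that no such lemma is needed: you already have everything in hand. In the stable category $\Mod_R$ the fibre of the base-changed map $B \times_A D \rightarrow D$ is canonically $\mathrm{fib}(f)$ itself, not $\mathrm{fib}(f) \times_A D$ (the latter, formed using the null structure map $\mathrm{fib}(f) \rightarrow A$, is $\mathrm{fib}(f) \oplus \mathrm{fib}(g)$). The section $(h, \mathrm{id}_D) \colon D \rightarrow B \times_A D$ splits the fibre sequence $\mathrm{fib}(f) \rightarrow B \times_A D \rightarrow D$, so $\mathrm{fib}(f)$ is a retract of $B \times_A D$, which is perfect even by the corrected extension sequence above; closure of $\Perf(R)_{ev}$ under retracts finishes the proof. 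This one-line ending is exactly how the paper argues: it forms the pullback, observes it is perfect even as an extension of a perfect even by the (perfect even) fibre of the covering, and uses the splitting furnished by the lift to exhibit the fibre of the original map as a retract. Your detour through the saturation lemma --- and with it the main obstacle you identify --- is unnecessary once the fibre of the base-changed map is identified correctly.
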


\begin{proof}
If $B \rightarrow A$ is an even epimorphism, then by definition it is a singleton covering family in the even epimorphism topology. It follows that the induced map of sheaves is an epimorphism. 

Conversely, suppose that $\evensheaf_{B} \rightarrow \evensheaf_{A}$ is an epimorphism of sheaves. It follows that there exists a commutative diagram of perfect even spectra 
\[
\begin{tikzcd}
	& C \\
	B & A
	\arrow["p", from=1-2, to=2-2]
	\arrow["q"', from=2-1, to=2-2]
	\arrow["s"', dashed, from=1-2, to=2-1]
\end{tikzcd}
\]
with $p$ an even epimorphism. We can extend $p$ and $q$ to a larger diagram 
\[
\begin{tikzcd}
	& D & D \\
	F & {B\times_{A}C} & C \\
	F & B & A
	\arrow["p", from=2-3, to=3-3]
	\arrow["q"', from=3-2, to=3-3]
	\arrow[from=3-1, to=3-2]
	\arrow["{q'}", from=2-2, to=2-3]
	\arrow[from=2-2, to=3-2]
	\arrow[from=1-3, to=2-3]
	\arrow[equal, from=1-2, to=1-3]
	\arrow[from=1-2, to=2-2]
	\arrow[from=2-1, to=2-2]
	\arrow[equal, from=2-1, to=3-1]
\end{tikzcd}
\]
where both lower rows and two right columns are cofibre sequences of $R$-modules. Since $p$ is assumed to be an even epimorphism, $D$ is perfect even and hence so is $B \times_{A} C$ as an extension. The map $s$ in the original triangle provides a splitting of $q'$ and we deduce that $F$ is a retract of $B \times_{A} C$ and hence it is also perfect even, which is what we wanted to show. 
\end{proof}

\subsection{The even filtration and even cohomology} 

Throughout this section, we will consider $R$ to be fixed and write $\Perf_{ev} \colonequals \Perf(R)_{ev}$. 

\begin{notation}
\label{notation:section_of_a_sheaf}
If $X \colon \Perf_{ev}^{op} \rightarrow \spectra$ is a sheaf and $A$ is perfect even, we will write 
\[
\Gamma_{\Perf_{ev}}(A, X) \colonequals X(A)
\]
for the sections of $X$ over $A$. 
\end{notation}

If $M$ is an $R$-module and $Y$ is its spectral Yoneda embedding, we have a canonical identification of spectra 
\[
\Gamma_{\Perf_{ev}}(R, Y_{R}(M)) \colonequals Y_{R}(M)(R) \simeq \map_{\Mod_{R}}(R, M) \simeq M. 
\]
This is the key insight that allows one to define the even filtration. Indeed, as the left hand side is given by sections of a sheaf of spectra, it has a canonical filtration induced by the $t$-structure: 

\begin{definition}
\label{definition:even_filtration_of_an_r_module}
Let $R$ be an $\mathbf{E}_{1}$-ring and $M$ be an $R$-module. The \emph{even filtration} of $M$ is given by the filtered spectrum
\[
\fil_{ev}^{q} (M) \colonequals \Gamma_{\Perf_{ev}(R)}(R, \tau_{\geq 2q} Y_{R}(M)),
\]
where the connective covers $\tau_{\geq 2q} Y_{R}(M)$ are calculated in the sheaf $\infty$-category. 
\end{definition}

\begin{remark}
The use of connective covers in \cref{definition:even_filtration_of_an_r_module} is similar to the construction of the Bhatt-Morrow-Scholze filtration on $\mathrm{THH}$ and its variants through the use of the quasisyntomic site \cite{bhatt2019topological}. The difference is that in the present case the site $\Perf(R)_{ev}$ used to define the filtration depends on the ring itself; on the other hand, it is somewhat linear in nature. 
\end{remark}

We record that the even filtration is exhaustive and commutes with filtered colimits. 

\begin{proposition}
\label{proposition:connectivity_of_even_filtration_maps_and_exhaustivity_of_the_even_filtration}
The canonical maps 
\[
\pi_{k} \fil^{q}_{ev}(M) \rightarrow \pi_{k} M
\]
induced by $\tau_{\geq 2q} Y_{R}(M) \rightarrow Y_{R}(M)$ are an isomorphism for $k \geq 2q$ and injective for $k = 2q-1$. In particular 
\[
\varinjlim \fil_{ev}^{q} M \simeq M. 
\]
\end{proposition}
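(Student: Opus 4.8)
The plan is to read the statement off the defining fibre sequence of the truncation functor in the sheaf \inftycat and then pass to sections over $R$. Fix $n$ and consider the canonical fibre sequence
\[
\tau_{\geq 2n} Y(M) \longrightarrow Y(M) \longrightarrow \tau_{\leq 2n-1} Y(M)
\]
in $\sheaves_{\Sigma}(\Perf(R)_{ev}, \spectra)$ determined by the $t$-structure of \cref{corollary:properties_of_the_t_structure_on_sheaves}. The sections functor $\Gamma_{\Perf_{ev}}(R,-)$ preserves limits (limits of sheaves of spectra are computed pointwise), hence is exact as a functor between stable $\infty$-categories, and $\Gamma_{\Perf_{ev}}(R, Y(M)) \simeq M$ by the Yoneda identification recorded above. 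Applying it therefore yields a fibre sequence of spectra
\[
\fil^{n}_{ev}(M) \longrightarrow M \longrightarrow C_{n}, \qquad C_{n} \colonequals \Gamma_{\Perf_{ev}}(R, \tau_{\leq 2n-1} Y(M)).
\]

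The one ingredient I would draw from the $t$-structure is its elementary half: by \cref{corollary:properties_of_the_t_structure_on_sheaves}, a sheaf $X$ is coconnective precisely when $X(A) \in \spectra_{\leq 0}$ for every perfect even $A$. Consequently $\tau_{\leq 2n-1} Y(M) \in \sheaves_{\leq 2n-1}$ forces $C_{n} \in \spectra_{\leq 2n-1}$, i.e.\ $\pi_{j} C_{n} = 0$ for $j \geq 2n$. (Crucially, one does \emph{not} assert the converse, that $\Gamma_{\Perf_{ev}}(R,-)$ preserves connective covers; it does not, and this is exactly where the interesting content of the even filtration sits.) Feeding the fibre sequence $\fil^{n}_{ev}(M) \to M \to C_{n}$ into the long exact sequence of homotopy groups
\[
\cdots \longrightarrow \pi_{k+1} C_{n} \longrightarrow \pi_{k}\fil^{n}_{ev}(M) \longrightarrow \pi_{k} M \longrightarrow \pi_{k} C_{n} \longrightarrow \cdots
\]
now gives the first assertion immediately: for $k \geq 2n$ both $\pi_{k} C_{n}$ and $\pi_{k+1} C_{n}$ vanish, so the middle map is an isomorphism, and for $k = 2n-1$ the term $\pi_{k+1} C_{n} = \pi_{2n} C_{n}$ vanishes, so the map is injective.

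For exhaustivity I would observe that the transition maps $\fil^{n+1}_{ev}(M) \to \fil^{n}_{ev}(M)$, induced by $\tau_{\geq 2n+2} Y(M) \to \tau_{\geq 2n} Y(M)$, are compatible with the augmentations to $M$, and that by the first part $\pi_{k}\fil^{n}_{ev}(M) \to \pi_{k} M$ is an isomorphism as soon as $2n \leq k$. Fixing $k$, the tail of the sequence $\{\pi_{k}\fil^{n}_{ev}(M)\}_{n}$ (the terms with $n$ sufficiently negative) therefore consists of copies of $\pi_{k}M$ with isomorphisms as transition maps, so $\pi_{k}\varinjlim_{n}\fil^{n}_{ev}(M) \cong \varinjlim_{n}\pi_{k}\fil^{n}_{ev}(M) \cong \pi_{k}M$, using that homotopy groups commute with the filtered colimit $\varinjlim_{n}\fil^{n}_{ev}(M)$; as this holds for all $k$, the canonical map $\varinjlim_{n}\fil^{n}_{ev}(M) \to M$ is an equivalence. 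I do not anticipate any genuine obstacle: the only points requiring care are to use \emph{only} the coconnective (left $t$-exactness) half of the $t$-structure rather than the false assertion that $\Gamma_{\Perf_{ev}}(R,-)$ is $t$-exact, and to run the colimit in the correct direction $n \to -\infty$.
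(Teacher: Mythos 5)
Your argument is correct and is essentially the paper's own proof: both rest on the observation that the cofibre of $\tau_{\geq 2n} Y(M) \rightarrow Y(M)$ is the $(2n-1)$-coconnective truncation, that coconnectivity of sheaves is detected levelwise (so sections over $R$ are $(2n-1)$-coconnective), and then the long exact sequence; the paper merely states this more tersely and leaves the filtered-colimit bookkeeping for exhaustivity implicit. Your explicit handling of the colimit direction $n \to -\infty$ and of the exactness of $\Gamma_{\Perf_{ev}}(R,-)$ fills in details the paper omits but introduces nothing different in substance.
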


\begin{proof}
Since sheafication is right $t$-exact, the cofibre of $\tau_{\geq 2q} Y_{R}(M) \rightarrow Y_{R}(M)$ in sheaves is $(2q-1)$-coconnective. As coconnectivity in sheaves is detected pointwise, the claim follows.
\end{proof}

\begin{lemma}
Let $M \simeq \varinjlim M_{\alpha}$ be a filtered colimit of $R$-modules. Then $\fil^{*}_{ev}(M) \simeq \varinjlim \fil^{*}_{ev}(M_{\alpha})$. 
\end{lemma}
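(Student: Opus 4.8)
The plan is to show that the even filtration commutes with filtered colimits by unwinding the definition $\fil^{n}_{ev}(M) = \Gamma_{\Perf_{ev}(R)}(R, \tau_{\geq 2n} Y(M))$ and checking that each of the three operations involved---the spectral Yoneda embedding $Y(-)$, the connective cover $\tau_{\geq 2n}$ in sheaves, and global sections $\Gamma_{\Perf_{ev}(R)}(R, -)$---commutes with filtered colimits. First I would observe that global sections at $R$ is just evaluation at the object $R \in \Perf_{ev}(R)$, and evaluation functors $\sheaves_{\Sigma}(\Perf_{ev}(R), \spectra) \to \spectra$ preserve all colimits (in particular filtered ones), since colimits of sheaves are computed levelwise after sheafification, and sheafification is a left adjoint, hence preserves colimits---so the outermost layer is harmless.

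Next I would handle the connective cover. The key input is part (3) of \cref{corollary:properties_of_the_t_structure_on_sheaves}, which says the $t$-structure on $\sheaves_{\Sigma}(\Perf_{ev}(R), \spectra)$ is compatible with filtered colimits, i.e. $\spectra_{\leq 0}$-valued sheaves (coconnective sheaves) are closed under filtered colimits. In a presentable stable $\infty$-category, a $t$-structure being compatible with filtered colimits is precisely the statement that the truncation functors $\tau_{\geq n}$ (equivalently $\tau_{\leq n}$) commute with filtered colimits; concretely, from a filtered colimit $Y = \varinjlim Y_{\alpha}$ one gets a cofibre sequence $\varinjlim \tau_{\geq 2n} Y_{\alpha} \to Y \to \varinjlim \tau_{\leq 2n-1} Y_{\alpha}$, and since the cofibre is a filtered colimit of $(2n-1)$-coconnective sheaves hence $(2n-1)$-coconnective, while the fibre $\varinjlim \tau_{\geq 2n} Y_{\alpha}$ is $2n$-connective (connective sheaves are always closed under colimits), uniqueness of the truncation triangle identifies $\varinjlim \tau_{\geq 2n} Y_{\alpha} \simeq \tau_{\geq 2n} Y$. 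So I would just cite \cref{corollary:properties_of_the_t_structure_on_sheaves}(3) for this.

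The remaining, and genuinely substantive, point is that the spectral Yoneda embedding $Y \colon \Mod_{R} \to \sheaves_{\Sigma}(\Perf_{ev}(R), \spectra)$ preserves filtered colimits. Here the presheaf-level statement is clear only for \emph{corepresentable} presheaves: for a \emph{fixed} perfect even $A$, the functor $M \mapsto \Map_{\Mod_R}(A, M)$ preserves filtered colimits precisely because $A$ is a compact object of $\Mod_R$ (a perfect complex), and this is exactly where compactness of perfect evens is used. So levelwise we have $\varinjlim Y(M_\alpha)(A) \simeq Y(\varinjlim M_\alpha)(A)$ for every $A \in \Perf_{ev}(R)$, i.e. the colimit of the $Y(M_\alpha)$ computed in \emph{presheaves} is already $Y(M)$. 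I then need that this presheaf-level colimit agrees with the colimit computed in sheaves; since sheafification is a left adjoint it preserves the colimit, and since each $Y(M_\alpha)$ and $Y(M)$ are already sheaves (by \cref{theorem:additive_presheaves_on_perf_ev_well_behaved}, as noted in the text), sheafifying does nothing, so the sheaf-colimit of $Y(M_\alpha)$ is indeed $Y(M)$. Composing the three steps---$Y$ commutes with filtered colimits, $\tau_{\geq 2n}$ commutes with filtered colimits by \cref{corollary:properties_of_the_t_structure_on_sheaves}(3), and $\Gamma(R, -) = \Ev_R$ commutes with all colimits---yields $\fil^{n}_{ev}(\varinjlim M_\alpha) \simeq \varinjlim \fil^{n}_{ev}(M_\alpha)$, naturally in $n$, which is the claim. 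I expect the only place demanding care is the bookkeeping that the presheaf-level filtered colimit of the $Y(M_\alpha)$ is already a sheaf, so that no nontrivial sheafification intervenes; everything else is a direct appeal to the cited $t$-exactness and compactness facts.
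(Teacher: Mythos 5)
Your proposal is correct and is essentially the paper's argument: the paper's proof is a one-line appeal to part (3) of \cref{corollary:properties_of_the_t_structure_on_sheaves}, and your write-up simply unwinds the steps that make that citation sufficient (compactness of perfect even modules for $Y(-)$, compatibility of the $t$-structure with filtered colimits for $\tau_{\geq 2n}$, and evaluation at $R$ for the outer layer). The only imprecision is the claim that evaluation preserves \emph{all} colimits of sheaves because sheafification is a left adjoint — that reasoning does not give levelwise colimits in general; what you actually need (and what holds here) is that \emph{filtered} colimits of additive sheaves are computed levelwise, which follows from part (1) of \cref{theorem:additive_presheaves_on_perf_ev_well_behaved} since fibre sequences are stable under filtered colimits, or equivalently from compactness of $\nu_{R}(R)$ together with \cref{remark:yoneda_lemma_for_nus}.
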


\begin{proof}
This is immediate from property $(3)$ in \cref{corollary:properties_of_the_t_structure_on_sheaves}. 
\end{proof}

As the even filtration is induced by the Postnikov filtration in sheaves, its associated graded object can be described in terms of sheaf cohomology, which we now recall.

\begin{recollection}
\label{recollection:sheaf_cohomology}
The functor $\Gamma_{\Perf_{ev}}(R, -) \colon \sheaves_{\Sigma}(\Perf_{ev}, \abeliangroups) \rightarrow \abeliangroups$ is left exact and so admits right derived functors, which are known as sheaf cohomology and denoted by $\Hrm^{p}_{\Perf_{ev}}(R, -)$. Since 
\[
\Gamma_{\Perf_{ev}}(R, -) \simeq \Hom_{\sheaves_{\Sigma}(\Perf_{ev}, \abeliangroups)}(\evensheaf_{R}, -) 
\]
by the Yoneda lemma, for any sheaf of abelian groups $\evensheaf$ we have an isomorphism 
\[
\Hrm^{p}_{\Perf_{ev}}(R, \evensheaf) \simeq \Ext^{p}_{\sheaves_{\Sigma}(\Perf_{ev}, \abeliangroups)}(\evensheaf_{R}, \evensheaf)
\]
between sheaf cohomology and extension groups in the category of additive sheaves of abelian groups. 
\end{recollection} 

\begin{definition}
\label{definition:even_cohomology_with_coefficients_in_a_module}
If $M$ is an $R$-module, the \emph{even cohomology of $R$ with coefficients in $M$} is given by sheaf cohomology groups 
\[
\evencoh^{p, q}(R, M) \colonequals \Hrm^{p}_{\Perf_{ev}}(R, \evensheaf_{M}(q)).
\]
If the ring $R$ is understood, we write 
\[
\evencoh^{p, q}(M) \colonequals \evencoh^{p, q}(R, M).
\]
\end{definition}

\begin{remark}
\label{remark:even_cohomology_of_suspension_in_terms_of_weights}
As a consequence of \cref{remark:concrete_description_of_even_sheaves_and_suspension_isomorphism}, for an arbitrary $R$-module $M$ we have 
\[
\evencoh^{p, q}(M) \simeq \evencoh^{p, q+\nicefrac{1}{2}}(\Sigma M);
\]
that is, the suspension functor corresponds to changing the weight in even cohomology. 
\end{remark}
We observe that unlike the even filtration, which involves sheaves of spectra indexed by an $\infty$-site, the even cohomology itself is a ``mildly'' derived phenomenon, as it involves only classical derived functors in the category of sheaves of abelian groups. More precisely, since sheaf cohomology is given by $\Ext$-groups, it can be identified 
\[
\Hrm^{p}_{\Perf_{ev}}(\evensheaf) \simeq \pi_{-p} \map_{\dcat(\sheaves_{\Sigma}(\Perf_{ev}, \abeliangroups))}(\evensheaf_{R}, \evensheaf_{M}) 
\]
with homotopy of the mapping space of the derived $\infty$-category, which is much more simple than the $\infty$-category of sheaves of spectra. However, the two are closely related, as we now show, which makes the even filtration more computable in practice than one might assume at first. 

\begin{notation}
\label{notation:sheaf_of_spectra_associated_to_a_sheaf_of_abelian_groups}
We write $i \colon \sheaves(\Perf(R)_{ev}^{op}, \abeliangroups) \rightarrow\sheaves(\Perf(R)_{ev}, \spectra)$ for the fully faithful embedding which identifies the source with the heart of the target with respect to the t-structure of \cref{corollary:properties_of_the_t_structure_on_sheaves}. Explicitly, $i$ is given by considering a sheaf of abelian groups as a presheaf of spectra with homotopy groups concentrated in degree zero and sheafifying. 
\end{notation}

\begin{lemma}
\label{lemma:for_perf_ev_sheaf_cohomology_is_section_of_a_sheaf_in_the_heart}
Let $\evensheaf \colon \Perf_{ev}^{op} \rightarrow \abeliangroups$ be a sheaf of abelian groups. Then, there's a canonical isomorphism 
\begin{equation}
\label{equation:iso_between_sheaf_cohomology_of_perf_r_and_values_of_associated_sheaf_of_spectra}
\Hrm^{\ast}_{\Perf_{ev}}(R, \evensheaf) \simeq \pi_{-\ast} \Gamma_{\Perf_{ev}}(R, i \evensheaf),
\end{equation}
where the left hand side is the sheaf cohomology of \cref{recollection:sheaf_cohomology} and the right hand side is given by the value at $R$ of the associated sheaf of spectra.
\end{lemma}

\begin{proof}
Since sheafifying a sheaf of abelian groups considered as a presheaf of spectra with homotopy concentrated in degree zero doesn't change $\pi_{0}$, there's a preferred isomorphism 
\[
\Hrm^{0}_{\Perf_{ev}}(R, \evensheaf) \simeq \pi_{0} \Gamma_{\Perf_{ev}}(R, i(\evensheaf)).
\]
Since $i$ takes short exact sequences to cofibre sequences, the long exact sequence of homotopy gives a structure of a $\delta$-functor on the right hand side of (\ref{equation:iso_between_sheaf_cohomology_of_perf_r_and_values_of_associated_sheaf_of_spectra}), so that the isomorphism in degree zero determines a natural transformation of $\delta$-functors 
\[
\Hrm^{\ast}_{\Perf_{ev}}(R, \evensheaf) \rightarrow \pi_{-\ast} \Gamma_{\Perf_{ev}}(R, i(\evensheaf)),
\]
see \cite{grothendieck1957quelques}. 

By \cite[\href{https://stacks.math.columbia.edu/tag/010T}{Tag 010T}]{stacks-project}, to show that the given natural transformation is an isomorphism, it is enough to show that if $\mathcal{I} \in \sheaves_{\Sigma}(\Perf_{ev}, \abeliangroups)$ is an injective object, then 
\[
\pi_{-\ast} \Gamma_{\Perf_{ev}}(R, i(\mathcal{I})) \simeq \pi_{-\ast} (i(\mathcal{I})(R))
\]
vanishes for $\ast > 0$. This is equivalent to showing that if $I$ is considered as a presheaf of spectra concentrated in degree zero, then it is already a sheaf. By the criterion of \cref{theorem:additive_presheaves_on_perf_ev_well_behaved}, this is equivalent to 
\begin{equation}
\label{equation:short_exact_sequence_needed_to_show_even_cohomology_is_sheaf_cohomology}
0 \rightarrow \mathcal{I}(A_{3}) \rightarrow \mathcal{I}(A_{2}) \rightarrow \mathcal{I}(A_{1}) \rightarrow 0
\end{equation}
being short exact for any cofibre sequence $A_{1} \rightarrow A_{2} \rightarrow A_{3}$ of perfect even spectra. Since
\[
\mathcal{I}(A) \simeq \Hom_{\sheaves_{\Sigma}(\Perf_{ev}, \abeliangroups)}(\evensheaf_{A}, \mathcal{I})
\]
by the Yoneda lemma for any perfect even $A$, exactness of (\ref{equation:short_exact_sequence_needed_to_show_even_cohomology_is_sheaf_cohomology}) follows from injectivity of $\mathcal{I}$ and the fact that 
\[
0 \rightarrow \evensheaf_{A_{1}} \rightarrow \evensheaf_{A_{2}} \rightarrow \evensheaf_{A_{3}} \rightarrow 0 
\]
is short exact which follows from \cref{lemma:every_perfect_even_is_homogically_even}
 and \cref{remark:long_exact_sequence_of_even_sheaves}. 
\end{proof}

\begin{warning}
The analogue of \cref{lemma:for_perf_ev_sheaf_cohomology_is_section_of_a_sheaf_in_the_heart} (that is, an isomorphism between sheaf cohomology and the homotopy groups of sheafication in spectra) is automatic for sheaves on an ordinary site; that is, for sheaves on a classical category. However, beware that it need not hold in general for sheaves on $\infty$-sites! 

Indeed, sheaf cohomology is an invariant of the abelian category of sheaves of abelian groups, which only depends on the homotopy category of the $\infty$-site in question. Consequently, \cref{lemma:for_perf_ev_sheaf_cohomology_is_section_of_a_sheaf_in_the_heart} can be interpreted as saying that the $\infty$-site $\Perf(R)_{ev}$ has favourable properties. 
\end{warning}

\begin{theorem}
\label{theorem:associated_graded_of_even_filtration_and_even_cohomology}
Let $M$ be a homologically even $R$-module, so that $\evensheaf_{M}(\nicefrac{k}{2}) = 0$ for all odd $k$. We then have canonical isomorphism 
\[
\Hrm^{p, q}_{ev}(R, M) \simeq \pi_{2q-p} \gr^{q}_{ev}(M) 
\]
between the even cohomology of \cref{definition:even_cohomology_with_coefficients_in_a_module} and the homotopy of the associated graded of the even filtration. 
\end{theorem}

\begin{proof}
The even filtration is induced by the Whitehead tower of $Y_{R}(M)$ of the form 
\[
\begin{tikzcd}
	\ldots & {\tau_{\geq 2q+2}Y_{R}(M)} & {\tau_{\geq 2q}Y_{R}(M)} & {\tau_{\geq 2q-2}Y_{R}(M)} & \ldots \\
	& {\Sigma^{2q+2} i(\evensheaf_{M}(q+1))} & {\Sigma^{2q} i(\evensheaf_{M}(q))} & {\Sigma^{2q-2} i(\evensheaf_{M}(q-1))}
	\arrow[from=1-4, to=2-4]
	\arrow[from=1-3, to=1-4]
	\arrow[from=1-4, to=1-5]
	\arrow[from=1-2, to=1-3]
	\arrow[from=1-1, to=1-2]
	\arrow[from=1-2, to=2-2]
	\arrow[from=1-3, to=2-3]
\end{tikzcd},
\]
where the bottom terms are the associated graded, so that the composite of any horizontal map followed by a vertical map is a cofibre sequence. Here, the associated graded is given by (de)suspensions of the even sheaves of \cref{definition:even_sheaf_associated_to_a_module}, considered as sheaves of spectra using the embedding $i$ of \cref{notation:sheaf_of_spectra_associated_to_a_sheaf_of_abelian_groups}. Applying the functor $\Gamma_{\Perf_{ev}}(R, -)$ to the above diagram we see that 
\[
\gr^{q}_{ev}(M) \simeq \Sigma^{2q} \Gamma_{\Perf_{ev}}(R, i(\evensheaf_{M}(q)))
\]
and the needed statement follows from \cref{lemma:for_perf_ev_sheaf_cohomology_is_section_of_a_sheaf_in_the_heart}. 
\end{proof}

\begin{remark}
\label{remark:even_cohomology_vanishes_in_negative_degrees_and_this_is_consistent_with_identification_with_graded_of_the_even_filtration}
Note that as derived functors, sheaf cohomology of \cref{recollection:sheaf_cohomology} is really only defined in non-negative degrees, but it is often convenient to follow the convention that these groups are defined and vanish when $p < 0$. With this interpretation, \cref{theorem:associated_graded_of_even_filtration_and_even_cohomology} is also true for $p < 0$, since $\gr^{q}_{ev}(M)$ is $2q$-coconnective as a spectrum of section of a sheafication of a $2q$-coconnective presheaf. 
\end{remark}

As any filtered spectrum, the even filtration defines a spectral sequence, as in \cite[\S 1.2.2]{higher_algebra}. In these terms, \cref{theorem:associated_graded_of_even_filtration_and_even_cohomology} can be interpreted as identifying the second page:

\begin{definition}
\label{definition:even_spectral_sequence}
If $M$ is homologically even, we call the spectral sequence associated to the even filtration of $M$ the \emph{even spectral sequence}. It is of signature 
\[
E_{2}^{p, q} \colonequals \Hrm_{ev}^{p, q}(R, M) \Rightarrow \pi_{2q-p}(M). 
\]
with differentials of bidegree $|d_{r}| = (2r-1, r-1)$. 
\end{definition}

\begin{remark}[Adams grading]
\label{remark:p_q_grading_convention_for_even_cohomology}
As a consequence of \cref{theorem:faithfully_flat_descent_for_modules}, in many cases the even spectral sequence of \cref{definition:even_spectral_sequence} can be identified with the Adams spectral sequence associated to a faithfully even flat map $R \rightarrow S$ into a $\pi_{*}$-even $\mathbf{E}_{1}$-ring. In particular, this happens for the sphere spectrum, in which case it can be identifed with the Adams-Novikov spectral sequence. From this perspective, our grading convention is non-standard, and in standard Adams grading $(s, t)$ we instead have 
\[
E_{2}^{s, t} \simeq \Hrm_{ev}^{s, \frac{1}{2} \cdot t}(R).
\]

Our convention is justified by the fact that even cohomology groups can often be identified with arithmetic phenomena, and our convention is closer to how things are graded in the arithmetic case. For example, for the sphere spectrum we have 
    \[
    \Hrm^{p, q}_{ev}(S^{0}) \simeq \Hrm^{p}(\mathcal{M}_{\mathrm{fg}}, \omega^{\otimes q}),
    \]
the $p$-th quasi-coherent cohomology of the moduli stack of formal groups with coefficients in the $q$-th tensor power of the canonical line bundle. 
\end{remark}

\begin{remark}[Integer and half-integer grading]
\label{remark:integer_and_half_integer_grading}
In principle, it is possible to consider the even filtration as half-integer graded; that is, it makes sense to consider
\[
\fil_{ev}^{q} \colonequals \Gamma_{\Perf_{ev}(R)}(R, \tau_{\geq 2q} Y_{R}(M))
\]
for $q \in \nicefrac{1}{2} \cdot \mathbb{Z}$. In practice, we are mainly interested in the even filtration for homologically even $M$. In this case, which includes $R$ itself by \cref{lemma:every_perfect_even_is_homogically_even}, we have 
\[
\fil_{ev}^{q}(M) \simeq \fil_{ev}^{q-\nicefrac{1}{2}}(M)
\]
for all $q \in \mathbb{Z}$, so that it is more convenient to consider the even filtration as only integer-graded. 

Our choice of notation is dictated by compatibility with the even filtration of Hahn-Raksit-Wilson \cite{hahn2022motivic} and subsequently, the various motivic filtrations. That being said, the convention of \cref{definition:even_filtration_of_an_r_module} is really most appropriate only when $M$ is homologically even. In the general case, it is preferable to work with the half-integer graded even filtration, whose associated graded is always given by the even cohomology groups (which now might be non-zero in half-integer weight). 

Similar tension exists in the classical case of $\MU$-homology, since the category of even graded $\MU_{*}\MU$-comodules has a beautiful geometric interpretation as the category of quasi-coherent sheaves on the moduli stack of formal groups. However, the language of quasi-coherent sheaves is less convenient when talking about spectra whose $\MU$-homology is not concentrated in even degrees, as one then needs to keep track of a pair of sheaves as in \cite[Lecture 11]{lurie2010chromatic}.
\end{remark}

\subsection{Example: Modules with even homotopy groups}

In case of modules with even homotopy, even cohomology takes a particularly simple form: 

\begin{lemma}
\label{lemma:pistar_even_means_homologically_even_and_even_filtration_is_whitehead}
Let $E$ be an $R$-module and suppose that $\pi_{*}E$ is concentrated in even degrees. Then 
\begin{enumerate}
    \item $E$ is homologically even, 
    \item we have $\fil_{ev}^{q} E \simeq\ \tau_{\geq 2q} E$ for all $q \in \mathbb{Z}$. 
\end{enumerate}
\end{lemma}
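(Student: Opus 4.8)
The entire statement rests on one elementary observation, which I would establish first: \emph{if $\pi_{*}E$ is concentrated in even degrees, then for every perfect even $A$ the mapping spectrum $\Map_{\Mod_{R}}(A,E)$ also has homotopy concentrated in even degrees.} Because $\Perf(R)_{ev}$ is by \cref{definition:perfect_even_module} the smallest subcategory of $\Mod_{R}$ containing the $\Sigma^{2k}R$ and closed under extensions and retracts, it suffices to check that the class of $R$-modules $A$ with $\pi_{*}\Map_{R}(A,E)$ even has these closure properties. It contains each $\Sigma^{2k}R$, since $\Map_{R}(\Sigma^{2k}R,E)\simeq\Omega^{2k}E$. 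It is closed under retracts because homotopy groups pass to retracts. Finally, if $A'\to A\to A''$ is a cofibre sequence with $\Map_{R}(A',E)$ and $\Map_{R}(A'',E)$ even, then in the fibre sequence $\Map_{R}(A'',E)\to\Map_{R}(A,E)\to\Map_{R}(A',E)$ every odd-degree homotopy group of the middle term sits, in the long exact sequence, between two vanishing groups, and therefore vanishes.

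Part (1) is then immediate. By \cref{definition:even_sheaf_associated_to_a_module} together with \cref{remark:concrete_description_of_even_sheaves_and_suspension_isomorphism}, the even sheaf $\evensheaf_{E}(q)=\pi_{2q}Y(E)$ is the sheafification of the presheaf $A\mapsto\pi_{2q}\Map_{R}(A,E)$; for $q\in\nicefrac{1}{2}+\mathbb{Z}$ the degree $2q$ is odd, so by the observation this presheaf is identically zero and hence so is its sheafification. (Alternatively one reduces to $q=-\nicefrac{1}{2}$ using \cref{remark:homologically_even_is_vanishing_of_one_half_weight_homology}.) For part (2) the key point is that the sheaf-theoretic truncations of $Y(E)$ can be computed pointwise. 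Since $\tau_{\leq m}$ preserves fibre sequences of spectra, applying it pointwise to the fibre sequences occurring in the sheaf criterion of \cref{theorem:additive_presheaves_on_perf_ev_well_behaved} shows that $A\mapsto\tau_{\leq m}\Map_{R}(A,E)$ is again an additive sheaf; it is pointwise, hence also sheaf-theoretically, coconnective, and the fibre of the natural map from $Y(E)$ to it is pointwise $(m+1)$-connective, hence $(m+1)$-connective as a sheaf. Therefore this pointwise truncation is $\tau_{\leq m}Y(E)$.

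To conclude, I would apply the sections functor $\Gamma_{\Perf(R)_{ev}}(R,-)$, i.e.\ evaluation at $R$, which preserves all limits and in particular fibre sequences, to the defining fibre sequence $\tau_{\geq 2n}Y(E)\to Y(E)\to\tau_{\leq 2n-1}Y(E)$ of \cref{definition:even_filtration_of_an_r_module}. Using $\Gamma_{\Perf(R)_{ev}}(R,Y(E))\simeq\Map_{R}(R,E)\simeq E$ and the pointwise description $\Gamma_{\Perf(R)_{ev}}(R,\tau_{\leq 2n-1}Y(E))\simeq\tau_{\leq 2n-1}E$, compatibly with the truncation map, one gets $\fil^{n}_{ev}(E)\simeq\mathrm{fib}(E\to\tau_{\leq 2n-1}E)\simeq\tau_{\geq 2n}E$, and naturality in $n$ upgrades this to an equivalence of filtered spectra with the even Whitehead tower. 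The step that needs care --- the main obstacle --- is precisely this pointwise computation of the sheaf truncation: one must verify both that the pointwise truncation is still a sheaf (using \cref{theorem:additive_presheaves_on_perf_ev_well_behaved} and the fact that $\tau_{\leq m}$ preserves fibres) and that the complementary piece is genuinely connective for the sheaf $t$-structure rather than merely pointwise, since connectivity of sheaves --- unlike coconnectivity --- is not detected pointwise in general (here it is, because the relevant fibre is even pointwise connective). Both facts are manifestations of the $t$-exactness of sheafification recorded in \cref{corollary:properties_of_the_t_structure_on_sheaves}; the rest is formal.
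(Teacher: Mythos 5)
Your overall strategy is the same as the paper's: first show that $\pi_{*}\Map_{\Mod_{R}}(A,E)$ is even for every perfect even $A$ (the paper asserts this in one line; your closure argument over $\Sigma^{2k}R$, retracts and extensions is a correct way to see it), deduce (1) from the vanishing of the odd-degree presheaves before sheafification, and deduce (2) by showing that the relevant truncation of $Y(E)$ may be computed pointwise, so that evaluating at $R$ gives $\tau_{\geq 2n}E$. The paper does this with the pointwise connective cover $A\mapsto\tau_{\geq 2n}\Map(A,E)$, you with the pointwise coconnective truncation; that difference is cosmetic, and your remark that connectivity of the complementary piece is not pointwise in general, but is here, is a point the paper leaves implicit.

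There is, however, one step whose stated justification is false: ``since $\tau_{\leq m}$ preserves fibre sequences of spectra.'' It does not. For instance, applying $\tau_{\leq 0}$ to the fibre sequence $\Hrm\mathbb{Z}\rightarrow 0\rightarrow \Sigma \Hrm\mathbb{Z}$ yields $\Hrm\mathbb{Z}\rightarrow 0\rightarrow 0$, which is not a fibre sequence; in general neither $\tau_{\leq m}$ nor $\tau_{\geq n}$ preserves (co)fibre sequences. What makes your step correct in the case at hand is precisely the evenness you established in your first paragraph: for an even epimorphism $f\colon A\rightarrow B$ all three spectra in the sequence $\Map(B,E)\rightarrow\Map(A,E)\rightarrow\Map(\fib(f),E)$ have even homotopy, so the boundary maps in the long exact sequence vanish, the sequence of homotopy groups is short exact in each (even) degree, and a direct comparison of long exact sequences then shows that applying $\tau_{\leq 2n-1}$ (or $\tau_{\geq 2n}$, as in the paper's proof) still yields a fibre sequence, whence the truncated presheaf satisfies the criterion of \cref{theorem:additive_presheaves_on_perf_ev_well_behaved}. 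So you should replace the appeal to the false general principle by this short-exactness argument — which you already have all the ingredients for; with that repair the proof is complete and matches the paper's.
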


\begin{proof}
Since $\pi_{*}E$ is concentrated in even degrees, so is the $R$-linear cohomology group
\[
E_{R}^{*}(A) \simeq \pi_{-*} \map_{\Mod_{R}}(A, E)
\]
for any perfect even $A$. Since the half-weight even sheaves are defined as the sheafication of the presheaf of odd homotopy groups, which vanish, we deduce that they are all zero, proving $(1)$. 

For $(2)$, observe that long exact sequence of cohomology shows that if 
\[
A \rightarrow B \rightarrow C
\]
is a cofibre sequence of perfect evens, then 
\[
0 \rightarrow E_{R}^{*}(C) \rightarrow E_{R}^{*}(B) \rightarrow E_{R}^{*}(A) \rightarrow 0
\]
is short exact, as all three groups are concentrated in even degrees. It follows from the criterion of \cref{theorem:additive_presheaves_on_perf_ev_well_behaved} that the presheaf of spectra on $\Perf_{ev}$ defined by 
\[
A \mapsto \tau_{\geq 2q} (\Gamma(A, E)) \simeq \tau_{\geq 2q} (\map_{\Mod_{R}}(A, E))
\]
is a sheaf and thus 
\[
\fil_{ev}^{q} E \colonequals \Gamma(R, \tau_{\geq 2q} Y_{R}(E)) \simeq \tau_{\geq 2q} (\Gamma(R, Y_{R}(E))) \simeq \tau_{\geq 2q} (\map_{\Mod_{R}}(R, E)) \simeq \tau_{\geq 2q} E,
\]
as claimed.
\end{proof}

\begin{corollary}
\label{corollary:even_cohomology_of_even_modules}
If $E$ is an $R$-module with $\pi_{*}E$ even, then its even cohomology groups are given by 
\[
\Hrm_{ev}^{0, q}(R, E) \simeq \pi_{2q} E
\]
in cohomological degree zero and vanish otherwise. 
\end{corollary}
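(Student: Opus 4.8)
The plan is to reduce everything to \cref{lemma:pistar_even_means_homologically_even_and_even_filtration_is_whitehead} together with the identification of even cohomology with the homotopy of the associated graded of the even filtration recorded in \cref{remark:associated_graded_of_even_filtration_and_even_cohomology}. Since $\pi_{*}E$ is concentrated in even degrees, part $(1)$ of \cref{lemma:pistar_even_means_homologically_even_and_even_filtration_is_whitehead} tells us that $E$ is homologically even, so \cref{remark:associated_graded_of_even_filtration_and_even_cohomology} applies and yields a canonical isomorphism
\[
\evencoh^{p, q}(R, E) \simeq \pi_{2q - p}\gr_{ev}^{q}(E).
\]
It therefore suffices to compute the associated graded of the even filtration of $E$.

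By part $(2)$ of the same lemma, $\fil_{ev}^{n}(E) \simeq \tau_{\geq 2n} E$, so the even filtration of $E$ is precisely the double-speed Whitehead tower of $E$. Consequently
\[
\gr_{ev}^{q}(E) \simeq \cofib\bigl(\tau_{\geq 2q+2} E \rightarrow \tau_{\geq 2q} E\bigr),
\]
which is a spectrum whose only possibly non-zero homotopy groups are $\pi_{2q}$ and $\pi_{2q+1}$. Since $\pi_{2q+1} E = 0$ by hypothesis, this associated graded piece has homotopy concentrated in degree $2q$, where it equals $\pi_{2q} E$. Combining this with the previous display, $\pi_{2q-p}\gr_{ev}^{q}(E)$ equals $\pi_{2q} E$ when $p = 0$ and vanishes for $p \neq 0$, which is exactly the assertion.

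No genuine obstacle arises here: all of the substantive content was already extracted in \cref{lemma:pistar_even_means_homologically_even_and_even_filtration_is_whitehead}, and the only point requiring care is matching indexing conventions (the weight $q$ corresponds to homotopical degree $2q$, and the cohomological degree $p$ to the filtration jump), which is handled by \cref{remark:associated_graded_of_even_filtration_and_even_cohomology}. Alternatively, one could argue directly on the site: the presheaf $A \mapsto \pi_{2q}\Map_{\Mod_{R}}(A,E) = E_{R}^{-2q}(A)$ is already a sheaf, with no sheafification needed, by the short exactness observed in the proof of \cref{lemma:pistar_even_means_homologically_even_and_even_filtration_is_whitehead}; one would then be reduced to showing that $\Ext^{p}_{\Perf_{ev}}(\evensheaf_{R}, \evensheaf_{E}(q))$ vanishes for $p > 0$. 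But this higher vanishing is precisely what the collapse of the doubled Whitehead tower packages, so the filtration route above is the most economical.
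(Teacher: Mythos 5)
Your proposal is correct and follows exactly the paper's route: the paper's proof is the one-line deduction from \cref{lemma:pistar_even_means_homologically_even_and_even_filtration_is_whitehead} together with \cref{remark:associated_graded_of_even_filtration_and_even_cohomology}, and your argument simply spells out that deduction (homological evenness to invoke the remark, the Whitehead-tower description of the filtration to compute the associated graded). The index bookkeeping and the vanishing of the odd homotopy group of the graded piece are handled correctly.
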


\begin{proof}
This follows from \cref{lemma:pistar_even_means_homologically_even_and_even_filtration_is_whitehead} and \cref{theorem:associated_graded_of_even_filtration_and_even_cohomology}.
\end{proof}

\subsection{Example: Rings with even homotopy groups}

In the previous section, we have seen that the even filtration of an $R$-module $E$ such that $\pi_{*}E$ is even coincides with the Postnikov filtration. We now verify that this conclusion can be extended to \emph{all} $R$-modules assuming that the ring itself satisfies this condition.

\begin{lemma}
\label{lemma:cofibre_seqs_of_perfect_evens_split_if_r_has_even_homotopy}
Let $R$ be an $\mathbf{E}_{1}$-ring such that $\pi_{*}R$ is even. Then every cofibre sequence 
\[
A \rightarrow B \rightarrow C 
\]
of perfect even $R$-modules splits. 
\end{lemma}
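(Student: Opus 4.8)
The plan is to show that any cofibre sequence $A \to B \to C$ of perfect even $R$-modules splits, given that $\pi_*R$ is even. The key observation is that perfect even modules over such an $R$ have $\pi_*$-even, since $\Sigma^{2k}R$ does and this property is visibly closed under extensions (via the long exact sequence in homotopy, where the odd groups are sandwiched between zeros) and retracts. So $A$, $B$ and $C$ all have even homotopy. A cofibre sequence $A \to B \to C$ gives a boundary map $\partial \colon C \to \Sigma A$, and splitting is equivalent to showing $\partial \simeq 0$. Now $\Sigma A$ has homotopy concentrated in \emph{odd} degrees, while $C$ has homotopy concentrated in even degrees, so the obstruction to nullhomotopy lives in the Postnikov-style obstruction groups $\Ext$-type terms built from $[C, \Sigma A]$, all of which involve maps between an even module and an odd module.

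Concretely, I would argue as follows. First reduce to the claim $\pi_0\Map_{\Mod_R}(C, \Sigma A) = 0$. Since $C \in \Perf(R)_{ev}$ is built from finitely many cells of the form $\Sigma^{2k}R$ by extensions and retracts, it suffices (by an induction on the cell structure, using the five lemma / long exact sequences for mapping spectra and the fact that retracts only shrink homotopy groups) to treat the case $C = \Sigma^{2k}R$. Then $\Map_{\Mod_R}(\Sigma^{2k}R, \Sigma A) \simeq \Sigma^{1-2k}A$, whose $\pi_0 = \pi_{2k-1}A = 0$ because $\pi_*A$ is even. Hence $\partial = 0$ on each cell and, assembling, $\partial \simeq 0$, so the sequence splits as $B \simeq A \oplus C$.

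The one point requiring a little care — and the main obstacle — is the induction over the cell structure of $C$: extensions behave well for the functor $\pi_0\Map_{\Mod_R}(-, \Sigma A)$ via the long exact sequence (the relevant $\pi_0$ is squeezed between a $\pi_{-1}$ and a $\pi_0$ of mapping spectra out of the pieces, and one wants all the odd-degree homotopy of $A$ to vanish, which it does), but one must also know $\pi_{-1}\Map_{\Mod_R}(\Sigma^{2k}R, A) = \pi_{2k-1}A = 0$ to make the squeeze collapse. Since $\pi_*A$ is even both groups vanish, so the long exact sequence forces $\pi_0\Map_{\Mod_R}(-, \Sigma A)$ to vanish on all of $\Perf(R)_{ev}$; retracts are then immediate. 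Thus $[C,\Sigma A] = 0$ and the cofibre sequence splits.
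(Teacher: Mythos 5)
Your proposal is correct and follows essentially the same route as the paper: first observe that every perfect even module has even homotopy (evenness being closed under extensions, retracts and even suspensions), then run the same closure argument for the homotopy of mapping spectra to conclude that all odd-degree maps between perfect evens — in particular the boundary map $C \rightarrow \Sigma A$ — vanish, so the sequence splits. Your explicit induction over the cell structure of $C$ is just a spelled-out version of the paper's phrase ``applying the same reasoning to $\pi_{*}\Map_{\Mod_{R}}(N,M)$,'' so there is nothing to add.
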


\begin{proof}
Observe that if $M$, $N$ are $R$-modules, then the property that 
\[
\pi_{*} \map_{\Mod_{R}}(N, M)
\]
is concentrated in even degrees is stable under retracts, extensions and even (de)suspensions in each variable separately. Since 
\[
\pi_{\ast} \map_{\Mod_{R}}(R, R) \simeq \pi_{\ast} R
\]
is concentrated in even degrees by assumption, we see that all pairs of perfect even modules have this property. We deduce that all odd degree maps between perfect even $R$-modules are null. In particular, this applies to the boundary map $C \rightarrow \Sigma A$, so that the cofibre sequence in the statement splits. 
\end{proof}

\begin{corollary}
\label{corollary:if_r_has_even_homotopy_then_additive_presheaves_are_even_sheaves}
Let $R$ be an $\mathbf{E}_{1}$-ring such that $\pi_{*}R$ is even. Then, every additive presheaf $X \colon \Perf(R)_{ev}^{op} \rightarrow X$ is a sheaf with respect to the even epimorphism topology. In particular, connective covers of additive sheaves are calculated pointwise. 
\end{corollary}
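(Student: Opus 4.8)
The plan is to deduce this from the splitting \cref{lemma:cofibre_seqs_of_perfect_evens_split_if_r_has_even_homotopy} together with the sheaf criterion of \cref{theorem:additive_presheaves_on_perf_ev_well_behaved}. By part $(1)$ of that theorem, an additive $\acat$-valued presheaf $X$ on $\Perf_{ev}$ (for $\acat$ additive and presentable) is a sheaf for the even epimorphism topology precisely when, for every even epimorphism $f \colon A \rightarrow B$, the sequence $X(B) \rightarrow X(A) \rightarrow X(\fib f)$ is a fibre sequence. So it suffices to verify this exactness for an arbitrary even epimorphism.

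Fix such an $f \colon A \rightarrow B$ and set $F \colonequals \fib(f)$, which is perfect even by the very definition of an even epimorphism. Then $F \rightarrow A \rightarrow B$ is simultaneously a fibre and a cofibre sequence of $R$-modules with all three terms perfect even, so by \cref{lemma:cofibre_seqs_of_perfect_evens_split_if_r_has_even_homotopy} it splits: there is an equivalence $A \simeq F \oplus B$ under which $F \rightarrow A$ is the inclusion of the first summand and $f$ the projection onto the second. Since $\Perf_{ev}$ is additive and $X$ is additive, $X$ carries finite direct sums to finite products, so $X(A) \simeq X(F) \times X(B)$, and under this identification the sequence $X(B) \rightarrow X(A) \rightarrow X(F)$ becomes the canonical split fibre sequence $X(B) \rightarrow X(F)\times X(B) \rightarrow X(F)$. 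In particular it is a fibre sequence, so \cref{theorem:additive_presheaves_on_perf_ev_well_behaved}$(1)$ applies and $X$ is a sheaf.

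For the final assertion, let $X$ be an additive sheaf of spectra and consider the presheaf $A \mapsto \tau_{\geq 0}(X(A))$ (and likewise $A \mapsto \tau_{\geq n}(X(A))$ for any $n$). As $\tau_{\geq n}$ is right adjoint to the inclusion $\spectra_{\geq n} \hookrightarrow \spectra$, it preserves finite products, so this presheaf is again additive, hence by the first part a sheaf. By \cref{corollary:properties_of_the_t_structure_on_sheaves} coconnectivity of a sheaf is detected levelwise; and since the sheafification functor is now the identity on additive presheaves, the same corollary identifies the homotopy-group sheaves of $X$ with the levelwise homotopy groups, so connectivity is detected levelwise too. The natural map $(A \mapsto \tau_{\geq 0}X(A)) \rightarrow X$ therefore exhibits the connective cover in the sheaf $\infty$-category, which is the claim.

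The argument is entirely formal; there is no substantive obstacle, and the only points requiring a little care are (i) matching the directions of the arrows in the split cofibre sequence against those in the exactness criterion of \cref{theorem:additive_presheaves_on_perf_ev_well_behaved}$(1)$, and (ii) noting that once every additive presheaf is a sheaf, \cref{corollary:properties_of_the_t_structure_on_sheaves} forces both homotopy groups and connectivity in the sheaf $t$-structure to be computed levelwise — which is exactly what makes the last sentence go through.
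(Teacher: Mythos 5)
Your proof is correct and follows essentially the same route as the paper: the first part is the splitting of cofibre sequences of perfect evens (\cref{lemma:cofibre_seqs_of_perfect_evens_split_if_r_has_even_homotopy}) fed into the sheaf criterion of \cref{theorem:additive_presheaves_on_perf_ev_well_behaved}(1), and the second part is the observation that levelwise connective covers preserve additive presheaves, which the paper states in one line. You merely spell out the details (the split fibre sequence $X(B) \rightarrow X(F) \times X(B) \rightarrow X(F)$, and that $\tau_{\geq n}$ preserves finite products as a right adjoint) that the paper leaves implicit.
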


\begin{proof}
The first part is immediate from \cref{lemma:cofibre_seqs_of_perfect_evens_split_if_r_has_even_homotopy} and the characterization of additive sheaves given in \cref{theorem:additive_presheaves_on_perf_ev_well_behaved}. The second follows from the fact that additive presheaves are closed under taking pointwise connective covers. 
\end{proof}

\begin{proposition}
\label{proposition:even_filtration_over_r_with_even_htpy_is_the_postnikov_filtration}
Let $R$ be an $\mathbf{E}_{1}$-ring such that $\pi_{*}R$ is even. Then for every $R$-module $M$ we have
\[
\fil^{q}_{ev}(M) \simeq \tau_{\geq 2q} M.
\]
\end{proposition}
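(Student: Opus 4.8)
The statement to prove is that for an $\mathbf{E}_1$-ring $R$ with $\pi_*R$ even, the even filtration of any $R$-module $M$ agrees with its Postnikov filtration $\tau_{\geq 2n}M$. The key tool is \cref{corollary:if_r_has_even_homotopy_then_additive_presheaves_are_even_sheaves}: under the hypothesis that $\pi_*R$ is even, every additive presheaf on $\Perf(R)_{ev}$ is automatically a sheaf, and connective covers of additive sheaves are computed levelwise. So the plan is to unwind the definition $\fil^n_{ev}(M) = \Gamma_{\Perf_{ev}(R)}(R, \tau_{\geq 2n}Y(M))$ and argue that the connective cover $\tau_{\geq 2n}Y(M)$, taken in the sheaf $\infty$-category, is just the levelwise connective cover of the presheaf $A \mapsto \Map_{\Mod_R}(A,M)$.

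\emph{Step 1.} Note that $Y(M)$ is an additive presheaf (indeed a sheaf), so by \cref{corollary:if_r_has_even_homotopy_then_additive_presheaves_are_even_sheaves} the presheaf $A \mapsto \tau_{\geq 2n}\Map_{\Mod_R}(A,M)$ obtained by applying levelwise connective covers is again an additive presheaf, hence a sheaf. \emph{Step 2.} Identify this levelwise-truncated presheaf with $\tau_{\geq 2n}Y(M)$ computed in the sheaf category: this is exactly the content of the second sentence of \cref{corollary:if_r_has_even_homotopy_then_additive_presheaves_are_even_sheaves}, that connective covers of additive sheaves are calculated levelwise when $\pi_*R$ is even. \emph{Step 3.} Evaluate at $R$: $\fil^n_{ev}(M) = \Gamma(R, \tau_{\geq 2n}Y(M)) \simeq (\tau_{\geq 2n}Y(M))(R) \simeq \tau_{\geq 2n}(Y(M)(R)) \simeq \tau_{\geq 2n}\Map_{\Mod_R}(R,M) \simeq \tau_{\geq 2n}M$, using the identification $\Gamma_{\Perf_{ev}}(R, Y(M)) \simeq M$ recorded just before \cref{definition:even_filtration_of_an_r_module}. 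One should also check that the filtration maps $\fil^{n+1}_{ev}(M) \to \fil^n_{ev}(M)$ are carried to the standard maps $\tau_{\geq 2n+2}M \to \tau_{\geq 2n}M$, which is clear since the levelwise-truncation identification is natural in $n$.

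\emph{Main obstacle.} There is essentially no obstacle here: the substance of the argument has already been extracted into \cref{lemma:cofibre_seqs_of_perfect_evens_split_if_r_has_even_homotopy} and \cref{corollary:if_r_has_even_homotopy_then_additive_presheaves_are_even_sheaves}, namely the fact that cofibre sequences of perfect evens split when $\pi_*R$ is even, which forces every additive presheaf to satisfy the sheaf condition of \cref{theorem:additive_presheaves_on_perf_ev_well_behaved}. The only point requiring a modicum of care is the claim that connective covers in the sheaf $\infty$-category can be computed levelwise; this follows because the inclusion of sheaves into presheaves is $t$-exact in this situation (sheafication is the identity on additive objects, which are all of them here, by \cref{corollary:properties_of_the_t_structure_on_sheaves}(1) the sheafication is $t$-exact in general, and now it is also essentially surjective onto additive presheaves), so the truncation functors agree. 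Given this, the proof is a one-line unwinding of definitions.
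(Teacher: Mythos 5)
Your proposal is correct and follows essentially the same route as the paper: both reduce the claim to \cref{corollary:if_r_has_even_homotopy_then_additive_presheaves_are_even_sheaves}, which guarantees that connective covers of additive sheaves are computed levelwise when $\pi_{*}R$ is even, and then evaluate $\tau_{\geq 2n} Y(M)$ at $R$ to obtain $\tau_{\geq 2n} M$. The additional remarks you make (naturality in $n$, the levelwise-truncation identification) are fine but already implicit in the paper's one-line argument.
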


\begin{proof}
This follows from \cref{corollary:if_r_has_even_homotopy_then_additive_presheaves_are_even_sheaves}, since 
\[
\Gamma(R, \tau_{\geq 2q} Y_{R}(M)) \simeq \tau_{\geq 2q} (\map_{\Mod_{R}}(R, M)) \simeq \tau_{\geq 2q} M. 
\]
\end{proof}

\section{Multiplicative properties of the even filtration} 

In this section we establish multiplicative properties of the even filtration, namely that its formation is suitably lax symmetric monoidal. In particular, we will show that the even filtration of an $\mathbf{E}_{n}$-algebra in spectra is canonically an $\mathbf{E}_{n}$-algebra in filtered spectra.  Our analysis is essentially based on the study of monoidal properties of the $\infty$-categories of additive sheaves introduced in \S\ref{section:the_even_filtration}. 

\subsection{Generators of additive sheaves} 

Throughout this section, $R$ denotes an $\mathbf{E}_{1}$-algebra. We will describe a particularly convenient set of generators for the associated $\infty$-category of additive sheaves. 

\begin{notation}
\label{notation:r_linear_nu}
If $M \in \Mod_{R}(\spectra)$, we write
\[
Y_{R}(M) = \map_{\Mod_{R}}(-, M) \in \sheaves_{\Sigma}(\Perf(R)_{ev}, \spectra)
\]
for its spectral Yoneda embedding and 
\[
\nu_{R}(M) \colonequals \tau_{\geq 0} Y_{R}(M) 
\]
for its connective cover. 
\end{notation}

\begin{remark}
\label{remark:synthetic_analogue_in_terms_of_the_restricted_yoneda_embedding}
Since $\Perf_{ev}(R)$ is additive, by \cite[{Proposition 2.19}]{pstrkagowski2018synthetic} the stabilization functor 
\[
\sheaves_{\Sigma}(\Perf(R)_{ev}) \rightarrow \sheaves_{\Sigma}(\Perf(R)_{ev}, \spectra)
\]
from sheaves of spaces into sheaves of spectra is fully faithful and identifies the source with the connective part of the t-structure of \cref{corollary:properties_of_the_t_structure_on_sheaves}. The functor $\nu_{R}$ can be identified with the composite 
\[
\begin{tikzcd}
	{\Mod_{R}(\spectra)} & \sheaves_{\Sigma}(\Perf(R)_{ev}) & {\sheaves_{\Sigma}(\Perf(R)_{ev}, \spectra)}
	\arrow["{y_{R}}", from=1-1, to=1-2]
	\arrow[from=1-2, to=1-3]
\end{tikzcd}
\]
of the restricted Yoneda embedding and the stabilization functor. In particular, when considered as valued in the connective part of additive sheaves, $\nu_{R}$ is continuous. Beware that this is not true if we consider it as a functor valued in all additive sheaves of spectra. 
\end{remark}

A reader familiar with synthetic spectra should think of $\nu_{R}(M)$ as the $R$-linear variant of the synthetic analogue construction of \cite[{Definition 4.3}]{pstrkagowski2018synthetic}, justifying the notation.

\begin{lemma}
\label{lemma:yoneda_lemma_in_additive_sheaves}
For any $A \in \Perf(R)_{ev}$ and $X \in \sheaves_{\Sigma}(\Perf(R)_{ev}, \spectra)$, we have 
\[
\map(\nu_{R}(A), X) \simeq X(A),
\]
where the mapping spectrum on the left is the one in additive sheaves. 
\end{lemma}

\begin{proof}
This is immediate from \cref{remark:synthetic_analogue_in_terms_of_the_restricted_yoneda_embedding} and the Yoneda lemma. 
\end{proof}

\begin{lemma}
\label{lemma:compact_generators_of_additive_sheaves}
The objects $\nu_{R}(\Sigma^{2q} R)$ are compact and generate the $\infty$-category $\sheaves(\Perf(R)_{ev}, \spectra)$ under colimits and desuspensions. 
\end{lemma}

\begin{proof}
By \cref{lemma:yoneda_lemma_in_additive_sheaves}, for any additive sheaf $X$ we have $\map(\nu_{R}(\Sigma^{2q} R), X) \simeq X(\Sigma^{2q} R)$ and it follows that $\nu_{R}(\Sigma^{2q} R)$ are compact since filtered colimits in additive sheaves are calculated pointwise as a consequence of part (1) of \cref{theorem:additive_presheaves_on_perf_ev_well_behaved}. 

We move on to the claim that these objects generate. Consider the class $\ccat$ of those perfect even $R$-modules $A$ such that $\nu_{R}(A)$ belongs to the thick subcategory generated by $\nu_{R}(\Sigma^{2q} R)$. We claim that $\ccat$ contains all perfect even modules, which together with \cref{lemma:yoneda_lemma_in_additive_sheaves} implies the claim. 

By definition of perfect even modules, it is enough to show that $\ccat \subseteq \Perf(R)_{ev}$ is closed under extensions and retracts. The second property is clear and the first follows from the fact that if 
\[
C \rightarrow B \rightarrow A 
\]
is a cofibre sequence of perfect even modules then 
\[
\nu_{R}(C) \rightarrow \nu_{R}(B) \rightarrow \nu_{R}(A) 
\]
is a cofibre sequence of sheaves as a consequence of \cref{lemma:characterization_of_even_epimorphisms_as_even_homology_epimorphism}. 
\end{proof}

We record that all even connective covers of the spectral Yoneda embedding can be described in terms of the functor $\nu_{R}$: 

\begin{remark}
\label{remark:behavior_of_nu_under_double_suspension}
If $M$ is an $R$-module, we have an equivalence 
\[
\tau_{\geq 2q} Y_{R}(M) \simeq \Sigma^{2q} \nu_{R}(\Sigma^{-2q} M), 
\]
since both sides can be identified with the sheafication of the presheaf 
\[
\tau_{\geq 2q} \map_{\Mod_{R}}(-, M) \simeq \Sigma^{2q} (\tau_{\geq 0} \map_{\Mod_{R}}(-, \Sigma^{-2q} M)). 
\]
\end{remark}

\subsection{Monoidal structure on additive sheaves} 
\label{subsection:monoidal_structure_on_additive_sheaves}

Recall that the $\infty$-category of $\mathbf{E}_{1}$-rings is symmetric monoidal. Informally, if $A, B$ are $\mathbf{E}_{1}$-algebras then their tensor product $A \otimes B$ is calculated in spectra, with multiplication defined by the composite
\[
(A \otimes B) \otimes (A \otimes B) \simeq (A \otimes A) \otimes (B \otimes B) \rightarrow A \otimes B. 
\]
Associated to an $\mathbf{E}_{1}$-algebra we have the $\infty$-site of perfect even modules of \S\ref{subsection:perfect_even_modules} and the corresponding $\infty$-category of sheaves. In this section, we will show that this association can be made multiplicative.

\begin{notation}
\label{notation:presheaves_are_left_adjoint_to_inclusion_of_catl_to_cat}
We write $\largecatinfty$ for the $\infty$-category of large $\infty$-categories and $\largecatinftyL$ for the $\infty$-category of cocomplete large $\infty$-categories and cocontinuous functors. Both are symmetric monoidal, $\largecatinfty$ with respect to the cartesian product, and $\largecatinftyL$ with respect to the tensor product of \cite[{\S 4.8.1}]{higher_algebra}. The inclusion
\[
\largecatinftyL \hookrightarrow \largecatinfty
\]
is lax symmetric monoidal by construction and it admits a left adjoint 
\[
\presheaves(-) \colon \largecatinfty \rightarrow \largecatinftyL
\]
which by \cite[4.8.1.8]{higher_algebra} is strongly symmetric monoidal. Here
\[
\presheaves(\ccat) \subseteq \Fun(\ccat^{op}, \spaces) 
\]
is the full subcategory of small presheaves; that is, those which can be written as a small colimit of representables. If $\ccat$ itself is small, then $\presheaves(\ccat) = \Fun(\ccat^{op}, \spaces)$. The equivalence 
\[
\Fun^{L}(\presheaves(\ccat), \dcat) \simeq \Fun(\ccat, \dcat),
\]
where the left hand side is the subcategory of cocontinuous functors, is provided by left Kan extension along the Yoneda embedding $\ccat \hookrightarrow \presheaves(\ccat)$. 
\end{notation}

\begin{remark}
A reader might be more familiar with the tensor product of presentable $\infty$-categories. By \cite[{Proposition 4.8.1.15}]{higher_algebra}, 
\[
\PrL \subseteq \largecatinftyL 
\]
is a symmetric monoidal full subcategory. In fact, we will only be interested in the tensor product of presentable $\infty$-categories, but it is convenient to work with all of $\largecatinftyL$ so that the presheaves construction $\ccat \mapsto \presheaves(\ccat)$ can be considered as a left adjoint. 
\end{remark}

\begin{construction}
\label{construction:lax_symmetric_monoidal_structure_on_additive_sheaves_on_perfect_evens_functor}
We will describe 
\begin{enumerate}
\item a lax symmetric monoidal structure on the functor 
\[
\sheaves_{\Sigma}(\Perf(-)_{ev}) \colon \Alg_{\mathbf{E}_{1}}(\spectra) \rightarrow \largecatinftyL
\]
\item a lax symmetric monoidal natural transformation 
\[
\sheaves_{\Sigma}(\Perf(-)_{ev}) \rightarrow \Mod_{-}(\spectra)
\]
of lax symmetric monoidal functors $\Alg_{\mathbf{E}_{1}}(\spectra) \rightarrow \largecatinfty$ which pointwise can be identified with the left Kan extension of the inclusion $\Perf(-)_{ev} \rightarrow \Mod_{-}(\spectra)$.  
\end{enumerate}

Recall that the functor $\Mod_{-}(\spectra) \colon \Alg_{\mathbf{E}_{1}}(\spectra) \rightarrow \largecatinfty$ has a canonical lax symmetric monoidal structure \cite[Theorem 4.8.5.16, (4)]{higher_algebra}. Concretely, if $A, B$ are $\mathbf{E}_{1}$-algebras, the functor 
\[
\Mod_{R}(\spectra) \times \Mod_{S}(\spectra) \rightarrow \Mod_{R \otimes S}(\spectra) 
\]
determined by the lax symmetric monoidal structure is given on objects by 
\[
(M, N) \mapsto M \otimes N,
\]
where the tensor product is calculated in spectra. This functor is exact in each variable separately and the tensor product of the units is the unit in the target. It follows that it restricts to a functor
\begin{equation}
\label{equation:monoidal_structure_on_perfect_evens}
\Perf(R)_{ev} \times \Perf(S)_{ev} \rightarrow \Perf(R \otimes S)_{ev}. 
\end{equation}
Thus, a restriction of the lax symmetric monoidal structure on $\Mod_{-}(\spectra)$ provides one on the functor 
\[
\Perf(-)_{ev} \colon \Alg_{\mathbf{E}_{1}}(\spectra) \rightarrow \catinfty
\]
such that the pointwise inclusion
\begin{equation}
\label{equation:inclusion_of_perfect_evens_into_all_modules_as_a_natural_transformation}
\Perf(-)_{ev} \hookrightarrow \Mod_{-}(\spectra)
\end{equation}
is a symmetric monoidal transformation of lax symmetric monoidal functors $\Alg_{\mathbf{E}_{1}}(\spectra) \rightarrow \largecatinfty$. Since the target of (\ref{equation:inclusion_of_perfect_evens_into_all_modules_as_a_natural_transformation}) is valued in $\largecatinftyL$, using the adjunction of \cref{notation:presheaves_are_left_adjoint_to_inclusion_of_catl_to_cat} we obtain a lax symmetric monoidal structure on the association 
\[
R \in \Alg_{\mathbf{E}_{1}}(\spectra) \mapsto \presheaves(\Perf(R)_{ev}, \spectra) \in \largecatinftyL
\]
and a symmetric monoidal transformation 
\begin{equation}
\label{equation:lax_symmetric_monoidal_transformation_from_presheaves_on_perfect_evens_to_modules}
\presheaves(\Perf(-)_{ev}, \spectra) \rightarrow \Mod_{-}(\spectra)
\end{equation}
of lax symmetric monoidal functors $\Alg_{\mathbf{E}_{1}}(\spectra))$. 

Concretely, if $R, S$ are $\mathbf{E}_{1}$-rings, this lax symmetric monoidal structure determines a map 
\[
\presheaves(\Perf(R)_{ev}) \otimes \presheaves(\Perf(S)_{ev}) \rightarrow \presheaves(\Perf(R \otimes S)_{ev}),
\]
which, if we identify it with a functor 
\begin{equation}
\label{equation:lax_symmetric_monoidal_structure_on_presheaves}
\presheaves(\Perf(R)_{ev}) \times \presheaves(\Perf(S)_{ev}) \rightarrow \presheaves(\Perf(R \otimes S)_{ev})
\end{equation}
cocontinuous in each variable separately, is given by left Kan extension of (\ref{equation:monoidal_structure_on_perfect_evens}). Since the latter is additive and preserves cofibres in each variable separately, \cref{remark:universal_property_of_additive_sheaves} implies that (\ref{equation:lax_symmetric_monoidal_structure_on_presheaves}) induces through localization a cocontinuous functor 
\[
\sheaves_{\Sigma}(\Perf(R)_{ev}) \otimes \sheaves_{\Sigma}(\Perf(S)_{ev}) \rightarrow \sheaves_{\Sigma}(\Perf(R \otimes S)_{ev}). 
\]
This construction refines to a lax symmetric monoidal structure on the functor 
\[
\sheaves_{\Sigma}(\Perf(-)_{ev}; \spectra) \colon \Alg_{\mathbf{E}_{1}}(\spectra) \rightarrow \largecatinftyL.
\]
Since $\Perf(R)_{ev} \hookrightarrow \Mod_{R}(\spectra)$ is additive and preserves cofibres, (\ref{equation:lax_symmetric_monoidal_transformation_from_presheaves_on_perfect_evens_to_modules}) induces a symmetric monoidal transformation 
\[
\sheaves_{\Sigma}(\Perf(-)_{ev}) \rightarrow \Mod_{-}(\spectra) 
\]
of lax symmetric monoidal functors $\Alg_{\mathbf{E}_{1}}(\spectra) \rightarrow \largecatinftyL$, as needed. 
\end{construction}

\begin{remark}
\label{remark:monoidal_structure_on_sheaves_of_spectra_induced_from_that_of_sheaves}
For an $\mathbf{E}_{1}$-ring $R$, we have 
\[
\sheaves_{\Sigma}(\Perf(-)_{ev}, \spectra) \simeq \sheaves_{\Sigma}(\Perf(-)_{ev}) \otimes \spectra,
\]
where the tensor product on the left is that of cocomplete $\infty$-categories. Since $\spectra \in \largecatinftyL$ is a commutative algebra, \cref{construction:lax_symmetric_monoidal_structure_on_additive_sheaves_on_perfect_evens_functor} also determines a lax symmetric monoidal structure on 
\[
R \mapsto \sheaves_{\Sigma}(\Perf(R)_{ev}, \spectra).
\]
The unit map $\spaces \rightarrow \spectra$ determines a symmetric monoidal transformation 
\[
\sheaves_{\Sigma}(\Perf(R)_{ev}) \rightarrow \sheaves_{\Sigma}(\Perf(R)_{ev}, \spectra)
\]
of lax symmetric monoidal functors $\Alg_{\mathbf{E}_{1}}(\spectra) \rightarrow \largecatinftyL$. 
\end{remark}

\begin{remark}
\label{remark:explicit_functoriality_of_additive_sheaves_category_in_termsof_nu}
By unwrapping the construction, the functoriality of $R \mapsto \sheaves_{\Sigma}(\Perf(R)_{ev}, \spectra)$ provided by \cref{remark:monoidal_structure_on_sheaves_of_spectra_induced_from_that_of_sheaves} can be described explicitly as a left Kan extension. Namely, the composite 
\[
\Perf(R)_{ev} \rightarrow \sheaves_{\Sigma}(\Perf(R)_{ev}) \rightarrow  \sheaves_{\Sigma}(\Perf(R)_{ev}, \spectra) 
\]
of the Yoneda embedding and the functor of \cref{remark:monoidal_structure_on_sheaves_of_spectra_induced_from_that_of_sheaves} can be identified with the restriction of functor $\nu_{R}(-)$ of \cref{notation:r_linear_nu} to perfect evens. This implies that if $f \colon R \rightarrow S$ is a map of $\mathbf{E}_{1}$-rings, then the induced functor $f^{\ast}$ between additive sheaves of spectra is the unique cocontinuous functor such that the diagram 
\[
\begin{tikzcd}
	{\Perf(R)_{ev}} & \sheaves_{\Sigma}(\Perf(R)_{ev}, \spectra) \\
	{\Perf(S)_{ev}} & \sheaves_{\Sigma}(\Perf(S)_{ev}, \spectra)
	\arrow["{S \otimes_{R}-}", from=1-1, to=2-1]
	\arrow["\nu_{R}", from=1-1, to=1-2]
	\arrow["f^{\ast}", from=1-2, to=2-2]
	\arrow["\nu_{S}", from=2-1, to=2-2]
\end{tikzcd}
\]
commutes. 
\end{remark}

\begin{corollary}
\label{corollary:additive_sheaves_on_perf_ev_monoidal_for_en_algebras}
Let $R$ be an $\mathbf{E}_{n}$-algebra in spectra. Then $\sheaves_{\Sigma}(\Perf(R)_{ev}, \spectra)$ has a canonical presentable $\mathbf{E}_{n-1}$-monoidal structure. 
\end{corollary}

\begin{proof}
An $\mathbf{E}_{n}$-algebra in spectra can be identified with an $\mathbf{E}_{n-1}$-algebra in $\Alg_{\mathbf{E}_{1}}(\spectra)$. Since the construction $R \mapsto \sheaves_{\Sigma}(\Perf(R)_{ev}; \spectra)$ is lax symmetric monoidal by \cref{remark:monoidal_structure_on_sheaves_of_spectra_induced_from_that_of_sheaves}, the claim follows. 
\end{proof}

\begin{remark}
\label{remark:monoidal_structure_on_additive_sheaves_is_inherited_by_left_kan_extension_from_perfect_evens}
If $R$ is an $\mathbf{E}_{n}$-algebra, $\Perf_{ev}(R)$ is a monoidal subcategory of the $\mathbf{E}_{n-1}$-monoidal $\infty$-category $\Mod_{R}(\spectra)$. Using \cref{remark:explicit_functoriality_of_additive_sheaves_category_in_termsof_nu}, we see that the monoidal structure on additive sheaves of \cref{corollary:additive_sheaves_on_perf_ev_monoidal_for_en_algebras} is inherited from $\Perf(R)_{ev}$ through left Kan extension. 
\end{remark}

\subsection{A filtered variant of Schwede-Shipley} 
\label{subsection:a_filtered_variant_of_schwede_shipley}

In this section, we describe a filtered variant of the Schwede-Shipley recognition result for $\infty$-categories of modules over ring spectra \cite{schwede2003stable}. This will be used in the next section to describe the $\infty$-categories of additive sheaves in terms of filtered spectra. 

\begin{recollection}
\label{recollection:schwede_shipley_recognition}
If $\ccat$ is a presentable, stable $\infty$-category, then $\ccat$ can be identified with a module over $\spectra$ in $\PrL$, the $\infty$-category of presentable $\infty$-categories and left adjoints. The $\spectra$-module structure determines for any $a, b \in \ccat$ a mapping spectrum $\map(a, b)$. A choice of an object $c \in \ccat$, which is the same as a map of modules $\spectra \rightarrow \ccat$ in $\Pr^{L}$, determines a functor 
\begin{equation}
\label{equation:schwede_shipley_comparison_functor}
\map(c, -) \colon \ccat \rightarrow \Mod_{\map(c, c)}(\spectra)
\end{equation}
and by the Schwede-Shipley recognition theorem, the following are equivalent: 
\begin{enumerate}
    \item the functor (\ref{equation:schwede_shipley_comparison_functor}) is an equivalence of pointed $\infty$-categories, 
    \item $c$ is a compact and a generator; that is, it generates $\ccat$ under colimits and desuspensions. 
\end{enumerate}

If $R$ is an $\mathbf{E}_{1}$-ring, then $\Mod_{R}(\spectra)$ has a canonical distinguished object $R$ (considered as a left module over itself), promoting it to an $\mathbf{E}_{0}$-$\spectra$-algebra. Moreover, the resulting construction 
\[
\Mod_{-}(\spectra) \colon \Alg_{\mathbf{E_{1}}}(\spectra) \rightarrow \Alg_{\mathbf{E_{0}}}(\Mod_{\spectra}(\PrL))
\]
is fully faithful by \cite[Proposition 7.1.2.6]{higher_algebra}. In other words, $\mathbf{E}_{1}$-algebras can be identified with a subcategory of pointed $\spectra$-modules satisfying either of the above two equivalent conditions. 
\end{recollection} 

\begin{notation}
\label{notation:generators_of_filtered_spectra}
We write $\Fil \spectra \colonequals \Fun(\ZZ^{op}, \spectra)$ for the $\infty$-category of filtered spectra, where we consider $\ZZ$ as a poset. This is symmetric monoidal through left Kan extension of the group structure on the integers. For $q \in \ZZ$, we write $f_{q}(S^{0})$ for the free filtered spectrum generated in weight $q$, explicitly described by
\[
(f_{q}(S^{0}))_{k} \colonequals \begin{cases}
  S^{0}  & k \leq q \\
  0 & k > q
\end{cases}.
\]
\end{notation}

\begin{recollection}
If $\ccat$ is a $\Fil \spectra$-module in $\Pr^{L}$, then the module structure determines for any $a, b \in \ccat$ a filtered mapping spectrum $\underline{\map}^{*}(a, b)$. Concretely, we have 
\begin{equation}
\label{equation:explicit_formula_for_internal_filtered_mapping_spectrum}
\underline{\map}^{q}(a, b) \colonequals \map_{\ccat}(f_{q}(S^{0}) \otimes a, b) 
\end{equation}
where the tensor product denotes the module structure. A choice of an object $c \in \ccat$, which is the same as a map of modules $\Fil \spectra \rightarrow \ccat$ in $\Pr^{L}$, gives a functor 
\begin{equation}
\label{equation:comparison_functor_into_modules_in_filtered_spectra}
\underline{\map}^{*}(c, -) \colon \ccat \rightarrow \Mod_{\underline{\map}^{*}(c, c)}(\Fil \spectra). 
\end{equation}
\end{recollection}

\begin{proposition}[Filtered Schwede-Shipley]
\label{proposition:fitlered_schwede_shipley}
For a presentable $\infty$-category $\ccat$ equipped with a $\Fil \spectra$-module structure and a choice of an object $c \in \ccat$, the following are equivalent: 
\begin{enumerate}
    \item the comparison functor of (\ref{equation:comparison_functor_into_modules_in_filtered_spectra}) is an equivalence, 
    \item $c$ is compact and the objects $f_{q}(S^{0}) \otimes c$ for $q \in \mathbb{Z}$ generate $\ccat$ under colimits and desuspensions. 
\end{enumerate}
\end{proposition}

\begin{proof}
We will verify that the six conditions outlined in \cite[4.8.5.8]{higher_algebra} hold. Since $\ccat$ is assumed to be a $\Fil \spectra$-module in $\Pr^{L}$, the first three are automatic. Since $\Fil \spectra$ is generated under colimits and desuspensions by dualizable objects, namely $f_{q}(S^{0})$, the sixth condition also follows. 

We have to verify the fourth and fifth conditions, which are that (\ref{equation:comparison_functor_into_modules_in_filtered_spectra}) is conservative and preserves geometric realizations. Since $c$ is compact, so is $f_{q}(S^{0}) \otimes c$ for each $q \in \ZZ$, and the formula (\ref{equation:explicit_formula_for_internal_filtered_mapping_spectrum}) implies that the needed functor preserves filtered colimits. As an exact functor between presentable stable $\infty$-categories, it follows it is a left adjoint. Since $f_{q}(S^{0}) \otimes c$ jointly generate, it is also conservative, as needed. 
\end{proof}

\subsection{Digression: Synthetic spectra}
\label{subsection:synthetic_spectra_and_the_even_filtration_of_the_sphere}

Introduced in \cite{pstrkagowski2018synthetic}, the $\infty$-category of synthetic spectra is an $\infty$-categorical deformation which categorifies the Adams spectral sequence. In this short section, which is not needed in the rest of the paper, we observe that sheaves on perfect even $S^{0}$-modules can be identified with even synthetic spectra based on complex bordism $\MU$.

Since the latter essentially encode the Adams-Novikov filtration on the sphere, this identification provides some informal motivation for our construction of the even filtration as giving an ``$R$-module analogue of the Adams-Novikov filtration''.

\begin{proposition}
\label{proposition:finite_spectrum_is_even_mu_projective_iff_its_perfect_even}
We have that 
\begin{enumerate}
    \item a finite spectrum $A$ is perfect even as an $S^{0}$-module if and only its $MU$-homology is concentrated in even degrees and projective as a $\MU_{\ast}$-module, 
    \item a map between perfect even spectra is an even epimorphism if and only if it is surjective on $\MU_{*}$. 
\end{enumerate}
\end{proposition}

\begin{proof}
Since the sphere has projective $\MU$-homology concentrated in even degrees, so does any perfect even. The converse follows from \cite[Remark 6.3]{pstrkagowski2018synthetic}. 
\end{proof}

\begin{corollary}
There exists a canonical symmetric monoidal equivalence 
\[
\sheaves_{\Sigma}(\Perf(S^{0})_{ev}, \spectra) \simeq \Syn_{\MU}^{ev}
\]
between additive sheaves on perfect even $S^{0}$-modules with respect to the even epimorphism topology and the $\infty$-category  of even $\MU$-based synthetic spectra of \cite[\S 5.2]{pstrkagowski2018synthetic}. 
\end{corollary}

\begin{proof}
By definition $\Syn_{\MU}^{ev}$ is the $\infty$-category of additive sheaves of spectra on the site $\spectra_{\MU}^{fpe}$ of finite spectra with even, $\MU_{\ast}$-projective homology with coverings given by $\MU_{\ast}$-projective maps, see \cite[Definition 5.10]{pstrkagowski2018synthetic} By \cref{proposition:finite_spectrum_is_even_mu_projective_iff_its_perfect_even}, the forgetful functor
\[
\Perf(S^{0})_{ev} \rightarrow \spectra_{\MU}^{fpe} 
\]
from perfect even $S^{0}$-modules to finite spectra with even, $\MU_{\ast}$-projective homology is an equivalence of Grothendieck sites. This induces an adjoint equivalence between $\infty$-categories of sheaves. The symmetric monoidal structure on both sides is induced by left Kan extension from tensor product of finite spectra, so the equivalence is symmetric monoidal. 
\end{proof}

\subsection{Additive sheaves as modules over a filtered algebra} 
\label{subsection:additive_sheaves_as_modules_over_a_filtered_algebra}

In this section, we describe the additive sheaves in terms of modules in filtered spectra, relating them to the even filtration of \cref{definition:even_filtration_of_an_r_module}.

\begin{remark}[Previous work]
Over the sphere, additive sheaves on perfect evens can be identified with synthetic spectra as we observed by \cref{proposition:finite_spectrum_is_even_mu_projective_iff_its_perfect_even}. In this case, our results here recover the filtered description of synthetic spectra previously proven by Gheorge-Isaksen-Krause-Ricka \cite[Remark 6.13]{gheorghe2022c} and Burklund-Hahn-Senger \cite[Proposition C.22]{burklund2020galois}.
\end{remark}

\begin{construction}
\label{construction:comparison_functor_between_synthetic_spectra_and_modules_over_the_even_filtered_sphere}
Consider the $\infty$-category $\sheaves_{\Sigma}(\Perf(S^{0})_{ev}, \spectra)$ of additive sheaves on perfect even spectra. This has a canonical symmetric monoidal structure of \cref{corollary:additive_sheaves_on_perf_ev_monoidal_for_en_algebras}, which coincides with that obtained through left Kan extension from \cref{remark:monoidal_structure_on_additive_sheaves_is_inherited_by_left_kan_extension_from_perfect_evens}. The additive sheaf $Y_{S^{0}}(S^{0})$, explicitly given by the Spanier-Whitehead dual 
\[
A \mapsto A^{\ast} \colonequals \map_{\spectra}(A, S^{0})
\]
is symmetric monoidal when considered as a functor $\Perf(S^{0})_{ev}^{op} \rightarrow \spectra$. It follows from \cite[2.2.6.8]{higher_algebra} that the symmetric monoidal structure determines a commutative algebra object in the $\infty$-category of additive sheaves. 

Since taking Postnikov towers is lax symmetric monoidal, the construction 
\[
q \mapsto \tau_{\geq -2q} Y_{S^{0}}(S^{0})
\]
determines a lax symmetric monoidal functor 
\begin{equation}
\label{equation:filtered_postnikov_tower_of_sphere_as_functor_from_z_into_spectra}
\tau_{\geq -2 \ast} Y_{S^{0}}(S^{0}) \colon \ZZ \mapsto \sheaves_{\Sigma}(\Perf(S^{0})_{ev}, \spectra).
\end{equation}
Using \cref{remark:behavior_of_nu_under_double_suspension} and the fact that $\nu_{S^{0}}$ is strongly symmetric monoidal when restricted to $\Perf(S^{0})_{ev}$, we see that (\ref{equation:filtered_postnikov_tower_of_sphere_as_functor_from_z_into_spectra}) is strongly symmetric monoidal. Its left Kan extension is a a symmetric monoidal left adjoint 
\[
F: \Fil \spectra \rightarrow \sheaves_{\Sigma}(\Perf(S^{0})_{ev}, \spectra),
\]
whose action on the generators of 
(\ref{notation:generators_of_filtered_spectra}) is given by 
\[
F(f_{q}(S^{0})) = \tau_{\geq -2q} Y_{S^{0}}(S^{0}).
\]
\end{construction} 

The functor $F$ can be thought of as a map of commutative monoids in $\PrL$. We have previously observed in \cref{construction:lax_symmetric_monoidal_structure_on_additive_sheaves_on_perfect_evens_functor} and \cref{remark:monoidal_structure_on_sheaves_of_spectra_induced_from_that_of_sheaves} that the association 
\[
R \in \Alg_{\mathbf{E}_{1}}(\spectra) \mapsto \sheaves_{\Sigma}(\Perf(R)_{ev}, \spectra) \in \PrL
\]
is lax symmetric monoidal. Since $S^{0} \in \Alg_{\mathbf{E}_{1}}(\spectra)$ is the monoidal unit, it follows that for any $\mathbf{E}_{1}$-algebra $R$ the presentable $\infty$-category $\sheaves_{\Sigma}(\Perf(R)_{ev}, \spectra)$ has a canonical structure of a $\sheaves_{\Sigma}(\Perf(S^{0})_{ev}, \spectra)$-module. 

\begin{notation}
\label{notation:fil_sp_module_structure_on_additive_sheaves_over_perf_r_ev}
If $R$ is an $\mathbf{E}_{1}$-algebra, we will consider $\sheaves_{\Sigma}(\Perf(R)_{ev}, \spectra)$ as a module over $\Fil \spectra$ via restriction along functor $F$ of \cref{construction:comparison_functor_between_synthetic_spectra_and_modules_over_the_even_filtered_sphere}. 
\end{notation} 

By the filtered variant of Schwede-Shipley theorem of \cref{proposition:synthetic_spectra_are_modules_over_the_filtered_sphere}, a $\Fil \spectra$-module structure on an object of $\PrL$ can be used to identify a given $\infty$-category with modules over the filtered endomorphism ring of a generator. In our case, the needed generator will be given by $\nu_{R}(R)$ of \cref{notation:r_linear_nu}. 

\begin{lemma}
\label{lemma:postnikov_tower_of_spectra_yoneda_in_terms_of_action_of_filtered_spectra}
Let $R$ be an $\mathbf{E}_{1}$-algebra and $M$ an $R$-module. Then 
\begin{equation}
\label{equation:two_filtered_objects_one_compares_when_describing_even_filtration_as_internal_filtered_mapping_spectrum}
F(f_{-\ast} (S^{0})) \otimes \nu_{R}(M) \simeq \tau_{\geq 2*} Y_{R}(M) 
\end{equation}
as filtered objects of $\sheaves_{\Sigma}(\Perf(R)_{ev}, \spectra)$, where the tensor product is the action of \cref{notation:fil_sp_module_structure_on_additive_sheaves_over_perf_r_ev}.
\end{lemma}

\begin{proof}
The action of $\Fil \spectra$ is defined by restriction along the functor $F$ of \cref{construction:comparison_functor_between_synthetic_spectra_and_modules_over_the_even_filtered_sphere} and we have 
\[
F(f_{-q} (S^{0})) \simeq \tau_{\geq 2q} Y_{S^{0}}(S^{0}) \simeq \Sigma^{2q} \nu_{S^{0}}(\Sigma^{-2q} S^{0}) \in \sheaves_{\Sigma}(\Perf(S^{0})_{ev}, \spectra),
\]
where the first equivalence is the construction of $F$ as a left Kan extension and the second is \cref{remark:behavior_of_nu_under_double_suspension}. Thus, after unwrapping the definitions we see that the right hand side of (\ref{equation:two_filtered_objects_one_compares_when_describing_even_filtration_as_internal_filtered_mapping_spectrum}) can be identified with 
\[
(\Sigma^{2q} \nu_{S^{0}}(\Sigma^{-2q} S^{0})) \otimes \nu_{R}(M) \simeq \Sigma^{2q} \nu_{R}(\Sigma^{-2q} S^{0} \otimes M) \simeq \Sigma^{2q} \nu_{R} (\Sigma^{-2q} M) \simeq \tau_{\geq 2q} Y_{R}(M). 
\]
Here, the tensor product on the left is the action of additive sheaves for $S^{0}$ on additive sheaves for $R$, which by construction for objects of the form $\nu(-)$ restricts to the standard action of $\Perf(S^{0})_{ev}$ on $\Perf(R)_{ev}$. 

By the arguments of \cite[{Proposition 4.28, Lemma 4.29}]{pstrkagowski2018synthetic}, as $q$ changes, the induced maps on the left hand side of (\ref{equation:two_filtered_objects_one_compares_when_describing_even_filtration_as_internal_filtered_mapping_spectrum}) are connective covers, so that it can be identified with the (double-speed) Postnikov tower of $Y_{R}(M)$, as needed. 
\end{proof}
\begin{lemma}
\label{lemma:filtered_endomorphism_ring_of_canonical_generator_of_additive_sheaves_is_the_even_filtration}
Let $R$ be an $\mathbf{E}_{1}$-algebra and $M$ an $R$-module. Then there is an equivalence of filtered spectra 
\begin{equation}
\label{equation:even_filtration_as_an_internal_mapping_spectrum}
\underline{\map}^{\ast}(\nu_{R}(R), \nu_{R}(M)) \simeq \fil_{ev/R}^{\ast}(M) 
\end{equation}
between the filtered mapping spectrum induced by the $\Fil \spectra$-module structure of \cref{notation:fil_sp_module_structure_on_additive_sheaves_over_perf_r_ev} and the even filtration of \cref{definition:even_filtration_of_an_r_module}. 
\end{lemma}

\begin{proof}
By definition of the filtered mapping spectrum we have 
\[
\underline{\map}^{q}(\nu_{R}(R), \nu_{R}(R)) \simeq \map_{\sheaves_{\Sigma}(\Perf(R)_{ev}, \spectra)}(f_{q}(S^{0}) \otimes \nu_{R}(R), \nu_{R}(M)),
\]
where the tensor product is the one defined by the $\Fil \spectra$-action. Since $f_{q} S^{0}$ is invertible under the tensor product of filtered spectra with inverse, we can rewrite the above as  
\[
\map_{\sheaves_{\Sigma}(\Perf(R)_{ev}, \spectra)}(\nu_{R}(R), f_{-q} (S^{0}) \otimes \nu_{R}(M)).
\]
Using the Yoneda lemma of \cref{lemma:yoneda_lemma_in_additive_sheaves}, this can be further rewritten as
\[
\Gamma_{\Perf(R)_{ev}}(R, f_{-q} (S^{0}) \otimes \nu_{R}(M)).
\]

Since the even filtration is defined as 
\[
\Gamma_{\Perf(R)_{ev}}(R, \tau_{\geq 2q} Y_{R}(M)), 
\]
the previous paragraph shows that (\ref{equation:even_filtration_as_an_internal_mapping_spectrum}) follows from an equivalence of filtered additive sheaves 
\[
f_{-q}(S^{0}) \otimes \nu_{R}(M) \simeq \tau_{\geq 2q} Y_{R}(M),
\]
which is \cref{lemma:postnikov_tower_of_spectra_yoneda_in_terms_of_action_of_filtered_spectra}. 
\end{proof}

\begin{corollary}
\label{corollary:even_filtration_of_an_e1_ring_is_a_filtered_e1_algebra_and_the_even_filtration_of_a_module_is_a_filtered_module}
Let $R$ be an $\mathbf{E}_{1}$-algebra and $M$ be an $R$-module. Then
\begin{enumerate}
    \item $\fil^{\ast}_{ev/R}(R)$ has a canonical structure of an $\mathbf{E}_{1}$-algebra in filtered spectra, 
    \item $\fil^{\ast}_{ev/R}(M)$ has a canonical structure of a module over $\fil^{\ast}_{ev/R}(R)$. 
    \end{enumerate}
\end{corollary}

\begin{proof}
By \cref{lemma:filtered_endomorphism_ring_of_canonical_generator_of_additive_sheaves_is_the_even_filtration}, we have an equivalence
\[
\fil^{\ast}_{ev/R}(M) \simeq \underline{\map}^{\ast}(\nu_{R}(R), \nu_{R}(M)). 
\]
and both claims follow at once. 
\end{proof}

\begin{notation}
Throughout the rest of the paper, if $R$ is an $\mathbf{E}_{1}$-algebra, we will consider $\fil_{ev}^{\ast}(R)$ as a filtered $\mathbf{E}_{1}$-ring through the equivalence of \cref{corollary:even_filtration_of_an_e1_ring_is_a_filtered_e1_algebra_and_the_even_filtration_of_a_module_is_a_filtered_module}. 
\end{notation}

Using the structure of a $\Fil \spectra$-module and identifying the filtered endomorphism ring with the even filtration using \cref{lemma:filtered_endomorphism_ring_of_canonical_generator_of_additive_sheaves_is_the_even_filtration}, we obtain a comparison functor 
\begin{equation}
\label{equation:functor_from_additive_sheaves_into_modules_over_even_filtered_ring}
\underline{\map}^{\ast}(\nu_{R}(R), -) \colon \sheaves_{\Sigma}(\Perf(R)_{ev}, \spectra) \rightarrow \Mod_{\fil_{ev/R}^{\ast}(R)}(\Fil \spectra) 
\end{equation}
In \S \ref{subsection:a_filtered_variant_of_schwede_shipley}, we proved a criterion for such a functor to be an equivalence which we can now apply. 

\begin{proposition}
\label{proposition:synthetic_spectra_are_modules_over_the_filtered_sphere}
The functor of (\ref{equation:functor_from_additive_sheaves_into_modules_over_even_filtered_ring}) is an equivalence of $\infty$-categories. 
\end{proposition}

\begin{proof}
By \cref{proposition:fitlered_schwede_shipley}, we have to verify that the objects 
\[
F(f^{\ast}_{k}(S^{0})) \simeq \nu_{R}(R) \simeq \Sigma^{-2k} \nu_{S^{0}}(\Sigma^{-2k} S^{0}) 
\]
are compact and for $k \in \ZZ$ jointly generate additive sheaves under colimits and desuspensions. This is exactly \cref{lemma:compact_generators_of_additive_sheaves}. 
\end{proof}

\subsection{Monoidality of the even filtration} 

In \S\ref{subsection:additive_sheaves_as_modules_over_a_filtered_algebra}, we observed that if $R$ is an $\mathbf{E}_{1}$-algebra, then the associated $\infty$-category of additive sheaves on perfect evens can be identified with modules in filtered spectra over $\fil_{ev}^{\ast}(R)$. Using this observation, one can deduce monoidality properties of the even filtration itself. In this section, we will show that the construction 
\[
(R, M) \mapsto \fil^{\ast}_{ev/R}(M) 
\]
can be refined to a lax symmetric monoidal functor from pairs of an $\mathbf{E}_{1}$-algebra and and a module to the corresponding $\infty$-category of pairs in filtered spectra. 

To make our arguments easier to follow, we will first analyze the case of the $\mathbf{E}_{1}$-algebra even filtration $R \mapsto \fil^{\ast}_{ev/R}(R)$. 

\begin{construction}
\label{construction:lifting_additive_sheaves_to_pointed_fil_sp_modules}
Consider the lax symmetric monoidal functor $\Alg_{\mathbf{E}_{1}}(\spectra) \rightarrow \PrL$ 
\begin{equation}
\label{equation:r_goes_to_additive_sheaves_in_construction_of_lift_to_pointed_filsp_modules}
R \mapsto \sheaves_{\Sigma}(\Perf(R)_{ev}, \spectra) 
\end{equation}
of \cref{construction:lax_symmetric_monoidal_structure_on_additive_sheaves_on_perfect_evens_functor} and \cref{remark:monoidal_structure_on_sheaves_of_spectra_induced_from_that_of_sheaves}. Since $S^{0} \in \Alg_{\mathbf{E}_{1}}(\spectra)$ is the monoidal unit, (\ref{equation:r_goes_to_additive_sheaves_in_construction_of_lift_to_pointed_filsp_modules}) admits a lift to a lax symmetric monoidal functor 
\begin{equation}
\Alg_{\mathbf{E}_{1}}(\spectra) \rightarrow \Mod_{\sheaves_{\Sigma}(\Perf(S^{0})_{ev}, \spectra)}(\PrL) 
\end{equation}
which by restricting scalars along the functor $\Fil\spectra \rightarrow \sheaves_{\Sigma}(\Perf(S^{0})_{ev}, \spectra)$ of \cref{construction:comparison_functor_between_synthetic_spectra_and_modules_over_the_even_filtered_sphere} yields a lift of (\ref{equation:r_goes_to_additive_sheaves_in_construction_of_lift_to_pointed_filsp_modules}) valued in $\Fil \spectra$-modules. Since $S^{0} \in \Alg_{\mathbf{E}_{1}}(\spectra)$ is initial, every $\mathbf{E}_{1}$-ring is canonically an $\mathbf{E}_{0}$-algebra object of $\Alg_{\mathbf{E}_{1}}(\spectra)$, so that (\ref{equation:r_goes_to_additive_sheaves_in_construction_of_lift_to_pointed_filsp_modules}) admits a further lift which is a lax symmetric monoidal functor 
\[
\Alg_{\mathbf{E}_{1}}(\spectra) \rightarrow \Alg_{\mathbf{E}_{0}}(\Mod_{\Fil \spectra}(\PrL)). 
\]
\end{construction}

\begin{remark}
\label{remark:e0_algebra_structure_on_additive_sheaves_determined_by_synthetic_analogue_of_the_unit}
The structure of an $\mathbf{E}_{0}$-algebra in $\Mod_{\Fil \spectra}(\PrL)$ is the same as a choice of an object in the underlying $\infty$-category. If $R$ is an $\mathbf{E}_{1}$-ring with unit map $\eta \colon S^{0} \rightarrow R$, the corresponding object determined by the lift of 
\cref{construction:lifting_additive_sheaves_to_pointed_fil_sp_modules} is the image of the monoidal unit under the induced functor 
\[
\eta^{\ast} \colon \sheaves(\Perf(S^{0})_{ev}, \spectra) \rightarrow \sheaves(\Perf(R)_{ev}, \spectra). 
\]
Since the monoidal unit of the source is given by $\nu_{S^{0}}(S^{0})$ of \cref{notation:r_linear_nu}, the distinguished object is given by 
\[
\nu_{R}(R) \simeq \eta^{*} \nu_{S^{0}}(S^{0}) 
\]
where the equivalence is that of \cref{remark:explicit_functoriality_of_additive_sheaves_category_in_termsof_nu}. 
\end{remark}

\begin{theorem}
\label{theorem:even_filtration_is_lax_symmetric_monoidal}
The $\mathbf{E}_{1}$-algebra even filtration can be canonically refined to a lax symmetric monoidal functor 
\[
\fil_{ev/-}^{*}(-) \colon \Alg_{\mathbf{E}_{1}}(\spectra) \rightarrow \Alg_{\mathbf{E}_{1}}(\Fil \spectra). 
\]
\end{theorem}

\begin{proof}
By \cite[4.8.5.16, 4.8.5.17]{higher_algebra}, the module $\infty$-category construction determines a symmetric monoidal functor 
\begin{equation}
\label{equation:fully_faithful_embedding_of_filtered_e1_algebras_into_e0_filsp_modules}
\Mod_{-}(\Fil \spectra) \colon \Alg_{\mathbf{E}_{1}}(\Fil \spectra) \rightarrow \Alg_{\mathbf{E}_{0}}(\Mod_{\Fil \spectra}(\PrL))
\end{equation}
which is moreover fully faithful by \cite[4.8.5.20]{higher_algebra}. In \cref{construction:lifting_additive_sheaves_to_pointed_fil_sp_modules}, we refined the association
\[
R \mapsto \sheaves_{\Sigma}(\Perf(R)_{ev}, \spectra) 
\]
to a lax symmetric monoidal functor 
\[
\Alg_{\mathbf{E}_{1}}(\spectra) \rightarrow \Alg_{\mathbf{E}_{0}}(\Mod_{\Fil \spectra}(\PrL)). 
\]
Since the $\mathbf{E}_{0}$-algebra structure on additive sheaves corresponds to the choice of the object $\nu_{R}(R)$ as observed in \cref{remark:e0_algebra_structure_on_additive_sheaves_determined_by_synthetic_analogue_of_the_unit}, by \cref{proposition:synthetic_spectra_are_modules_over_the_filtered_sphere} this functor factors through the image of the fully faithful embedding (\ref{equation:fully_faithful_embedding_of_filtered_e1_algebras_into_e0_filsp_modules}) as in the diagram
\[
\begin{tikzcd}
	\Alg_{\mathbf{E}_{1}}(\spectra) & \Alg_{\mathbf{E}_{0}}(\Mod_{\Fil \spectra}(\PrL)) \\
	& \Alg_{\mathbf{E}_{1}}(\Fil \spectra) 
	\arrow[from=1-1, to=1-2]
	\arrow[hook, from=2-2, to=1-2]
	\arrow["{\exists_{!}}"', dotted, from=1-1, to=2-2]
\end{tikzcd}. 
\]
By \cref{proposition:synthetic_spectra_are_modules_over_the_filtered_sphere}, the diagonal arrow can be identified with a lax symmetric monoidal lift of the even filtration functor, as desired. 
\end{proof}

\begin{corollary}
\label{corollary:filtered_en_algebra_structure_on_even_filtration_of_en_ring}
Let $R$ be an $\mathbf{E}_{n}$-ring for $n \geq 1$. Then the even filtration $\fil^{\ast}_{ev/R}(R)$ has a canonical structure of a filtered $\mathbf{E}_{n}$-algebra. 
\end{corollary}

\begin{proof}
An $\mathbf{E}_{n}$-ring for $n \geq 1$ can be identified with an $\mathbf{E}_{n-1}$-algebra in the $\infty$-category $\Alg_{\mathbf{E}_{1}}(\spectra)$, so this follows immediately from \cref{theorem:even_filtration_is_lax_symmetric_monoidal}. 
\end{proof}

\begin{remark}
\label{remark:determining_property_of_the_even_filtration_as_a_lax_symmetric_monoidal_functor}
By construction, the functor $R \mapsto \fil^{\ast}_{ev/R}(R)$ of \cref{theorem:even_filtration_is_lax_symmetric_monoidal} has the property that 
\[
\Mod_{\fil^{\ast}_{ev/R}(R)}(\Fil \spectra) \simeq \sheaves_{\Sigma}(\Perf(R)_{ev}, \spectra) 
\]
as lax symmetric monoidal functors $\Alg_{\mathbf{E}_{1}}(\spectra) \rightarrow \PrL$, with the equivalence being given by the functor $\underline{\map}^{\ast}(\nu_{R}, -)$ of \cref{proposition:synthetic_spectra_are_modules_over_the_filtered_sphere}. The somewhat elaborate setup of \cref{construction:lifting_additive_sheaves_to_pointed_fil_sp_modules} is necessary to ensure that the data needed to establish this equivalence (namely the $\Fil \spectra$-module structure and the choice of the object $\nu_{R}(R)$) can be chosen suitably functorially. 
\end{remark}

We now move on to another way in which the even filtration is multiplicative, namely as a functor on the $\infty$-category of pairs of an algebra and a module. The strategy is analogous to that of \cref{theorem:even_filtration_is_lax_symmetric_monoidal}, but the category theory is just slightly more involved, using the notion of symmetric monoidal cocartesian fibrations which we now recall. 

\begin{recollection}
\label{recollection:symmetric_monoidal_cocartesian_fibration}
If $\ccat$ is a symmetric monoidal $\infty$-category, we say that a symmetric monoidal functor $\dcat \rightarrow \ccat$ is a \emph{symmetric monoidal cocartesian fibration} if in the corresponding diagram 
\[
\begin{tikzcd}
	{\dcat^{\otimes}} && {\ccat^{\otimes}} \\
	& {\euscr{F}\mathrm{in}_{*}}
	\arrow[from=1-1, to=2-2]
	\arrow[from=1-3, to=2-2]
	\arrow[from=1-1, to=1-3]
\end{tikzcd}
\]
of cocartesian fibrations over finite pointed sets, the horizontal arrow is also a cocartesian fibration. The Grothendieck construction for $\infty$-categories can be refined to a multiplicative variant which yields an equivalence of $\infty$-categories between symmetric monoidal cocartesian fibrations over $\ccat$ and lax symmetric monoidal functors $\ccat \rightarrow \largecatinfty$, see \cite[{Proposition A.2.1}]{hinich_rectification_of_algebras}.
\end{recollection}

\begin{notation}
\label{notation:infty_category_of_pairs_of_algebra_and_a_module}
If $\ccat$ is a presentably symmetric monoidal $\infty$-category, we write $\Mod(\ccat)$ for the corresponding $\infty$-category of algebras over the operad $\euscr{LM}$ of \cite[{Definition 4.2.1.7}]{higher_algebra}. Informally, $\Mod(\ccat)$ is the $\infty$-category of pairs $(R, M)$, where $R \in \Alg_{\mathbf{E}_{1}}(\ccat)$ and $M$ is a left $R$-module. The forgetful functor 
\begin{equation}
\label{equation:forgetful_functor_from_mod_into_e1_algebras}
\Mod(\ccat) \rightarrow \Alg_{\mathbf{E}_{1}}(\ccat)
\end{equation}
is a symmetric monoidal cocartesian fibration classifying the lax\footnote{Beware the subtle point that the association $R \mapsto \Mod_{R}(\ccat)$ is symmetric monoidal when considered as valued in $\largecatinftyL$ by \cite[Theorem 4.8.5.16, (4)]{higher_algebra}, but only lax symmetric monoidal when considered as valued in $\largecatinfty$.} symmetric monoidal functor $\Alg_{\mathbf{E}_{1}}(\ccat) \rightarrow \largecatinfty$ given by $R \mapsto \Mod_{R}(\ccat)$ in the sense of \cref{recollection:symmetric_monoidal_cocartesian_fibration}. 
\end{notation}

\begin{construction}
\label{construction:nu_as_a_map_of_cocartesian_fibrations}
Let $\sheaves_{\Sigma}^{\spectra}$ denote the symmetric monoidal cocartesian fibration classifying the functor 
\[
R \mapsto \sheaves_{\Sigma}(\Perf(R)_{ev}, \spectra) 
\]
in the sense of \cref{recollection:symmetric_monoidal_cocartesian_fibration}. We will construct a commutative diagram of symmetric monoidal $\infty$-categories
\[
\begin{tikzcd}
	{\Mod(\spectra)} & {} & \sheaves_{\Sigma}^{\spectra} \\
	& \Alg_{\mathbf{E}_{1}}(\spectra)
	\arrow[from=1-1, to=2-2]
	\arrow[from=1-3, to=2-2]
	\arrow[from=1-1, to=1-3]
\end{tikzcd}
\]
where the horizontal arrow is lax symmetric monoidal and fibrewise over $R \in \Alg_{\mathbf{E}_{1}}(\spectra)$ can be identified with the construction $M \mapsto \nu_{R}(M)$ of \cref{notation:r_linear_nu}. 

By \cref{construction:lax_symmetric_monoidal_structure_on_additive_sheaves_on_perfect_evens_functor}, we have a symmetric monoidal natural transformation 
\begin{equation}
\label{equation:natural_transformation_from_sheaves_into_modules_in_proof_of_monoidality_of_the_even_filtration}
\sheaves_{\Sigma}(\Perf(R)_{ev}) \rightarrow \Mod_{R}(\spectra)
\end{equation}
which is pointwise given by left Kan extension along the inclusion $\Perf(R)_{ev} \hookrightarrow \Mod_{R}(\spectra)$. If we write $\sheaves_{\Sigma} \rightarrow \Alg_{\mathbf{E}_{1}}(\spectra)$ for the cocartesian fibration classifying $R \mapsto \sheaves_{\Sigma}(\Perf(R)_{ev})$, (\ref{equation:natural_transformation_from_sheaves_into_modules_in_proof_of_monoidality_of_the_even_filtration}) determines a commutative diagram 
\[
\begin{tikzcd}
	{\sheaves_{\Sigma}} & {} & \Mod(Sp) \\
	& \Alg_{\mathbf{E}_{1}}(\spectra)
	\arrow[from=1-1, to=2-2]
	\arrow[from=1-3, to=2-2]
	\arrow["F", from=1-1, to=1-3]
\end{tikzcd}
\]
where $F$ is symmetric monoidal. Since (\ref{equation:natural_transformation_from_sheaves_into_modules_in_proof_of_monoidality_of_the_even_filtration}) is pointwise a left adjoint, by \cite[{Corollary 7.3.2.7}]{higher_algebra}, $F$ is also a left adjoint and moreover its right adjoint $G$ fits into a commutative diagram 
\[
\begin{tikzcd}
	{\sheaves_{\Sigma}} & {} & \Mod(Sp) \\
	& \Alg_{\mathbf{E}_{1}}(\spectra)
	\arrow[from=1-1, to=2-2]
	\arrow[from=1-3, to=2-2]
	\arrow["R", from=1-3, to=1-1]
\end{tikzcd}
\]
and is fibrewise over $\Alg_{\mathbf{E}_{1}}(\spectra)$ given by the right adjoint of (\ref{equation:natural_transformation_from_sheaves_into_modules_in_proof_of_monoidality_of_the_even_filtration}), which we can identify with the restricted Yoneda embedding. Moreover, $G$ is lax symmetric monoidal by another application of \cite[{Corollary 7.3.2.7}]{higher_algebra}. The needed functor is then given by the composite 
\[
\begin{tikzcd}
	\Mod(\spectra) & \sheaves_{\Sigma} & \sheaves_{\Sigma}^{\spectra} \\
	& \Alg_{\mathbf{E}_{1}}(\spectra)
	\arrow[from=1-1, to=2-2]
	\arrow[from=1-2, to=2-2]
	\arrow[from=1-3, to=2-2]
	\arrow[from=1-1, to=1-2]
	\arrow[from=1-2, to=1-3]
\end{tikzcd}
\]
where the second one is the one induced by \cref{remark:monoidal_structure_on_sheaves_of_spectra_induced_from_that_of_sheaves}. It can be fibrewise over $R \in \Alg_{\mathbf{E}_{1}}(\spectra)$ identified with $\nu_{R}$ since the latter is as a consequence of \cref{remark:synthetic_analogue_in_terms_of_the_restricted_yoneda_embedding} the composite 
\[
\Mod_{R} \rightarrow \sheaves_{\Sigma}(\Perf(R)_{ev}) \rightarrow \sheaves_{\Sigma}(\Perf(R)_{ev}, \spectra)
\]
of the restricted Yoneda embedding (which is right adjoint to the left Kan extension) and the stabilization functor. 
\end{construction}

\begin{remark}
\label{remark:right_adjoint_between_cocartesian_fibration_specifying_lax_functoriality_of_nu}
Beware that while the functor $F \colon \sheaves_{\Sigma} \rightarrow \Mod(\spectra)$ appearing in  \cref{construction:nu_as_a_map_of_cocartesian_fibrations} comes from an honest natural transformation 
\[
\sheaves_{\Sigma}(\Perf(-)_{ev}) \rightarrow \Mod_{-}(\spectra) 
\]
of functors $\Alg_{\mathbf{E}_{1}}(\spectra) \rightarrow \largecatinfty$, the same is not true for its right adjoint $G \colon \Mod(\spectra) \rightarrow \sheaves_{\Sigma}$, as it need not preserve cocartesian morphisms. Instead, it can be thought of as classifying a \emph{lax} natural transformation. Concretely, if $f \colon R \rightarrow S$ is a map of $\mathbf{E}_{1}$-rings, then the diagram
\[
\begin{tikzcd}
	{\sheaves_{\Sigma}(\Perf(R)_{ev})} & {\sheaves_{\Sigma}(\Perf(S)_{ev})} \\
	{\Mod_{R}(\spectra)} & {\Mod_{S}(\spectra)}
	\arrow["y_{R}", from=2-1, to=1-1]
	\arrow["y_{S}", from=2-2, to=1-2]
	\arrow["S \otimes_{R} -", from=2-1, to=2-2]
	\arrow["f^{\ast}", from=1-1, to=1-2]
\end{tikzcd},
\]
where $f^{\ast}$ is the left Kan extension from perfect evens and the vertical arrows are the restricted Yoneda embedding, need not commute! Instead, the data of $G$ provides a canonical natural transformation 
\[
f^{\ast} y_{R}(-) \rightarrow y_{S}(S \otimes_{R} -). 
\]
obtained using the push-pull formula from the corresponding commutative diagram of left adjoints defined by $F$, see \cite[\S 7.3.1]{lurie_higher_topos_theory}. Consequently, the functor $\Mod(\spectra) \rightarrow \sheaves_{\Sigma}^{\spectra}$ of \cref{construction:nu_as_a_map_of_cocartesian_fibrations} can be thought of as specifying a family of natural transformations 
\[
f^{\ast} \nu_{R}(-) \rightarrow \nu_{S}(S \otimes_{R} -). 
\]
\end{remark}

\begin{remark}
\label{remark:explicit_description_of_functoriality_of_synthetic_analogues}
The natural transformations \cref{remark:right_adjoint_between_cocartesian_fibration_specifying_lax_functoriality_of_nu}
\[
f^{\ast} \nu_{R} \rightarrow \nu_{S}(S \otimes_{R} -). 
\]
associated to a map $f \colon R \rightarrow S$ of $\mathbf{E}_{1}$-rings can be made explicit by unwrapping the construction. If $M$ is an $R$-module, by adjunction between left Kan extension and composition, to specify the needed map it is enough to define its adjoint 
\begin{equation}
\label{equation:adjoint_of_functoriality_of_nu}
\nu_{R}(M) \rightarrow f_{\ast} \nu_{S}(S \otimes_{R} M),
\end{equation}
where $f_{\ast} \colon \sheaves_{\Sigma}(\Perf(S)_{ev}, \spectra) \rightarrow \Perf(R)_{ev}, \spectra)$ is given by composition with the morphism of $\infty$-sites 
\[
S \otimes_{R} - \colon \Perf(R)_{ev} \rightarrow \Perf(S)_{ev},
\]
which preserves sheaves by \cite[{Proposition A.11}]{pstrkagowski2018synthetic}. Since the target of (\ref{equation:adjoint_of_functoriality_of_nu}) is a sheaf and $\nu_{R}(M)$ is a sheafication of $\tau_{\geq 0} \map_{\Mod_{R}}(-, M)$, it is enough to specify a map of presheaves 
\[
\map_{\Mod_{R}}(-, M) \rightarrow f_{\ast} \nu_{S}(S \otimes_{R} M).
\]
The needed morphism is given by the composite of 
\[
\tau_{\geq 0} \map_{\Mod_{R}}(-, M) \rightarrow \tau_{\geq 0} \map_{\Mod_{S}}(S \otimes_{R} -, S \otimes_{R} M)
\]
induced by functoriality of $S \otimes {R} -$, the identification 
\[
\tau_{\geq 0} \map_{\Mod_{S}}(S \otimes_{R} -, S \otimes_{R} M) \simeq f_{\ast} (\tau_{\geq 0} \map_{Mod_{S}}(-, S \otimes_{R} M))
\]
and the map 
\[
f_{\ast} (\tau_{\geq 0} \map_{Mod_{S}}(-, S \otimes_{R} M)) \rightarrow f_{\ast} \nu_{S}(S \otimes_{R} M)
\]
obtained by applying $f_{\ast}(-)$ to the sheafication map in presheaves on $\Perf(S)_{ev}$. 
\end{remark}

\begin{theorem}
\label{theorem:even_filtration_is_lax_symmetric_monoidal_as_a_functor_on_pairs}
The construction
\[
(R, M) \mapsto (\fil^{\ast}_{ev/R}(R), \fil^{\ast}_{ev/R}(M))
\]
can be refined to a lax symmetric monoidal functor 
\[
\fil_{ev}^{*}(-) \colon \Mod(\spectra) \rightarrow \Mod(\Fil \spectra)
\]
between the $\infty$-categories of pairs of an algebra and a module.
\end{theorem}

\begin{proof}
As in \cref{construction:nu_as_a_map_of_cocartesian_fibrations}, we write $\sheaves_{\Sigma}^{\spectra} \rightarrow \Alg_{\mathbf{E}_{1}}(\spectra)$ for the symmetric monoidal cocartesian fibration classifying
\[
R \mapsto \sheaves_{\Sigma}(\Perf(R)_{ev}, \spectra). 
\]
Using the natural equivalence of \cref{remark:determining_property_of_the_even_filtration_as_a_lax_symmetric_monoidal_functor}, the even filtration functor of \cref{theorem:even_filtration_is_lax_symmetric_monoidal} can be completed to a cartesian diagram of symmetric monoidal $\infty$-categories 
\[
\begin{tikzcd}
	\sheaves_{\Sigma}^{\spectra} & {\Mod(\Fil\spectra)} \\
	{\Alg_{\mathbf{E}_{1}}(\spectra)} & {\Alg_{\mathbf{E}_{1}}(\Fil\spectra)}
	\arrow[from=1-1, to=1-2]
	\arrow[from=1-1, to=2-1]
	\arrow[from=1-2, to=2-2]
	\arrow["\fil_{ev/-}^{\ast}(-)", from=2-1, to=2-2]
\end{tikzcd},
\]
where the vertical arrows are symmetric monoidal. Here the horizontal arrows are lax symmetric monoidal and the upper one is given fibrewise over $R \in \Alg_{\mathbf{E}_{1}}(\spectra)$ by the equivalence
\[
\underline{\map}^{\ast}(\nu_{R}(R), -) \colon \sheaves_{\Sigma}(\Perf(R)_{ev}, \spectra) 
\]
of \cref{proposition:synthetic_spectra_are_modules_over_the_filtered_sphere}. 
Composing the above square with the commutative triangle of \cref{construction:nu_as_a_map_of_cocartesian_fibrations} we obtain a square 
\[
\begin{tikzcd}
	\Mod(\spectra) & {\Mod(\Fil\spectra)} \\
	{\Alg_{\mathbf{E}_{1}}(\spectra)} & {\Alg_{\mathbf{E}_{1}}(\Fil\spectra)}
	\arrow[from=1-1, to=1-2]
	\arrow[from=1-1, to=2-1]
	\arrow[from=1-2, to=2-2]
	\arrow["\fil_{ev/-}^{\ast}(-)", from=2-1, to=2-2]
\end{tikzcd}.
\]
Fibrewise over $R \in \Alg_{\mathbf{E}_{1}}(\spectra)$, the upper horizontal arrow can be identified with the composite 
\[
\begin{tikzcd}
	\Mod_{R}(\spectra) & {\sheaves_{\Sigma}(\Perf(R)_{ev}, \spectra)} && {\Mod_{\fil^{\ast}_{ev/R}(R)}(\Fil \spectra)}
	\arrow["{\nu_{R}}", from=1-1, to=1-2]
	\arrow["{\underline{map}^{\ast}(\nu_{R}(R), -)}", from=1-2, to=1-4]
\end{tikzcd}
\]
which we can identify with the $R$-module even filtration by \cref{lemma:filtered_endomorphism_ring_of_canonical_generator_of_additive_sheaves_is_the_even_filtration}. This ends the argument. 
\end{proof}

\begin{corollary}
Let $R$ be an $\mathbf{E}_{n}$-ring. Then the $R$-module even filtration functor
\[
\fil_{ev/R}^{\ast}(-) \colon \Mod_{R}(\spectra) \rightarrow \Mod_{\fil_{ev/R}^{\ast}(R)}(\Fil \spectra) 
\]
is canonically $\mathbf{E}_{n-1}$-monoidal. 
\end{corollary}

\begin{proof}
We can identify $R$ with an $\mathbf{E}_{n-1}$-algebra object in $\Alg_{\mathbf{E}_{1}}(\spectra)$. Since by construction the even filtration functor of \cref{theorem:even_filtration_is_lax_symmetric_monoidal_as_a_functor_on_pairs} fits into a commutative square 
\[
\begin{tikzcd}
	\Mod(\spectra) & {\Mod(\Fil\spectra)} \\
	{\Alg_{\mathbf{E}_{1}}(\spectra)} & {\Alg_{\mathbf{E}_{1}}(\Fil\spectra)}
	\arrow[from=1-1, to=1-2]
	\arrow[from=1-1, to=2-1]
	\arrow[from=1-2, to=2-2]
	\arrow["\fil_{ev/-}^{\ast}(-)", from=2-1, to=2-2]
\end{tikzcd},
\]
where horizontal arrows are symmetric monoidal cocartesian fibrations, passing to the fibre over $R$ yields the needed $\mathbf{E}_{n-1}$-monoidal functor. 
\end{proof}

\begin{remark}
\label{remark:unwrapped_definition_of_the_functoriality_of_the_even_filtration}
The construction of the functor of \cref{theorem:even_filtration_is_lax_symmetric_monoidal_as_a_functor_on_pairs} is somewhat elaborate, but by unwrapping the definitions the induced map on even filtrations can be made explicit in the following way. 

Suppose we have an arrow $(R, M) \rightarrow (S, N)$ in $\Mod(\spectra)$, which we can identify with a map $f \colon R \rightarrow S$ of $\mathbf{E}_{1}$-rings together with a homomorphism $\alpha \colon M \rightarrow N$ of $R$-modules. Left Kan extension yields a cocontinuous functor 
\[
\sheaves_{\Sigma}(\Perf(R)_{ev}, \spectra) \rightarrow \sheaves_{\Sigma}(\Perf(R)_{ev}, \spectra) 
\]
which is a morphism of $\mathbf{E}_{0}$-$\Fil \spectra$-algebras, so that it induces a morphism of filtered mapping spectra and comes with a preferred equivalence $f^{\ast} \nu_{R}(R) \simeq \nu_{S}(S)$. The induced map of even filtrations 
\[
\fil^{\ast}_{ev/R}(M) \rightarrow \fil^{\ast}_{ev/S}(N) 
\]
can then be obtained as the composite 
\[
\underline{\map}^{\ast}_{R}(\nu_{R}(R), \nu_{R}(M)) \rightarrow \underline{\map}^{\ast}_{S}(\nu_{S}(S), f^{\ast} \nu_{R}(M)) \rightarrow \underline{\map}^{\ast}_{S}(\nu_{S}(S), \nu_{S}(N)),
\]
where the first map is obtained by applying $f^{\ast}$ and the second one is induced by composing with 
\[
f^{\ast} \nu_{R}(M) \rightarrow \nu_{S}(S \otimes_{R} M) \rightarrow \nu_{S}(N) 
\]
where the first arrow is the one provided by \cref{remark:explicit_description_of_functoriality_of_synthetic_analogues} and the second one is induced by the adjoint $S \otimes_{R} M \rightarrow N$ of $\alpha$. Here, by $\underline{\map}^{\ast}_{R}$ (resp. $\underline{\map}^{\ast}_{S}$) we denote the filtered mapping spectrum in additive sheaves over $\Perf(R)_{ev}$ (resp. $\Perf(S)_{ev}$), which we can identify with the relevant even filtrations using \cref{lemma:filtered_endomorphism_ring_of_canonical_generator_of_additive_sheaves_is_the_even_filtration}. 
\end{remark}

\section{Calculus of evenness}
\label{section:calculus_of_evenness}

This section is devoted to the study of the various notions of evenness one can attach to an $R$-module, as well as their relationships. We will be interested in three different properties:
\begin{enumerate}
    \item \emph{homologically even} modules of \cref{definition:homologically_even_module}; that is, those $M$ such that the even sheaves $\evensheaf_{M}(q)$ vanish for half-integers $q \in \mathbb{Z} + \nicefrac{1}{2}$, 
    \item \emph{even flat} modules of \cref{definition:even_flat_module}; that is, those $M$ which can be written $M \simeq \varinjlim A_{\alpha}$ as a filtered colimit of perfect even modules, 
    \item \emph{$\pi_{\ast}$-even} modules; that is, those $M$ such that $\pi_{\ast}(M)$ is concentrated in even degrees. 
\end{enumerate}
Either of the second or the third conditions implies the first one, as in the diagram
\[
(\textnormal{even flat}) \Rightarrow (\textnormal{homologically even}) \Leftarrow (\pi_{\ast}\textnormal{-even}).
\]
In general, neither of these implications can be reversed. Instead, we will show in \cref{proposition:tensor_characterization_of_even_flat_modules} that even flat and $\pi_{\ast}$-even modules can be thought of as ``orthogonal classes of modules'' with respect to the tensor product. 

\begin{notation}
In this section, $R$ is a fixed $\mathbf{E}_{1}$-algebra and by a module we mean a \emph{left} $R$-module. Similarly, unless stated otherwise by a \emph{map} we mean a morphism of modules. 

When we work with right $R$-modules, we will be explicit about it. We will generally identify right $R$-modules with left modules over $R^{op}$, the opposite algebra. In particular, any of the various properties of left modules and their maps we introduce in this section apply to right $R$-modules by considering them as left modules over $R^{op}$. 
\end{notation}

\begin{recollection}[Linear duality]
If $M$ is a left $R$-module, we write 
\[
M^{\ast} \colonequals \map_{\Mod_{R}}(M, R) 
\]
for its $R$-linear dual, which is a right $R$-module through its action on the target. Analogously, if $M$ is right $R$-module, we write 
\[
M^{\ast} \colonequals \map_{\Mod_{R^{op}}}(M, R) 
\]
for its linear dual which is a left $R$-module. When restricted to perfect modules, these two constructions induce inverse equivalences
\[
\Perf(R)^{op} \simeq \Perf(R^{op}), 
\]
which restrict to an equivalence between the subcategories of perfect evens.
\end{recollection}

\subsection{Even flat modules}

In this subsection, we introduce the notion of an even flat $R$-module, which informally is a module ``flat from the point of view of modules with even homotopy groups''. We will make the latter characterization precise in \cref{proposition:tensor_characterization_of_even_flat_modules}. 

\begin{proposition}
\label{proposition:characterization_of_even_flatness_in_terms_of_maps_factoring_through_perfect_even}
The following two conditions are equivalent for an $R$-module $M$:
\begin{enumerate}
    \item it can be written $M \simeq \varinjlim M_{\alpha}$ as a filtered colimit of perfect evens, 
    \item any map $P \rightarrow M$ from a perfect $R$-module into $M$ factors through a perfect even. 
\end{enumerate}
\end{proposition}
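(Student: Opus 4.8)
The plan is to prove the two implications separately, with the direction $(2) \Rightarrow (1)$ being the substantive one. For $(1) \Rightarrow (2)$: suppose $M \simeq \varinjlim M_\alpha$ with each $M_\alpha$ perfect even. Since $P$ is a perfect (hence compact) $R$-module, the mapping space $\Map_{\Mod_R}(P, M) \simeq \varinjlim \Map_{\Mod_R}(P, M_\alpha)$, so any map $P \to M$ factors through some $M_\alpha$, which is perfect even by hypothesis. This is essentially immediate from compactness of perfect modules.

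\textbf{The main direction.} For $(2) \Rightarrow (1)$, the idea is to exhibit $M$ as the filtered colimit of the poset (or filtered diagram) of perfect even modules mapping to it. Concretely, consider the comma-type category $\mathcal{D}$ whose objects are pairs $(A, A \to M)$ with $A \in \Perf(R)_{ev}$; I would first check that $\mathcal{D}$ is filtered. Filteredness should follow because $\Perf(R)_{ev}$ is closed under extensions and finite colimits-up-to-the-evenness-constraint: given two maps $A_1 \to M$, $A_2 \to M$, the direct sum $A_1 \oplus A_2$ is perfect even and maps to $M$, and given two parallel maps $A \rightrightarrows B$ over $M$, one needs a perfect even receiving $B$ that coequalizes them — here one uses that the relevant cofiber, after possibly adding even cells, stays perfect even, or argues via the fact that $B \to M$ equalizes the two composites so the map factors through an appropriate quotient which by hypothesis $(2)$ factors through a perfect even. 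Then I would form $M' \colonequals \varinjlim_{\mathcal{D}} A$ with its canonical map $M' \to M$ and aim to show this is an equivalence.

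\textbf{Showing $M' \to M$ is an equivalence.} Both surjectivity and injectivity on homotopy should be checked directly using hypothesis $(2)$. For surjectivity on $\pi_k$: any class in $\pi_k M$ is represented by a map $\Sigma^k R \to M$; if $k$ is even this factors through a perfect even tautologically, and if $k$ is odd, consider $\Sigma^k R$ mapping to $M$, complete this to something perfect even — e.g. use the cofiber sequence $\Sigma^{k-1}R \to F \to \Sigma^k R$ with $F$ built by attaching an even cell — and apply $(2)$ to the map $\Sigma^k R \to M$ directly, or note $\Sigma^k R$ itself is a perfect $R$-module so $(2)$ applies verbatim giving a factorization through a perfect even in $\mathcal{D}$. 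For injectivity and higher coherence, I would argue that any perfect module mapping to $\fib(M' \to M)$ is null after composing to $M'$ (since it factors through a perfect even over $M$, and that perfect even already sits in the colimit $M'$), so $\fib(M' \to M)$ has no maps from perfect $R$-modules, in particular from $\Sigma^k R$ for all $k$, forcing all its homotopy groups to vanish. This last step is where I expect the real subtlety: one must be careful that "factors through a perfect even over $M$" lifts to "factors through the colimit $M'$ compatibly," which is where filteredness of $\mathcal{D}$ and compactness of $\Sigma^k R$ are both used — a map $\Sigma^k R \to M'$ factors through a finite stage $A \in \mathcal{D}$, and its image in $M$ then automatically factors through that same $A$. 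The bookkeeping to make the diagram $\mathcal{D} \to \Mod_R$ and its colimit genuinely functorial (rather than just on homotopy categories) is the part requiring the most care, and I would handle it by citing the standard fact that for a compactly generated stable $\infty$-category, any object is the filtered colimit of the compact objects over it, then intersecting with the evenness condition via hypothesis $(2)$.
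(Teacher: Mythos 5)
Your direction $(1 \Rightarrow 2)$ is fine and is exactly the paper's argument (compactness of perfect modules). For $(2 \Rightarrow 1)$ your overall strategy --- exhibit $M$ as the colimit of the comma category $\mathcal{D} = \Perf(R)^{ev}_{-/M}$ of perfect evens over $M$ --- is also the paper's, but the two steps where the content lies both have gaps. First, filteredness: in the $\infty$-categorical setting it is not enough to handle pairs of objects and parallel arrows; one must extend an arbitrary finite diagram $K \rightarrow \mathcal{D}$ to a cone. Moreover your proposed treatment of parallel arrows is not justified: the claim that ``the relevant cofiber, after possibly adding even cells, stays perfect even'' is not a legitimate move --- the cofiber of a map of perfect evens is in general only perfect. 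The uniform fix, which is exactly what the paper does (via Quillen's Theorem A applied to the under-over-categories $\Perf(R)^{ev}_{P/-/M}$), is to form the finite colimit $C$ of the given diagram inside $\Perf(R)_{-/M}$, where it exists because $\Perf(R)$ has finite colimits, and then use hypothesis $(2)$ on the structure map $C \rightarrow M$ to factor it through a perfect even $C'$; the object $C'$ supplies the cone.

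Second, the equivalence $M' \xrightarrow{\ \sim\ } M$: the injectivity step is where the real work is, and the justification you give does not do it. Knowing that a map $\Sigma^{k} R \rightarrow M'$ factors through a stage $A \in \mathcal{D}$ and that its image in $M$ ``factors through that same $A$'' says nothing about why a class that dies in $M$ also dies in $M'$. The missing idea is: given $\Sigma^{k}R \rightarrow A$ whose composite to $M$ is null, a \emph{choice of nullhomotopy} produces a map $\cofib(\Sigma^{k}R \rightarrow A) \rightarrow M$ extending $A \rightarrow M$; this cofiber is perfect but not perfect even, so hypothesis $(2)$ applied to it gives a factorization through a perfect even $B$, and the resulting morphism $A \rightarrow B$ in $\mathcal{D}$ annihilates the class, so it vanishes in $\varinjlim_{\mathcal{D}} \pi_{k}A \simeq \pi_{k}M'$. (Note also the logical slip: showing that every map $P \rightarrow \fib(M' \rightarrow M)$ becomes null after composing to $M'$ does not by itself force the fiber to vanish --- it only gives injectivity of $\pi_{*}M' \rightarrow \pi_{*}M$, which must then be combined with the surjectivity you proved.) The paper's cofinality argument sidesteps all of this homotopy-group bookkeeping: once the under-over-categories are shown to be filtered (by the colimit-then-factor maneuver above), Theorem A gives cofinality and the standard presentation $M \simeq \varinjlim_{\Perf(R)_{-/M}} P$ finishes the proof. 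As written, your proposal gestures at this at the very end (``intersecting with the evenness condition'') but that intersection statement is precisely the cofinality claim that needs the argument just described.
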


\begin{proof}
Since perfect $R$-modules are compact as objects of $\Mod_{R}$, certainly $(1 \Rightarrow 2)$, and we only have to prove the converse. We can write $M$ 
\[
M \simeq \varinjlim_{P \in \Perf(R)_{- /M}} P
\]
as a colimit of perfect $R$-modules indexed by the overcategory $\Perf(R)_{-/M}$. We claim that if $M$ satisfies the condition $(2)$, then the inclusion
\[
\Perf(R)^{ev}_{-/M} \rightarrow \Perf({R})_{-/M}
\]
is cofinal, which will finish the argument. By Quillen's Theorem A \cite[4.1.3.1]{lurie_higher_topos_theory}, we have to check that for any map $f \colon P \rightarrow M$ from a perfect $R$-module, the under-over-category 
\[
\Perf(R)^{ev}_{P / - / M},
\]
whose objects are commutative triangles 
\begin{equation}
\label{equation:triangle_in_proof_of_criterion_for_even_flatness}
\begin{tikzcd}
	P && A \\
	& M
	\arrow["f", from=1-1, to=2-2]
	\arrow[from=1-3, to=2-2]
	\arrow[from=1-1, to=1-3]
\end{tikzcd}
\end{equation}
with $A$ perfect even, is weakly contractible. We will show that it is filtered, which implies weak contractibility. Suppose that 
\[
p \colon K \rightarrow \Perf(R)^{ev}_{P / - /M}
\]
is a finite diagram. As $\Perf(R)$ admits finite colimits, $p$ admits a colimit $C$ in the larger $\infty$-category $\Perf(R)_{P / - /M}$, where the middle module as in (\ref{equation:triangle_in_proof_of_criterion_for_even_flatness}) is perfect, but not necessarily even. Since the map $C \rightarrow M$ factors through a perfect even $C'$ by assumption, declaring $\widetilde{p}(\triangleright) \colonequals C'$ provides the necessary extension of $p$ to a diagram $\widetilde{p} \colon K^{\triangleright} \rightarrow \Perf(R)^{ev}_{A / - /M}$. 
\end{proof}

\begin{definition}
\label{definition:even_flat_module}
We say that an $R$-module $M$ is \emph{even flat} if it satisfies the equivalent conditions of \cref{proposition:characterization_of_even_flatness_in_terms_of_maps_factoring_through_perfect_even}. 
\end{definition}

We record that the class of even flats has good closure properties. 

\begin{proposition}
\label{proposition:even_flat_modules_closed_under_extensions_retracts_and_filtered_colimits}
The full subcategory $\Mod_{R}^{e\flat} \subseteq \Mod_{R}$ spanned by even flat modules is closed under extensions, retracts and filtered colimits. 
\end{proposition}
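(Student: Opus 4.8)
The plan is to run everything through the characterization of even flatness from \cref{proposition:characterization_of_even_flatness_in_terms_of_maps_factoring_through_perfect_even}, in its form that an $R$-module is even flat precisely when every map into it from a perfect $R$-module factors through a perfect even. Two of the three closure properties are then immediate. For filtered colimits: if $M \simeq \varinjlim_{i} M_{i}$ with each $M_{i}$ even flat and $P \to M$ is a map from a perfect module, then $P$ is compact in $\Mod_{R}$, so this map factors through some $M_{i}$, and then through a perfect even by even flatness of $M_{i}$. For retracts: if $M$ is a retract of an even flat $N$, with inclusion $\iota$ and retraction $\rho$ satisfying $\rho\iota \simeq \mathrm{id}_{M}$, then a map $f \colon P \to M$ yields $\iota f \colon P \to N$, which factors as $P \xrightarrow{g} A \to N$ through a perfect even $A$; composing with $\rho$ exhibits $f \simeq \rho\iota f$ as factoring through $A$ via $g$.

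The substantive case is closure under extensions. Suppose $M_{1} \to M_{2} \to M_{3}$ is a cofibre sequence in $\Mod_{R}$ with $M_{1}$ and $M_{3}$ even flat, and let $f \colon P \to M_{2}$ be a map from a perfect module. First I would factor the composite $P \to M_{2} \to M_{3}$ through a perfect even, say $P \xrightarrow{g} A_{3} \xrightarrow{h} M_{3}$, and then form the pullback $Q \colonequals M_{2} \times_{M_{3}} A_{3}$. This produces a map $\widetilde{f} \colon P \to Q$ lifting $f$, together with a cofibre sequence $M_{1} \to Q \to A_{3}$, since the fibre of $Q \to A_{3}$ is the fibre of $M_{2} \to M_{3}$. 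Hence it suffices to prove that $Q$ — an extension of the perfect even $A_{3}$ by the even flat $M_{1}$ — is itself even flat, for then $\widetilde{f}$ factors through a perfect even, and composing with the canonical map $Q \to M_{2}$ completes the argument.

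To see that $Q$ is even flat I would write $M_{1} \simeq \varinjlim_{j} B_{j}$ as a filtered colimit of perfect evens and use the presentation $Q \simeq \fib(A_{3} \to \Sigma M_{1}) \simeq \fib(A_{3} \to \varinjlim_{j}\Sigma B_{j})$, where the map is the composite of $h$ with the connecting map of the cofibre sequence. Since $\Mod_{R}$ is stable, finite limits commute with filtered colimits; combined with the compactness of $A_{3}$, which guarantees that the category of lifts of $A_{3} \to \Sigma M_{1}$ along the structure maps $\Sigma B_{j} \to \Sigma M_{1}$ is filtered and cofinal over the original index category, this exhibits $Q$ as a filtered colimit of the fibres $\fib(A_{3} \to \Sigma B_{j})$. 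Each of these sits in a cofibre sequence $B_{j} \to \fib(A_{3} \to \Sigma B_{j}) \to A_{3}$, hence is an extension of two perfect evens and so perfect even by \cref{definition:perfect_even_module}. Therefore $Q$ is a filtered colimit of perfect evens, i.e. even flat, which closes the loop.

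I expect the only delicate point to be the cofinality/coherence bookkeeping in the last step — upgrading the bare map $A_{3} \to \varinjlim_{j}\Sigma B_{j}$ to a genuine filtered diagram of fibres whose colimit computes $Q$ — while every other step is a formal manipulation with the defining factorization property of \cref{definition:even_flat_module} and elementary facts about stable $\infty$-categories.
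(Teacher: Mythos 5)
Your argument is correct, and the retract and filtered-colimit cases coincide with the paper's (which simply observes that condition (2) of \cref{proposition:characterization_of_even_flatness_in_terms_of_maps_factoring_through_perfect_even} is manifestly stable under these operations). For extensions, however, you take a genuinely different route. The paper stays entirely at the level of perfect modules: given $f \colon P \to M_{2}$, it factors $P \to M_{3}$ through a perfect even $A''$, then factors the induced map $\fib(P \to A'') \to M_{1}$ through a perfect even $A'$ (note $\fib(P \to A'')$ is perfect), and finally forms the pushout $A \colonequals A' \amalg_{\fib(P \to A'')} P$, which receives $P$, maps to $M_{2}$ by the universal property, and is perfect even because it sits in a cofibre sequence $A' \to A \to A''$. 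Your proof instead forms the pullback $Q = M_{2} \times_{M_{3}} A_{3}$ and proves the stronger intermediate claim that an extension of a perfect even by an even flat module is itself even flat, by writing $Q \simeq \fib(A_{3} \to \Sigma M_{1})$, invoking characterization (1), compactness of $A_{3}$, and the fact that filtered colimits commute with finite limits to exhibit $Q$ as a filtered colimit of the perfect evens $\fib(A_{3} \to \Sigma B_{j})$. The coherence point you flag is exactly the standard one and is handled by passing to the coslice under a chosen factorization $A_{3} \to \Sigma B_{j_{0}}$, which is filtered and cofinal; so there is no gap, and there is no circularity since the equivalence of the two characterizations is established before the proposition. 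What each approach buys: the paper's pushout argument is more elementary (two factorizations and one finite colimit, no compactness or cofinality), uses only characterization (2), and produces an explicit perfect even receiving $P$; yours yields a slightly more general statement along the way (extensions of perfect evens by even flats are even flat, with an explicit filtered presentation), at the cost of the colimit bookkeeping and of using both characterizations.
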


\begin{proof}
The property $(2)$ of \cref{proposition:characterization_of_even_flatness_in_terms_of_maps_factoring_through_perfect_even} is clearly closed under retracts and filtered colimits. We will verify that it is also closed under extensions. Suppose we have a cofibre sequence 
\[
M' \rightarrow M \rightarrow M''
\]
of $R$-modules such that $M'$ and $M''$ are even flat. Let $P \rightarrow M$ be a map from a perfect; we have to show that it factors through a perfect even. Using that $M''$ is even flat, we can factor the composite $P \rightarrow M''$ through a perfect even $A''$, obtaining a diagram 
\[
\begin{tikzcd}
	\mathrm{fib}(P \rightarrow A'') & P & A'' \\
	{M'} & M & {M''} 
	\arrow[from=2-1, to=2-2]
	\arrow[from=2-2, to=2-3]
	\arrow[from=1-2, to=2-2]
	\arrow[from=1-2, to=1-3]
	\arrow[from=1-3, to=2-3]
	\arrow[from=1-1, to=2-1]
	\arrow[from=1-1, to=1-2]
\end{tikzcd}
\]
where both rows are cofibre sequences and where $A''$ is perfect even. Using that $M'$ is even flat, we can factor the map $\mathrm{fib}(P \rightarrow A'') \rightarrow M'$ through a perfect even $A'$. Finally, let $A$ be defined by a pushout diagram 
\[
\begin{tikzcd}
	\mathrm{fib}(P \rightarrow A'') & P \\
	{A'} & A 
	\arrow[from=2-1, to=2-2]
	\arrow[from=1-2, to=2-2]
	\arrow[from=1-1, to=2-1]
	\arrow[from=1-1, to=1-2]
\end{tikzcd},
\]
so that the universal property of the pushout gives a factorization $P \rightarrow A \rightarrow M$. By construction we have a cofibre sequence 
\[
A' \rightarrow A \rightarrow A''
\]
so that $A$ is perfect even, as needed. 
\end{proof}

\begin{remark}
\label{remark:perfect_flats_generated_by_even_suspensions_under_filtered_colimits_retracts_extensions}
As perfect even modules are generated under retracts and extensions by $\Sigma^{2n} R$ for $n \in \mathbb{Z}$, as a consequence of \cref{proposition:even_flat_modules_closed_under_extensions_retracts_and_filtered_colimits} the subcategory $\Mod_{R}^{e \flat} \subseteq \Mod_{R}$ of even flats can be characterized as the smallest subcategory containing $\Sigma^{2n} R$ and closed under filtered colimits, retracts and extensions. 
\end{remark}

\subsection{Homotopy even envelopes} 
\label{subsection:homotopy_even_envelopes}

As modules with even homotopy groups have a particularly simple even filtration by \cref{lemma:pistar_even_means_homologically_even_and_even_filtration_is_whitehead}, a useful way to prove results about the even filtration of an arbitrary $R$-module is to map it into a module with even homotopy groups. The following describes a particularly well-behaved way to do so: 

\begin{definition}
\label{definition:pi_star_even_envelope}
Let $M$ be an $R$-module. We say that a map $M \rightarrow E$ is a \emph{$\pi_{*}$-even envelope} if 
\begin{enumerate}
    \item the cofibre $\mathrm{cofib}(M \rightarrow E)$ is even flat, 
    \item $\pi_{*}E$ is a concentrated in even degrees,
    \item every map $M \rightarrow F$ into an $R$-module such that $\pi_{*}F$ is even can be completed to a commutative diagram 
    \[
\begin{tikzcd}
	& M \\
	{E} && {F}
	\arrow[from=2-1, to=2-3]
	\arrow[from=1-2, to=2-1]
	\arrow[from=1-2, to=2-3]
\end{tikzcd}.
\]
\end{enumerate}
\end{definition}

\begin{remark}
\label{remark:extension_condition_in_envelope_as_relative_cohomology}
Note that the condition $(3)$ of \cref{definition:pi_star_even_envelope} is equivalent to saying that for any $\pi_{*}$-even $F$, the relative $R$-linear cohomology groups 
\[
F_{R}^{*}(E, M) \simeq F_{R}^{*}(\mathrm{cofib}(M \rightarrow E)) \simeq \pi_{-*} \map_{\Mod_{R}}(\mathrm{cofib}(M \rightarrow E), F)
\]
are concentrated in even degrees. 
\end{remark}

\begin{remark}
If $M \rightarrow E$ is $\pi_{*}$-even envelope, then the map $\evensheaf_{M} \rightarrow \evensheaf_{E}$ is a monomorphism. Indeed, we have an exact sequence
\[
\evensheaf_{\mathrm{cofib}(M \rightarrow N)}(\nicefrac{1}{2}) \rightarrow \evensheaf_{M} \rightarrow \evensheaf_{N}.
\]
where the left hand side vanishes, since the cofibre is even flat. 
\end{remark}

\begin{remark}
\label{remark:pistar_even_envelopes_inherit_evenness}
Since even flat (resp. homologically even) $R$-modules are closed under extensions, if $M \rightarrow E$ is a $\pi_{*}$-even envelope and $M$ is even flat (resp. homologically even), then so is $E$. 
\end{remark}

\begin{proposition}[{Existence of homotopy even envelopes}]
\label{proposition:pi_star_even_envelopes_exist}
Let $M$ be an $R$-module. Then there exists a $\pi_{*}$-even envelope $M \rightarrow E$. Moreover, if $R$ and $M$ are connective, then there exists an envelope such that $E$ is also connective and $\pi_{0} M \simeq \pi_{0} E$. 
\end{proposition}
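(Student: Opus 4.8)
**Proof proposal for Proposition 3.20 (existence of $\pi_*$-even envelopes).**

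The plan is to build $E$ by transfinitely attaching even cells to $M$, killing odd-degree homotopy classes one at a time, while simultaneously ensuring the universal property (3) holds. Concretely, I would proceed as follows. First, note that conditions (1)--(3) of \cref{definition:pi_star_even_envelope} can be repackaged: by \cref{remark:extension_condition_in_envelope_as_relative_cohomology}, the universal property (3) is equivalent to the statement that $C \colonequals \mathrm{cofib}(M \rightarrow E)$ has the property that $\Map_{\Mod_R}(C, F)$ has even homotopy for every $\pi_*$-even $F$; and condition (1) asks that $C$ be even flat. So the goal is to produce a map $M \rightarrow E$ with $\pi_* E$ even whose cofibre $C$ is even flat and ``$\pi_*$-even-acyclic'' in the above sense. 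I would first treat these two desiderata separately and then reconcile them.

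\textbf{Step 1: Making $\pi_* E$ even.} Starting from $M_0 \colonequals M$, for each odd integer $k$ choose generators of $\pi_k M_0$ and cone them off: form $M_1 \colonequals \mathrm{cofib}\big(\bigoplus \Sigma^{k} R \rightarrow M_0\big)$, where the sum runs over a generating set of $\bigoplus_{k \text{ odd}} \pi_k M_0$. Each $\Sigma^k R$ with $k$ odd is $\Sigma^{k-1}(\Sigma R)$; since $k-1$ is even, $\bigoplus \Sigma^k R$ is an even (de)suspension of a free module, hence perfect even (or, allowing infinite sums, even flat by \cref{remark:perfect_flats_generated_by_even_suspensions_under_filtered_colimits_retracts_extensions}). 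The map $M_0 \rightarrow M_1$ is surjective on odd homotopy in the sense that its cofibre shift kills those classes, but attaching cells can create new odd homotopy in higher degrees, so I would iterate: set $E \colonequals \varinjlim M_n$. Because homotopy groups commute with sequential colimits and each stage kills the odd homotopy present at that stage without lowering connectivity, $\pi_* E$ is concentrated in even degrees. The cofibre $\mathrm{cofib}(M \rightarrow E)$ is then a sequential colimit of iterated cofibres of maps from even-flat modules, hence even flat by the closure properties of \cref{proposition:even_flat_modules_closed_under_extensions_retracts_and_filtered_colimits}; this gives condition (1), and (2) holds by construction.

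\textbf{Step 2: The universal property.} For condition (3) I would enlarge the construction of Step 1 by a standard small-object / transfinite argument: at each stage, in addition to killing odd homotopy of $M_n$, also attach cells along \emph{all} maps $\mathrm{cofib}(M \rightarrow M_n) \rightarrow \Sigma^{\text{odd}} R$ (more precisely, along a set of maps detecting the odd relative cohomology with $R$-coefficients, which suffices since $R$ generates $\Mod_R$ and $\pi_*$-even modules are built from even suspensions of $R$ under the operations preserving ``even mapping spectrum''). Running this enlarged process to a colimit produces $M \rightarrow E$ with $C = \mathrm{cofib}(M \rightarrow E)$ satisfying: $\pi_{-*}\Map_{\Mod_R}(C, R)$ is concentrated in even degrees, hence (by the long exact sequences, retracts, extensions, and even suspensions that generate the $\pi_*$-even modules from $R$) $\Map_{\Mod_R}(C, F)$ has even homotopy for every $\pi_*$-even $F$. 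By \cref{remark:extension_condition_in_envelope_as_relative_cohomology} this is exactly condition (3). One must check the enlargement does not destroy Step 1; it does not, since we are only attaching further even (de)suspensions of $R$, which keeps the cofibre even flat and $\pi_* E$ even.

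\textbf{Step 3: The connective refinement.} If $R$ and $M$ are connective, then $\pi_k M = 0$ for $k < 0$, so in Step 1 one only ever attaches cells $\Sigma^k R$ with $k \geq 1$ odd; these are $1$-connective, so each $M_n \rightarrow M_{n+1}$ is an isomorphism on $\pi_0$ and the attached cells are connective. Hence $E = \varinjlim M_n$ is connective with $\pi_0 E \simeq \pi_0 M$, and $C = \mathrm{cofib}(M \rightarrow E)$ is $1$-connective. One should verify the Step 2 enlargement can also be carried out with only connective (indeed positive-dimensional even) cells: since $C$ is $1$-connective and $R$ is connective, $\Map_{\Mod_R}(C, \Sigma^j R) = 0$ for $j < 1$, so only attachments along maps to $\Sigma^j R$ with $j \geq 1$ are needed, all of which are connective. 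This preserves connectivity of $E$ and the identification $\pi_0 E \simeq \pi_0 M$.

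\textbf{Main obstacle.} The delicate point is Step 2: one must choose the set of cells carefully enough that the transfinite process \emph{converges}, i.e.\ that the colimit $C$ genuinely has even relative cohomology into \emph{every} $\pi_*$-even $F$, not merely into $R$ and its even suspensions. The reduction from ``even cohomology into $R$'' to ``even cohomology into all $\pi_*$-even $F$'' requires knowing that the class of modules $F$ with $\Map_{\Mod_R}(C,F)$ even is closed under the operations (even suspensions, extensions, retracts, and suitable colimits) that generate $\pi_*$-even modules from $R$ — the colimit closure in particular needs care, since a $\pi_*$-even module need not be a nice colimit of even suspensions of $R$ in general. Handling this cleanly — likely by a Postnikov-tower argument on $F$ reducing to its even Eilenberg--MacLane pieces, which are modules over $\pi_0 R$ and so controlled by ordinary homological algebra — is where the real work lies; everything else is a routine cell-attachment bookkeeping.
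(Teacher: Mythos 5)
Your Step 1 (cone off odd-degree classes $\Sigma^{k}R \rightarrow M_{n}$ iteratively, take the sequential colimit) and Step 3 (in the connective case use only positive odd $k$, so $\pi_{0}$ is untouched) are exactly the paper's construction, and your verification of conditions (1) and (2) is essentially the paper's. The problem is your Step 2, and you have correctly located it yourself as the ``main obstacle'': it is a genuine gap, and moreover the extra enlargement it proposes is not needed. First, the enlargement is not well formed as stated: you propose to ``attach cells along maps $\mathrm{cofib}(M \rightarrow M_{n}) \rightarrow \Sigma^{\mathrm{odd}}R$,'' but these are maps \emph{out of} the cofibre into shifts of $R$; there is no cell-attachment that kills a cohomology class of this kind, so the transfinite process you describe does not have a clear meaning. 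Second, even granting some such process, your reduction from ``$\Map_{\Mod_{R}}(C,\Sigma^{j}R)$ even for all $j$'' to ``$\Map_{\Mod_{R}}(C,F)$ even for every $\pi_{*}$-even $F$'' is unjustified: $\pi_{*}$-even modules are not generated from even suspensions of $R$ by operations under which the class $\{F : \Map(C,F)\ \text{even}\}$ is visibly closed, and the Postnikov-tower/Eilenberg--MacLane fix you gesture at would have to contend with $\lim^{1}$ terms and with the fact that the layers are general $\pi_{0}R$-modules rather than sums of shifts of $R$. So as written the universal property (3) is not established.

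The paper closes this by showing that the \emph{unmodified} Step 1 colimit already satisfies (3), with the coefficients $F$ kept general throughout rather than reduced to $R$. Writing $F_{R}^{*}(-,M)$ for relative $R$-linear $F$-cohomology as in \cref{remark:extension_condition_in_envelope_as_relative_cohomology}, one applies the Milnor exact sequence to $\mathrm{cofib}(M \rightarrow E) \simeq \varinjlim \mathrm{cofib}(M \rightarrow M_{n})$ and proves by induction on $n$ that $F_{R}^{k}(M_{n},M)$ vanishes for $k$ odd and that $F_{R}^{k}(M_{n+1},M) \rightarrow F_{R}^{k}(M_{n},M)$ is surjective for $k$ even (so the $\lim^{1}$ term dies by Mittag--Leffler). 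Both facts drop out of the long exact sequence for the cofibre sequence relating $M_{n}$, $M_{n+1}$ and $\bigoplus \Sigma^{k_{\alpha}}R$, because
\[
F_{R}^{*}\bigl(\bigoplus \Sigma^{k_{\alpha}}R\bigr) \simeq \prod \pi_{*-k_{\alpha}}(F)
\]
is concentrated in \emph{odd} degrees: the $k_{\alpha}$ are odd and $\pi_{*}F$ is even. In other words, the same odd cells you attach to make $\pi_{*}E$ even automatically contribute nothing in even degrees when mapped into any $\pi_{*}$-even $F$, which is precisely the universal property. If you replace your Step 2 by this direct verification (and note that in the connective case restricting to positive odd cells does not disturb it), your argument becomes the paper's proof.
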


\begin{proof}
We will use the small object argument to construct $E$. Consider the set of homotopy classes of maps 
\[
\Sigma^{k} R \rightarrow M
\]
where $k$ is odd. Taking a direct sum of all such maps, we obtain a cofibre sequence
\[
\bigoplus \Sigma^{k_{\alpha}} R \rightarrow M \rightarrow M'.
\]
We now inductively declare $M_{0} \colonequals M$ and $M_{n+1} \colonequals (M_{n})'$ as above. We claim that $E \colonequals \varinjlim M_{n}$ has the required properties. 

We first verify that $E$ is an $\pi_{*}$-even envelope. For property $(1)$, since 
\[
\mathrm{cofib}(M \rightarrow E) \simeq \varinjlim \mathrm{cofib}(M \rightarrow M_{n})
\]
and even flat modules are stable under filtered colimits, it is enough to show that each of $\mathrm{cofib}(M \rightarrow M_{n})$ is even flat. We argue by induction. For $n=0$, this cofibre vanishes and so is even flat. For $n \geq 0$ we have a cofibre sequence
\[
\mathrm{cofib}(M \rightarrow M_{n}) \rightarrow \mathrm{cofib}(M \rightarrow M_{n+1}) \rightarrow \bigoplus \Sigma^{k_{\alpha}+1} R.
\]
As the right hand side term is a direct sum of perfect evens and even flat modules are closed under extensions by \cref{proposition:even_flat_modules_closed_under_extensions_retracts_and_filtered_colimits}, we deduce that the middle term is also even flat. 

For property $(2)$, observe that since any map $\Sigma^{k} R \rightarrow E$ where $k$ is odd factors through some $M_{n}$, it vanishes in $M_{n+1}$ and hence in $E$. Thus, $\pi_{*}E$ is even as needed. 

We move on to property $(3)$. Using \cref{remark:extension_condition_in_envelope_as_relative_cohomology}, it is enough to show that if $\pi_{*}F$ is even, then the relative $R$-linear cohomology groups $F_{R}^{*}(E, M)$ are concentrated in even degrees. Since 
\[
\mathrm{cofib}(M \rightarrow E) \simeq \varinjlim \mathrm{cofib}(M \rightarrow M_{n}), 
\]
we have a Milnor exact sequence
\[
0 \rightarrow R^{1} \varprojlim F_{R}^{*-1}(M_{n}, M) \rightarrow F_{R}^{*}(E, M) \rightarrow \varprojlim F_{R}^{*}(M_{n}, M) \rightarrow 0.
\]
Since the left hand side term vanishes on Mittag-Leffler sequences, vanishing in odd degrees will follow if we can show that 
\begin{enumerate}
    \item $F_{R}^{k}(M_{n}, M)$ vanishes for $k$ odd and 
    \item $F_{R}^{k}(M_{n+1}, M) \rightarrow F_{R}^{k}(M_{n}, M)$ is an epimorphism for $k$ even. 
\end{enumerate}
Both follow at once (in the first case, by induction) from the long exact sequence
\[
\ldots \rightarrow F_{R}^{*}(M_{n+1}, M) \rightarrow F_{R}^{*}(M_{n}, M) \rightarrow F_{R}^{*}(\bigoplus R^{k_{\alpha}}) \rightarrow F_{R}^{*+1}(M_{n+1}, M) \rightarrow F_{R}^{*+1}(M_{n}, M) \rightarrow \ldots, 
\]
since
\[
F_{R}^{*}(\bigoplus R^{k_{\alpha}}) \simeq \prod F_{R}^{*}(R^{k_{\alpha}}) \simeq \prod \pi_{*-k_{\alpha}}(F)
\]
vanishes in even degrees by assumption as all $k_{\alpha}$ are odd. 

Finally, suppose that $R$ and $M$ are connective. Then in the inductive construction above it is enough to consider homotopy clases of maps $\Sigma^{k} R \rightarrow M$ such that $k$ is odd and positive. If the above inductive construction is performed with this restriction, the result will be connective with $\pi_{0} M \simeq \pi_{0} E$. 
\end{proof}

Of particular importance is the $\pi_{*}$-even envelope of $R$ itself, which generates envelopes of any perfect even in the following sense: 

\begin{proposition}
\label{proposition:pi_star_unit_envelope_of_unit_contains_envelopes_of_perfect_evens}
Let $R \rightarrow E$ and $A \rightarrow E'$ be $\pi_{*}$-even envelopes, where $A$ is perfect even. Then, $E'$ belongs to the subcategory generated by $E$ under direct sums, even (de)suspensions and retracts. 
\end{proposition}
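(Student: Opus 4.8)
Throughout, write $\langle E\rangle\subseteq\Mod_{R}$ for the subcategory generated by $E$ under direct sums, even (de)suspensions and retracts; its objects are precisely the retracts of direct sums $\bigoplus_{\alpha}\Sigma^{2k_{\alpha}}E$, and they all have even homotopy groups since $E$ does. The plan is to argue in two stages: (a) every perfect even $A$ admits \emph{some} $\pi_{*}$-even envelope lying in $\langle E\rangle$, and (b) any $\pi_{*}$-even envelope of $A$ whatsoever lies in $\langle E\rangle$.

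For (a) I would induct over the generation of $\Perf(R)_{ev}$ from the generators $\Sigma^{2k}R$ under retracts and extensions, as in \cref{definition:perfect_even_module}. The class of $A$ admitting a good envelope contains $\Sigma^{2k}R$, with envelope $\Sigma^{2k}E$, since the conditions of \cref{definition:pi_star_even_envelope} transport along the exact autoequivalence $\Sigma^{2k}$ (which preserves even flatness and even homotopy). It is closed under retracts: if $A$ is a retract of $B$ with good envelope $B\to E_{B}$, write $B\simeq A\oplus A'$ (idempotent completeness of $\Mod_{R}$) and observe that the composite $A\to B\to E_{B}$ is a $\pi_{*}$-even envelope of $A$ — condition (2) is inherited, $\cofib(A\to E_{B})$ is an extension of $\cofib(B\to E_{B})$ by $A'$ hence even flat by \cref{proposition:even_flat_modules_closed_under_extensions_retracts_and_filtered_colimits}, and factorizations are produced by extending maps out of $A$ by zero over $B$. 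It is closed under extensions: given a cofibre sequence $A_{1}\to A_{2}\to A_{3}$ with good envelopes $\iota_{i}\colon A_{i}\to E_{i}$, the map $\iota_{1}$ extends over $A_{2}$ because the obstruction lies in $[A_{3},\Sigma E_{1}]\cong\pi_{-1}\Map_{\Mod_{R}}(A_{3},E_{1})$, which vanishes since $A_{3}$ is perfect even and $\pi_{*}E_{1}$ is even, so that $\Map_{\Mod_{R}}(A_{3},E_{1})$ has even homotopy by the long exact sequence argument of \cref{lemma:pistar_even_means_homologically_even_and_even_filtration_is_whitehead}; combining with $A_{2}\to A_{3}\xrightarrow{\iota_{3}}E_{3}$ gives $\phi\colon A_{2}\to E_{1}\oplus E_{3}\in\langle E\rangle$, and a map of cofibre sequences shows $\cofib(\phi)$ is an extension of $\cofib(\iota_{3})$ by $\cofib(\iota_{1})$ and hence even flat, condition (2) is clear, and condition (3) follows from the long exact sequence in $F_{R}^{*}(-)$ and \cref{remark:extension_condition_in_envelope_as_relative_cohomology} for $E_{1}$ and $E_{3}$.

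For (b) the key lemma is a uniqueness-up-to-cofibres statement for envelopes: if $\iota_{i}\colon M\to E_{i}$ ($i=1,2$) are $\pi_{*}$-even envelopes of the same module with $C_{i}\colonequals\cofib(\iota_{i})$, then the pushout $P\colonequals E_{1}\sqcup_{M}E_{2}$ satisfies $P\simeq E_{1}\oplus C_{2}\simeq E_{2}\oplus C_{1}$. Indeed $\cofib(E_{1}\to P)\simeq C_{2}$, and the boundary map $C_{2}\to\Sigma E_{1}$ is an element of the odd part of $F_{R}^{*}(C_{2})$ for $F=E_{1}$, which vanishes by \cref{remark:extension_condition_in_envelope_as_relative_cohomology} applied to the envelope $E_{2}$ (as $\pi_{*}E_{1}$ is even), so that cofibre sequence splits; symmetrically for the other. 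Applying this with $M=A$, $E_{1}$ the given envelope $E'$, and $E_{2}$ a good envelope $E_{A}$ from (a) yields $E'\oplus C_{A}\simeq E_{A}\oplus C'$, so $E'$ is a retract of $E_{A}\oplus C'$. Since $C'=\cofib(A\to E')$ need not itself have even homotopy, it is not in $\langle E\rangle$ and one cannot stop here; the remaining task is to show $E'$ itself lies in $\langle E\rangle$, by feeding the relation back into a simultaneously-inductive strengthening of (a) that also controls the cofibre of the constructed envelope, or by resolving $E'$ directly by even suspensions of $R$-envelopes built out of $E$.

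The main obstacle is precisely this last reduction. Conditions (1)--(3) of \cref{definition:pi_star_even_envelope} determine an envelope only weakly — condition (3) provides a factorization, not a unique one — so $\pi_{*}$-even envelopes are genuinely non-unique (for instance $R\oplus R$ is a $\pi_{*}$-even envelope of $R$ whenever $\pi_{*}R$ is even), and the content of the proposition is exactly that closing $E$ under direct sums and retracts absorbs this ambiguity. Stage (a) and the pushout--splitting lemma are the structural ingredients I am confident about; bootstrapping them into the statement for an arbitrary envelope is the delicate point, and I expect it to require careful book-keeping of the cofibre modules through the construction of (a) rather than a genuinely new idea.
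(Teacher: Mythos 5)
Your proposal does not close: stage (b) is left open, and that is where the entire content of the proposition lies. From the (correct) pushout--splitting lemma you only obtain $E' \oplus C_{A} \simeq E_{A} \oplus C'$ with $C' = \cofib(A \to E')$, and the subcategory $\langle E\rangle$ generated by $E$ under direct sums, even (de)suspensions and retracts is not closed under cancellation of direct summands, so this relation gives no purchase on $E'$ itself; as you note, $C'$ need not even have even homotopy. Stage (a) (existence of \emph{some} envelope of a perfect even $A$ inside $\langle E\rangle$, by induction over the generation of $\Perf(R)_{ev}$) and the pushout lemma both check out, but they do not combine to treat an arbitrary envelope, and the ``careful book-keeping'' you defer to is exactly the missing idea, not a routine completion.

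The paper's proof avoids this difficulty by never comparing two envelopes of $A$ at all. Since $\pi_{*}E'$ is even, choose a $\pi_{*}$-surjection $f \colon \bigoplus \Sigma^{k_{\alpha}} R \to E'$ with all $k_{\alpha}$ even; because $R \to E$ is an envelope and $E'$ is $\pi_{*}$-even, condition $(3)$ of \cref{definition:pi_star_even_envelope} extends $f$ to a map $p \colon \bigoplus \Sigma^{k_{\alpha}} E \to E'$. It then suffices to split $p$, i.e.\ to show $E' \to C$ is null where $C \colonequals \cofib(p)$. Since $f$ (hence $p$) is $\pi_{*}$-surjective, $\pi_{*}C$ is concentrated in odd degrees; as $A$ is perfect even and the source and target of $p$ are $\pi_{*}$-even, the structure map $A \to E'$ lifts along $p$, so $E' \to C$ factors through $\cofib(A \to E') \to C$, and the latter map vanishes by \cref{remark:extension_condition_in_envelope_as_relative_cohomology} applied to the envelope $A \to E'$ (relative cohomology with $\pi_{*}$-even coefficients is even, while a map to $C$ lives in odd degree). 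Thus $E'$ is a retract of $\bigoplus \Sigma^{k_{\alpha}} E$, which is the statement. Note how each hypothesis is used exactly once --- the envelope property of $R \to E$ to build $p$, perfect evenness of $A$ to lift along $p$, and the envelope property of $A \to E'$ to kill the map into the odd module $C$ --- with no induction over $\Perf(R)_{ev}$ and no uniqueness statement needed; if you want to salvage your write-up, replace stages (a) and (b) by this one splitting argument.
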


\begin{proof}
Let $f \colon \bigoplus \Sigma^{k_{\alpha}} R \rightarrow E'$ be a $\pi_{*}$-surjective map from a direct sum of even (de)suspensions of $R$ which exists by the assumption that $\pi_{*}(E')$ is even. By assumption, we can extend this map to a commutative diagram 
\[
\begin{tikzcd}
	& {\bigoplus \Sigma^{k_{\alpha}}R} \\
	{\bigoplus \Sigma^{k_{\alpha}} E} && {E'}
	\arrow["p", from=2-1, to=2-3]
	\arrow[from=1-2, to=2-1]
	\arrow["f", from=1-2, to=2-3]
\end{tikzcd}.
\]
We claim that the map $p$ admits a section, which will imply that $E'$ is a retract of $\bigoplus \Sigma^{k_{\alpha}} E$, proving the result. This happens precisely when the map $E' \rightarrow C$ vanishes, where
\[
C \colonequals \mathrm{cofib}(\bigoplus \Sigma^{k_{\alpha}} E \rightarrow E').  
\]
Since $f$ is $\pi_{*}$-surjective on homotopy groups, so is $p$, and thus $\pi_{*}C$ is concentrated in odd degree. Since $A$ is perfect even, 
\[
\pi_{*} \map_{\Mod_{R}}(A, \bigoplus \Sigma^{k_{\alpha}} E) \rightarrow \pi_{*} \map_{\Mod_{R}}(A, E')
\]
is surjective and thus the structure map $A \rightarrow E'$ lifts to a map into $\bigoplus \Sigma^{k_{\alpha}} E$, so that the composite $A \rightarrow E' \rightarrow C$ vanishes. Thus, we have a factorization 
\[
E' \rightarrow \mathrm{cofib}(A \rightarrow E') \rightarrow C.
\]
The second map vanishes by \cref{remark:extension_condition_in_envelope_as_relative_cohomology} since $\pi_{*}C$ is odd, proving the result. 
\end{proof}

\begin{corollary}[Weak uniqueness of the $\pi_{*}$-even envelope of the unit] 
\label{corollary:weak_uniqueness_of_the_pi_star_envelopes}
If $R \rightarrow E$ and $R \rightarrow E'$ are $\pi_{*}$-even envelopes, then $E$ and $E'$ generate the same subcategory of $\Mod_{R}$ under direct sums, retracts and even (de)suspensions. This subcategory contains $\pi_{*}$-even envelopes of any perfect even. 
\end{corollary}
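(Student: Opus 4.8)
The plan is to deduce this as a formal consequence of \cref{proposition:pi_star_unit_envelope_of_unit_contains_envelopes_of_perfect_evens}, which already contains all the real content. Write $\langle X \rangle \subseteq \Mod_{R}$ for the smallest full subcategory containing $X$ and closed under direct sums, retracts and even (de)suspensions; this is well-defined since the three operations can be applied iteratively, and by construction $\langle X\rangle$ is contained in every such subcategory containing $X$.

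First I would observe that $R$ itself is perfect even, being $\Sigma^{0} R$. So, given two $\pi_{*}$-even envelopes $R \rightarrow E$ and $R \rightarrow E'$, applying \cref{proposition:pi_star_unit_envelope_of_unit_contains_envelopes_of_perfect_evens} with the perfect even module $A = R$ and the envelope $A \rightarrow E'$ shows that $E' \in \langle E\rangle$. Since $\langle E'\rangle$ is the smallest subcategory closed under the three operations and containing $E'$, and $\langle E\rangle$ is such a subcategory containing $E'$, this gives $\langle E'\rangle \subseteq \langle E\rangle$. Swapping the roles of $E$ and $E'$ yields the reverse inclusion, hence $\langle E\rangle = \langle E'\rangle$, which is the first assertion.

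For the last sentence, let $A$ be any perfect even $R$-module. By \cref{proposition:pi_star_even_envelopes_exist} there exists a $\pi_{*}$-even envelope $A \rightarrow E_{A}$, and applying \cref{proposition:pi_star_unit_envelope_of_unit_contains_envelopes_of_perfect_evens} once more — this time with the envelopes $R \rightarrow E$ and $A \rightarrow E_{A}$ — gives $E_{A} \in \langle E\rangle$, as claimed. I do not anticipate any genuine obstacle: the argument is pure bookkeeping on top of \cref{proposition:pi_star_unit_envelope_of_unit_contains_envelopes_of_perfect_evens}, and the only point worth flagging is that one must cite \cref{proposition:pi_star_even_envelopes_exist} so that the final clause is not vacuous.
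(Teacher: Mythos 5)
Your argument is correct and is exactly the intended deduction: the paper states this as an immediate corollary of \cref{proposition:pi_star_unit_envelope_of_unit_contains_envelopes_of_perfect_evens}, applied with $A = R$ (and symmetry) for the first claim and with an arbitrary perfect even $A$, together with \cref{proposition:pi_star_even_envelopes_exist}, for the second. Nothing is missing.
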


As a corollary of the construction of $\pi_{*}$-even envelopes, we have the following characterization of even flat modules in terms of tensor products: 

\begin{proposition}[Lazard's Theorem]
\label{proposition:tensor_characterization_of_even_flat_modules}
The following are equivalent for an $R$-module $M$: 
\begin{enumerate}
    \item $M$ is even flat in the sense of \cref{definition:even_flat_module} or 
    \item for any right $R$-module $E$ such that $\pi_{*}E$ is even, $\pi_{*}(E \otimes_{R} M)$ is even. 
\end{enumerate}
Moreover, if $R$ is connective, then it is enough to verify the second condition when $E$ is also connective. 
\end{proposition}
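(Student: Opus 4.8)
The plan is to prove the two implications separately, using the $\pi_{*}$-even envelope machinery of \cref{proposition:pi_star_even_envelopes_exist} for the hard direction $(2 \Rightarrow 1)$, which I expect to be the main obstacle.

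\emph{Proof of $(1 \Rightarrow 2)$.} This is the easy direction. Write $M \simeq \varinjlim M_{\alpha}$ as a filtered colimit of perfect evens, as in \cref{definition:even_flat_module}. Since tensor product commutes with colimits separately in each variable, $E \otimes_{R} M \simeq \varinjlim (E \otimes_{R} M_{\alpha})$, and homotopy groups commute with filtered colimits, so it suffices to check that $\pi_{*}(E \otimes_{R} A)$ is even whenever $A$ is perfect even. The class of right $R$-modules $N$ such that $\pi_{*}(E \otimes_{R} N)$ is even is closed under even (de)suspensions (clear), retracts (clear), and extensions (by the long exact sequence associated to $E \otimes_{R} -$ applied to a cofibre sequence, since the three terms being in even degrees with the connecting map of odd degree forces the middle one even). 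It contains $\Sigma^{2n} R$, since $\pi_{*}(E \otimes_{R} \Sigma^{2n} R) \simeq \pi_{*-2n}(E)$ is even by hypothesis. Hence it contains all of $\Perf(R)_{ev}$, and we are done.

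\emph{Proof of $(2 \Rightarrow 1)$.} This is where the envelope construction enters. Assume $\pi_{*}(E \otimes_{R} M)$ is even for every right $R$-module $E$ with $\pi_{*}E$ even. By \cref{proposition:pi_star_even_envelopes_exist} applied to the \emph{right} $R$-module $R$, choose a $\pi_{*}$-even envelope $R \to E_{R}$ of the unit, with cofibre $C$ even flat; then by the dual of the first implication, or directly, $C$ is a filtered colimit of perfect even right modules, so $\pi_{*}(C \otimes_{R} M)$ is even as well — combined with $\pi_{*}(E_{R} \otimes_{R} M)$ even and the cofibre sequence $M \to E_{R} \otimes_{R} M \to C \otimes_{R} M$, we conclude $\pi_{*}M$ is... no — the point is subtler, since $M$ itself need not be even. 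Instead, the strategy is to verify condition $(2)$ of \cref{proposition:characterization_of_even_flatness_in_terms_of_maps_factoring_through_perfect_even}: every map $P \to M$ from a perfect $P$ factors through a perfect even. Given such a map, one builds an approximation to $M$ by perfect evens by resolving $P$ through the perfect-even subcategory and using the tensor hypothesis to control the obstruction groups $\pi_{\mathrm{odd}}$; concretely, one maps $P$ into a finite direct sum of even (de)suspensions of $R$ hitting $\pi_{0}$-generators and argues inductively up the Postnikov tower of the cofibre, at each stage using that the relevant cohomology group is a homotopy group of some $E \otimes_{R} M$-type object and hence even, so the odd-degree attaching classes that would obstruct evenness vanish. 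The envelope $R \to E_R$ and \cref{proposition:pi_star_unit_envelope_of_unit_contains_envelopes_of_perfect_evens} package precisely this inductive mechanism: the even flat modules are exactly those built from copies of $E_R$, and the tensor condition $(2)$ is what forces any finite stage of a resolution of $P \to M$ to deform into $\Perf(R)_{ev}$.

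\emph{The connective refinement.} For the last sentence, one repeats the argument for $(2 \Rightarrow 1)$ but invokes the connective version of \cref{proposition:pi_star_even_envelopes_exist}, which produces an envelope $R \to E_{R}$ with $E_R$ connective and using only positive-dimensional cells; correspondingly in the resolution of $P \to M$ one only ever needs finite direct sums of $\Sigma^{2n} R$ with $n \geq 0$, so testing condition $(2)$ of \cref{proposition:tensor_characterization_of_even_flat_modules} only against connective even $E$ suffices. \textbf{The hard part} is making the inductive vanishing-of-obstructions argument in $(2 \Rightarrow 1)$ precise: one must ensure that the finitely many odd-degree attaching maps encountered at each stage of resolving $P$ really are detected by $\pi_{*}(E \otimes_{R} M)$ for a suitable even $E$ — this is the content that the envelope formalism of \S\ref{subsection:homotopy_even_envelopes} is designed to streamline, and the cofinality argument in \cref{proposition:characterization_of_even_flatness_in_terms_of_maps_factoring_through_perfect_even} is what converts "factors through a perfect even" back into "is a filtered colimit of perfect evens."
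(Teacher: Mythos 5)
Your direction $(1 \Rightarrow 2)$ is correct and is essentially the paper's argument (closure of the class $\{N : \pi_{*}(E \otimes_{R} N) \text{ even}\}$ under the operations generating even flats). The problem is $(2 \Rightarrow 1)$: what you have written is a strategy sketch, not a proof, and you say so yourself when you flag "the hard part" as the unproved inductive vanishing-of-obstructions step. The missing idea is concrete and is exactly what the paper uses: given $P \rightarrow M$ with $P$ perfect, \emph{dualize} it to a map of spectra $S^{0} \rightarrow P^{\vee} \otimes_{R} M$, and apply \cref{proposition:pi_star_even_envelopes_exist} not to $R$ or to $M$ but to the \emph{right} $R$-module $P^{\vee}$, shifted by one: choose $P^{\vee} \rightarrow E$ whose suspension is a $\pi_{*}$-even envelope, so that $\pi_{*}E$ is concentrated in \emph{odd} degrees and $F = \mathrm{fib}(P^{\vee} \rightarrow E)$ is even flat. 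Hypothesis $(2)$ (applied to $\Sigma E$) forces $\pi_{*}(E \otimes_{R} M)$ to be odd, so the composite $S^{0} \rightarrow E \otimes_{R} M$ is null; the map then lifts to $F \otimes_{R} M$, and since $F$ is a filtered colimit of perfect evens and $S^{0}$ is compact, it factors through $A \otimes_{R} M$ with $A$ perfect even. Dualizing back gives $P \rightarrow A^{\vee} \rightarrow M$, which is condition $(2)$ of \cref{proposition:characterization_of_even_flatness_in_terms_of_maps_factoring_through_perfect_even}. Without the dualization, your proposed induction "up the Postnikov tower of the cofibre" does not obviously work: the obstruction classes you need to kill are maps \emph{out of} a perfect module, which are not homotopy groups of any $E \otimes_{R} M$ until you dualize, and an infinite cell-attachment process gives no control on landing in a \emph{perfect} (finite) even module, which is what the factorization requires.

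The connective refinement inherits the same gap: in the paper it falls out of the dualization argument (for $R$ connective, $\pi_{*}P$ is bounded below, so after a large even suspension one may take $P$, and hence the envelope $E$ of $P^{\vee}$, to be connective by \cref{proposition:pi_star_even_envelopes_exist}), whereas in your version it rests on the unproved resolution step. Your opening gambit with the envelope of the unit $R \rightarrow E_{R}$ and \cref{proposition:pi_star_unit_envelope_of_unit_contains_envelopes_of_perfect_evens}, which you abandon mid-sentence, is indeed a dead end for this statement: the envelope that does the work is attached to the dual of the test perfect module, not to the unit.
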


\begin{proof}
$(1 \Rightarrow 2)$ Since the subcategory of those $R$-modules $M$ such that $\pi_{*}(E \otimes_{R} M)$ is even is closed under retracts, filtered colimits and contains $\Sigma^{2k} R$, it necessarily contains all even flat modules. 

$(2 \Rightarrow 1)$ By \cref{proposition:characterization_of_even_flatness_in_terms_of_maps_factoring_through_perfect_even}, it is enough to show that every map $P \rightarrow M$ from a perfect $R$-module factors through a perfect even.  By dualizing, we can identify the given map of modules with a map of spectra $S^{0} \rightarrow P^{\ast} \otimes_{R} M$. 

Let $P^{\ast} \rightarrow E$ be a map such that its suspension is a $\pi_{*}$-even envelope in right $R$-modules, so that the fibre $F \colonequals \mathrm{fib}(P^{\ast} \rightarrow E)$ is even flat and $\pi_{*}E$ is concentrated in \emph{odd} degrees. By assumption, $\pi_{*}(E \otimes_{R} M)$ is also concentrated in odd degrees, so that the composite $S^{0} \rightarrow E \otimes_{R} M$ vanishes. It follows that we have a lift $S^{0} \rightarrow F \otimes_{R} M$. 

Since $F$ is even flat, it can be written $F \simeq \varinjlim A_{\alpha}$ as a filtered colimit of perfect even right $R$-modules, so that $F \otimes_{R} M \simeq \varinjlim A_{\alpha} \otimes_{R} M$. As $S^{0}$ is compact, the given map $S^{0} \rightarrow F \otimes_{R} M$ of spectra factors through $S^{0} \rightarrow A \otimes_{R} M$, where $A$ is a perfect even. By dualizing the resulting diagram of spectra 
\[
\begin{tikzcd}
	& {A \otimes_{R} M} \\
	{S^{0}} && {P^{\ast} \otimes_{R} M}
	\arrow[from=2-1, to=1-2]
	\arrow[from=2-1, to=2-3]
	\arrow[from=1-2, to=2-3]
\end{tikzcd}
\]
we obtain a diagram of $R$-modules 
\[
\begin{tikzcd}
	& {A{^\ast}} \\
	P && {M}
	\arrow[from=2-1, to=1-2]
	\arrow[from=2-1, to=2-3]
	\arrow[from=1-2, to=2-3]
\end{tikzcd},
\]
which since $A$ is perfect even provides the needed factorization. 

If $R$ is connective, then the homotopy groups of $P$ as above are bounded from below, so that by a sufficiently large even suspension we can assume that $P$ is connective. In this case, $E$ can also chosen to be connective by \cref{proposition:pi_star_even_envelopes_exist}. 
\end{proof}

\begin{remark}
Observe that any cofibre sequence $E \rightarrow F \rightarrow G$ of $\pi_{*}$-even right $R$-modules is necessarily short exact on homotopy; that is, 
\[
0 \rightarrow \pi_{*}E \rightarrow \pi_{*}F \rightarrow \pi_{*}G \rightarrow 0
\]
is exact. By \cref{proposition:tensor_characterization_of_even_flat_modules}, the same is true for 
\[
0 \rightarrow \pi_{*}(E \otimes_{R} M) \rightarrow \pi_{*}(F \otimes_{R} M) \rightarrow \pi_{*}(G \otimes_{R} M) \rightarrow 0,
\]
when $M$ is even flat since again everything is even. It is in this sense that an even flat module behaves in a ``flat'' manner, but only from the perspective of modules with even homotopy groups. 
\end{remark}

\subsection{Homologically even modules} 

Recall that we say that an $R$-module $M$ is homologically even if $\evensheaf_{M}(\nicefrac{1}{2}+k) = 0$ for all $k \in \mathbb{Z}$. The goal of this section is to characterize these modules in a variety of different ways. 

In the context of \cref{theorem:characterization_of_homologically_even_modules}, we say that a right $R$-module $E$ is said to be even flat if it is even flat when considered as a left module over the opposite algebra $R^{op}$. 

\begin{theorem}
\label{theorem:characterization_of_homologically_even_modules}
For an $R$-module $M$, the following conditions are equivalent: 
\begin{enumerate}
    \item $M$ is homologically even, 
    \item every map $A \rightarrow \Sigma M$ from a perfect even $R$-module into the suspension of $M$ factors through $A \rightarrow \Sigma B$, where $B$ is perfect even, 
    \item for any even flat, $\pi_{*}$-even right $R$-module $E$, $\pi_{*}(E \otimes_{R} M)$ is even, 
    \item if $A \rightarrow E$ is $\pi_{*}$-even envelope of a perfect even right $R$-module, then $\pi_{*}(E \otimes_{R} M)$ is even. 
    \item if $R \rightarrow E$ is $\pi_{*}$-even envelope of the unit in right $R$-modules, then $\pi_{*}(E \otimes_{R} M)$ is even. 
\end{enumerate}
\end{theorem}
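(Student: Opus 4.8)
The plan is to prove the equivalences in the cyclic order $(1)\Rightarrow(2)\Rightarrow(3)\Rightarrow(4)\Rightarrow(5)\Rightarrow(1)$, though some implications are more naturally proved directly. First I would establish $(1)\Leftrightarrow(2)$: by \cref{remark:homologically_even_is_vanishing_of_one_half_weight_homology}, $M$ is homologically even precisely when $\evensheaf_M(\nicefrac{-1}{2}) = \evensheaf_{\Sigma M} = 0$, which by \cref{remark:concrete_description_of_even_sheaves_and_suspension_isomorphism} means the sheafification of $A \mapsto \pi_0\Map_{\Mod_R}(A, \Sigma M)$ vanishes; as in the proof of \cref{lemma:every_perfect_even_is_homogically_even}, this says exactly that every homotopy class $A \to \Sigma M$ becomes locally zero, i.e. factors through an even epimorphism $B \to A$ with $B$ perfect even, which upon rotating the relevant cofibre sequence is precisely condition $(2)$ (using that the fibre of an even epimorphism into $A$ gives a factorization $A \to \Sigma B$).

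Next I would handle the tensor characterizations. For $(1)\Rightarrow(3)$: given an even flat $\pi_*$-even right $R$-module $E$, write $E \simeq \varinjlim E_\alpha$ as a filtered colimit of perfect evens. Tensoring with $M$ commutes with this colimit and with suspension, so it suffices to control $\pi_*(A \otimes_R M)$ for $A$ perfect even, and then argue that the collection of modules $M$ for which this tensor is even (for all such $E$) is closed under the relevant operations — but more to the point, $\pi_*(A\otimes_R M)$ even for all perfect even right $R$-modules $A$ should be equivalent, via dualizing $A$ and the identification $\Map(A^\vee, \Sigma M) \simeq \pi_{-1}(A\otimes_R M)$-type formulas, to condition $(2)$. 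This is the technical heart: I would use the characterization $(2)$ together with \cref{proposition:tensor_characterization_of_even_flat_modules} (Lazard), dualizing a perfect even $A$ to turn "maps $A^\vee \to \Sigma M$ factor through perfect evens" into "$S^0 \to \Sigma(A\otimes_R M)$ lifts appropriately", and inducting up the cell structure of $A$ via the long exact sequences of \cref{remark:long_exact_sequence_of_even_sheaves}. The implications $(3)\Rightarrow(4)\Rightarrow(5)$ are then nearly immediate: $(4)$ is a special case of $(3)$ since a $\pi_*$-even envelope of a perfect even is even flat and $\pi_*$-even by \cref{remark:pistar_even_envelopes_inherit_evenness} (with $R$ itself perfect even, $(5)$ is the special case $A = R$).

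The remaining and most substantial step is $(5)\Rightarrow(1)$, closing the loop. Let $R \to E$ be a $\pi_*$-even envelope of the unit in right $R$-modules, with cofibre $C$ even flat, and suppose $\pi_*(E\otimes_R M)$ is even. I would verify $(2)$: given $f\colon A \to \Sigma M$ with $A$ perfect even, I want to factor it through $\Sigma B$, $B$ perfect even. Dualize to view $f$ as an element of $\pi_{-1}(A^\vee \otimes_R M)$, or better, smash the cofibre sequence $R \to E \to C$ with $A^\vee \otimes_R M$ (equivalently, use that $A$ is self-dual up to even suspension so we may replace $A$ by $R$ after dualizing): the hypothesis says the "$E$-part" is even, hence the class of $f$ (which sits in an odd degree) must come from the contribution of $C$; since $C$ is even flat, it is a filtered colimit of perfect evens, and a compactness argument (perfect modules are compact, \cref{proposition:characterization_of_even_flatness_in_terms_of_maps_factoring_through_perfect_even}) pushes the factorization through a single perfect even $B$.

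The hard part will be the bookkeeping in $(5)\Rightarrow(1)$ and in $(1)\Leftrightarrow(3)$: carefully tracking degrees and suspensions when dualizing perfect even modules (a perfect even need not be even-self-dual on the nose, only up to an even shift, which is harmless but must be stated), and correctly invoking \cref{remark:pistar_even_envelopes_inherit_evenness} and \cref{proposition:even_flat_modules_closed_under_extensions_retracts_and_filtered_colimits} to know that the cofibre of the envelope is even flat so that the relevant factorizations through perfect evens exist. I expect the long exact sequence argument — peeling off one even cell of a perfect even $A$ at a time and using \cref{remark:long_exact_sequence_of_even_sheaves} to propagate the vanishing — to be where the real work lies; everything else is formal manipulation of the already-established closure properties and the Yoneda-style identifications from \S\ref{section:the_even_filtration}.
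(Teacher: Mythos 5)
Your overall skeleton (prove $(1)\Leftrightarrow(2)$ sheaf-theoretically, then circulate through the tensor conditions using envelopes and Lazard) matches the paper, and your $(1)\Leftrightarrow(2)$ and $(3)\Rightarrow(4)\Rightarrow(5)$ steps are fine. However, the two implications you yourself flag as the ``technical heart'' each rest on a false claim. In your $(2)\Rightarrow(3)$ step you assert that ``$\pi_{*}(A\otimes_{R}M)$ even for all perfect even right $R$-modules $A$'' should be equivalent to condition $(2)$. It is not: taking $A=R$ it would force $\pi_{*}M$ itself to be even, whereas homologically even modules need not have even homotopy (e.g.\ the mod-$p$ Moore spectrum over $S^{0}$ is homologically even since its $\MU$-homology is even, yet has odd-degree homotopy). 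No cell-induction via \cref{remark:long_exact_sequence_of_even_sheaves} can rescue this, since the base case $A=R$ already fails. The correct mechanism, as in the paper, is subtler: an odd class of $\pi_{*}(E\otimes_{R}M)$ factors through $A\otimes_{R}M$ for a single perfect even $A$ (even flatness of $E$ plus compactness of spheres), the dualized map $\Sigma^{k}A^{\vee}\rightarrow M$ factors through a perfect even $B$ by $(2)$, and then the class dies only after mapping onward into $\pi_{*}(E\otimes_{R}B)$, which is even by \cref{proposition:tensor_characterization_of_even_flat_modules}; the vanishing never happens at the level of $A\otimes_{R}M$.

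Your closing step $(5)\Rightarrow(1)$ has a parallel problem. The reduction ``$A$ is self-dual up to even suspension so we may replace $A$ by $R$'' is false for general perfect evens, which are built from $\Sigma^{2k}R$ by extensions and retracts, not just direct sums of shifts; and the alternative of smashing $R\rightarrow E\rightarrow C$ against $A^{\vee}\otimes_{R}M$ does not typecheck ($E$ and $A^{\vee}$ are both right $R$-modules) and in any case hypothesis $(5)$ only controls $\pi_{*}(E\otimes_{R}M)$ for the envelope of the unit, which says nothing directly about maps out of an arbitrary perfect even $A$. What is missing is precisely \cref{proposition:pi_star_unit_envelope_of_unit_contains_envelopes_of_perfect_evens}: envelopes of perfect evens are retracts of direct sums of even (de)suspensions of an envelope of the unit, which gives $(5)\Rightarrow(4)$; after that, your compactness argument with the even flat cofibre $C$ is essentially the paper's $(4)\Rightarrow(2)$, provided you apply it to a $\pi_{*}$-even envelope of $A^{\vee}$ rather than of $R$. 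As written, both of the load-bearing implications have genuine gaps.
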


\begin{proof}
$(1 \Leftrightarrow 2)$ The homotopy class of $A \rightarrow \Sigma M$ determines an element of $\pi_{0} \map (A, \Sigma M)$. Since $\evensheaf_{M}(\nicefrac{1}{2})$  is defined as the sheafication of the presheaf
\[
\pi_{0} \map(-, \Sigma M) \colon \Perf(R)_{ev}^{op} \rightarrow \Ab,
\]
it vanishes if for every such element there exists an even epimorphism $f \colon B \rightarrow A$ such that the composite
\[
B \rightarrow A \rightarrow \Sigma M
\]
is zero. This happens precisely when the second map factors through $\Sigma(\mathrm{fib}(f))$, which is a suspension of a perfect even as needed.  

$(2 \Rightarrow 3)$ Suppose we have a homotopy class of maps $S^{k} \rightarrow E \otimes_{R} M$ of spectra with $k$ odd. Since $E$ is even flat, it is a filtered colimit of perfect evens, so that the given map of spectra factors as $S^{k} \rightarrow A \otimes_{R} M$, where $A$ is a perfect even right $R$-module. This is determined by a map of modules  $\Sigma^{k} A^{\ast} \rightarrow M$ from the dual. As $A^{\ast}$ is perfect even and $k$ is odd, by assumption this map factors as
\[
\Sigma^{k} A^{\ast} \rightarrow B \rightarrow M,
\]
where $B$ is perfect even. By dualizing again, we obtain a commutative diagram 
\[
\begin{tikzcd}
	{S^{k}} & {A \otimes_{R} B} & {A \otimes_{R} M} \\
	& {E \otimes_{R}B} & {E \otimes_{R}M}
	\arrow[from=1-2, to=2-2]
	\arrow[from=2-2, to=2-3]
	\arrow[from=1-2, to=1-3]
	\arrow[from=1-3, to=2-3]
	\arrow[from=1-1, to=1-2]
\end{tikzcd}
\]
of spectra. Since $B$ is perfect even, the spectrum $\pi_{*}(E \otimes_{R} B)$ is concentrated in even degrees by \cref{proposition:tensor_characterization_of_even_flat_modules}. We deduce that the same is true for $\pi_{*}(E \otimes_{R} M)$. 

$(3 \Rightarrow 4)$ and $(4 \Rightarrow 5)$ follow from \cref{remark:pistar_even_envelopes_inherit_evenness}. $(5 \Rightarrow 4)$ is a consequence of \cref{proposition:pi_star_unit_envelope_of_unit_contains_envelopes_of_perfect_evens}.

$(4 \Rightarrow 2)$. Let $A \rightarrow \Sigma M$ be a map from a perfect even spectrum, which we can identify with a map of spectra $S^{-1} \rightarrow A^{\ast} \otimes_{R} M$. Let $A^{\ast} \rightarrow E$ be a $\pi_{*}$-even envelope in right $R$-modules which exists by \cref{proposition:pi_star_even_envelopes_exist} and which we can complete to a cofibre sequence. 
\[
A^{\ast} \rightarrow E \rightarrow C
\]
By assumption, $\pi_{*}(E \otimes_{R} M)$ is even, so that the map $S^{-1} \rightarrow A^{\ast} \otimes_{R} M$ lifts to $S^{-1} \rightarrow (\Sigma^{-1} C) \otimes_{R} M$. As $C$ is even flat, we can write it $C \simeq \varinjlim B_{\alpha}$ as a filtered colimit of perfect evens, so that $(\Sigma^{-1} C) \otimes_{R} M \simeq \varinjlim (\Sigma^{-1} B_{\alpha}) \otimes_{R} M$ and by compactness of the sphere we obtain factorization
\[
S^{-1} \rightarrow (\Sigma^{-1} B) \otimes_{R} M
\]
where $B$ is perfect even. Dualizing the resulting diagram 
\[
\begin{tikzcd}
	& {(\Sigma^{-1} B)\otimes_{R} M} \\
	{S^{-1}} && {A^{\ast} \otimes_{R} M}
	\arrow[from=2-1, to=1-2]
	\arrow[from=2-1, to=2-3]
	\arrow[from=1-2, to=2-3]
\end{tikzcd}
\]
gives a diagram of left $R$-modules 
\[
\begin{tikzcd}
	& {\Sigma B^{\ast}} \\
	{A} && {\Sigma M}
	\arrow[from=2-1, to=1-2]
	\arrow[from=2-1, to=2-3]
	\arrow[from=1-2, to=2-3]
\end{tikzcd}
\]
which ends the argument since $B^{\ast}$ is perfect even. 
\end{proof}

\begin{remark}
\label{remark:even_flat_modules_are_homologically_even}
Note that it follows from characterizations $(2)$ of \cref{theorem:characterization_of_homologically_even_modules} and \cref{proposition:characterization_of_even_flatness_in_terms_of_maps_factoring_through_perfect_even} that any even flat module is homologically even. Alternatively, we can also observe that $\evensheaf_{-}(\nicefrac{-1}{2})$ commutes with filtered colimits and vanishes on perfect evens by \cref{lemma:every_perfect_even_is_homogically_even}. 
\end{remark}

\begin{warning}
Beware that the implication of \cref{remark:even_flat_modules_are_homologically_even} cannot be reversed in general: that is, not every homologically even $R$-module is even flat. For a specific example, by \cref{proposition:even_flatnes_and_homological_flatnes_for_pistar_even_rings} below, $\mathbb{Z}/p$ is homologically even as a $\mathbb{Z}$-module in spectra, but it is not even flat. 
\end{warning}

\begin{remark}
Note that parts $(2)$ of \cref{proposition:tensor_characterization_of_even_flat_modules} and part $(3)$ of \cref{theorem:characterization_of_homologically_even_modules} can be combined in the following elegant manner: the homotopy groups $\pi_{*}(E \otimes_{R} M)$ of a tensor product of a $\pi_{*}$-even right $R$-module $E$ and a homologically even left $R$-module $M$ are concentrated in even degrees if at least one of the two is even flat. This mimics the classical observation that for $\Tor^{1}_{\mathbb{Z}}(A, B)$ to vanish it is enough for one of the $A$ or $B$ to be a flat as an abelian group.
\end{remark}

\subsection{Evenness over connective and homotopy even rings} 

In this section, we describe the various notions of evenness over rings which are connective or have homotopy groups concentrated in even degrees. 

\begin{proposition}
\label{proposition:even_flatnes_and_homological_flatnes_for_pistar_even_rings}
Let $R$ be an $\mathbf{E}_{1}$-ring such that $\pi_{*}R$ is concentrated in even degrees. Then an $R$-module $M$ 
\begin{enumerate}
    \item is even flat if and only if $\pi_{*}M$ is a flat $\pi_{*}R$-module concentrated in even degrees, 
    \item is homologically even if and only if $\pi_{*}M$ is concentrated in even degrees. 
\end{enumerate}

\begin{proof}
In both cases, the forward implication is clear. We begin with backwards implication for $(1)$, so suppose that $\pi_{*}M$ is flat and concentrated in even degrees. By \cref{proposition:tensor_characterization_of_even_flat_modules}, it is enough to show that for every right $R$-module $E$ with homotopy in even degrees, the same is true for $\pi_{*}(E \otimes_{R} M)$. However, by \cite[7.2.1.19]{higher_algebra} we have a conditionally convergent K\"{u}nneth spectral sequence 
\[
\mathrm{Tor}^{*, *}_{\pi_{*}R}(\pi_{*}E, \pi_{*}M) \Rightarrow \pi_{*}(E \otimes_{R} M).
\]
Observe that the left hand side is concentrated in $\mathrm{Tor}$-degree zero as $\pi_{*}M$ is flat. As it is concentrated in even internal degree by assumption, we deduce that the same is true for the right hand side, which is what we wanted to show. 

The backwards implication for $(2)$ is identical, using characterization $(3)$ of \cref{theorem:characterization_of_homologically_even_modules}. 
\end{proof}
\end{proposition}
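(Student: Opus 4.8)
The plan is to treat the two ``only if'' directions by reducing to perfect evens, and the two ``if'' directions by the K\"unneth spectral sequence, feeding everything through the tensor-product characterizations of even flat and homologically even modules established earlier.

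For the forward implications I would argue as follows. When $\pi_{*}R$ is even, \cref{lemma:cofibre_seqs_of_perfect_evens_split_if_r_has_even_homotopy} shows that every cofibre sequence of perfect evens splits, so $\Perf(R)_{ev}$ is exactly the category of retracts of finite direct sums of even suspensions of $R$; in particular the homotopy of a perfect even is a finitely generated projective $\pi_{*}R$-module concentrated in even degrees. Since an even flat module is a filtered colimit of perfect evens and $\pi_{*}(-)$ commutes with filtered colimits, and since flatness and concentration in even degrees are both stable under filtered colimits of $\pi_{*}R$-modules, this yields the forward direction of $(1)$. For $(2)$ I would instead invoke \cref{corollary:if_r_has_even_homotopy_then_additive_presheaves_are_even_sheaves}: over such $R$ every additive presheaf is already a sheaf, so $\evensheaf_{M}(q)$ is literally the presheaf $A \mapsto \pi_{2q}\Map_{\Mod_{R}}(A,M)$ with no sheafification; evaluating at $A=R$ identifies $\evensheaf_{M}(\nicefrac{1}{2}+k)(R)$ with the odd homotopy groups of $M$, so $M$ homologically even forces $\pi_{*}M$ to be even.

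For the backward implications the key tool is the conditionally convergent K\"unneth (Tor) spectral sequence $\Tor^{\pi_{*}R}_{*,*}(\pi_{*}E,\pi_{*}M)\Rightarrow \pi_{*}(E\otimes_{R}M)$ of \cite[7.2.1.19]{higher_algebra}. For $(1)$, assuming $\pi_{*}M$ is flat and even, I would apply \cref{proposition:tensor_characterization_of_even_flat_modules}, so it suffices to show $\pi_{*}(E\otimes_{R}M)$ is even for every $\pi_{*}$-even right $R$-module $E$; flatness of $\pi_{*}M$ collapses the spectral sequence onto the $\Tor_{0}$-line $\pi_{*}E\otimes_{\pi_{*}R}\pi_{*}M$, a tensor product of graded modules concentrated in even degrees, hence even. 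For $(2)$, assuming $\pi_{*}M$ even, I would use characterization $(3)$ of \cref{theorem:characterization_of_homologically_even_modules}, so $E$ now ranges over \emph{even flat} $\pi_{*}$-even right $R$-modules; by the already-proved part $(1)$ (applied over $R^{\mathrm{op}}$) such $E$ has $\pi_{*}E$ flat, the spectral sequence again collapses onto $\pi_{*}E\otimes_{\pi_{*}R}\pi_{*}M$, and evenness follows exactly as before. I expect the only point genuinely requiring care is the convergence bookkeeping: \cite[7.2.1.19]{higher_algebra} gives only conditional convergence in general, so one must note that concentration in a single $\Tor$-degree --- forced by the flatness hypothesis on one factor --- upgrades this to strong convergence with no extension problems; everything else is a routine assembly of these characterizations with standard facts about flat graded modules and filtered colimits.
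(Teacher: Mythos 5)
Your proposal is correct and takes essentially the same route as the paper: the substantive backward implications go through the tensor characterizations (\cref{proposition:tensor_characterization_of_even_flat_modules} and characterization $(3)$ of \cref{theorem:characterization_of_homologically_even_modules}) together with the collapsing K\"{u}nneth spectral sequence, and your observation that in part $(2)$ the flatness forcing the collapse comes from $\pi_{*}E$ via part $(1)$ applied to right modules is exactly what the paper's ``identical'' argument means. The forward implications, which the paper dismisses as clear, are correctly fleshed out by your splitting-of-perfect-evens/filtered-colimit argument and the presheaf-is-already-a-sheaf evaluation at $A=R$.
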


\begin{theorem}
\label{theorem:even_flat_modules_over_connective_rings_detected_by_base_change_to_pi0}
Let $R$ be a connective $\mathbf{E}_{1}$-ring and let us write $R_{\leq 0} \colonequals \pi_{0} R$, considered as an $\mathbf{E}_{1}$-algebra in spectra. Then the following are equivalent for bounded below $R$-module $M$:
\begin{enumerate}
    \item $M$ is even flat, 
    \item $R_{\leq 0} \otimes_{R} M$ is even flat as a $R_{\leq 0}$-module, 
    \item $\pi_{*}(R_{\leq 0} \otimes_{R} M)$ is a flat $\pi_{0}R$-module, concentrated in even degrees. 
\end{enumerate}
\end{theorem}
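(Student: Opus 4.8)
The plan is to prove the three implications $(2)\Leftrightarrow(3)$, $(1)\Rightarrow(2)$ and $(2)\Rightarrow(1)$ in turn. The equivalence $(2)\Leftrightarrow(3)$ is \cref{proposition:even_flatnes_and_homological_flatnes_for_pistar_even_rings} applied to the discrete ring $\pi_0 R$: its homotopy is trivially concentrated in even degrees, and that proposition identifies even flat $R_{\leq 0}$-modules with those whose homotopy is a flat $\pi_0 R$-module concentrated in even degrees, which for $N=R_{\leq 0}\otimes_R M$ is exactly condition $(3)$. For $(1)\Rightarrow(2)$, note that base change along $R\to R_{\leq 0}$ is a colimit-preserving exact functor $\Mod_R\to\Mod_{R_{\leq 0}}$ carrying $R$ to $R_{\leq 0}$; hence it sends each $\Sigma^{2k}R$ to $\Sigma^{2k}R_{\leq 0}$ and commutes with retracts, extensions and filtered colimits, so by the generation description of \cref{remark:perfect_flats_generated_by_even_suspensions_under_filtered_colimits_retracts_extensions} it carries even flat $R$-modules to even flat $R_{\leq 0}$-modules.

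The substance is $(2)\Rightarrow(1)$, and I would deduce it from the Lazard criterion \cref{proposition:tensor_characterization_of_even_flat_modules}: it suffices to show that $\pi_*(E\otimes_R M)$ is concentrated in even degrees for every right $R$-module $E$ with $\pi_*E$ even, and, $R$ being connective, one may take $E$ connective. The heart of the argument is a d\'evissage along the Postnikov filtration of the \emph{test} module $E$, which reduces everything to the discrete ring $\pi_0 R$. Assume first that $M$ is bounded below, say $(-c)$-connective. Since $E$ is connective, the fibre $\tau_{\geq n+1}E\otimes_R M$ of $E\otimes_R M\to\tau_{\leq n}E\otimes_R M$ is $(n+1-c)$-connective, so $\pi_j(E\otimes_R M)\cong\pi_j(\tau_{\leq n}E\otimes_R M)$ for $j$ well below $n$; as $\tau_{\leq n}E$ is again connective with even homotopy, this reduces us to $E$ bounded above. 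Now induct on the top degree $n$ (which we may take even) of such an $E$. In the base case $E$ is discrete, hence a $\pi_0 R$-module, so $E\otimes_R M\simeq E\otimes_{R_{\leq 0}}(R_{\leq 0}\otimes_R M)$, and since $R_{\leq 0}\otimes_R M$ is even flat over $\pi_0 R$ by $(2)$, evenness of $\pi_*(E\otimes_R M)$ follows from \cref{proposition:tensor_characterization_of_even_flat_modules} applied over the ring $\pi_0 R$ to the right $\pi_0 R$-module $E$. For the inductive step, use the cofibre sequence $\Sigma^n\pi_n E\to E\to\tau_{\leq n-2}E$ (legitimate because $\pi_{n-1}E=0$): the first term is a shift of a discrete right $\pi_0 R$-module, so the base case gives $\pi_*(\Sigma^n\pi_n E\otimes_R M)$ even; the last term is handled by the inductive hypothesis; and the long exact sequence in homotopy then forces $\pi_*(E\otimes_R M)$ to be even.

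The main obstacle is the case of $M$ unbounded below, which genuinely must be treated, since even flat modules such as $\bigoplus_{k\geq 0}\Sigma^{-2k}R$ are unbounded below. Here tensoring with $M$ no longer preserves the connectivity estimates used above, and the naive reduction is blocked: truncations like $\tau_{\geq -n}M$ need not inherit hypothesis $(2)$. I would address this by a limiting/filtered-colimit argument reducing the evenness of $\pi_*(E\otimes_R M)$ to the bounded-below case already handled, using that the class of $R$-modules satisfying $(2)$ is closed under filtered colimits and extensions (by \cref{proposition:even_flat_modules_closed_under_extensions_retracts_and_filtered_colimits} and exactness of base change) and that $M$ is a filtered colimit of bounded-below modules; organising this colimit so that the pieces still satisfy a form of $(2)$ usable in the argument is the delicate technical point. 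Granting this, \cref{proposition:tensor_characterization_of_even_flat_modules} yields that $M$ is even flat, completing $(2)\Rightarrow(1)$ and hence the theorem.
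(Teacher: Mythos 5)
Your proposal takes the same route as the paper. The paper likewise disposes of $(1\Rightarrow 2)$ by closure of even flat modules under base change and of $(2\Leftrightarrow 3)$ by \cref{proposition:even_flatnes_and_homological_flatnes_for_pistar_even_rings}, and its proof of $(2\Rightarrow 1)$ is exactly your d\'{e}vissage: by the connective refinement of \cref{proposition:tensor_characterization_of_even_flat_modules} one tests against connective right $R$-modules $E$ with even homotopy; one replaces $E$ by $\tau_{\leq 2n}E$; a bounded-above such $E$ admits a finite filtration whose subquotients are even suspensions of objects $N \in \Mod_{R}^{\heartsuit} \simeq \Mod_{R_{\leq 0}}^{\heartsuit}$; and for discrete $N$ one rewrites $N \otimes_{R} M \simeq N \otimes_{R_{\leq 0}} (R_{\leq 0} \otimes_{R} M)$, whose homotopy is even by hypothesis $(2)$. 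Everything you completed coincides with the published argument.

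Concerning the point at which you stop: the paper contains no extra device for $M$ unbounded below. Its justification for truncating $E$ is the assertion that $E \rightarrow \tau_{\leq 2n}E$ induces an isomorphism on $\pi_{*}(- \otimes_{R} M)$ in degrees $\leq 2n$, and, as you observe, this rests on $\tau_{\geq 2n+1}E \otimes_{R} M$ being highly connective, which uses that $M$ is bounded below; for instance with $M = \bigoplus_{k \geq 0} \Sigma^{-2k}R$ and $E$ unbounded above the comparison map fails to be an isomorphism in that range. Since even flat modules (and the modules to which the theorem is later applied, e.g.\ in \cref{proposition:fef_maps_are_hrw_eff}) need not be bounded below, the ``delicate technical point'' you flag is a genuine wrinkle that the paper passes over silently, not an idea you failed to extract from it. So your write-up matches the paper where it is complete and is, if anything, the more careful of the two; but be aware that as it stands neither your sketch of a filtered-colimit reduction nor the paper's truncation step actually settles the case of $M$ unbounded below, so a fully rigorous proof still requires supplying that extra argument (or restricting the statement to bounded-below $M$).
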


\begin{proof}
The three given properties are all invariant under even (de)suspensions. Thus, by taking a suitably large even suspension of $M$, we can assume that it is connective. 

Observe that $(1 \Rightarrow 2)$ is clear, since even flat modules are closed under base-change, and $(2 \Leftrightarrow 3)$ follows from \cref{proposition:even_flatnes_and_homological_flatnes_for_pistar_even_rings}. Thus, we only have to show $(2 \Rightarrow 1)$. 

Suppose that $R_{\leq 0} \otimes_{R} M$ is even flat as a $R_{\leq 0}$-module. By \cref{proposition:tensor_characterization_of_even_flat_modules}, it is enough to verify that for every connective $R$-module $E$ with homotopy in even degrees, $\pi_{*}(E \otimes_{R} M)$ is also concentrated in even degrees. Since $M$ is connective, the truncation map $E \rightarrow \tau_{\leq 2n} E$ induces an isomorphism
\[
\pi_{*}(E \otimes_{R} M) \rightarrow \pi_{*}(\tau_{\leq 2n} E \otimes_{R} M)
\]
in degrees $* \leq 2n$. It follows that we can assume that $M$ is connective and bounded from above. 

In this case, $E$ admits a finite filtration whose subquotients are even suspensions of objects of $N \in \Mod_{R}^{\heartsuit} \simeq \Mod_{R_{\leq 0}}^{\heartsuit}$. In this case, we can write 
\[
N \otimes_{R} M \simeq N \otimes_{R_{\leq 0}} \otimes (R_{\leq 0} \otimes_{R} M)
\]
and as the right hand side has even homotopy groups by assumption, so does the left hand side. Since $E \otimes_{R} M$ is built using extensions from even suspensions of such $N \otimes_{R} M$, we deduce that $\pi_{*}(E \otimes_{R} M)$ is also concentrated in even degrees.
\end{proof}

\begin{warning}
\label{warning:base_change_of_homological_even_along_truncation_need_not_be_homologically_even}
Beware that the criterion of \cref{theorem:even_flat_modules_over_connective_rings_detected_by_base_change_to_pi0} does not apply to homologically even modules; that is, the base-change along $R \rightarrow R_{\leq 0}$ need neither preserve nor reflect the property of being homologically even.

For a specific counterexample, let $k$ be a field and $k[x]$ a free $\mathbf{E}_{1}$-algebra on a variable in degree two, so that $\pi_{*}(k[x])$ is a polynomial ring. Then 
\[
k \simeq \mathrm{cofib}(x \colon \Sigma^{2} k[x] \rightarrow k[x])
\]
is homologically even as a $k[x]$-module as a consequence of \cref{proposition:even_flatnes_and_homological_flatnes_for_pistar_even_rings}, but $k \otimes_{k[x]} k \simeq k \oplus \Sigma^{3} k$ is not homologically even over $k$. 
\end{warning}

\begin{warning}
Beware that \cref{theorem:even_flat_modules_over_connective_rings_detected_by_base_change_to_pi0} is not true if we do not assume that $M$ is bounded below. For a specific example, let $k[x]$ be the free $\mathbf{E}_{1}$-algebra on a variable in degree $2$. Then 
\[
\Sigma k(x) \colonequals \varinjlim \ (\Sigma k[x] \rightarrow \Sigma^{-1} k[x] \rightarrow \Sigma^{-3} k[x] \rightarrow \ldots) 
\]
is not even flat by \cref{proposition:even_flatnes_and_homological_flatnes_for_pistar_even_rings}, even though 
\[
k \otimes_{k[x]} (\Sigma k(x)) = 0
\]
which is even flat over $k$. 
\end{warning}

\section{Homological resolutions}

In this section, we describe how even cohomology of an $R$-module $M$ can be calculated by resolving it through $R$-modules with even homotopy groups. 

\begin{remark}[Adams resolutions]
The various constructions described in this section are standard in homotopy theory; essentially, we study a particular class of Adams resolutions with respect to the homological functor $\evensheaf_{-} \colon \Mod_{R}(\spectra) \rightarrow \sheaves_{\Sigma}(\Perf_{ev}, \Ab)$. For more background on Adams resolutions, see \cite[\S III.15]{adams1995stable} or \cite[\S 2.2]{patchkoria2021adams}. Since our aim is for the paper to be accessible to a possibly large audience, we do not assume any knowledge of Adams resolutions or the associated Adams spectral sequence. 
\end{remark}

Observe that if $M_{1} \rightarrow M_{2} \rightarrow M_{3}$ is a cofibre sequence of homologically even $R$-modules, then the long exact sequence of even sheaves shows that 
\[
0 \rightarrow \evensheaf_{M_{1}}(q) \rightarrow \evensheaf_{M_{2}}(q) \rightarrow \evensheaf_{M_{3}}(q) \rightarrow 0
\]
is short exact for any $q \in \mathbb{Z}$. Applying $\evencoh^{*}(R, -)$ of \cref{definition:even_cohomology_with_coefficients_in_a_module} we obtain a long exact sequence of even cohomology 
\[
\ldots \rightarrow \evencoh^{p-1, q}(R, M_{3}) \rightarrow \evencoh^{p, q}(R, M_{1}) \rightarrow \evencoh^{p, q}(R, M_{2}) \rightarrow \evencoh^{p, q}(R, M_{3}) \rightarrow \evencoh^{p+1, q}(R, M_{1}) \rightarrow \ldots.
\]
Since even cohomology of $\pi_{*}$-even modules is particularly simple by \cref{corollary:even_cohomology_of_even_modules}, it follows that to compute cohomology with coefficients in an $R$-module $M$, one can map it into a $\pi_{*}$-even module, which can be done in a particularly nice way as studied in \S\ref{subsection:homotopy_even_envelopes}. One elementary way to exploit this idea was described in the introduction, and we do so again here: 

\begin{construction}
\label{construction:cochain_complex_computing_even_cohomology}
Let $M$ be homologically even. Choose a $\pi_{*}$-envelope $M \rightarrow E_{0}$, which can be done by \cref{proposition:pi_star_even_envelopes_exist}, and set $C_{0} \colonequals \mathrm{cofib}(M \rightarrow E_{0})$. Inductively letting $C_{i} \rightarrow E_{i+1}$ be a $\pi_{*}$-even envelope and setting $C_{i+1} \colonequals \mathrm{cofib}(C_{i} \rightarrow E_{i})$ leads to a diagram of $R$-modules  
\begin{equation}
\label{equation:cochain_complex_of_even_modules_computing_even_cohomology}
\begin{tikzcd}
	& {E_{0}} & {E_{1}} & {E_{2}} & \ldots \\
	M & {C_{0}} & {C_{1}} & {C_{2}}
	\arrow[from=2-1, to=1-2]
	\arrow[from=1-2, to=2-2]
	\arrow[from=2-2, to=1-3]
	\arrow[from=1-2, to=1-3]
	\arrow[from=1-3, to=2-3]
	\arrow[from=2-3, to=1-4]
	\arrow[from=1-3, to=1-4]
	\arrow[from=1-4, to=2-4]
	\arrow[from=2-4, to=1-5]
	\arrow[from=1-4, to=1-5]
\end{tikzcd}
\end{equation}
where in the upper row the composite of any two maps is null. 
\end{construction}

\begin{proposition}
\label{proposition:even_cohomology_computed_using_cochain_complex_of_r_modules}
If $M$ is homologically even and (\ref{equation:cochain_complex_of_even_modules_computing_even_cohomology}) is a diagram as in \cref{construction:cochain_complex_computing_even_cohomology}, then there is a canonical isomorphism
\[
\evencoh^{p, q}(R, M) \simeq \mathrm{H}^{p}(\pi_{2q}E_{\bullet}),
\]
between the $(p,q)$-th even cohomology of $M$ and the cohomology of the cochain complex
\[
\pi_{2q} E_{0} \rightarrow \pi_{2q} E_{1} \rightarrow \pi_{2q} E_{2} \rightarrow \ldots 
\]
computed in the $p$-th spot. 
\end{proposition}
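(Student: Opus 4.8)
The plan is to reduce the statement to a purely homological-algebra computation in the abelian category $\sheaves_{\Sigma}(\Perf_{ev}, \Ab)$. The key input is that each $E_i$ in the diagram (\ref{equation:cochain_complex_of_even_modules_computing_even_cohomology}) has even homotopy groups, so by \cref{corollary:even_cohomology_of_even_modules} its even cohomology is concentrated in cohomological degree zero and equals $\pi_{2q}E_i$ there; in sheaf-theoretic language, applying $\evensheaf_{-}(q)$ to $E_i$ produces a sheaf whose sections over $R$ are $\pi_{2q}E_i$ and all of whose higher $\Ext$-groups against $\evensheaf_R$ vanish. Thus the $E_i$ are ``acyclic'' objects adapted to the computation of $\evencoh^{*,q}(R,-)$, and the diagram (\ref{equation:cochain_complex_of_even_modules_computing_even_cohomology}) is exactly an acyclic (Adams-type) resolution of $M$; the standard spectral-sequence/descent argument then yields the claimed identification of $\evencoh^{p,q}(R,M)$ with the cohomology of the complex $\pi_{2q}E_\bullet$.

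First I would unwind the diagram into short exact sequences of sheaves. Since $M$ and each $C_i$ are homologically even (the $C_i$ because each $C_i \to E_{i+1}$ is a $\pi_*$-even envelope, whose cofibre $C_{i+1}$ is even flat hence homologically even by \cref{remark:even_flat_modules_are_homologically_even}, and homological evenness is a two-out-of-three property along the cofibre sequence $C_i \to E_{i+1} \to C_{i+1}$), applying the even-sheaf functor to each cofibre sequence $C_{i-1}\to E_i \to C_i$ (with $C_{-1}\colonequals M$) gives a short exact sequence of sheaves of abelian groups
\[
0 \to \evensheaf_{C_{i-1}}(q) \to \evensheaf_{E_i}(q) \to \evensheaf_{C_i}(q) \to 0
\]
for every $q\in\mathbb{Z}$, using that the half-weight sheaves vanish so the long exact sequence of \cref{remark:long_exact_sequence_of_even_sheaves} breaks into short pieces. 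Splicing these together exhibits $\evensheaf_{E_\bullet}(q)$ as a resolution of $\evensheaf_M(q)$ by the sheaves $\evensheaf_{E_i}(q)$.

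Next I would observe that each $\evensheaf_{E_i}(q)$ is acyclic for the functor $\Ext^*_{\Perf_{ev}}(\evensheaf_R, -)$: indeed $\evencoh^{p,q}(R,E_i)\simeq \Ext^p_{\Perf_{ev}}(\evensheaf_R,\evensheaf_{E_i}(q))$ vanishes for $p>0$ and equals $\pi_{2q}E_i$ for $p=0$ by \cref{corollary:even_cohomology_of_even_modules}. A resolution by acyclics computes the derived functor, so running the hyper-derived-functor spectral sequence (equivalently, dimension-shifting inductively along the short exact sequences above) gives $\evencoh^{p,q}(R,M)\simeq \mathrm{H}^p$ of the complex obtained by applying $\Ext^0_{\Perf_{ev}}(\evensheaf_R,-)=\Gamma_{\Perf_{ev}}(R,-)$ to $\evensheaf_{E_\bullet}(q)$, which is precisely $\pi_{2q}E_0 \to \pi_{2q}E_1 \to \pi_{2q}E_2 \to \cdots$. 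Naturality of all constructions gives the canonicity of the isomorphism.

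The main obstacle is verifying that the differentials in the resulting complex genuinely agree — up to the canonical isomorphisms $\Gamma_{\Perf_{ev}}(R,\evensheaf_{E_i}(q))\simeq \pi_{2q}E_i$ — with the maps $\pi_{2q}E_i \to \pi_{2q}E_{i+1}$ induced by the zig-zag $E_i \to C_i \to E_{i+1}$ in (\ref{equation:cochain_complex_of_even_modules_computing_even_cohomology}), rather than merely with the abstract connecting maps of the spliced resolution. This is a diagram-chase bookkeeping point: one checks that the connecting homomorphism $\evensheaf_{C_{i-1}}(q)\to \evensheaf_{E_i}(q)$ arising from the cofibre sequence is, under $\evensheaf_{-}(q)$, the image of the map $C_{i-1}\to E_i$, and then that composing the surjection $\evensheaf_{E_i}(q)\twoheadrightarrow\evensheaf_{C_i}(q)$ with the next such map recovers $\evensheaf_{-}(q)$ applied to $E_i\to C_i\to E_{i+1}$; taking sections over $R$ and invoking that $\evensheaf_{E_i}(q)$ sees $\pi_{2q}E_i$ on sections finishes it. One should also remark that the complex $\pi_{2q}E_\bullet$ has square-zero differentials for the obvious reason that consecutive composites $E_i\to C_i\to E_{i+1}\to C_{i+1}$ vanish (the composite $C_i\to E_{i+1}\to C_{i+1}$ is zero as part of a cofibre sequence), which is already noted in \cref{construction:cochain_complex_computing_even_cohomology}.
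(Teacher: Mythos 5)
Your argument is in substance the paper's own: the paper splices the long exact sequences of even cohomology attached to the cofibre sequences $C_{i-1} \rightarrow E_{i} \rightarrow C_{i}$ into an exact couple whose spectral sequence collapses by \cref{corollary:even_cohomology_of_even_modules}, while you splice the corresponding short exact sequences of even sheaves into a resolution of $\evensheaf_{M}(q)$ by objects acyclic for $\Ext^{*}(\evensheaf_{R},-)$ and run the equivalent hyper-derived-functor/dimension-shifting argument; your version has the small merit of making the identification of the differentials with the maps $\pi_{2q}E_{i} \rightarrow \pi_{2q}E_{i+1}$ explicit.

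One justification you give is incorrect, though the conclusion you need is fine. Homological evenness is \emph{not} a two-out-of-three property in the direction you invoke: if $M_{1} \rightarrow M_{2} \rightarrow M_{3}$ is a cofibre sequence with $M_{2}$ and $M_{3}$ homologically even, then $M_{1}$ need not be, since in the long exact sequence the half-integer-weight sheaf $\evensheaf_{M_{1}}(q)$ receives a map from the \emph{integer}-weight sheaf $\evensheaf_{M_{3}}(q+\nicefrac{1}{2})$, which has no reason to vanish (for instance $S^{1} \rightarrow 0 \rightarrow S^{2}$ over the sphere: the two right-hand terms are homologically even but $S^{1}$ is not). Only the ``middle term'' direction of two-out-of-three holds. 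Fortunately the fact you want is immediate without it: for $i \geq 0$ the module $C_{i}$ is by construction the cofibre of the $\pi_{*}$-even envelope $C_{i-1} \rightarrow E_{i}$, hence even flat by condition $(1)$ of \cref{definition:pi_star_even_envelope} and therefore homologically even by \cref{remark:even_flat_modules_are_homologically_even}, while $C_{-1} = M$ is homologically even by hypothesis. With that repair the rest of your argument (short exactness of the sheaf sequences, acyclicity of the $\evensheaf_{E_{i}}(q)$, and the identification of sections over $R$ with $\pi_{2q}E_{i}$) goes through as written.
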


\begin{proof}
By construction, all of these $R$-modules are homologically even, so that for each $q \in \mathbb{Z}$ and $i \geq -1$ we have short exact sequences of even sheaves 
\[
0 \rightarrow \evensheaf_{C_{i}} \rightarrow \evensheaf_{E_{i+1}} \rightarrow \evensheaf_{C_{i+1}} \rightarrow 0 
\]
and thus long exact sequences in sheaf cohomology 
\[
\ldots \rightarrow \evencoh^{p-1, q}(R, C_{i+1}) \rightarrow \evencoh^{p, q}(R, C_{i}) \rightarrow \evencoh^{p, q}(R, E_{i+1}) \rightarrow \evencoh^{p, q}(R, C_{i+1}) \rightarrow \ldots.
\]
where $C_{-1} \colonequals M$. Splicing these together leads to an exact couple and hence a spectral sequence 
\[
\evencoh^{*, q}(E_{\bullet}) \Rightarrow \evencoh^{*, q}(M).
\]
By \cref{corollary:even_cohomology_of_even_modules}, the first page is concentrated in cohomological degree zero, where we have $\evencoh^{0, q}(E_{\bullet}) \simeq \pi_{2q} E_{\bullet}$. It follows that the spectral sequence collapses on the second page, inducing the needed isomorphism. 
\end{proof}

While \cref{proposition:even_cohomology_computed_using_cochain_complex_of_r_modules} gives an effective way of calculating even cohomology, it is often convenient to have a more refined version of \cref{construction:cochain_complex_computing_even_cohomology}, based on cosimplicial objects. This has the advantage that it gives a limit description of the even filtration itself, rather than just the calculation of the even cohomology. 

Recall that associated to an (augmented) cosimplicial object in an abelian category we have the unnormalized\footnote{Since the normalized and unnormalized chain complexes are quasi-isomorphic by \cite[Proposition 1.2.3.17]{higher_algebra}, in the context of \cref{definition:homlogical_resolution} one can equivalently work with the normalized chain complex. In other words, the condition is that the cochain corresponding to $\evensheaf_{X_{\bullet}}$ through the Dold-Kan correspondence of \cite[Theorem 1.2.3.7]{higher_algebra} is exact.} Moore complex whose connecting maps are given by the alternating sums of coboundary maps, see \cite[Proposition 1.2.3.17]{higher_algebra}. 

\begin{definition}
\label{definition:homlogical_resolution}
We say that an augmented semicosimplicial object $X \colon \Delta_{s, +} \rightarrow \Mod_{R}$ is a \emph{homological resolution} if for every $q \in \nicefrac{1}{2}\mathbb{Z}$, the induced Moore cochain complex
\[
0 \rightarrow \evensheaf_{X_{-1}}(q) \rightarrow \evensheaf_{X_{0}}(q) \rightarrow \evensheaf_{X_{1}}(q) \rightarrow \ldots
\]
is exact as a complex in the abelian category $\sheaves_{\Sigma}(\Perf_{ev}, \Ab)$. 
\end{definition}

The usefulness of homological resolutions comes down to the following simple observation: 

\begin{proposition}
\label{proposition:homological_resolutions_are_limits_on_even_filtrations_up_to_completion}
If $X \colon \Delta_{s, +} \rightarrow \Mod_{R}$ is a homological resolution, then the canonical comparison map
\[
\gr^{*}_{ev}(X_{-1}) \rightarrow \varprojlim \gr^{*}_{ev}(X_{m}), 
\]
is an equivalence of graded spectra, where the limit on the right is taken over $[m] \in \Delta_{s}$. In particular, the map 
\[
\fil_{ev}^{*}(X_{-1}) \rightarrow \varprojlim (\fil_{ev}^{*}(X_{m}))
\]
of filtered spectra is an equivalence after completion. 
\end{proposition}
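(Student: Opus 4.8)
The plan is to prove the equivalence of associated gradeds first and then deduce the after-completion statement by a formal comparison. The first step is to record the identification of the associated graded of the even filtration in terms of the even sheaves: passing to sections over $R$ in the Whitehead tower of $Y(M)$, as in \cref{remark:associated_graded_of_even_filtration_and_even_cohomology}, gives for every $R$-module $M$ and every weight $q$ a natural equivalence of spectra
\[
\gr^{q}_{ev}(M) \simeq \Gamma_{\Perf_{ev}}(R, \Sigma^{2q}\evensheaf_{M}(q)),
\]
the spectrum of sections over $R$ of the sheaf $\Sigma^{2q}\evensheaf_{M}(q)$, which is concentrated in the heart and whose homotopy groups are the even cohomology groups $\evencoh^{*,q}(R,M)$ by \cref{definition:even_cohomology_with_coefficients_in_a_module}. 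This identification is cleanest for the half-integer graded refinement of \cref{remark:integer_and_half_integer_grading}; since an integer-graded associated graded piece is a two-step extension of the adjacent half-integer ones (apply the exact functor $\Gamma_{\Perf_{ev}}(R,-)$ to the octahedron of $\tau_{\geq 2n+2}Y(M)\to\tau_{\geq 2n+1}Y(M)\to\tau_{\geq 2n}Y(M)$), it is enough to treat one weight $q \in \nicefrac12\mathbb{Z}$ at a time.

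With this in hand the computation of the limit is short. Since evaluation at $R$ preserves all limits (it is corepresented by $\nu_{R}(R)$, and in any case limits of sheaves are computed levelwise), I would write
\[
\varprojlim_{[m]\in\Delta_{s}}\gr^{q}_{ev}(X_{m}) \simeq \Gamma_{\Perf_{ev}}\!\Big(R,\; \Sigma^{2q}\varprojlim_{[m]\in\Delta_{s}}\evensheaf_{X_{m}}(q)\Big),
\]
with the inner limit formed in $\sheaves_{\Sigma}(\Perf_{ev},\spectra)$. Now the hypothesis enters: because $X$ is a homological resolution, for every weight $q$ the augmented Moore cochain complex of $\evensheaf_{X_{\bullet}}(q)$ is exact, and by \cref{remark:moore_cochain_exact_if_a_limit_in_the_derived_infinity_category} this is precisely the assertion that $\evensheaf_{X_{-1}}(q)\simeq\varprojlim_{[m]\in\Delta_{s}}\evensheaf_{X_{m}}(q)$ in the derived $\infty$-category of additive sheaves. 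Substituting and invoking the identification of the associated graded once more yields $\gr^{q}_{ev}(X_{-1})\simeq\varprojlim_{[m]}\gr^{q}_{ev}(X_{m})$, proving the first claim after reassembling over all weights.

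For the second claim I would note that the associated graded functor $\gr^{*}$ on filtered spectra preserves limits — in each weight it is a suspension of the fibre of a map between two evaluation functors, all of which preserve limits — so $\gr^{*}\big(\varprojlim_{[m]}\fil^{*}_{ev}(X_{m})\big)\simeq\varprojlim_{[m]}\gr^{*}_{ev}(X_{m})$. Combined with the previous paragraph, the canonical map $\fil^{*}_{ev}(X_{-1})\rightarrow\varprojlim_{[m]}\fil^{*}_{ev}(X_{m})$, induced by functoriality of $\fil^{*}_{ev}(-)$ applied to the augmented diagram $X$, induces an equivalence on associated gradeds. A standard finite induction over the subquotients $\fil^{n}/\fil^{m}$ (built from the pieces $\gr^{n},\dots,\gr^{m-1}$) then shows the map is an equivalence on the completions $\varprojlim_{m}(\fil^{n}/\fil^{m})$, which is exactly the assertion that it is an equivalence after completion.

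The main obstacle — really the only step that is not bookkeeping — is the middle one: matching the limit $\varprojlim_{[m]\in\Delta_{s}}\evensheaf_{X_{m}}(q)$, computed in the stable $\infty$-category of additive sheaves of spectra, with the value predicted by exactness of the Moore complex in the abelian category of additive sheaves. This is what \cref{remark:moore_cochain_exact_if_a_limit_in_the_derived_infinity_category} is designed to supply, provided one checks that a limit of a heart-valued semicosimplicial diagram agrees whether taken in $\sheaves_{\Sigma}(\Perf_{ev},\spectra)$ or in the derived $\infty$-category of its heart; this uses only that both carry a $t$-structure compatible with filtered colimits (\cref{corollary:properties_of_the_t_structure_on_sheaves}) and that the totalization tower is bounded below at each finite stage. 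Everything else reduces to formal manipulation of corepresentable functors and of associated gradeds.
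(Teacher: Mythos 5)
Your argument is correct and is essentially the paper's proof: identify each associated graded piece with sections of a shifted even sheaf, feed the exactness of the augmented Moore complex through \cref{remark:moore_cochain_exact_if_a_limit_in_the_derived_infinity_category}, treat a general (not necessarily homologically even) module via the fibre sequence mixing the integer and half-integer weight sheaves, and deduce the completion statement formally from the equivalence on associated gradeds. The only divergence is that the paper computes the mapping spectra directly in $\mathcal{D}(\sheaves_{\Sigma}(\Perf_{ev}, \Ab))$, using the identification of even cohomology with $\Ext$ in the heart already recorded in \cref{definition:even_cohomology_with_coefficients_in_a_module}, so the comparison you flag as the main obstacle (that a heart-valued semicosimplicial diagram with exact augmented Moore complex is also a limit diagram in $\sheaves_{\Sigma}(\Perf_{ev}, \spectra)$) is sidestepped there; your sketched argument for it, via the degreewise-stabilizing totalization tower whose layers are concentrated in single degrees, does go through.
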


\begin{proof}
Since the sections functor $\Gamma_{\Perf_{ev}}(R, -) \colon \sheaves_{\Sigma}(\Perf(R)_{ev}, \spectra)) \rightarrow \spectra$ of \cref{notation:section_of_a_sheaf} is exact, for any $R$-module we have 
\[
\gr^{q}_{ev}(M) \simeq \Gamma_{\Perf_{ev}}(R, \mathrm{cofib}(\tau_{\geq 2q+2} Y_{R}(M) \rightarrow \tau_{\geq 2q} Y_{R}(M))). 
\]
The sheaf whose sections we are considering is concentrated in degrees $2q \leq \ast \leq 2q-1$, so that its $2q$-connective cover can be completed to a cofibre sequences 
\[
\Sigma^{2q+1} \evensheaf_{M}(q+\nicefrac{1}{2}) \rightarrow \mathrm{cofib}(\tau_{\geq 2q+2} Y_{R}(M)) \rightarrow \tau_{\geq 2q} Y_{R}(M)) \rightarrow \Sigma^{2q} \evensheaf_{M}(q),
\]
where we identify the even sheaves with objects of the heart. Taking section, we obtain a cofibre sequence of spectra 
\[
\Sigma^{2q+1} \Gamma_{\Perf_{ev}}(R, \evensheaf_{M}(q+\nicefrac{1}{2})) \rightarrow \gr^{q}_{ev}(M) \rightarrow \Sigma^{2q} \Gamma_{\Perf_{ev}}(R, \evensheaf_{M}(q)).
\]
Since the section functor is corepresentable and hence continuous, we deduce that to prove the needed statement about the associated graded it is enough to verify that for each $q \in \nicefrac{1}{2} \mathbb{Z}$ the canonical map 
\begin{equation}
\label{equation:canonical_map_from_even_sheaf_of_m_to_limit_of_even_sheaves_of_homological_resolution}
\evensheaf_{X_{-1}}(q) \rightarrow \varprojlim \evensheaf_{X_{\bullet}}(q) 
\end{equation}
is an equivalence in $\sheaves_{\Sigma}(\Perf_{ev}, \spectra)$. The sheaf homotopy groups of the target can be calculated using the totalization spectral sequence of \cite[{\S 1.2.2}]{higher_algebra} which degenerates in this case and shows that 
\[
\pi_{-t} (\varprojlim \evensheaf_{X_{\bullet}}(q)) \simeq \Hrm^{t}(\evensheaf_{X_{\bullet}}(q)), 
\]
the cohomology of the (unaugmented) Moore cochain complex. By assumption that $X$ is a homological resolution, we see that (\ref{equation:canonical_map_from_even_sheaf_of_m_to_limit_of_even_sheaves_of_homological_resolution}) is an isomorphism on sheaf homotopy groups. Since both sides are coconnective, they are hypercomplete and the map is an equivalence. 
\end{proof}

As in the context of \cref{proposition:even_cohomology_computed_using_cochain_complex_of_r_modules}, homological resolutions are most useful when they consist of modules with only even homotopy groups.

\begin{theorem}
\label{theorem:existence_of_pistar_even_adams_resolutions}
Any homologically even $R$-module $M$ can be completed to a homological resolution $X \colon \Delta_{s, +} \rightarrow \Mod_{R}$ with $X_{-1} = M$ and $\pi_{*} X_{m}$ even for $m \geq 0$. 
\end{theorem}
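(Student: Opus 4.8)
The plan is to iterate the $\pi_{*}$-even envelope of \cref{proposition:pi_star_even_envelopes_exist}, organize the resulting tower of cofibre sequences into an augmented semicosimplicial object, and then verify the defining condition of \cref{definition:homlogical_resolution} by splicing long exact sequences of even sheaves. First I would note that the small object argument used to prove \cref{proposition:pi_star_even_envelopes_exist} is functorial, so that it produces a functor $T \colon \Mod_{R} \to \Mod_{R}$ together with a natural transformation $\eta \colon \mathrm{id} \Rightarrow T$ for which $\eta_{N} \colon N \to T(N)$ is a $\pi_{*}$-even envelope for every $R$-module $N$; in particular $\pi_{*}T(N)$ is even, and $Q(N) \colonequals \mathrm{cofib}(\eta_{N})$ is even flat, this last point using only property $(1)$ of \cref{definition:pi_star_even_envelope}, which holds with no hypothesis on $N$. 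Setting $E_{i} \colonequals T(Q^{\circ i}(M))$, $C_{-1} \colonequals M$ and $C_{i} \colonequals Q^{\circ(i+1)}(M)$ then yields a functorial --- and in particular homotopy coherent --- refinement of the diagram \eqref{equation:cochain_complex_of_even_modules_computing_even_cohomology}, in which each
\[
C_{i-1} \to E_{i} \to C_{i}
\]
is a genuine cofibre sequence of $R$-modules. Each $E_{i}$ is homologically even by \cref{lemma:pistar_even_means_homologically_even_and_even_filtration_is_whitehead}, each $C_{i}$ is even flat hence homologically even by \cref{remark:even_flat_modules_are_homologically_even}, and $C_{-1} = M$ is homologically even by hypothesis.

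The main step is to assemble this coherent tower of cofibre sequences into an augmented semicosimplicial object $X \colon \Delta_{s,+} \to \Mod_{R}$ with $X_{-1} = M$ and $X_{m} = E_{m}$ for $m \geq 0$; one convenient model is the one in which, at each level, all coface maps except the last vanish and the last coface $X_{m} \to X_{m+1}$ is the composite $E_{m} \to C_{m} \to E_{m+1}$, the higher coherences needed to make this a genuine functor out of $\Delta_{s,+}$ being supplied by the coherent nullhomotopies of the double composites $C_{i} \to E_{i+1} \to C_{i+1}$ in the tower (alternatively one builds $X$ one level at a time, applying $T$ to the relevant latching objects). I expect this to be the principal obstacle: the homotopy-category-level diagram produced by \cref{construction:cochain_complex_computing_even_cohomology} does not by itself carry this coherence, and it is precisely the functoriality of the envelope $T$ --- rather than the mere existence of envelopes --- that makes the iterated construction coherent. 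Since each $X_{m}$ with $m \geq 0$ is a $\pi_{*}$-even envelope, $\pi_{*}X_{m}$ is even, which is one of the two assertions of the theorem.

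It then remains to check that $X$ is a homological resolution. Fixing $q \in \nicefrac{1}{2}\mathbb{Z}$ and applying $\evensheaf_{-}(q)$: if $q \in \nicefrac{1}{2} + \mathbb{Z}$ then $\evensheaf_{M}(q)$ and all $\evensheaf_{E_{i}}(q)$ vanish, since $M$ and the $E_{i}$ are homologically even, so the Moore cochain complex of $\evensheaf_{X_{\bullet}}(q)$ is identically zero and in particular exact. If $q \in \mathbb{Z}$, then applying the long exact sequence of even sheaves of \cref{remark:long_exact_sequence_of_even_sheaves} to $C_{i-1} \to E_{i} \to C_{i}$ and using that the half-weight even sheaves of $C_{i-1}$ and $C_{i}$ vanish by homological evenness yields short exact sequences
\[
0 \to \evensheaf_{C_{i-1}}(q) \to \evensheaf_{E_{i}}(q) \to \evensheaf_{C_{i}}(q) \to 0
\]
in $\sheaves_{\Sigma}(\Perf_{ev}, \Ab)$ for all $i \geq 0$, with $C_{-1} = M$. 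Splicing these exactly as in the proof of \cref{proposition:even_cohomology_computed_using_cochain_complex_of_r_modules} identifies the augmented Moore complex $0 \to \evensheaf_{M}(q) \to \evensheaf_{E_{0}}(q) \to \evensheaf_{E_{1}}(q) \to \cdots$ with a complex that is exact in every degree, the image of each differential and the kernel of the next both being $\evensheaf_{C_{i}}(q)$ regarded inside $\evensheaf_{E_{i+1}}(q)$. Hence $X$ satisfies \cref{definition:homlogical_resolution}, completing the construction. Apart from this splicing, which is routine, and the existence of envelopes, which is \cref{proposition:pi_star_even_envelopes_exist}, the real work lies entirely in the $\infty$-categorical bookkeeping of the second paragraph.
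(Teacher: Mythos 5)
Your reduction of the exactness check to splicing the short exact sequences $0 \to \evensheaf_{C_{i-1}}(q) \to \evensheaf_{E_i}(q) \to \evensheaf_{C_i}(q) \to 0$ is fine as far as it goes, but the proof has a genuine gap exactly where you yourself locate ``the principal obstacle'': the existence of the semicosimplicial object $X$ with $X_m = E_m$ and all coface maps nullhomotopic except one is never established. Asserting that the higher coherences ``are supplied by the coherent nullhomotopies of the double composites'' is not an argument: to produce a functor $\Delta_{s,+} \to \Mod_R$ you must exhibit compatible homotopies and higher homotopies witnessing all the cosimplicial identities among your mostly-zero cofaces, and nothing in the tower of cofibre sequences hands you this data in an organized form. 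The preliminary step is also shakier than you allow: the $\pi_*$-even envelope of \cref{proposition:pi_star_even_envelopes_exist} is not characterized by a universal property (condition $(3)$ of \cref{definition:pi_star_even_envelope} is a mere extension property), so there is no formal reason the small object argument yields a functor $T$ with a natural transformation $\mathrm{id} \Rightarrow T$; one could engineer a functorial variant, but that requires its own construction, and even granted $T$, functoriality of the tower does not by itself produce the semicosimplicial coherences you need.

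The paper sidesteps this entirely. It builds $X$ level by level using the standard inductive extension of (augmented) semicosimplicial diagrams: having defined $X$ on $\Delta_{s,+,\leq m-1}$, one only needs an object $X_m$ together with a single map out of the $m$-th matching object $M_m X$, and one takes $X_m$ to be a $\pi_*$-even envelope of $M_m X$ --- no coherence choices beyond that are required. Note that this is \emph{not} your tower: $M_1 X$ is a pushout $E_0 \sqcup_M E_0$, not the cofibre $C_0$, so the levels and faces differ from \cref{construction:cochain_complex_computing_even_cohomology}, and your parenthetical fallback (``apply $T$ to the relevant latching objects'') is essentially the paper's construction but is then incompatible with your third paragraph, whose exactness argument presumes $X_m = E_m$ with a single nonzero coface. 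For the matching-object construction the resolution property is proved differently: one shows by induction that each $M_m X$ is homologically even and that $\evensheaf_{M_m X} \simeq M_m(\evensheaf_X) \to \evensheaf_{X_m}$ is a monomorphism, using that homologically even modules are closed under pushouts along $\evensheaf$-monomorphisms and that $\evensheaf_{-}$ commutes with such pushouts; exactness of the Moore complex then follows by testing against an injective cogenerator, where these monomorphisms exhibit $\Hom(\evensheaf_X, I)$ as a hypercover. None of this appears in your proposal, so as written the construction of $X$ --- the heart of the theorem --- is missing.
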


\begin{proof}
We recall that augmented semicosimplicial objects can be constructed inductively, by choosing appropriate maps out of matching objects, see \cite[A.2.9.15, A2.9.16]{lurie_higher_topos_theory}.

We define $X_{-1} \colonequals M$, so that the $0$-th matching object is given by $M_{0}X \simeq X_{-1} \simeq M$. By \cref{proposition:pi_star_even_envelopes_exist}, there exists a $\pi_{*}$-even envelope 
\[
M_{0}X \rightarrow E_{0}
\]
and we set $X_{0} \colonequals E_{0}$. This extends $X$ to a diagram indexed by $\Delta_{s, +, \leq 0}$, so that the matching object $M_{1}X$ is well-defined. We then let $X_{1}$ be a $\pi_{*}$-even envelope of $M_{1}X$, extending our diagram to $\Delta_{s, +, \leq 1}$. Proceeding inductively in this manner, we obtain an augmented semicosimplicial object which by construction has $\pi_{*} X_{m}$ even for $m \geq 0$. We will show that it is a homological resolution. 

We first argue by induction that $M_{m}X$ is homologically even for all $m \geq 0$. The base-case is clear, since $M_{0} X \simeq M$, so suppose that we know that each of $M_{k}X$ is homologically even for $k < m$. As by construction, the map $M_{k}X \rightarrow X_{k}$ has homologically even cofibre, this will also show that for in this range $X_{k}$ is homologically even and $\evensheaf_{M_{k}X} \rightarrow \evensheaf_{X_{k}}$ is a monomorphism.

The long exact sequence of homology shows that homologically even modules are closed under pushouts along $\evensheaf_{-}$-monomorphisms, and that on this subcategory the association $M \mapsto \evensheaf_{M}$ commutes with such pushouts. It follows from \cite[A.2: Proposition 5 and Remark 4]{pstrkagowski2018synthetic} and the inductive step that $M_{m}X$ is homologically even and $\evensheaf_{M_{m}X} \simeq M_{m}(\evensheaf_{X})$. The first conclusion ends the induction. 

Since the cofibre of $M_{m}X \rightarrow X_{m}$ is homologically even, the second conclusion from the previous paragraph shows that
\[
M_{m}(\evensheaf_{X}) \simeq \evensheaf_{M_{m}X} \rightarrow \evensheaf_{X_{m}}
\]
is a monomorphism for all $m \geq 0$. If follows that if $I$ is an injective cogenerator of $\sheaves_{\Sigma}(\Perf_{ev}; \Ab)$, then $\Hom(\evensheaf_{X}, I)$ defines a hypercover in $\dcat_{\geq 0}(\mathbb{Z})$ and hence its Moore complex is exact. We deduce that the Moore complex of $\evensheaf_{X}$ itself is exact. Since the integral weight twists can be obtained from $\evensheaf_{-}$ using \cref{remark:local_grading_of_even_sheaves_compatible_with_weight_notation} and the half-weight twists vanish, we deduce the exactness of the Moore complex of $\evensheaf_{X}(q)$ for all $q$. 
\end{proof}

\begin{remark}
\label{remark:even_filtration_of_a_homological_even_as_decalage_of_its_resolution}
Suppose that $M$ is homologically even, so that by \cref{theorem:existence_of_pistar_even_adams_resolutions} we can complete it to a homological resolution $X \colon \Delta_{s, +} \rightarrow \Mod_{R}$ such that $\pi_{*}X_{m}$ is even for all $m \geq 0$. Then by a combination of \cref{proposition:homological_resolutions_are_limits_on_even_filtrations_up_to_completion} and \cref{lemma:pistar_even_means_homologically_even_and_even_filtration_is_whitehead} we see that the canonical map of filtered spectra
\[
\fil^{*}_{ev} M \rightarrow \varprojlim \tau_{\geq 2 \ast} X_{m}
\]
is an equivalence after completion. Here, the right hand side can be identified with the spectral analogue of Deligne's d\'{e}calage construction of \cite{deligne1971theorie}. 
\end{remark}

\section{Base-change and descent}

Suppose that $R$ is an $\mathbf{E}_{2}$-ring and $S$ is an $\mathbf{E}_{1}-R$-algebra, so that we have an associated cobar diagram of $\mathbf{E}_{1}$-rings 
\[
\begin{tikzcd}
	{R} & {S} & {S \otimes_{R} S} & \ldots
	\arrow[yshift=4pt, from=1-3, to=1-4]
 	\arrow[yshift=0pt, from=1-3, to=1-4]
 	\arrow[yshift=-4pt, from=1-3, to=1-4]
	\arrow[yshift=3pt, from=1-2, to=1-3]	          \arrow[yshift=-3pt, from=1-2, to=1-3]
	\arrow[from=1-1, to=1-2]
\end{tikzcd},
\]
where each coboundary map is induced by the unit $R \rightarrow S$, see \cite[Construction 2.7]{mathew2017nilpotence}. In this context, it is natural to ask if the even filtration associated to $R$ can be recovered from the even filtration of $\mathbf{E}_{1}$-rings $S^{\otimes_{R} n}$, perhaps up to some form of completion. 

\begin{warning}
We will think of the cobar diagram as an augmented semicosimplicial diagram $\Delta_{s, +} \rightarrow \Alg_{\mathbf{E}_{1}}(\spectra)$. While it can be naturally extended to a cosimplicial diagram, the coboundary maps do not respect multiplication and so the resulting extension is only a diagram of spectra.
\end{warning}
As the notion of homological resolution of \cref{definition:homlogical_resolution} gives us some natural conditions under which a semicosimplicial diagram of $R$-modules induces a limit on even filtrations, the main problem to tackle is how the even filtration of $S^{\otimes_{R} n}$ considered as a module over itself differs from the one where we consider it as a module over $R$. 

As explained in the next section, when studying descent properties of the even filtration, it is important to consider properties of $R$-algebras when considered as both left and right modules. To do so without introducing new notions, we will use the following convention: 

\begin{notation}
If $P$ is a property of left modules over $\mathbf{E}_{1}$-rings (such as being perfect even, homologically even or even flat), we will say that a right $R$-module $M$ has property $P$ if it has this property when considered as a left module over $R^{op}$, the opposite algebra. 
\end{notation}

\subsection{Evenness and extension/restriction of scalars} 
\label{subsection:evenness_and_extension_restriction_of_scalars}

Associated to a morphism $f \colon R \rightarrow S$ of $\mathbf{E}_{1}$-rings we have an adjunction
\[
S \otimes_{R} - \dashv R_{S. R} \colon \Mod_{R} \rightleftarrows \Mod_{S},
\]
where $R_{S. R}$ denotes the forgetful functor, and it is natural to ask how these two functors relate to the various notions introduced in the present work. 

\begin{lemma}
\label{lemma:extension_of_scalars_preserves_even_flat_modules_and_perfect_evens}
The functor $S \otimes_{R} - \colon \Mod_{R} \rightarrow \Mod_{S}$ preserves perfect even and even flat modules. 
\end{lemma}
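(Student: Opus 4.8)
The plan is to handle perfect evens first and then deduce the even flat case by a colimit argument. For perfect evens, I would argue that the functor $S \otimes_R - \colon \Mod_R \to \Mod_S$ is exact (it is a left adjoint between stable $\infty$-categories, hence preserves cofibre sequences, and it also preserves retracts), so the full subcategory of $\Mod_R$ consisting of modules $M$ with $S \otimes_R M$ perfect even is closed under extensions and retracts. Since $S \otimes_R \Sigma^{2k} R \simeq \Sigma^{2k} S$, which is perfect even as an $S$-module by \cref{definition:perfect_even_module}, this subcategory contains all the generators $\Sigma^{2k} R$; by the very definition of $\Perf(R)_{ev}$ as the smallest such subcategory, it contains all perfect even $R$-modules. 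Hence $S \otimes_R -$ carries $\Perf(R)_{ev}$ into $\Perf(S)_{ev}$.

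For even flat modules, I would invoke the characterization from \cref{remark:perfect_flats_generated_by_even_suspensions_under_filtered_colimits_retracts_extensions}: $\Mod_R^{e\flat}$ is the smallest subcategory of $\Mod_R$ containing $\Sigma^{2k} R$ and closed under filtered colimits, retracts and extensions. The functor $S \otimes_R -$ is a left adjoint, so it preserves filtered colimits (indeed all colimits), and as above it preserves retracts and extensions; it also sends each generator $\Sigma^{2k} R$ to the even flat $S$-module $\Sigma^{2k} S$. Therefore the subcategory of $M \in \Mod_R$ with $S \otimes_R M$ even flat over $S$ contains the generators and is closed under the three operations, hence contains all of $\Mod_R^{e\flat}$. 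Alternatively, and even more directly, if $M \simeq \varinjlim M_\alpha$ with each $M_\alpha$ perfect even, then $S \otimes_R M \simeq \varinjlim (S \otimes_R M_\alpha)$ is a filtered colimit of perfect even $S$-modules by the first part, so it is even flat by \cref{definition:even_flat_module}.

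I do not expect any serious obstacle here: the entire argument rests on the exactness and colimit-preservation of $S \otimes_R -$ together with the generation-by-closure descriptions of $\Perf(R)_{ev}$ and $\Mod_R^{e\flat}$ already established. The only point requiring the slightest care is checking that $S \otimes_R -$ preserves retracts, but this is automatic since any additive functor preserves idempotents and their splittings.
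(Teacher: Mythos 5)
Your argument is correct and is exactly the reasoning behind the paper's own (one-line) proof: both $\Perf(R)_{ev}$ and $\Mod_{R}^{e\flat}$ are generated from the unit by operations (extensions, retracts, even suspensions, filtered colimits) that the left adjoint $S \otimes_{R} -$ preserves, and $S \otimes_{R} R \simeq S$. You have simply spelled out the closure argument that the paper declares to be clear.
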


\begin{proof}
This is clear, since $S \otimes_{R} R \simeq S$ and both classes are defined as closure of the unit under certain kinds of colimits and extensions.
\end{proof}

\begin{remark}
Note that we had seen in \cref{warning:base_change_of_homological_even_along_truncation_need_not_be_homologically_even} that extension of scalars does not in general preserve homologically even modules. 
\end{remark}

To get further properties, we need to make some assumptions on the map. Compatibility with the even filtration can be thought of as a form of exactness, so as motivation, let us analyze the classical situation, when $f \colon S \rightarrow R$ is a map of classical rings. In this case we have an induced extension/restriction of scalars adjunction between the categories of modules in abelian groups 
\[
\mathrm{Tor}^{0}_{R}(S, -) \dashv R_{S, R} \colon \Mod_{R}(\Ab) \rightleftarrows \Mod_{S}(\Ab),
\]
where we write $\mathrm{Tor}^{0}_{R}(S, -)$ to emphasize that here we mean the classical rather than derived tensor product. In this case 
\begin{enumerate}
    \item the restriction of scalars $R_{S, R}$ is always exact, as (co)limits in either $R$ or $S$-modules can be both calculated in abelian groups, 
    \item the extension of scalars $\mathrm{Tor}^{0}_{R}(S, -)$ is exact precisely when $S$ is flat as a right $R$-module.
\end{enumerate}

In the context of $\mathbf{E}_{1}$-rings, the behaviour of extension of scalars is somewhat similar to the one above. However, the situation with the forgetful functor is more subtle, as the even filtration varies with the ring. 

In particular, whether something is ``exact with respect to the even filtration'' depends on more than the underlying spectra. Instead, the behaviour of the forgetful functor depends on the structure of $S$ as a \emph{left} $R$-module. Thus, to get the best of both worlds we are forced to think of $S$ as \emph{both} a left and right $R$-module. 

\begin{definition}
\label{definition:even_flat_and_homologically_even_map_of_e1_rings}
We say that a map $f \colon R \rightarrow S$ of $\mathbf{E}_{1}$-rings is
\begin{enumerate}
    \item \emph{left (resp. right) even flat} if $S$ is even flat as a left (resp. right) $R$-module, 
    \item \emph{left (resp. right) homologically even} if $S$ is homologically even as a left (resp. right) $R$-module. 
\end{enumerate}
\end{definition}

\begin{remark}
\label{remark:for_e1_algebras_left_homologically_even_is_right_homologically_even}
If $R$ is $\mathbf{E}_{2}$ and $f \colon R \rightarrow S$ can be promoted to a unit map of an $\mathbf{E}_{1}$-$R$-algebra structure, then $f$ is left even flat if and only if it is right even flat, and similarly for homological evenness. In particular, this happens whenever $f$ can be promoted to a map of $\mathbf{E}_{2}$-rings. 
\end{remark}

\begin{lemma}
\label{lemma:forgetful_functor_preserves_along_even_flat_or_homologically_flat_map_of_rings_preserves_these_modules}
Let $f \colon R \rightarrow S$ be a map of $\mathbf{E}_{1}$-rings. Then the forgetful functor $\Mod_{S} \rightarrow \Mod_{R}$ 
\begin{enumerate}
    \item preserves even flat modules if $f$ is left even flat and 
    \item preserves homologically even modules if $f$ is left homologically even. 
\end{enumerate}
\end{lemma}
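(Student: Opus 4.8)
The plan is to handle the two assertions separately; in each case the point is to transport a module-theoretic property across the forgetful functor by matching it with one of the intrinsic characterizations of even flat, respectively homologically even, modules proved above.

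\textbf{Part (1).} I would argue using the generation statement of \cref{remark:perfect_flats_generated_by_even_suspensions_under_filtered_colimits_retracts_extensions}: for an $\mathbf{E}_{1}$-ring the full subcategory of even flat modules is the smallest one containing the even (de)suspensions of the unit and closed under filtered colimits, retracts and extensions. The forgetful functor $\Mod_{S} \rightarrow \Mod_{R}$ is exact (a sequence of $S$-modules is a (co)fibre sequence precisely when its underlying sequence of spectra is) and preserves filtered colimits and retracts; hence the preimage $\mathcal{C} \subseteq \Mod_{S}$ of $\Mod_{R}^{e\flat}$ under restriction of scalars is closed under filtered colimits, retracts and extensions. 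Since $f$ is left even flat, $S$ — and therefore each $\Sigma^{2n}S$, even flats being stable under even (de)suspension by \cref{remark:perfect_flats_generated_by_even_suspensions_under_filtered_colimits_retracts_extensions} — is even flat as an $R$-module, i.e.\ lies in $\mathcal{C}$. Thus $\mathcal{C}$ contains the smallest subcategory with these closure properties generated by the $\Sigma^{2n}S$, namely $\Mod_{S}^{e\flat}$, which is the claim.

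\textbf{Part (2).} Here I would use the tensor characterization $(3)$ of \cref{theorem:characterization_of_homologically_even_modules}: an $R$-module is homologically even if and only if its tensor product over $R$ with every even flat, $\pi_{*}$-even \emph{right} $R$-module has homotopy concentrated in even degrees. So let $M$ be homologically even over $S$ and let $E$ be an even flat, $\pi_{*}$-even right $R$-module; I must show $\pi_{*}(E \otimes_{R} M)$ is even. The key step is the base-change equivalence
\[
E \otimes_{R} M \;\simeq\; (E \otimes_{R} S) \otimes_{S} M ,
\]
which holds because $S \otimes_{S} M \simeq M$ as left $R$-modules and moves the computation over to the ring $S$. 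Now $E \otimes_{R} S$, regarded as a right $S$-module, is (a) even flat over $S$, by the right-module version of \cref{lemma:extension_of_scalars_preserves_even_flat_modules_and_perfect_evens}, and (b) $\pi_{*}$-even, by \cref{proposition:tensor_characterization_of_even_flat_modules} applied to the even flat left $R$-module $S$ and the $\pi_{*}$-even right $R$-module $E$ — this is the one place where left even flatness of $f$ is used. Hence $E \otimes_{R} S$ is an even flat, $\pi_{*}$-even right $S$-module, so since $M$ is homologically even over $S$, characterization $(3)$ over $S$ gives that $\pi_{*}\big((E \otimes_{R} S) \otimes_{S} M\big)$ is even; unwinding the displayed equivalence, $\pi_{*}(E \otimes_{R} M)$ is even. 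As $E$ was arbitrary, characterization $(3)$ over $R$ (in the sufficient direction) shows $M$ is homologically even as an $R$-module.

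The genuinely substantive inputs — the tensor characterizations of even flatness and homological evenness, and the behaviour of extension of scalars — are all already in hand, so the remaining work is purely bookkeeping. The main obstacle, such as it is, is keeping track of left versus right module structures: making sure the base-change equivalence $E \otimes_{R} S \otimes_{S} M \simeq E \otimes_{R} M$ is set up with the correct sidedness and that the ``even flat'' and ``$\pi_{*}$-even'' hypotheses are applied on the appropriate side. I expect this left/right juggling, rather than any conceptual difficulty, to be where an error is most likely to slip in.
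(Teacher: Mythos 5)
Part (1) is fine and matches the paper's (terser) argument. Part (2), however, proves the wrong statement: you conclude that $\pi_{*}(E \otimes_{R} S)$ is even by applying \cref{proposition:tensor_characterization_of_even_flat_modules} to ``the even flat left $R$-module $S$'', and you explicitly flag this as ``the one place where left even flatness of $f$ is used.'' But the hypothesis of part (2) is only that $f$ is \emph{left homologically even}, i.e.\ that $S$ is homologically even as a left $R$-module; left even flatness is strictly stronger (by \cref{remark:even_flat_modules_are_homologically_even} every even flat module is homologically even, but not conversely -- e.g.\ $\mathbb{Z}/p$ over $\mathbb{Z}$, by \cref{proposition:even_flatnes_and_homological_flatnes_for_pistar_even_rings}). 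So as written your argument only establishes part (2) under the stronger hypothesis that $f$ is left even flat, not under the stated one.

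The fix is small and keeps your structure, which otherwise coincides with the paper's: in step (b) the flatness should be placed on $E$, not on $S$. You already know that $E$ is an even flat, $\pi_{*}$-even right $R$-module (that is exactly the class of test objects in characterization $(3)$ of \cref{theorem:characterization_of_homologically_even_modules}), and the hypothesis gives that $S$ is homologically even as a left $R$-module; applying characterization $(3)$ with $M = S$ then yields that $\pi_{*}(E \otimes_{R} S)$ is even, with no flatness assumption on $S$ needed. With that replacement your step (a) (even flatness of $E \otimes_{R} S$ over $S$, via extension of scalars applied to the even flat right $R$-module $E$) and the base-change identification $E \otimes_{R} M \simeq (E \otimes_{R} S) \otimes_{S} M$ go through unchanged, and the argument is then exactly the paper's proof.
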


\begin{proof}
The first part is clear, since the forgetful functor sends the unit $S$ to an even flat module by assumption, and even flat modules are defined as a closure of the unit under various colimits which commute with the forgetful functor. 

For the second, by \cref{theorem:characterization_of_homologically_even_modules} it is enough to show that if $N$ is a homologically even $S$-module and $E$ is a $\pi_{*}$-even, even flat right $R$-module, then $\pi_{*}(E \otimes_{R} N)$ is concentrated in even degrees. We can rewrite this tensor product as 
\[
E \otimes_{R} N \simeq (E \otimes_{R} S) \otimes_{S} N.
\]
Note that $E \otimes_{R} S$ is even flat as a right $S$-module by \cref{lemma:extension_of_scalars_preserves_even_flat_modules_and_perfect_evens} and $\pi_{*}(E \otimes_{R} S)$ is concentrated in even degrees by \cref{proposition:tensor_characterization_of_even_flat_modules} and the assumption that $S$ is homologically even as a left $R$-module. Thus, $\pi_{*}((E \otimes_{R} S) \otimes_{S} N)$ is concentrated in even degrees, as needed. 
\end{proof}

\begin{lemma}
\label{lemma:even_flat_maps_preserve_homologically_even_modules}
Let $f \colon R \rightarrow S$ be a right even flat map of $\mathbf{E}_{1}$-rings. Then $S \otimes_{R} - \colon \Mod_{R} \rightarrow \Mod_{S}$ preserves homologically even modules. 
\end{lemma}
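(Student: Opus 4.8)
The plan is to verify homological evenness of $S \otimes_{R} M$ over $S$ by checking the tensor criterion $(3)$ of \cref{theorem:characterization_of_homologically_even_modules}. So let $M$ be a homologically even left $R$-module, and let $E$ be a $\pi_{*}$-even, even flat \emph{right} $S$-module; we must show that $\pi_{*}\bigl(E \otimes_{S} (S \otimes_{R} M)\bigr)$ is concentrated in even degrees.

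First I would use associativity of the relative tensor product to rewrite
\[
E \otimes_{S} (S \otimes_{R} M) \simeq E \otimes_{R} M,
\]
where on the right $E$ is regarded as a right $R$-module by restriction of scalars along $f$. Next, I would observe that $E$, as a right $R$-module, is still $\pi_{*}$-even (its underlying spectrum is unchanged) and is still even flat: this is the evident right-handed mirror of \cref{lemma:forgetful_functor_preserves_along_even_flat_or_homologically_flat_map_of_rings_preserves_these_modules}$(1)$, which uses precisely the hypothesis that $f$ is right even flat — the forgetful functor on right modules carries the even flat module $S$ to an even flat $R$-module, and even flatness is a closure property under colimits that commute with this forgetful functor, so the whole subcategory of even flats is preserved.

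Finally, since $M$ is homologically even by assumption, criterion $(3)$ of \cref{theorem:characterization_of_homologically_even_modules} applied over $R$ to the $\pi_{*}$-even, even flat right $R$-module $E$ gives that $\pi_{*}(E \otimes_{R} M)$ is even. Tracing the identification backwards, $\pi_{*}\bigl(E \otimes_{S} (S \otimes_{R} M)\bigr)$ is even for every $\pi_{*}$-even, even flat right $S$-module $E$, so criterion $(3)$ over $S$ shows that $S \otimes_{R} M$ is homologically even. I do not expect a genuine obstacle here; the only point needing (minor) care is spelling out the right-module analogue of \cref{lemma:forgetful_functor_preserves_along_even_flat_or_homologically_flat_map_of_rings_preserves_these_modules}$(1)$, which is immediate by left–right symmetry, and making sure the base-change/restriction bookkeeping between left and right modules is consistent.
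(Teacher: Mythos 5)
Your proposal is correct and follows essentially the same route as the paper: verify criterion $(3)$ of \cref{theorem:characterization_of_homologically_even_modules} over $S$, rewrite $E \otimes_{S} (S \otimes_{R} M) \simeq E \otimes_{R} M$, and use the right-module variant of \cref{lemma:forgetful_functor_preserves_along_even_flat_or_homologically_flat_map_of_rings_preserves_these_modules} (which is where right even flatness of $f$ enters) to see that $E$ remains $\pi_{*}$-even and even flat over $R$, so that criterion $(3)$ over $R$ applies. No gaps.
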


\begin{proof}
Let $M$ be a homologically even $R$-module. By \cref{theorem:characterization_of_homologically_even_modules}, to show that $S \otimes_{R} M$ is homologically even, it is enough to verify that if $F$ is a $\pi_{*}$-even, even flat right $S$-module, then $\pi_{*}(F \otimes_{S} S \otimes_{R} M)$ is concentrated in even degrees. We have
\[
F \otimes_{S} S \otimes_{R} M \simeq F \otimes_{R} M
\]
and the right hand side has homotopy groups concentrated in even degrees since $F$ is even flat as a right $R$-module by (the right module variant of) \cref{lemma:forgetful_functor_preserves_along_even_flat_or_homologically_flat_map_of_rings_preserves_these_modules}. 
\end{proof}

We will also need the following variant for bimodules: 

\begin{lemma}
\label{lemma:bimodules_preserving_homological_evenness}
Let $B$ be an $R$-bimodule which is homologically even as a left $R$-module and even flat as a right $R$-module. Then the functor
\[
B \otimes_{R} - \colon \Mod_{R} \rightarrow \Mod_{R}
\]
preserves homologically even modules. 
\end{lemma}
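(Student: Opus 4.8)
I expect the intended conclusion of \cref{lemma:bimodules_preserving_homological_evenness} to be that the functor $B \otimes_R - \colon \Mod_R \rightarrow \Mod_R$ preserves homologically even modules, and here is the proof I would give. The plan is to push everything through the tensor-product characterizations of \cref{proposition:tensor_characterization_of_even_flat_modules} and \cref{theorem:characterization_of_homologically_even_modules}, the only new ingredient being a careful analysis of the intermediate right $R$-module $E \otimes_R B$.

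First I would fix a homologically even left $R$-module $M$ and apply condition $(3)$ of \cref{theorem:characterization_of_homologically_even_modules}: to show $B \otimes_R M$ is homologically even it suffices to verify that $\pi_*\bigl(E \otimes_R (B \otimes_R M)\bigr)$ is concentrated in even degrees for every even flat, $\pi_*$-even right $R$-module $E$. Associativity of the relative tensor product rewrites this group as $\pi_*\bigl((E \otimes_R B) \otimes_R M\bigr)$, where $E \otimes_R B$ is formed using the right $R$-action on $E$ together with the left $R$-action on $B$, and carries the residual right $R$-module structure coming from the right action on $B$.

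The substance of the argument is then to show that $N \colonequals E \otimes_R B$, regarded as a right $R$-module, is again both $\pi_*$-even and even flat. For $\pi_*$-evenness I would invoke condition $(3)$ of \cref{theorem:characterization_of_homologically_even_modules} once more, now for the homologically even \emph{left} module $B$ together with the even flat, $\pi_*$-even test module $E$; it gives that $\pi_*(E \otimes_R B)$ is concentrated in even degrees. For even flatness of $N$ I would use (the right-module variant of) \cref{proposition:tensor_characterization_of_even_flat_modules}, so that it is enough to check $\pi_*(N \otimes_R F) \simeq \pi_*\bigl(E \otimes_R (B \otimes_R F)\bigr)$ is even for every $\pi_*$-even left $R$-module $F$: here I would first use that $B$ is even flat as a \emph{right} $R$-module to conclude (again by \cref{proposition:tensor_characterization_of_even_flat_modules}) that $B \otimes_R F$ is $\pi_*$-even as a left $R$-module, and then use that $E$ is even flat as a right $R$-module to conclude that $E \otimes_R (B \otimes_R F)$ is $\pi_*$-even. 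Note that this is where both hypotheses on $B$ are consumed, on the correct side.

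With $N = E \otimes_R B$ now known to be an even flat, $\pi_*$-even right $R$-module, a final application of condition $(3)$ of \cref{theorem:characterization_of_homologically_even_modules} — this time to the homologically even left module $M$ with test module $N$ — shows that $\pi_*\bigl((E \otimes_R B) \otimes_R M\bigr)$ is concentrated in even degrees, which is exactly what the first reduction demanded. The step I would be most careful about is not conceptual but a matter of bookkeeping: tracking the left and right $R$-module structures on $B$ through the reassociations so that every relative tensor product is well-formed, and making sure that at each appeal to \cref{proposition:tensor_characterization_of_even_flat_modules} or \cref{theorem:characterization_of_homologically_even_modules} the factor playing the role of the ``even flat'' module really is even flat on the relevant side while the other factor is $\pi_*$-even or homologically even as required; the closure properties of even flats from \cref{proposition:even_flat_modules_closed_under_extensions_retracts_and_filtered_colimits} enter only implicitly, through these characterizations.
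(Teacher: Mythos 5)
Your proof is correct, and it is essentially the paper's argument: the paper's proof is just the one-line remark that the lemma follows by combining the arguments of \cref{lemma:forgetful_functor_preserves_along_even_flat_or_homologically_flat_map_of_rings_preserves_these_modules} and \cref{lemma:even_flat_maps_preserve_homologically_even_modules}, and your write-up (showing $E \otimes_{R} B$ is $\pi_{*}$-even via \cref{theorem:characterization_of_homologically_even_modules} and even flat as a right $R$-module via \cref{proposition:tensor_characterization_of_even_flat_modules}, then testing $M$ against it) is exactly that combination spelled out, with the intended conclusion of the truncated statement correctly supplied.
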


\begin{proof}
This is a combination of the arguments of \cref{lemma:forgetful_functor_preserves_along_even_flat_or_homologically_flat_map_of_rings_preserves_these_modules} and \cref{lemma:even_flat_maps_preserve_homologically_even_modules}.
\end{proof}

\begin{warning}
Beware that if $f \colon R \rightarrow S$ is merely right homologically even rather than even flat, then $S \otimes_{R} - \colon \Mod_{R} \rightarrow \Mod_{S}$ need not preserve homologically even modules. For a specific counterexample, consider the unit map $S^{0} \rightarrow \mathbb{Z}/p$. 

As $\MU$ admits an even cell structure and has even homotopy groups, the unit $S^{0} \rightarrow \MU$ is easily seen to be a $\pi_{*}$-even envelope in spectra and from \cref{theorem:characterization_of_homologically_even_modules} we deduce that a spectrum $M$ is homologically even as an $S^{0}$-module if and only if $\MU_{*}(M)$ is concentrated in even degrees. Since 
\[
\MU_{*}(\mathbb{Z}/p) \simeq \mathbb{Z}/p[b_{1}, b_{2}, \ldots]
\]
is a polynomial algebra concentrated in even degrees, we see that $\mathbb{Z}/p$ is homologically even as a spectrum. However, the base-change $\mathbb{Z}/p \otimes_{S^{0}} \mathbb{Z}/p$ is not homologically even as a $\mathbb{Z}/p$-module as a consequence of \cref{proposition:even_flatnes_and_homological_flatnes_for_pistar_even_rings}, since the dual Steenrod algebra 
\[
\mathcal{A}_{*} \simeq \pi_{*}(\mathbb{Z}/p \otimes_{S^{0}} \mathbb{Z}/p)
\]
is not concentrated in even degrees. 
\end{warning}

We record that there is a canonical map comparing even filtrations over different rings: 

\begin{construction}
\label{construction:comparison_map_between_even_filtrations}
Let $f \colon R \rightarrow S$ be a map of $\mathbf{E}_{1}$-rings and let $N$ be an $S$-module. Then the identity map $\mathrm{id}_{N} \colon N \rightarrow N$ is $R$-linear so that the pair $(f, \mathrm{id}_{N})$ defines a morphism in the $\infty$-category $\Mod(\spectra)$ of pairs of an $\mathbf{E}_{1}$-algebra and a module. Applying the even filtration functor of \cref{theorem:even_filtration_is_lax_symmetric_monoidal_as_a_functor_on_pairs} we obtain a map 
\[
\fil^{*}_{ev/R}(N) \rightarrow \fil^{*}_{ev/S}(N),
\]
of $\fil^{*}_{ev/R}(R)$-modules, which we will call the canonical comparison map. After passing to homotopy groups of the associated graded object of the even filtration, we obtain a comparison map 
\[
\evencoh^{p, q}(R, N) \rightarrow \evencoh^{p, q}(S, N)
\]
between even cohomology groups. 
\end{construction}

\begin{remark}
\label{remark:detecting_eqv_between_even_filtrations_on_cohomology}
Using the explicit description of the maps of additive sheaves inducing the comparison map between even filtrations given in \cref{remark:explicit_description_of_functoriality_of_synthetic_analogues} and the identification of the colimit of the even filtration of \cref{proposition:connectivity_of_even_filtration_maps_and_exhaustivity_of_the_even_filtration}, one observes that after passing to colimits the map 
\begin{equation}
\label{equation:comparison_map_between_even_filtrations_in_rmk_about_detecting_eqv_using_coh}
\varinjlim \fil_{ev/R}^{*}(N) \rightarrow \varinjlim \fil_{ev/S}^{*}(N)
\end{equation}
of \cref{construction:comparison_map_between_even_filtrations} can be identified with the identity $N \rightarrow N$. It follows that it is an equivalence if and only if it is an equivalence between associated graded pieces; that is, when it induces an isomorphism 
\[
\evencoh^{*, *}(R, N) \simeq \evencoh^{*, *}(S, N)
\]
between even cohomology groups. Indeed, if the latter holds, then the cofibre of the comparison map is a filtered spectrum whose associated graded object vanishes (hence it is a constant filtered spectrum) and whose colimit is zero. 
\end{remark}

\begin{lemma}
\label{lemma:homologically_even_maps_of_rings_have_covering_lifting_property_on_perfect_evens}
Let $f \colon R \rightarrow S$ be a left homologically even map of $\mathbf{E}_{1}$-rings. Then 
\[
S \otimes_{R} - \colon \Perf(R)_{ev} \rightarrow \Perf(S)_{ev}
\]
has the covering lifting property with respect to the Grothendieck topologies of even epimorphisms. 
\end{lemma}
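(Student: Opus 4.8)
The plan is to unwind the covering lifting property and reduce it, via the extension/restriction adjunction $S\otimes_R -\dashv(\text{forget})$, to an application of the characterization of homologically even modules. Since every covering sieve in either topology is generated by a single even epimorphism, it suffices to show: for every $A\in\Perf(R)_{ev}$ and every even epimorphism $g\colon B'\to S\otimes_R A$ in $\Perf(S)_{ev}$, there is an even epimorphism $h\colon B\to A$ in $\Perf(R)_{ev}$ such that $S\otimes_R h$ factors through $g$; then $h$ lies in the pullback sieve on $A$, which is therefore a covering sieve.

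First I would put $F'\colonequals\mathrm{fib}(g)$, which is perfect even over $S$ by the definition of an even epimorphism, so $\mathrm{cofib}(g)\simeq\Sigma F'$ and factoring $S\otimes_R h$ through $g$ amounts to the nullity of $S\otimes_R B\xrightarrow{S\otimes_R h}S\otimes_R A\to\Sigma F'$. Using the adjunction together with naturality of the unit $A\to S\otimes_R A$ and exactness of restriction, this composite is adjunct to $B\xrightarrow{h}A\xrightarrow{\bar\psi}\Sigma(F'\vert_R)$, where $F'\vert_R$ denotes $F'$ regarded as an $R$-module and $\bar\psi$ is the adjunct of the projection $S\otimes_R A\to\Sigma F'$. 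So it is enough to produce an even epimorphism $h\colon B\to A$ in $\Perf(R)_{ev}$ that kills the fixed class $\bar\psi$.

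The crucial input is that $F'\vert_R$ is homologically even over $R$: indeed $F'$ is perfect even over $S$, hence homologically even over $S$ by \cref{lemma:every_perfect_even_is_homogically_even}, and the forgetful functor along the left homologically even map $f$ preserves homologically even modules by \cref{lemma:forgetful_functor_preserves_along_even_flat_or_homologically_flat_map_of_rings_preserves_these_modules}. Now apply characterization $(2)$ of \cref{theorem:characterization_of_homologically_even_modules} to the perfect even $A$ and the map $\bar\psi\colon A\to\Sigma(F'\vert_R)$: it factors as $A\xrightarrow{\alpha}\Sigma B_1\to\Sigma(F'\vert_R)$ with $B_1$ perfect even over $R$. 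Set $B\colonequals\mathrm{fib}(\alpha)$. The fibre sequence $B_1\to B\to A$ then shows that $B$ is perfect even (an extension of $A$ by $B_1$, using closure of $\Perf(R)_{ev}$ under extensions) and that $h\colon B\to A$ is an even epimorphism with fibre $B_1$; since $h$ composes $\alpha$ to zero, $\bar\psi\circ h$ is null, as wanted.

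The only point requiring care is the bookkeeping in the second step — identifying $S\otimes_R B\to S\otimes_R A\to\mathrm{cofib}(g)$ with the $R$-linear map $\bar\psi\circ h$ — which is a routine diagram chase with the unit and counit of the adjunction. There is no substantive obstacle here and, unlike the descent statements later in the section, no completion enters.
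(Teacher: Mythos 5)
Your proof is correct and follows essentially the same route as the paper: both hinge on the fibre of the covering being homologically even as an $R$-module (via \cref{lemma:every_perfect_even_is_homogically_even} and \cref{lemma:forgetful_functor_preserves_along_even_flat_or_homologically_flat_map_of_rings_preserves_these_modules}) and then produce an even epimorphism $B \rightarrow A$ over $R$ whose base change factors through the given covering. The only difference is packaging: the paper phrases the lifting step as the sheaf-level epimorphism $\evensheaf_{B'} \rightarrow \evensheaf_{S \otimes_{R} A}$, whereas you unwind it directly through the obstruction class and characterization $(2)$ of \cref{theorem:characterization_of_homologically_even_modules}, which amounts to the same thing.
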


\begin{proof}
We have to show that any pair of $A \in \Perf(R)_{ev}$ and an even epimorphism $q \colon M \rightarrow S \otimes_{R} A$ in $\Perf(S)_{ev}$ can be completed to a commutative diagram 
\[
\begin{tikzcd}
	&& M \\
	{S\otimes_{R}B} & {} & {S \otimes_{R}A}
	\arrow["q"', from=1-3, to=2-3]
	\arrow["{S \otimes_{R} p}", from=2-1, to=2-3]
	\arrow[from=2-1, to=1-3]
\end{tikzcd},
\]
where $p$ is an even epimorphism of perfect even $R$-modules. 

Since the fibre $\mathrm{fib}(q)$ is homologically even as an $R$-module by \cref{lemma:forgetful_functor_preserves_along_even_flat_or_homologically_flat_map_of_rings_preserves_these_modules}, long exact sequence of homology shows that the map $\evensheaf_{M} \rightarrow \evensheaf_{S \otimes_{R} A}$ is an epimorphism of sheaves on $\sheaves_{\Sigma}(\Perf(R)_{ev}, \Ab)$. It follows that that there exists an even epimorphism $p \colon B \rightarrow A$ of perfect even $R$-modules which can be completed to a commutative diagram 
\[
\begin{tikzcd}
	B & M \\
	A & {S  \otimes_{R}A}
	\arrow[from=2-1, to=2-2]
	\arrow["p"', from=1-1, to=2-1]
	\arrow[from=1-1, to=1-2]
	\arrow[from=1-2, to=2-2]
\end{tikzcd}
\]
The induced map $S \otimes_{R} p \colon S \otimes_{R} B \rightarrow S \otimes_{R} A$ factors through $M$, as needed. 
\end{proof}

\begin{theorem}
\label{theorem:even_filtration_commutes_with_restriction_of_scalars_for_homologically_even_maps}
Let $f \colon R \rightarrow S$ be a homologically even map of $\mathbf{E}_{1}$-rings. Then for any $S$-module $N$ the canonical comparison map 
\[
\fil_{ev/R}^{*}(N) \rightarrow \fil_{ev/S}^{*}(N)
\]
of \cref{construction:comparison_map_between_even_filtrations} is an equivalence. In particular, 
\[
\evencoh^{*, *}(R, N) \simeq \evencoh^{*, *}(S, N). 
\]
\end{theorem}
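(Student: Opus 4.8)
The plan is to deduce everything from a single structural fact: that the pushforward functor $f_{*}$ of \cref{construction:comparison_map_between_even_filtrations} is $t$-exact. Recall from that construction that $f_{*}$ is precomposition with the exact functor $S \otimes_{R} - \colon \Perf(R)_{ev} \to \Perf(S)_{ev}$ (which does land in $\Perf(S)_{ev}$ by \cref{lemma:extension_of_scalars_preserves_even_flat_modules_and_perfect_evens}), that $f_{*} Y_{S}(N) \simeq Y_{R}(N)$ functorially, and that $f_{*}$ preserves coconnective sheaves since coconnectivity is detected levelwise and $(f_{*} X)(A) = X(S \otimes_{R} A)$; equivalently, $f_{*}$ is left $t$-exact. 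So the only genuinely new thing to prove is that $f_{*}$ is also \emph{right} $t$-exact, and this is exactly where the homological-evenness hypothesis is used.

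For right $t$-exactness I would use \cref{lemma:homologically_even_maps_of_rings_have_covering_lifting_property_on_perfect_evens}: $S \otimes_{R} -$ has the covering lifting property for the even-epimorphism topologies, and I claim precomposition with such a functor carries presheaves with vanishing sheafification to presheaves with vanishing sheafification. Concretely, if $X$ is a connective additive sheaf on $\Perf(S)_{ev}$, $A \in \Perf(R)_{ev}$, and $\alpha \in \pi_{k}(X(S\otimes_{R} A))$ with $k < 0$, then since $X$ is connective there is an even epimorphism $C \to S \otimes_{R} A$ in $\Perf(S)_{ev}$ killing $\alpha$; the covering lifting property produces an even epimorphism $p \colon B \to A$ of perfect even $R$-modules together with a factorization $S \otimes_{R} B \to C \to S \otimes_{R} A$ of $S \otimes_{R} p$, and then $\alpha$ restricts to zero in $\pi_{k}((f_{*}X)(B)) = \pi_{k}(X(S \otimes_{R} B))$. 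Thus each negative homotopy-group presheaf of $f_{*}X$ is locally zero, hence has vanishing sheafification, so $f_{*}X$ is connective. Together with left $t$-exactness this shows $f_{*}$ is $t$-exact; everything stays inside the additive world by \cref{theorem:additive_presheaves_on_perf_ev_well_behaved} and \cref{corollary:properties_of_the_t_structure_on_sheaves}.

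Granting $t$-exactness of $f_{*}$, the rest is bookkeeping. For each $n$ we get $f_{*}(\tau_{\geq 2n} Y_{S}(N)) \simeq \tau_{\geq 2n}(f_{*} Y_{S}(N)) \simeq \tau_{\geq 2n} Y_{R}(N)$, and under this identification the comparison sheaf-map $\tau_{\geq 2n} Y_{R}(N) \to f_{*}(\tau_{\geq 2n} Y_{S}(N))$ of \cref{construction:comparison_map_between_even_filtrations} — the unique lift through the connective cover of the canonical truncation map $\tau_{\geq 2n} Y_{R}(N) \to Y_{R}(N) \simeq f_{*}Y_{S}(N)$ — is an equivalence (indeed the identity). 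Passing to sections over $R \in \Perf(R)_{ev}$ and using $(f_{*} Z)(R) = Z(S \otimes_{R} R) \simeq Z(S)$ gives, compatibly in $n$,
\[
\fil^{n}_{ev, R}(N) = \Gamma(R, \tau_{\geq 2n} Y_{R}(N)) \simeq \Gamma\bigl(R, f_{*}(\tau_{\geq 2n} Y_{S}(N))\bigr) \simeq \Gamma(S, \tau_{\geq 2n} Y_{S}(N)) = \fil^{n}_{ev, S}(N),
\]
where the first equivalence is the comparison map; this is the asserted equivalence of filtered spectra, and the "in particular" statement follows by passing to homotopy groups of associated graded, or directly from \cref{remark:detecting_eqv_between_even_filtrations_on_cohomology}.

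The main obstacle is precisely the right $t$-exactness of $f_{*}$: unlike coconnectivity, connectivity of an additive sheaf is genuinely not a levelwise condition, so one cannot simply evaluate; the covering lifting property of \cref{lemma:homologically_even_maps_of_rings_have_covering_lifting_property_on_perfect_evens} (equivalently, the homological evenness of $S$ over $R$) is exactly the input that lets the connective cover be computed compatibly under restriction of scalars along $f$. Everything after that is formal manipulation of the $t$-structure and the adjunction.
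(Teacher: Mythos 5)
Your proof is correct and follows essentially the same route as the paper: the paper likewise deduces the theorem from \cref{lemma:homologically_even_maps_of_rings_have_covering_lifting_property_on_perfect_evens}, which by \cite[Remark 2.23]{pstrkagowski2018synthetic} implies that the pushforward $f_{*}$ preserves connective covers, so that the comparison map of \cref{construction:comparison_map_between_even_filtrations} is an equivalence. The only difference is that you inline a direct proof of that cited remark (the locally-zero argument via the covering lifting property) rather than quoting it, which is a fine, self-contained substitute.
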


\begin{proof}
Since $S \otimes_{R} \colon \Perf(R)_{ev} \rightarrow \Perf(S)_{ev}$ has the covering lifting property by \cref{lemma:homologically_even_maps_of_rings_have_covering_lifting_property_on_perfect_evens}, the precomposition functor 
\[
f_{\ast} \colon \presheaves(\Perf(S)_{ev}, \spectra) \rightarrow \presheaves(\Perf(R)_{ev}, \spectra)
\]
commutes with sheafication by \cite[{Proposition A.13}]{pstrkagowski2018synthetic}. It follows that the map 
\[
\tau_{\geq 0} \map_{\Mod_{R}}(-, N) \simeq \tau_{\geq 0} \map_{\Mod_{S}}(S \otimes_{R} -, N) \simeq f_{\ast}(\tau_{\geq 0} \map_{\Mod_{S}}(-, N)) \rightarrow f_{\ast} \nu_{S}(N)
\]
identifies the target as a sheafication of the source, inducing an equivalence 
\[
\nu_{R}(N) \simeq f_{\ast} \nu_{S}(N). 
\]
It follows that its adjoint  
\begin{equation}
\label{equation:induced_map_between_nus_in_even_flat_base_change}
f^{\ast} \nu_{R}(N) \rightarrow \nu_{S}(N), 
\end{equation}
(which encodes the lax functoriality of the $\nu_{-}(-)$ construction, as explained in \cref{remark:explicit_description_of_functoriality_of_synthetic_analogues}) can be identified with the counit of the target. Since the induced map on even filtrations can be identified with the map 
\[
\underline{\map}^{\ast}_{R}(\nu_{R}(R), \nu_{R}(N)) \rightarrow \underline{\map}^{\ast}_{R}(f^{\ast} \nu_{R}(R), \nu_{S}(N))
\]
obtained by first applying $f^{\ast}$ and then composing with (\ref{equation:induced_map_between_nus_in_even_flat_base_change}) as explained in \cref{remark:unwrapped_definition_of_the_functoriality_of_the_even_filtration}, it must be an equivalence. 
\end{proof}

\subsection{Faithfully flat descent} 

In \cref{theorem:even_filtration_commutes_with_restriction_of_scalars_for_homologically_even_maps}, we had shown that if $f \colon R \rightarrow S$ is a homologically even map of $\mathbf{E}_{1}$-rings, then for any $S$-module its even filtration over $S$ agrees with the one relative to $R$. This shows that information can be moved ``up'' along a map of rings; that is, what happens over $S$ is already determined by $R$. 

More commonly, we instead want to move information ``down''; that is, to deduce results about $R$-modules from their base-change $S \otimes_{R} -$. As Grothendieck's theory of descent shows, this usually requires some variant of faithful flatness. As we discussed in \S\ref{subsection:evenness_and_extension_restriction_of_scalars}, in the context of even filtration this requires some control over $S$ as both a left and right $R$-module. 

\begin{definition}
\label{definiton:even_faithfully_flat_maps_of_rings}
We say that a map $f \colon R \rightarrow S$ of $\mathbf{E}_{1}$-rings is (left) \emph{faithfully even flat} if both $S$ and $\mathrm{cofib}(f)$ are even flat as right $R$-modules and $\mathrm{cofib}(f)$ is homologically even as a left $R$-module. 
\end{definition}

\begin{remark}
\label{remark:criterion_for_faithful_flatness_of_classical_rings}
The motivation for \cref{definiton:even_faithfully_flat_maps_of_rings} is given by the following classical observation: a  monomorphism $R \hookrightarrow S$ of (discrete) commutative rings is faithfully flat if and only if both $S$ and $\mathrm{coker}(R \rightarrow S)$ are flat as $R$-modules, see \cite[Addendum 3.9]{diracgeometry1}.
\end{remark}

\begin{remark}
Note that there is an obvious ``opposite'' notion of a right faithfully even flat morphism, which would be relevant in the context of working with right $R$-modules. 
\end{remark}

\begin{remark}
\label{remark:for_faithfully_even_flat_map_of_e1_rings_the_target_is_left_homologically_even}
If $f \colon R \rightarrow S$ is a faithfully even flat map of $\mathbf{E}_{1}$-rings, then $S$ is also homologically even as left $R$-module. This follows from the fact $R$ is homologically even, which is \cref{lemma:every_perfect_even_is_homogically_even}, and the fact that homologically even modules are closed under extensions. 
\end{remark}

Note that our definition of faithfully even flat is distinct from the notion of ``evenly faithfully flat'' given by Hahn-Raksit-Wilson in the context of $\mathbf{E}_{\infty}$-rings \cite[2.2.13]{hahn2022motivic}. In this paper, by faithfully even flat we will always mean the notion introduced in \cref{definiton:even_faithfully_flat_maps_of_rings}. The following shows that they agree on connective $\mathbf{E}_{\infty}$-rings: 

\begin{proposition}
\label{proposition:fef_maps_are_hrw_eff}
A map $R \rightarrow S$ of $\mathbf{E}_{\infty}$-rings which is faithfully even flat in the sense of \cref{definiton:even_faithfully_flat_maps_of_rings} is also evenly faithfully flat in the sense of Hahn-Raksit-Wilson; that is, for every map $R \rightarrow E$ into a $\pi_{*}$-even $\mathbf{E}_{\infty}$-ring, the base-change $E \otimes_{R} S$ is also $\pi_{*}$-even and the map $\pi_{*}(E) \rightarrow \pi_{*}(E \otimes_{R} S)$ of classical commutative rings is faithfully flat. If both $R$ and $S$ are connective, the converse holds as well. 
\end{proposition}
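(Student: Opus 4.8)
The plan is to prove the two implications separately, in both cases leveraging the tensor (Lazard-type) characterizations of even flatness together with the base-change lemmas of \S\ref{section:calculus_of_evenness}; I will not need to unpack the Hahn-Raksit-Wilson site beyond the characterization of ``evenly faithfully flat'' spelled out in the statement.

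\emph{Faithfully even flat implies evenly faithfully flat.} Fix a map $R \to E$ with $E$ a $\pi_{*}$-even $\mathbf{E}_{\infty}$-ring. Since $f \colon R \to S$ is in particular a map of $\mathbf{E}_{2}$-rings, the left and right notions of even flatness agree by the remark following \cref{definition:even_flat_and_homologically_even_map_of_e1_rings}, so $S$ is even flat as a left $R$-module; hence $E \otimes_{R} S$, being the extension of scalars of $S$ along $R \to E$, is even flat as an $E$-module by \cref{lemma:extension_of_scalars_preserves_even_flat_modules_and_perfect_evens}. Because $\pi_{*}E$ is concentrated in even degrees, part~(1) of \cref{proposition:even_flatnes_and_homological_flatnes_for_pistar_even_rings} shows that $\pi_{*}(E \otimes_{R} S)$ is a flat $\pi_{*}E$-module concentrated in even degrees. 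This already gives that $E \otimes_{R} S$ is $\pi_{*}$-even and that $\pi_{*}E \to \pi_{*}(E \otimes_{R} S)$ is flat; it remains to upgrade flatness to faithful flatness. For that I would run the same argument for $C \colonequals \mathrm{cofib}(f)$, which is also even flat as a right (hence left) $R$-module, obtaining that $\pi_{*}(E \otimes_{R} C)$ is flat over $\pi_{*}E$ and even. Applying $E \otimes_{R} -$ to the cofibre sequence $R \to S \to C$ yields a cofibre sequence $E \to E \otimes_{R} S \to E \otimes_{R} C$ of $E$-modules all of whose homotopy groups vanish in odd degrees, so its long exact sequence breaks into short exact sequences $0 \to \pi_{2k}E \to \pi_{2k}(E \otimes_{R} S) \to \pi_{2k}(E \otimes_{R} C) \to 0$. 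Since the cokernel $\pi_{*}(E \otimes_{R} C)$ is flat, the unit $\pi_{*}E \to \pi_{*}(E \otimes_{R} S)$ is a pure monomorphism, and a pure monomorphism with flat target is automatically faithfully flat: for any $\pi_{*}E$-module $M$ one has $M \cong M \otimes_{\pi_{*}E}\pi_{*}E \hookrightarrow M \otimes_{\pi_{*}E}\pi_{*}(E \otimes_{R} S)$ by purity, so the latter vanishes only if $M$ does. (Note that only the even-flatness clauses of \cref{definiton:even_faithfully_flat_maps_of_rings}, not the homological-evenness clauses, enter here.)

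\emph{Converse when $R$ is connective.} Assume the Hahn-Raksit-Wilson condition holds and specialize to the discrete, hence $\pi_{*}$-even, $\mathbf{E}_{\infty}$-$R$-algebra $E = \pi_{0}R$. By hypothesis $\pi_{*}(\pi_{0}R \otimes_{R} S)$ is then a flat $\pi_{0}R$-module concentrated in even degrees, so $S$ is even flat over $R$ by \cref{theorem:even_flat_modules_over_connective_rings_detected_by_base_change_to_pi0}. For $C = \mathrm{cofib}(f)$ I would apply $\pi_{0}R \otimes_{R} -$ to $R \to S \to C$ and chase the long exact sequence: using that $\pi_{0}R \to \pi_{*}(\pi_{0}R \otimes_{R} S)$ is faithfully flat, hence injective and pure, and that $\pi_{*}(\pi_{0}R \otimes_{R} S)$ is even, each $\pi_{n}(\pi_{0}R \otimes_{R} C)$ is either zero ($n$ odd), isomorphic to the flat module $\pi_{n}(\pi_{0}R \otimes_{R} S)$ ($n$ even, $n \neq 0$), or the cokernel of the pure inclusion $\pi_{0}R \hookrightarrow \pi_{0}(\pi_{0}R \otimes_{R} S)$ of flat modules ($n = 0$); in every case it is flat over $\pi_{0}R$. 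Thus $\pi_{*}(\pi_{0}R\otimes_{R}C)$ is flat over $\pi_{0}R$ and concentrated in even degrees, and \cref{theorem:even_flat_modules_over_connective_rings_detected_by_base_change_to_pi0} gives that $C$ is even flat over $R$. Finally, even flat modules are homologically even by \cref{remark:even_flat_modules_are_homologically_even}, and the left and right notions coincide since $f$ is $\mathbf{E}_{2}$, so $f$ is faithfully even flat in the sense of \cref{definiton:even_faithfully_flat_maps_of_rings}.

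The step I expect to be the main obstacle is the faithfulness argument: correctly extracting the short exact sequences from the long exact homotopy sequences, keeping honest track of connectivity of $S$, and invoking the commutative-algebra principle that a pure submodule of a flat module with flat quotient makes the ambient module faithfully flat. Everything else reduces to direct citations of the tensor characterizations and base-change results of \S\ref{section:calculus_of_evenness}.
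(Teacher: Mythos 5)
Your proposal is correct and follows essentially the same route as the paper's proof: in the forward direction you extract the short exact sequence of even homotopy groups from the cofibre sequence $E \to E \otimes_{R} S \to E \otimes_{R} \mathrm{cofib}(f)$ and use flatness of the middle and right terms, and in the converse you specialize to $E = \pi_{0}R$ and invoke \cref{theorem:even_flat_modules_over_connective_rings_detected_by_base_change_to_pi0}. The only cosmetic difference is that you spell out the purity argument (mono with flat target and flat cokernel is faithfully flat, and conversely) where the paper cites the classical criterion recalled in \cref{remark:criterion_for_faithful_flatness_of_classical_rings}, and you make explicit the final step that even flatness implies the homological-evenness clause of \cref{definiton:even_faithfully_flat_maps_of_rings}, which the paper leaves implicit.
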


\begin{proof}
Since by \cref{proposition:tensor_characterization_of_even_flat_modules} a tensor product of an even flat module and a $\pi_{*}$-even module is $\pi_{*}$-even, if $f \colon R \rightarrow S$ is faithfully even flat then the sequence
\[
0 \rightarrow \pi_{*}(E) \rightarrow \pi_{*}(E \otimes_{R} S) \rightarrow \pi_{*}(E \otimes_{R} \mathrm{cofib}(f)) \rightarrow 0
\]
is short exact and concentrated in even degrees. As the middle and right terms are flat over $\pi_{*}E$ by \cref{proposition:even_flatnes_and_homological_flatnes_for_pistar_even_rings}, we deduce from \cite[Lemma 5.5]{lurie2004tannaka} that the first map is a faithfully flat map of classical rings. It follows that $f$ is evenly faithfully flat in the sense of Hahn-Raksit-Wilson. 

Conversely, suppose that $R$ is connective and that $f$ is evenly faithfully flat in the sense of Hahn-Raksit-Wilson. Let us write $R_{\leq 0} \simeq \pi_{0} R$ for the $0$-truncation. By assumption, $\pi_{*}(R_{\leq 0} \otimes_{R} S)$ is concentrated in even degrees and the first map in the sequence
\[
0 \rightarrow \pi_{*}(R_{\leq 0}) \rightarrow \pi_{*}(R_{\leq 0} \otimes_{R} S) \rightarrow \pi_{*}(R_{\leq 0} \otimes_{R} \mathrm{cofib}(f)) \rightarrow 0 
\]
is faithfully flat, hence injective. It follows that the sequence is short exact, concentrated in even degrees. Moreover, the third term is flat as a $\pi_{*}(R_{\leq 0}) \simeq \pi_{0}R$-module by \cref{remark:criterion_for_faithful_flatness_of_classical_rings}. It follows from \cref{theorem:even_flat_modules_over_connective_rings_detected_by_base_change_to_pi0} that both $S$ and $\mathrm{cofib}(f)$ are left even flat as $R$-modules, and thus also right even flat since this is a map of $\mathbf{E}_{\infty}$-rings. We deduce that $R \rightarrow S$ is faithfully even flat. 
\end{proof}

\begin{warning}
For non-connective $\mathbf{E}_{\infty}$-rings, a map which is evenly faithfully flat in the sense of \cite{hahn2022motivic} need not be faithfully even flat in the sense of \cref{definiton:even_faithfully_flat_maps_of_rings}, see \cref{remark:a_map_which_is_eff_but_not_fef}.
\end{warning}

\begin{lemma}
\label{lemma:any_module_admits_a_mono_on_even_homology_into_a_homological_even}
Let $M$ be an $R$-module. Then there exists a map of modules $M \rightarrow N$ such that $N$ is homologically even and such that $\evensheaf_{M} \rightarrow  \evensheaf_{N}$ is a monomorphism. 
\end{lemma}

\begin{proof}
By taking an appropriately large direct sum of perfect evens, we can find a map $\Sigma P \rightarrow M$ of $R$-modules where $P$ is homologically even and such that
\[
\evensheaf_{P} \simeq \evensheaf_{\Sigma P}(\nicefrac{1}{2}) \rightarrow \evensheaf_{M})(\nicefrac{1}{2})
\]
is a surjection. Since $P$ is homologically even, it follows from the long exact sequence of even sheaves of \cref{remark:long_exact_sequence_of_even_sheaves} that the map from $M \rightarrow \mathrm{cofib}(\Sigma P \rightarrow M)$ has the needed properties. 
\end{proof}

\begin{lemma}
\label{lemma:for_even_faithfully_map_tensoring_gives_ses_of_homology}
Let $f \colon R \rightarrow S$ be a left faithfully even flat map of $\mathbf{E}_{1}$-rings. If $M$ is an $R$-module, then for any weight $q$ the sequence
\[
0 \rightarrow \evensheaf_{M}(q) \rightarrow \evensheaf_{S \otimes_{R} M}(q) \rightarrow \evensheaf_{\cofib(f) \otimes_{R} M}(q) \rightarrow 0
\]
of abelian sheaves on $\Perf(R)_{ev}$ is short exact. 
\end{lemma}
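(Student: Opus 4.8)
\emph{Proof plan.} Write $Z \colonequals \cofib(f)$, regarded as an $R$-bimodule. Tensoring the cofibre sequence of right $R$-modules $R \xrightarrow{f} S \to Z$ with $M$ over $R$ gives a cofibre sequence of left $R$-modules $M \to S \otimes_{R} M \to Z \otimes_{R} M$. Applying the spectral Yoneda embedding $Y_{R}(-) = \Map_{\Mod_{R}}(-,-)$ --- which is exact, since each $\Map_{\Mod_{R}}(A,-)$ preserves fibre sequences and such presheaves are sheaves by \cref{theorem:additive_presheaves_on_perf_ev_well_behaved} --- yields a cofibre sequence of additive sheaves of spectra. As $\pi_{2q}Y_{R}(N) \simeq \evensheaf_{N}(q)$ (and the odd homotopy sheaves recover the half-weight even sheaves), the statement to prove is exactly that the associated long exact sequence of homotopy sheaves breaks up into short exact sequences; equivalently, that the boundary map $Z \otimes_{R} M \to \Sigma M$ induces the zero map on all homotopy sheaves of $Y_{R}(-)$. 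Call this property $P(M)$; the goal is to establish it for all $M$.

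The property $P(M)$ holds whenever $M$ is homologically even. Indeed $S \otimes_{R} M$ is then homologically even --- $S$ is even flat as a right $R$-module, so $S \otimes_{R} M$ is homologically even over $S$ by \cref{lemma:even_flat_maps_preserve_homologically_even_modules}, and hence over $R$ by \cref{lemma:forgetful_functor_preserves_along_even_flat_or_homologically_flat_map_of_rings_preserves_these_modules} since $S$ is homologically even as a left $R$-module --- and $Z \otimes_{R} M$ is homologically even by \cref{lemma:bimodules_preserving_homological_evenness}, $Z$ being homologically even on the left and even flat on the right. For a cofibre sequence of homologically even modules all half-weight even sheaves vanish, so every connecting map in the long exact sequence of even sheaves of \cref{remark:long_exact_sequence_of_even_sheaves} is zero, giving $P(M)$. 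In particular $P$ holds on all $\pi_{*}$-even modules (\cref{lemma:pistar_even_means_homologically_even_and_even_filtration_is_whitehead}) and on all even flat modules (\cref{remark:even_flat_modules_are_homologically_even}).

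The main step is a closure property: if $M \to E \to C$ is a cofibre sequence of left $R$-modules with $P(E)$ and $P(C)$, then $P(M)$. To see this I would apply $Y_{R}(-)$ to the bifunctor $(N,j)\mapsto j \otimes_{R} N$, with $N$ running over $M \to E \to C$ and $j$ over $R \to S \to Z$, producing a $3\times 3$ grid of sheaves of spectra all of whose rows and columns are cofibre sequences. The row boundary maps $\partial_{N}\colon Y_{R}(Z\otimes_{R}N)\to Y_{R}(\Sigma N)$ are natural in $N$, so combining their naturality square for the column boundary maps with $(\partial_{E})_{*}=(\partial_{C})_{*}=0$ shows that $(\partial_{M})_{*}$ vanishes on $\ker\big(\pi_{*}Y_{R}(Z\otimes_{R}M)\to\pi_{*}Y_{R}(Z\otimes_{R}E)\big)$. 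A general homotopy class can then be corrected, modulo that kernel, by one pulled back along $Y_{R}(S\otimes_{R}M)\to Y_{R}(Z\otimes_{R}M)$ --- here one uses that the $E$-row is surjective and the $C$-row injective on homotopy sheaves, which are exactly $P(E)$ and $P(C)$ --- and $\partial_{M}$ annihilates the image of $Y_{R}(S\otimes_{R}M)$, being the next map in a cofibre sequence. This chase is the only genuinely fiddly part of the argument.

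To conclude, for arbitrary $M$ choose a $\pi_{*}$-even envelope $M \to E$ (\cref{proposition:pi_star_even_envelopes_exist}), so that $C \colonequals \cofib(M\to E)$ is even flat and $\pi_{*}E$ is even. Then $P(E)$ and $P(C)$ hold by the second paragraph, so $P(M)$ holds by the closure step applied to $M \to E \to C$. Unwinding $P(M)$ gives precisely the short exact sequence of abelian sheaves on $\Perf(R)_{ev}$ in each weight $q$.
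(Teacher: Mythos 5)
Your reduction of the statement to the vanishing of the boundary map $\evensheaf_{\cofib(f)\otimes_{R}M}(q)\to\evensheaf_{M}(q-\nicefrac{1}{2})$, and your treatment of the homologically even case, agree with the paper. The gap is in your ``main step'': the closure property ``$P(E)$ and $P(C)$ imply $P(M)$ for a cofibre sequence $M\to E\to C$'' is not a formal consequence of the $3\times 3$ diagram, and your chase breaks exactly at the correction step. The first half is fine ($\partial_{M}$ kills the kernel of $\pi_{*}Y_{R}(Z\otimes_{R}M)\to\pi_{*}Y_{R}(Z\otimes_{R}E)$, by anticommutation of the two boundaries and $P(C)$), but to correct a general class $x$ you lift its image $\bar{x}\in\pi_{k}Y_{R}(Z\otimes_{R}E)$ to some $\bar{y}\in\pi_{k}Y_{R}(S\otimes_{R}E)$ using $P(E)$, and then must descend $\bar{y}$, modulo the image of $\pi_{k}Y_{R}(E)$, to $\pi_{k}Y_{R}(S\otimes_{R}M)$. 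Its image in $\pi_{k}Y_{R}(S\otimes_{R}C)$ comes, by $P(C)$, from a class $w\in\pi_{k}Y_{R}(C)$, but to absorb it you need $w$ to lift along $\pi_{k}Y_{R}(E)\to\pi_{k}Y_{R}(C)$, and the obstruction is the column boundary $\pi_{k}Y_{R}(C)\to\pi_{k-1}Y_{R}(M)$ applied to $w$ --- a class of exactly the kind you are trying to prove vanishes. That no purely formal chase can close this gap is shown by a toy example with the same shape: take rows $N\to N\otimes S^{0}/2\to N\otimes\Sigma S^{0}$, whose boundary is multiplication by $2$, and the cofibre sequence $H\mathbb{Z}/4\to H\mathbb{Z}/2\to\Sigma H\mathbb{Z}/2$; the boundary vanishes on the homotopy of the last two terms but not on $\pi_{*}H\mathbb{Z}/4$. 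So a correct argument must use the standing hypotheses on $S$ and $\cofib(f)$ in this step, which your chase does not.

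The paper resolves $M$ from the other side instead. Choose a map $N\to M$ from a large direct sum of even (de)suspensions of $R$ so that $\evensheaf_{N}\to\evensheaf_{M}$ is an epimorphism (in every integer weight). Since $N$ is homologically even, the long exact sequence shows both that the cofibre $C$ is a suspension of a homologically even module and that $\evensheaf_{M}(-\nicefrac{1}{2})\to\evensheaf_{C}(-\nicefrac{1}{2})$ is a \emph{monomorphism}. Naturality of the boundary in $M$ then gives a commutative square whose bottom arrow $\evensheaf_{\cofib(f)\otimes_{R}C}\to\evensheaf_{C}(-\nicefrac{1}{2})$ vanishes by the homologically even case, and the monomorphism on the right forces the top arrow $\evensheaf_{\cofib(f)\otimes_{R}M}\to\evensheaf_{M}(-\nicefrac{1}{2})$ to vanish as well. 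The moral is that the direction of the resolution matters: covering $M$ by an $\evensheaf$-epimorphism makes the half-weight sheaf of $M$ inject into that of the cofibre, so vanishing transfers back, whereas mapping $M$ into a $\pi_{*}$-even envelope pushes the obstruction the wrong way. If you replace your closure step by this one-step comparison (keeping your first two paragraphs essentially verbatim), the proof goes through.
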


\begin{proof}
Assume first that $M$ is homologically even or a suspension of one. In this case, since $\mathrm{cofib}(f)$ and $S$ are homologically even as left $R$-modules, the latter by \cref{remark:for_faithfully_even_flat_map_of_e1_rings_the_target_is_left_homologically_even}, so are $S \otimes_{R} M$ and $\cofib(f) \otimes_{R} M$ by \cref{lemma:bimodules_preserving_homological_evenness}. It follows that the above sequence is short exact. 

In the case of general $M$, since $\evensheaf_{M}(q) \simeq \evensheaf_{\Sigma^{-2q} M}$, it is enough to do the case when $q = 0$. We want to prove that the boundary map 
\[
\evensheaf_{\cofib(f) \otimes_{R} M} \rightarrow \evensheaf_{M}(-\nicefrac{1}{2})
\]
is zero. Applying \cref{lemma:any_module_admits_a_mono_on_even_homology_into_a_homological_even} to the desuspension of $M$, there exists an $\evensheaf_{-}(-\nicefrac{1}{2})$-monomorphism $M \rightarrow \Sigma N$ into a suspension of a homologically even module. We then have a commutative diagram
\[
\begin{tikzcd}
	{\evensheaf_{\cofib(f) \otimes_{R} M}} & {\evensheaf_{M}(-\nicefrac{1}{2})} \\
	{\evensheaf_{\cofib(f) \otimes_{R} \Sigma N}}  & {{\evensheaf_{\Sigma N}(-\nicefrac{1}{2})}}
	\arrow[from=2-1, to=2-2]
	\arrow[from=1-1, to=1-2]
	\arrow[from=1-2, to=2-2]
	\arrow[from=1-1, to=2-1]
\end{tikzcd}
\]
The bottom horizontal arrow is zero by the first paragraph. Since the right vertical arrow is a monomorphism by construction, we deduce that the top horizontal arrow is also zero, as needed. 
\end{proof}

\begin{notation}
We will say that a map $M \rightarrow M'$ of $R$-modules is an $\evensheaf_{-}$-monomorphism if 
\[
\evensheaf_{-} \colon \Mod_{R}(\spectra) \rightarrow \sheaves_{\Sigma}(\Perf(R)_{ev}, \abeliangroups) 
\]
takes it to a monomorphism of sheaves. Analogously, we say it is an $\evensheaf_{-}$-epimorphism if it is taken to an epimorphism of sheaves. 
\end{notation}

\begin{corollary}
\label{corollary:even_faithfully_flat_maps_detect_and_preserve_homology_monos}
Let $f \colon R \rightarrow S$ be left faithfully even flat map of $\mathbf{E}_{1}$-rings. Then a map $M \rightarrow M'$ of $R$-modules is an $\evensheaf_{-}$-monomorphism if and only if the base-change $S \otimes_{R} M \rightarrow S \otimes_{R} M'$, considered as a map of $R$-modules, is an $\evensheaf_{-}$-monomorphism. 
\end{corollary}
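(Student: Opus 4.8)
The plan is to establish the two implications separately: the ``if'' direction is formal, while the ``only if'' direction carries the content.

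\emph{``If''.} Suppose $S\otimes_{R}M\to S\otimes_{R}M'$ is a $\evensheaf_{-}$-monomorphism. Applying \cref{lemma:for_even_faithfully_map_tensoring_gives_ses_of_homology} naturally in the module variable produces, for each weight $q$, a commutative square whose horizontal arrows are the canonical monomorphisms $\evensheaf_{M}(q)\hookrightarrow\evensheaf_{S\otimes_{R}M}(q)$ and $\evensheaf_{M'}(q)\hookrightarrow\evensheaf_{S\otimes_{R}M'}(q)$ and whose right vertical arrow is a monomorphism by hypothesis. Then the composite along the top-right of this square is a monomorphism, being a composite of monomorphisms; hence the left vertical arrow $\evensheaf_{\phi}(q)$ is a monomorphism, so $\phi$ is a $\evensheaf_{-}$-monomorphism.

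\emph{``Only if''.} I will deduce this from the following more general assertion: if $B$ is an $R$-bimodule which is even flat as a right $R$-module and homologically even as a left $R$-module, then $\evensheaf_{\phi}$ a monomorphism implies $\evensheaf_{B\otimes_{R}\phi}$ a monomorphism. The corollary is the case $B=S$, which is legitimate because being left faithfully even flat means precisely that both $S$ and $\cofib(f)$ satisfy these hypotheses (\cref{definiton:even_faithfully_flat_maps_of_rings}, together with the fact that even flat modules are homologically even, \cref{remark:even_flat_modules_are_homologically_even}); alternatively one may first run a snake lemma argument on the ladder of short exact sequences attached to $\phi$ by \cref{lemma:for_even_faithfully_map_tensoring_gives_ses_of_homology}, which reduces ``only if'' to the case $B=\cofib(f)$ of the same assertion. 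To prove the assertion, set $K=\fib(\phi)$; since $\evensheaf_{-}$ is homological and $B\otimes_{R}-$ is exact, $\evensheaf_{\phi}$ is a monomorphism precisely when $\evensheaf_{K}\to\evensheaf_{M}$ is the zero map, and $\evensheaf_{B\otimes_{R}\phi}$ is a monomorphism precisely when $\evensheaf_{B\otimes_{R}K}\to\evensheaf_{B\otimes_{R}M}$ vanishes. Evaluating on a perfect even left $R$-module $P$ and using the identification $\Map_{\Mod_{R}}(P,B\otimes_{R}N)\simeq(P^{\vee}\otimes_{R}B)\otimes_{R}N$, one is led to study the right $R$-module $Q_{P}\colonequals P^{\vee}\otimes_{R}B$; it is even flat, being a finite iterated extension, up to retracts, of even shifts of the even flat module $B$ (\cref{proposition:even_flat_modules_closed_under_extensions_retracts_and_filtered_colimits}), so write $Q_{P}\simeq\varinjlim_{\gamma}C_{\gamma}$ with the $C_{\gamma}$ perfect even right $R$-modules. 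Because $\evensheaf_{-}$ commutes with filtered colimits, every class in $\pi_{0}(Q_{P}\otimes_{R}K)$ comes from some $C_{\gamma}$; one then applies the hypothesis to the perfect even right module $C_{\gamma}$, in the concrete form that every map from a perfect even into $K$ becomes nullhomotopic into $M$ after an even covering, and uses filteredness of the diagram defining $Q_{P}$ together with the homological evenness of $Q_{P}$ (so that the obstructions to enlarging $C_{\gamma}$ inside the diagram are $\evensheaf$-locally trivial) to conclude that the class dies in $\pi_{0}(Q_{P}\otimes_{R}M)$ after an even covering of $P$, as desired.

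The hard part will be precisely this last step. Even flatness of $B$ \emph{as a right module} yields a filtered-colimit presentation of $B$, and hence of $Q_{P}=P^{\vee}\otimes_{R}B$, only compatibly with the right $R$-module structure, whereas $\evensheaf_{B\otimes_{R}\phi}$ is computed from the \emph{left} $R$-module structure of the output; propagating the hypothesis, which a priori controls only maps out of perfect even \emph{left} modules, through the tensor with $B$ is the delicate point, and it is exactly where one needs $B$ to be even flat rather than merely homologically even (compare \cref{warning:base_change_of_homological_even_along_truncation_need_not_be_homologically_even}). Granting the general assertion, the corollary follows, the ``only if'' direction being the case $B=S$ and the ``if'' direction the formal argument above.
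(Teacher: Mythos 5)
Your ``if''/detection half is correct and is exactly the paper's argument: the unit monomorphisms $\evensheaf_{M}(q)\hookrightarrow\evensheaf_{S\otimes_{R}M}(q)$ supplied by \cref{lemma:for_even_faithfully_map_tensoring_gives_ses_of_homology}, naturality in the module, and a diagram chase in the square. No issues there.

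The ``only if''/preservation half, however, has a genuine gap, and it is the one you yourself flag as ``the hard part.'' After reducing to showing that $\evensheaf_{B\otimes_{R}K}\to\evensheaf_{B\otimes_{R}M}$ vanishes and writing $Q_{P}=P^{\vee}\otimes_{R}B\simeq\varinjlim_{\gamma}C_{\gamma}$, the hypothesis hands you an even covering of the perfect even \emph{left} module $C_{\gamma}^{\vee}$ (equivalently, a modification of the filtered presentation of $Q_{P}$), whereas what you must produce is an even covering of $P$ killing the class in $\evensheaf_{B\otimes_{R}M}(P)$. Your proposal offers no mechanism for converting one into the other; the parenthetical appeal to ``homological evenness of $Q_{P}$'' and to obstructions being ``$\evensheaf$-locally trivial'' is not an argument, so as written the forward direction is not proved. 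The paper avoids this left/right transposition entirely: since $f$ is left faithfully even flat, $S$ is even flat as a right $R$-module and homologically even as a left $R$-module, so \cref{lemma:bimodules_preserving_homological_evenness} says $S\otimes_{R}-$ preserves homologically even modules; applying this to $\cofib(M\to M')$, which is homologically even when $M\to M'$ is an $\evensheaf_{-}$-monomorphism, immediately gives that $\cofib(S\otimes_{R}M\to S\otimes_{R}M')$ is homologically even, i.e.\ that $S\otimes_{R}M\to S\otimes_{R}M'$ is an $\evensheaf_{-}$-monomorphism. The left-versus-right bookkeeping you correctly identify as the delicate point is genuine, but it has already been done once and for all in \cref{theorem:characterization_of_homologically_even_modules} (the criterion via tensoring against $\pi_{*}$-even, even flat right $R$-modules), on which \cref{lemma:bimodules_preserving_homological_evenness} rests; you should route through those statements rather than redo the dualization argument by hand. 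In short: keep your detection argument, and replace your general assertion and its attempted proof by the cofibre characterization plus the bimodule lemma.
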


\begin{proof}
Assume first that $M \rightarrow M'$ is an $\evensheaf_{-}$-monomorpism. By \cref{lemma:any_module_admits_a_mono_on_even_homology_into_a_homological_even}, we can assume that $M'$ is homologically even, in which case so is the cofibre. By \cref{lemma:bimodules_preserving_homological_evenness}, 
\[
S \otimes_{R} \mathrm{cofib}(M \rightarrow M')
\]
is also homologically even and it follows that $\evensheaf_{S \otimes_{R} M} \rightarrow \evensheaf_{S \otimes_{R} M'}$ is a monomorphism, as needed. 

For the converse, consider the commutative diagram 
\[
\begin{tikzcd}
	{\evensheaf_{M}} & {\evensheaf_{M'}} \\
	{\evensheaf_{S \otimes_{R} M}}  & {\evensheaf_{S \otimes_{R} M'}}
	\arrow[from=2-1, to=2-2]
	\arrow[from=1-1, to=1-2]
	\arrow[from=1-2, to=2-2]
	\arrow[from=1-1, to=2-1]
\end{tikzcd}
\]
where both vertical arrows are monomorphisms by \cref{lemma:for_even_faithfully_map_tensoring_gives_ses_of_homology}. If the bottom horizontal arrow is a monomorphism, so must be the upper one, ending the argument. 
\end{proof}

\begin{recollection}
If $f \colon R \rightarrow S$ is a map of $\mathbf{E}_{1}$-rings, then the extension-restriction of scalars adjunction induces a monad on $R$-modules which we also denote by $S \otimes_{R} -$. It follows that every $R$-module $M$ determines an augmented cosimplicial diagram 
\[
S^{\otimes_{R} \bullet} \otimes_{R} M \colon \Delta_{s, +} \rightarrow \Mod_{R}
\]
of the form 
\[
\begin{tikzcd}
	{M} & {S \otimes_{R} M} & {S \otimes_{R} S \otimes_{R} M} & \ldots
	\arrow[yshift=4pt, from=1-3, to=1-4]
 	\arrow[yshift=0pt, from=1-3, to=1-4]
 	\arrow[yshift=-4pt, from=1-3, to=1-4]
	\arrow[yshift=3pt, from=1-2, to=1-3]	          \arrow[yshift=-3pt, from=1-2, to=1-3]
	\arrow[from=1-1, to=1-2]
\end{tikzcd}
\]
which we call the cobar resolution. In good cases, this is a limit diagram, giving a way to understand $M$ through its base-change. The associated spectral sequence
\[
\pi_{*}(S^{\otimes_{R} \bullet} \otimes_{R} M) \Rightarrow \pi_{*}(M)
\]
is called the Adams spectral sequence associated to $f$, or the descent spectral sequence. 
\end{recollection} 

\begin{theorem}[Faithfully flat descent]
\label{theorem:faithfully_flat_descent_for_modules}
Let $f \colon R \rightarrow S$ be a left faithfully even flat map of $\mathbf{E}_{1}$-rings and let $M$ be an $R$-module. Then the canonical map 
\[
\gr_{ev / R}^{*}(M) \rightarrow \gr_{ev / R}^{*}(S^{\otimes_{R} \bullet} \otimes_{R} M)
\]
induced by the cobar resolution is an equivalence of graded spectra. Thus, 
\[
\fil^{*}_{ev / R}(M) \rightarrow \varprojlim \fil^{*}_{ev / R}(S^{\otimes_{R} \bullet} \otimes_{R} M)
\]
induced by the cobar resolution is an equivalence of filtered spectra after completion. 
\end{theorem}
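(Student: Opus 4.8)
The plan is to reduce the statement to \cref{proposition:homological_resolutions_are_limits_on_even_filtrations_up_to_completion} by showing that the cobar resolution $S^{\otimes_R \bullet}\otimes_R M$, regarded as an augmented semicosimplicial object $\Delta_{s,+}\to\Mod_R$ whose $(-1)$-term is $M$ and whose $m$-term is $S^{\otimes_R m+1}\otimes_R M$, is a \emph{homological resolution} in the sense of \cref{definition:homlogical_resolution}. Granting this, \cref{proposition:homological_resolutions_are_limits_on_even_filtrations_up_to_completion} delivers at once the equivalence $\gr^*_{ev/R}(M)\simeq\varprojlim\gr^*_{ev/R}(S^{\otimes_R\bullet}\otimes_R M)$ of graded spectra and the equivalence of filtered spectra after completion, which are exactly the two assertions to be proven.

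So I must show that for every weight $q$ the augmented Moore complex
\[
0\to\evensheaf_M(q)\to\evensheaf_{S\otimes_R M}(q)\to\evensheaf_{S^{\otimes_R 2}\otimes_R M}(q)\to\cdots
\]
is exact in $\sheaves_\Sigma(\Perf(R)_{ev},\Ab)$; equivalently, by \cref{remark:moore_cochain_exact_if_a_limit_in_the_derived_infinity_category}, that $\evensheaf_M(q)\simeq\varprojlim_{[m]\in\Delta_s}\evensheaf_{S^{\otimes_R m+1}\otimes_R M}(q)$ in the derived $\infty$-category of even sheaves. Since $\evensheaf_M(q)\simeq\evensheaf_{\Sigma^{-2q}M}$ it is enough to treat $q=0$, and — mapping $M$ out of a perfect even whose cofibre is a suspension of a homologically even module, exactly as in the proof of \cref{lemma:for_even_faithfully_map_tensoring_gives_ses_of_homology} — one may further reduce to the case that $M$ is homologically even; then, since $f$ being left faithfully even flat makes $S$ an $R$-bimodule which is homologically even on the left and even flat on the right, \cref{lemma:bimodules_preserving_homological_evenness} shows inductively that each $S^{\otimes_R n}\otimes_R M$ is homologically even, so that the even-sheaf functor is simply $\pi_0$ of the sheaf-valued Yoneda throughout. (Alternatively one argues directly in the derived category without this reduction; or one reduces to descent over $S$ using \cref{theorem:even_filtration_commutes_with_restriction_of_scalars_for_homologically_even_maps}, noting that $f$ is a homologically even map because $S$, being even flat as a right $R$-module, is homologically even as such.)

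The heart of the argument is then the exactness of this Amitsur-type complex, which I would establish by the classical faithfully-flat-descent pattern. Applying the monad $T=S\otimes_R(-)$ to the cobar resolution yields a split augmented cosimplicial $R$-module $T\bigl(S^{\otimes_R\bullet}\otimes_R M\bigr)$, the extra codegeneracy being the monad multiplication of $T$; being split it is an absolute limit diagram, so applying $\evensheaf_{(-)}$ shows that the base-changed complex $0\to\evensheaf_{S\otimes_R M}(q)\to\evensheaf_{S^{\otimes_R 2}\otimes_R M}(q)\to\cdots$ is exact. On the other hand, the short exact sequence of \cref{lemma:for_even_faithfully_map_tensoring_gives_ses_of_homology}, $0\to\evensheaf_X(q)\to\evensheaf_{S\otimes_R X}(q)\to\evensheaf_{\cofib(f)\otimes_R X}(q)\to 0$, natural in the $R$-module $X$, encodes exactness of base change at the level of even sheaves, while \cref{corollary:even_faithfully_flat_maps_detect_and_preserve_homology_monos} supplies the corresponding faithfulness. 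Feeding these short exact sequences (with $X=S^{\otimes_R n}\otimes_R M$) into the split complex above and organizing the resulting $\cofib(f)$-terms — for instance by passing to the normalized cobar complex, whose $n$-th term involves $\cofib(f)^{\otimes_R n}$, or by an exact-couple induction on the short exact sequences — one descends the exactness from the base change to the complex for $M$ itself.

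The main obstacle is precisely this descent step. Since $S\otimes_R(-)$ is a functor on $\Mod_R$ and not directly on even sheaves, one cannot literally "check exactness after $\otimes_R S$"; one has to exploit the naturality of the short exact sequences of \cref{lemma:for_even_faithfully_map_tensoring_gives_ses_of_homology}, keeping everything inside the class of homologically even modules so that $\evensheaf_{(-)}$ remains well behaved, and the terms $\cofib(f)\otimes_R(S^{\otimes_R\bullet}\otimes_R M)$ that appear there do not become split after a single base change, so that the bookkeeping must be iterated. Once this acyclicity is in hand the rest is formal, given \cref{proposition:homological_resolutions_are_limits_on_even_filtrations_up_to_completion}.
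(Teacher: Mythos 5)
Your overall route is the paper's: reduce via \cref{proposition:homological_resolutions_are_limits_on_even_filtrations_up_to_completion} to showing that the cobar resolution is a homological resolution, reduce to weight $q=0$ by suspending, observe that the tensored-up cobar complex is split and hence an absolute limit, so that its Moore complex of even sheaves is exact, and then descend exactness using \cref{lemma:for_even_faithfully_map_tensoring_gives_ses_of_homology} and \cref{corollary:even_faithfully_flat_maps_detect_and_preserve_homology_monos}. However, the step you yourself flag as ``the main obstacle'' and leave to unspecified bookkeeping (a normalized cobar complex with $\cofib(f)^{\otimes_{R} n}$ terms, or an exact-couple induction) is exactly where the proof has to happen, and as written your proposal does not contain it. The resolution is more direct than what you envisage: write $C_{-1}=M$ and $C_{m}=S^{\otimes_{R}(m+1)}\otimes_{R}M$, and induct on Moore degree. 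Exactness of the split tensored complex says that $\evensheaf_{S\otimes_{R}C_{-1}}\to\evensheaf_{S\otimes_{R}C_{0}}$ is a monomorphism, so \cref{corollary:even_faithfully_flat_maps_detect_and_preserve_homology_monos} makes $\evensheaf_{C_{-1}}\to\evensheaf_{C_{0}}$ a monomorphism; since this holds in every weight, the long exact sequence identifies $\evensheaf_{\cofib(C_{-1}\to C_{0})}$ with the cokernel, so exactness at the next spot is equivalent to $\evensheaf_{\cofib(C_{-1}\to C_{0})}\to\evensheaf_{C_{1}}$ being a monomorphism. Because $S\otimes_{R}-$ commutes with cofibre sequences of modules, this is again checked after base change, where it follows from exactness of the split complex, and one continues inductively. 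No iterated $\cofib(f)$ bookkeeping is needed; your worry that one ``cannot literally check exactness after $\otimes_{R}S$'' is answered precisely by working with cofibres of module maps rather than cokernels of sheaf maps and invoking the detection corollary at each stage.

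Separately, your preliminary reduction to homologically even $M$ is both unnecessary and not justified as stated: the trick in the proof of \cref{lemma:for_even_faithfully_map_tensoring_gives_ses_of_homology} kills a single boundary map, and it does not obviously reduce exactness of the whole Moore complex to the homologically even case. Nothing forces this reduction: \cref{lemma:for_even_faithfully_map_tensoring_gives_ses_of_homology} and \cref{corollary:even_faithfully_flat_maps_detect_and_preserve_homology_monos} hold for arbitrary modules, so the induction above applies to an arbitrary $R$-module $M$ directly (your appeal to \cref{lemma:bimodules_preserving_homological_evenness} is then not needed either). In short, keep your splitting argument and the two descent lemmas, drop the reduction, and replace the ``organizing'' step by the cofibre/monomorphism induction; that is the paper's proof.
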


\begin{proof}
By \cref{proposition:homological_resolutions_are_limits_on_even_filtrations_up_to_completion}, it is enough to verify that $S^{\otimes_{R} \bullet} \otimes_{R} M \colon \Delta_{s, +} \rightarrow \Mod_{R}$ is a homological resolution; that is, that the Moore cochain complex 
\begin{equation}
\label{equation:moore_chain_complex_in_proof_of_fef_descent}
0 \rightarrow \evensheaf_{M}(q) \rightarrow \evensheaf_{S \otimes_{R} M}(q) \rightarrow \evensheaf_{S \otimes_{R} S \otimes_{R} M}(q) \rightarrow \ldots
\end{equation}
is exact for any $q$. By replacing $M$ by a suitable (de)suspension we can assume that $q = 0$. Exactness in degree zero is equivalent to the map $\evensheaf_{M} \rightarrow \evensheaf_{S \otimes_{R} M}$ being a monomorphism, which follows from \cref{lemma:for_even_faithfully_map_tensoring_gives_ses_of_homology}. 

We will now show that (\ref{equation:moore_chain_complex_in_proof_of_fef_descent}) is also exact in higher degrees. To avoid multiple tensor products in notation, it will be convenient to write $C^{\bullet}(M) \colonequals S^{\otimes_{R} \bullet} \otimes_{R} M$ for the cobar resolution of $M$. Considering cobar resolutions of $S \otimes_{R} M$ and $\mathrm{cofib}(f) \otimes_{R} M$, another application of \cref{lemma:for_even_faithfully_map_tensoring_gives_ses_of_homology} shows that we have a short exact sequence of chain complexes 
\[
\begin{tikzcd}
	0 & {\evensheaf_{C^{-1}(M)}} & {\evensheaf_{C^{0}(M)}} & {\evensheaf_{C^{1}(M)}} & \ldots \\
	0 & {\evensheaf_{C^{-1}(S \otimes_{R} M)}} & {\evensheaf_{C^{0}(S \otimes_{R} M)}} & {\evensheaf_{C^{1}(S \otimes_{R} M)}} & \ldots \\
	0 & {\evensheaf_{C^{-1}(\mathrm{cofib}(f) \otimes_{R} M)}} & {\evensheaf_{C^{0}(\mathrm{cofib}(f) \otimes_{R} M)}} & {\evensheaf_{C^{1}(\mathrm{cofib}(f) \otimes_{R} M)}} & \ldots
	\arrow[from=1-2, to=1-3]
	\arrow[from=2-2, to=2-3]
	\arrow[from=1-3, to=1-4]
	\arrow[from=1-4, to=1-5]
	\arrow[from=2-4, to=2-5]
	\arrow[from=3-4, to=3-5]
	\arrow[from=3-3, to=3-4]
	\arrow[from=2-3, to=2-4]
	\arrow[from=3-2, to=3-3]
	\arrow[from=1-1, to=1-2]
	\arrow[from=2-1, to=2-2]
	\arrow[from=3-1, to=3-2]
	\arrow[from=2-3, to=3-3]
	\arrow[from=2-2, to=3-2]
	\arrow[from=1-2, to=2-2]
	\arrow[from=1-3, to=2-3]
	\arrow[from=1-4, to=2-4]
	\arrow[from=2-4, to=3-4]
\end{tikzcd}.
\]
Since $S \otimes_{R} M$ is a coalgebra over the comonad $S \otimes_{R} -$, by  \cite[4.7.2.7]{higher_algebra} the cobar resolution $C^{\bullet}(S \otimes_{R} M)$ is split as an augmented cosimplicial object. The splitting provides a contraction of the corresponding Moore cochain complex, from which we deduce that the chain complex in the middle is exact. This shows that for each $p \geq 0$ we have isomorphisms between cohomology sheaves
\[
\Hrm^{p+1}(\evensheaf_{C^{\ast}(M)}) \simeq \Hrm^{p}(\evensheaf_{C^{\ast}(\mathrm{cofib}(f) \otimes_{R} M)}).
\]
Since the first zeroth cohomology sheaf vanishes by the first paragraph, we deduce by induction that they all do, ending the argument. 
\end{proof}

Note that in \cref{theorem:faithfully_flat_descent_for_modules}, all of the even filtrations considered are relative to $R$. This is necessary, since for a general map of $\mathbf{E}_{1}$-rings, the tensor products $S \otimes_{R} \ldots \otimes_{R} S$ do not have a natural ring structure if they involve more than one factor. Thus, if we want to consider a variant of faithfully flat descent where the ring varies, we need to assume more structure on our map.

\begin{theorem}
\label{theorem:fef_descent_for_algebras_over_e2_rings}
Let $R$ be an $\mathbf{E}_{2}$-ring and let $S$ be an $\mathbf{E}_{1}$-$R$-algebra whose unit map is faithfully even flat as map of $\mathbf{E}_{1}$-rings. Then for any $R$-module $M$ the canonical map 
\[
\fil^{*}_{ev / R}(M) \rightarrow \varprojlim\fil^{*}_{ev / S^{\otimes_{R} \bullet}}(S^{\otimes_{R} \bullet} \otimes_{R} M)
\]
is an equivalence of filtered spectra after completion. In particular, this is true for 
\[
\fil^{*}_{ev / R}(R) \rightarrow \varprojlim \fil^{*}_{ev / S^{\otimes_{R} \bullet}}(S^{\otimes_{R} \bullet})
\]
\end{theorem}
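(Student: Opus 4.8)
The plan is to reduce the statement to the already-established relative descent result, namely \cref{theorem:faithfully_flat_descent_for_modules}, by showing that the two cosimplicial towers in question have the same limit after completion. First I would recall that since $R$ is an $\mathbf{E}_2$-ring and $S$ is an $\mathbf{E}_1$-$R$-algebra, each term $S^{\otimes_R n}$ of the cobar construction is genuinely an $\mathbf{E}_1$-ring, so that $\fil^*_{ev/S^{\otimes_R n}}(S^{\otimes_R n}\otimes_R M)$ makes sense: this is exactly the content of the \textbf{Warning} preceding \S\ref{subsection:evenness_and_extension_restriction_of_scalars} about why the $\mathbf{E}_2$-hypothesis is needed. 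Then the key observation is that for each fixed $n$, the unit map $R \to S^{\otimes_R n}$ (the $n$-fold coboundary) is a \emph{homologically even} map of $\mathbf{E}_1$-rings. Granting this, \cref{theorem:even_filtration_commutes_with_restriction_of_scalars_for_homologically_even_maps} applies levelwise and gives a natural equivalence
\[
\fil^*_{ev/R}(S^{\otimes_R n}\otimes_R M) \xrightarrow{\ \sim\ } \fil^*_{ev/S^{\otimes_R n}}(S^{\otimes_R n}\otimes_R M)
\]
of filtered spectra, compatibly as $n$ varies over $\Delta_{s,+}$ (one must check this compatibility, but it follows from the naturality of the comparison map of \cref{construction:comparison_map_between_even_filtrations} together with the fact that all the relevant maps are homologically even). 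Passing to the limit over $[n] \in \Delta_s$ and combining with \cref{theorem:faithfully_flat_descent_for_modules}, which says $\fil^*_{ev/R}(M) \to \varprojlim \fil^*_{ev/R}(S^{\otimes_R\bullet}\otimes_R M)$ is an equivalence after completion, yields the desired equivalence after completion. The final displayed assertion is the special case $M = R$, where $S^{\otimes_R\bullet}\otimes_R R \simeq S^{\otimes_R\bullet}$.

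So the crux is verifying that each coboundary $R \to S^{\otimes_R n}$ is homologically even, i.e.\ that $S^{\otimes_R n}$ is homologically even both as a left and as a right $R$-module. For this I would proceed by induction on $n$ using that $f\colon R \to S$ is faithfully even flat: by \cref{definiton:even_faithfully_flat_maps_of_rings}, $S$ is even flat as a right $R$-module and homologically even as a left $R$-module, and the same holds for $\cofib(f)$. The cofibre sequence $R \to S \to \cofib(f)$ tensored appropriately, together with \cref{lemma:bimodules_preserving_homological_evenness} (a bimodule that is homologically even on the left and even flat on the right preserves homological evenness under tensoring), lets one bootstrap: $S^{\otimes_R(n+1)} \simeq S^{\otimes_R n}\otimes_R S$, and one uses that $S$ (or $\cofib(f)$) is even flat as a right module while $S^{\otimes_R n}$ is already known to be homologically even as a left module, and symmetrically for the right-module structure. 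One also needs $S^{\otimes_R n}$ to be even flat as a right (resp.\ left) $R$-module to continue the induction, which follows because even flat modules are closed under the relevant colimits and $S$ is even flat on the appropriate side (using \cref{lemma:extension_of_scalars_preserves_even_flat_modules_and_perfect_evens} and \cref{lemma:even_flat_maps_preserve_homologically_even_modules}).

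I expect the main obstacle to be \emph{bookkeeping the handedness} throughout this induction: the even filtration over an $\mathbf{E}_1$-ring is sensitive to left- versus right-module structure (as emphasized in the discussion preceding \cref{definition:even_flat_and_homologically_even_map_of_e1_rings}), and in an iterated tensor product $S\otimes_R\cdots\otimes_R S$ the outer factors carry the module structures over the outermost copies of $R$ while the inner tensor operations see the opposite structures. Keeping track of exactly which lemma (\cref{lemma:forgetful_functor_preserves_along_even_flat_or_homologically_flat_map_of_rings_preserves_these_modules}, \cref{lemma:even_flat_maps_preserve_homologically_even_modules}, \cref{lemma:bimodules_preserving_homological_evenness}) applies at each inductive step, and verifying that the equivalences $\fil^*_{ev/R}(S^{\otimes_R n}\otimes_R M) \simeq \fil^*_{ev/S^{\otimes_R n}}(S^{\otimes_R n}\otimes_R M)$ assemble into a map of cosimplicial objects rather than merely a levelwise equivalence, is where the care is needed; the rest is a formal consequence of the earlier results, in particular \cref{theorem:faithfully_flat_descent_for_modules} and \cref{theorem:even_filtration_commutes_with_restriction_of_scalars_for_homologically_even_maps}.
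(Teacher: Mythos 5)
Your proposal is correct and follows essentially the same route as the paper: reduce, via the levelwise comparison of \cref{theorem:even_filtration_commutes_with_restriction_of_scalars_for_homologically_even_maps}, to the relative descent statement \cref{theorem:faithfully_flat_descent_for_modules}, the crux being that each unit map $R \rightarrow S^{\otimes_{R} m}$ is (left) homologically even. The only divergence is at that crux: the paper dispatches it in one line by noting that the $\mathbf{E}_{1}$-$R$-algebra structure identifies the left and right $R$-module structures on $S$, so that $S^{\otimes_{R} m}$ is a tensor product of even flat $R$-modules and hence even flat, in particular homologically even (\cref{remark:even_flat_modules_are_homologically_even}), whereas your handedness-tracking induction via \cref{lemma:bimodules_preserving_homological_evenness} is a valid but more laborious path to the same conclusion.
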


\begin{proof}
Note that since $S$ is an $\mathbf{E}_{1}$-$R$-algebra, the left and right $R$-module structures can be identified. It follows that it is even flat as both a left and right $R$-module. 

Keeping in mind  \cref{theorem:faithfully_flat_descent_for_modules}, we just have to show that for any $m \geq 0$ the canonical comparison map 
\[
\fil_{ev / R}^{*}(S^{\otimes_{R} m} \otimes_{R} M) \rightarrow \fil_{ev / S^{\otimes_{R} m}}^{*}(S^{\otimes_{R} m} \otimes_{R} M)
\]
is an equivalence. By \cref{theorem:even_filtration_commutes_with_restriction_of_scalars_for_homologically_even_maps}, we just have to show that $S^{\otimes_{R} m}$ is homologically even for all $m \geq 1$. This is clear since all factors are even flat and even flat modules are stable under tensor products as a consequence of \cref{proposition:even_flat_modules_closed_under_extensions_retracts_and_filtered_colimits} and the fact that tensor products commute with colimits. 
\end{proof}

\begin{example}
\label{example:even_filtration_of_thh_bpn}
Let $\BPn$ be the truncated Brown-Peterson spectrum with
\[
\pi_{*}(\BPn) \simeq \mathbb{Z}_{(p)}[v_{1}, \ldots, v_{n}],
\]
which can be made into an $\mathbf{E}_{3}$-$\MU$-algebra by the work of Hahn-Wilson \cite[Theorem A]{hahn2022redshift}. By the main results of \cite{hahn2022redshift}, the algebraic $K$-theory spectrum $\mathrm{K}(\BPn)$ is of height $n+1$ and satisfies an analogue of Lichtenbaum-Quillen conjectures. A key step in the proof of this remarkable theorem is the analysis of $\THH(\BPn)$ via descent along the map 
\[
\THH(\BPn) \rightarrow \THH(\BPn/\MU)
\]
into the relative topological Hochschild homology. As observed in \cite[Example 4.2.3, 4.2.4]{hahn2022motivic}, the resulting descent filtration can be identified with the $\mathbf{E}_{\infty}$-even filtration
\[
\fil^{*}_{\mathbf{E}_{\infty} \mhyphen ev / \THH(\MU)}(\THH(\BPn))
\]
relative to $\THH(\MU)$. We claim that this filtration is in fact the even filtration
\[
\fil^{*}_{ev}(\THH(\BPn))
\]
and so is an invariant of the ring spectrum $\THH(\BPn)$ itself, providing evidence that our filtration is the right way to obtain a good theory of motivic cohomology of $\mathbf{E}_{2}$-ring spectra. To see this, we need the following two facts observed in \cite[Example 4.2.4]{hahn2022motivic}:
\begin{enumerate}
    \item $\THH(\MU) \rightarrow \MU$ is faithfully even flat (by  \cref{proposition:fef_maps_are_hrw_eff}) and hence so is its base-change $\THH(\BPn) \rightarrow \THH(\BPn/\MU)$ 
    \item $\pi_{*} \THH(\BPn/\MU)$ is even. 
\end{enumerate}
The result then follows from \cref{theorem:fef_descent_for_algebras_over_e2_rings}. 
\end{example}

\section{Comparison with the \texorpdfstring{$\mathbf{E}_{\infty}$-even}{E-infinity-even} filtration} 
\label{section:comparison_with_hrw_filtration}

In this section, we compare the even filtration studied in the current work with the even filtration of $\mathbf{E}_{\infty}$-rings introduced by Hahn-Raksit-Wilson \cite{hahn2022motivic}. To avoid confusion, we refer to the latter as the $\mathbf{E}_{\infty}$-even filtration. 

We first recall the definition of the $\mathbf{E}_{\infty}$-even filtration; for details, see \cite[\S 2]{hahn2022motivic}. 

\begin{notation}
We write $\Mod_{\mathbf{E_{\infty}}}(\spectra)$ for the pullback of $\infty$-categories
\[
\Mod_{\mathbf{E_{\infty}}}(\spectra) \colonequals \Mod(\spectra) \times_{\Alg_{\mathbf{E}_{1}}(\spectra)} \Alg_{\mathbf{E}_{\infty}}(\spectra),
\]
where $\Mod(\spectra)$ is as in \cref{notation:infty_category_of_pairs_of_algebra_and_a_module}. Concretely, $\Mod_{\mathbf{E_{\infty}}}(\spectra)$ is the $\infty$-category of pairs $(A, M)$, where $A \in \Alg_{\mathbf{E}_{\infty}}(\spectra)$ and $M \in \Mod_{A}(\spectra)$. We write 
\[
\Mod^{ev}_{\mathbf{E_{\infty}}}(\spectra) \subseteq \Mod_{\mathbf{E_{\infty}}}(\spectra)
\]
for the full subcategory spanned by those pairs such that $\pi_{*}A$ is even. 
\end{notation}

\begin{definition}
\label{definition:e_infty_even_filtration}
The \emph{$\mathbf{E}_{\infty}$-even filtration} 
\[
\fil^{\ast}_{\mathbf{E}_{\infty} \mhyphen ev / -}(-) \colon \Mod_{\mathbf{E_{\infty}}}(\spectra) \rightarrow \Fil\spectra
\]
is the right Kan extension of the functor
\[
U_{\ast} \colon \Mod^{ev}_{\mathbf{E_{\infty}}}(\spectra) \rightarrow \Fil\spectra
\]
given by 
\[
U_{q}(A, M) \colonequals \tau_{\geq 2q} M. 
\]
along the inclusion $\Mod_{\mathbf{E}_{\infty}}^{ev}(\spectra) \hookrightarrow \Mod_{\mathbf{E}_{\infty}}(\spectra)$. 
\end{definition}

\begin{remark}
Concretely, if $(A, M) \in \Mod_{\mathbf{E_{\infty}}}(\spectra)$, then the $\mathbf{E}_{\infty}$-even filtration is given by the limit 
\[
\fil^{q}_{\mathbf{E}_{\infty} \mhyphen ev / A}(M) \simeq \varprojlim \tau_{\geq 2q} (B \otimes_{A} M),
\]
taken over all $\mathbf{E}_{\infty}$-ring maps $A \rightarrow B$ with $\pi_{*}B$ even, see \cite[{Remark 2.1.3}]{hahn2022motivic}.
\end{remark}

\begin{construction}
\label{construction:canonical_comparison_map_between_even_and_einfty_even_filtrations}
In \cref{theorem:even_filtration_is_lax_symmetric_monoidal_as_a_functor_on_pairs}, we refined the even filtration of \cref{definition:even_filtration_of_an_r_module} to a functor on the $\infty$-category $\Mod$. Considering the composite 
\[
\begin{tikzcd}
	\Mod_{\mathbf{E}_{\infty}}(\spectra) & \Mod(\spectra) & \Mod(\Fil \spectra) & {\Fil \spectra}
	\arrow[from=1-1, to=1-2]
	\arrow[from=1-2, to=1-3]
	\arrow[from=1-3, to=1-4]
\end{tikzcd},
\]
where the first arrow is induced by the forgetful functor $\Alg_{E_{\infty}}(\spectra) \rightarrow \Alg(\spectra)$ and the third one is the forgetful functor $(A^{\ast}, M^{\ast}) \mapsto M^{\ast}$, the even filtration defines a functor $\Mod_{\mathbf{E}_{\infty}}(\spectra) \rightarrow \Fil \spectra$. We will describe a canonical natural transformation
\begin{equation}
\label{equation:natural_transf_from_even_fil_to_einfty_even_fil}
\fil^{*}_{ev / -}(-) \rightarrow \fil^{*}_{\mathbf{E}_{\infty} \mhyphen ev / -}(-)
\end{equation}
into the $\mathbf{E}_{\infty}$-even filtration. 

Since the right hand side of (\ref{equation:natural_transf_from_even_fil_to_einfty_even_fil}) is defined as a right Kan extension, to construct the needed natural transformation it is enough to define it on the subcategory $\Mod^{ev}_{\mathbf{E}_{\infty}}(\spectra)$. We claim that on this subcategory, the two filtrations are canonically equivalent, providing the needed natural transformation. Indeed, if $\pi_{*}A$ is even, then both filtrations are given by
\[
(A, M) \mapsto \tau_{\geq 2*} M,
\]
by definition in the case of the $\mathbf{E}_{\infty}$-even filtration and by \cref{proposition:even_filtration_over_r_with_even_htpy_is_the_postnikov_filtration} in the case of the even filtration of \cref{definition:even_filtration_of_an_r_module}. 
\end{construction}

\begin{theorem}
\label{theorem:comparison_between_ev_and_einfty_ev_filtrations}
Let $R$ be an $\mathbf{E}_{\infty}$-ring which admits a faithfully even flat map $R \rightarrow S$ in the sense of \cref{definiton:even_faithfully_flat_maps_of_rings} into a $\pi_{*}$-even $\mathbf{E}_{\infty}$-ring $S$. Then for any $R$-module $M$ the comparison map of filtered spectra
\[
\fil^{*}_{ev / R}(M) \rightarrow \fil^{*}_{\mathbf{E}_{\infty} \mhyphen ev / R}(M)
\]
of \cref{construction:canonical_comparison_map_between_even_and_einfty_even_filtrations} exhibits the target as a completion of the source. In particular, it is an equivalence after completion. 
\end{theorem}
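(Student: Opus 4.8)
The plan is to combine faithfully flat descent for the even filtration of \cref{definition:even_filtration_of_an_r_module} (\cref{theorem:fef_descent_for_algebras_over_e2_rings}) with the flat descent property of the $\mathbf{E}_{\infty}$-even filtration established by Hahn--Raksit--Wilson \cite{hahn2022motivic}, using that over a $\pi_{*}$-even ring both filtrations collapse to the Whitehead tower $\tau_{\geq 2*}$.

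First I would check that the hypotheses propagate along the cobar resolution. Since $R$ is $\mathbf{E}_{\infty}$ and $R \rightarrow S$ is faithfully even flat, $S$ is even flat as an $R$-module by \cref{definiton:even_faithfully_flat_maps_of_rings}; together with the assumption that $\pi_{*}S$ is even, \cref{proposition:tensor_characterization_of_even_flat_modules} shows that $\pi_{*}(S^{\otimes_{R} m})$ is even for every $m \geq 0$, and $S^{\otimes_{R}\bullet}$ is a cosimplicial $\mathbf{E}_{\infty}$-ring. By \cref{proposition:even_filtration_over_r_with_even_htpy_is_the_postnikov_filtration} and the definition of the $\mathbf{E}_{\infty}$-even filtration (\cref{definition:e_infty_even_filtration}), over each $S^{\otimes_{R} m}$ both filtrations of any module agree with $\tau_{\geq 2*}$; and because the comparison natural transformation of \cref{construction:canonical_comparison_map_between_even_and_einfty_even_filtrations} is by construction the identity on pairs with even homotopy, this identification is compatible with the comparison map. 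Hence the comparison map is an equivalence at each level of the cosimplicial object.

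Next I would invoke the two descent statements. On the one hand, \cref{theorem:fef_descent_for_algebras_over_e2_rings} gives that
\[
\fil^{*}_{ev / R}(M) \longrightarrow \varprojlim \fil^{*}_{ev / S^{\otimes_{R}\bullet}}(S^{\otimes_{R}\bullet} \otimes_{R} M)
\]
is an equivalence after completion, and the target is a complete filtered spectrum, being a limit of Whitehead filtrations (each of which is complete). On the other hand, by \cref{proposition:fef_maps_are_hrw_eff} the map $R \rightarrow S$ is evenly faithfully flat in the sense of Hahn--Raksit--Wilson, so by the flat descent property of the $\mathbf{E}_{\infty}$-even filtration \cite{hahn2022motivic} the map
\[
\fil^{*}_{\mathbf{E}_{\infty} \mhyphen ev / R}(M) \longrightarrow \varprojlim \fil^{*}_{\mathbf{E}_{\infty} \mhyphen ev / S^{\otimes_{R}\bullet}}(S^{\otimes_{R}\bullet} \otimes_{R} M)
\]
is an equivalence (after completion). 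By the previous paragraph the two cosimplicial filtered spectra appearing as targets are identified compatibly with the comparison natural transformation, so their limits agree. Chasing the resulting commuting square of filtered spectra, the composite
\[
\fil^{*}_{ev / R}(M) \rightarrow \fil^{*}_{\mathbf{E}_{\infty} \mhyphen ev / R}(M) \rightarrow \varprojlim \fil^{*}_{\mathbf{E}_{\infty} \mhyphen ev / S^{\otimes_{R}\bullet}}(S^{\otimes_{R}\bullet} \otimes_{R} M)
\]
is identified with the completion map of \cref{theorem:fef_descent_for_algebras_over_e2_rings}; since the second arrow is an equivalence onto a complete object, the first arrow exhibits its target as a completion of its source. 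Equivalently, one records that the comparison map induces an equivalence on associated graded spectra (both sides compute the same $\gr^{*}$ from the cobar resolution) and that the target is complete.

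The main obstacle I expect is the bookkeeping around completions and naturality: one must ensure the Hahn--Raksit--Wilson descent statement and \cref{theorem:fef_descent_for_algebras_over_e2_rings} are applied with compatible conventions, and that the comparison transformation of \cref{construction:canonical_comparison_map_between_even_and_einfty_even_filtrations} is genuinely natural in the ring variable, so that it intertwines the two cobar resolutions levelwise. A secondary point to pin down is that the $\mathbf{E}_{\infty}$-even filtration is already complete under these hypotheses (or else that a completion of a completion is a completion), so that the conclusion is correctly phrased as ``the target is a completion of the source'' rather than merely ``the two agree after completion.''
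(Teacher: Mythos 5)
Your proposal is correct and follows essentially the same route as the paper: identify both filtrations with the Whitehead tower over the $\pi_{*}$-even terms of the cobar resolution, then combine \cref{theorem:fef_descent_for_algebras_over_e2_rings} with \cref{proposition:fef_maps_are_hrw_eff} and the Hahn--Raksit--Wilson flat descent to reduce to the even-homotopy case handled in \cref{construction:canonical_comparison_map_between_even_and_einfty_even_filtrations}. The only cosmetic difference is that the paper sidesteps your completeness bookkeeping by citing completeness of the $\mathbf{E}_{\infty}$-even filtration and checking the comparison on associated graded objects, where the descent statements are honest equivalences rather than equivalences after completion.
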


\begin{proof}
The $\mathbf{E}_{\infty}$-even filtration is always complete \cite[Remark 2.1.6]{hahn2022motivic}, so it is enough to show that the canonical map is an equivalence after passing to the associated graded objects. As both filtrations satisfy faithfully even flat descent, the even filtration by \cref{theorem:fef_descent_for_algebras_over_e2_rings} and the $\mathbf{E}_{\infty}$-even filtration by a combination of \cref{proposition:fef_maps_are_hrw_eff} and \cite[Corollary 2.2.14]{hahn2022motivic}, the comparison map can be identified with 
\[
\varprojlim \gr^{*}_{ev / S^{\otimes_{R} [m]}}(S^{\otimes_{R} [m]} \otimes_{R} M) \rightarrow \varprojlim \gr^{*}_{\mathbf{E}_{\infty} \mhyphen ev / S^{\otimes_{R} [m]}}(S^{\otimes_{R} [m]} \otimes_{R} M),
\]
where the limit is taken over $[m] \in \Delta$. 

Each of the tensor products $S^{\otimes_{R} [m]}$ has homotopy groups concentrated in even degrees by \cref{proposition:tensor_characterization_of_even_flat_modules} as it is a tensor product of even flat, $\pi_{\ast}$-even $R$-modules. In the case of $\pi_{\ast}$-even rings, the two filtration agree as observed in \cref{construction:canonical_comparison_map_between_even_and_einfty_even_filtrations}, ending the argument. 
\end{proof}

In \cite[\S 5]{hahn2022motivic}, Hahn-Raksit-Wilson show that the $\mathbf{E}_{\infty}$-even filtration of various rings recovers various classically studied and important filtrations, implying that they are in fact a functorial invariant of the $\mathbf{E}_{\infty}$-ring itself. In particular, they show that this is true for: 
\begin{enumerate}
    \item the sphere, for which the $\mathbf{E}_{\infty}$-even filtration is the Adams-Novikov filtration,
    \item $\mathrm{HH}(R/k)$, where $k \rightarrow R$ is a quasi-lci map of commutative rings, where one recovers the Hochschild-Kostant-Rosenberg filtration, 
    \item $\THH(R)^{\wedge}_{p}$, where $R$ is a $p$-quasisyntomic, $p$-complete commutative ring, where one recovers the Bhatt-Morrow-Scholze filtration of \cite{bhatt2019topological}, 
    \item $\THH(R)$ for $R$ a quasisyntomic ring, where one recovers the Bhatt-Lurie filtration of \cite{bhatt2022absolute}
\end{enumerate}

\begin{corollary}
\label{corollary:in_various_example_hrw_filtration_is_ev_filtration}
In each of the above four examples, the respective filtration coincides with the even filtration of \cref{definition:even_filtration_of_an_r_module}. 
\end{corollary}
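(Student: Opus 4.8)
The plan is to deduce all four identifications from the comparison theorem \cref{theorem:comparison_between_ev_and_einfty_ev_filtrations}, combined with the computations of the $\mathbf{E}_{\infty}$-even filtration carried out by Hahn--Raksit--Wilson. For each of the four rings
\[
T \in \{\, S^{0},\ \mathrm{HH}(R/k),\ \THH(R)^{\wedge}_{p},\ \THH(R)\,\},
\]
Hahn--Raksit--Wilson exhibit an $\mathbf{E}_{\infty}$-ring map $T \to T'$ into a $\pi_{*}$-even $\mathbf{E}_{\infty}$-ring $T'$ which is evenly faithfully flat in their sense and along which $\fil^{*}_{\mathbf{E}_{\infty}\mhyphen ev}$ satisfies descent; these are exactly the covers they use to perform the identifications with the Adams--Novikov, Hochschild--Kostant--Rosenberg, Bhatt--Morrow--Scholze and Bhatt--Lurie filtrations, respectively. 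Since each of these four rings is connective --- $S^{0}$ trivially, and $\mathrm{HH}(R/k)$, $\THH(R)$, $\THH(R)^{\wedge}_{p}$ because $R$ is an ordinary commutative ring --- the converse direction of \cref{proposition:fef_maps_are_hrw_eff} promotes each of these covers to a faithfully even flat map in the sense of \cref{definiton:even_faithfully_flat_maps_of_rings}, so that the hypotheses of \cref{theorem:comparison_between_ev_and_einfty_ev_filtrations} are met in every case.

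Concretely, I would spell out the covers as follows. For the sphere one takes $S^{0} \to \MU$, which is $\pi_{*}$-even and, being evenly faithfully flat, faithfully even flat by the previous paragraph. For $\THH(R)^{\wedge}_{p}$ (resp.\ $\THH(R)$) one chooses a quasisyntomic cover $R \to R'$ of the quasisyntomic ring $R$ by a quasiregular semiperfectoid ring $R'$; then $\THH(R')^{\wedge}_{p}$ (resp.\ the relevant integral version) is $\pi_{*}$-even, and the induced map of topological Hochschild homologies is evenly faithfully flat, hence faithfully even flat. For $\mathrm{HH}(R/k)$ one uses the cover employed by Hahn--Raksit--Wilson in their proof of the Hochschild--Kostant--Rosenberg identification; here one must also keep track of the base over which the $\mathbf{E}_{\infty}$-even filtration is computed, matching it to the absolute even filtration of \cref{definition:even_filtration_of_an_r_module}, which is part of their analysis.

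With the hypotheses of \cref{theorem:comparison_between_ev_and_einfty_ev_filtrations} verified in each case, that theorem identifies $\fil^{*}_{\mathbf{E}_{\infty}\mhyphen ev}(T)$ with the completion of $\fil^{*}_{ev}(T)$. On the other hand, by the work of Hahn--Raksit--Wilson recalled above, $\fil^{*}_{\mathbf{E}_{\infty}\mhyphen ev}(T)$ is the classical filtration attached to $T$, and this classical filtration is complete. Hence $\fil^{*}_{ev}(T)$ agrees with the respective classical filtration after completion; and, granting that the even filtration of a connective ring over itself is complete, the comparison map is already an equivalence, so the coincidence holds on the nose.

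I expect the main obstacle to be the case-by-case verification that the Hahn--Raksit--Wilson covers are genuinely faithfully even flat in the sense of \cref{definiton:even_faithfully_flat_maps_of_rings} --- i.e.\ the matching of their ``evenly faithfully flat'' hypothesis with \cref{proposition:fef_maps_are_hrw_eff} in the connective setting, including checking that a single such cover (rather than a whole site) suffices to apply \cref{theorem:comparison_between_ev_and_einfty_ev_filtrations} --- together with the bookkeeping of which base ring the $\mathbf{E}_{\infty}$-even filtration is taken relative to in the Hochschild and topological Hochschild examples, and ensuring this matches the absolute even filtration of \cref{definition:even_filtration_of_an_r_module}. Once these points are in place, the argument is purely formal from \cref{theorem:comparison_between_ev_and_einfty_ev_filtrations}.
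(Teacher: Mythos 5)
Your proposal follows essentially the same route as the paper: the paper's proof likewise reduces to checking that the evenly-faithfully-flat covers used by Hahn--Raksit--Wilson are faithfully even flat in the sense of \cref{definiton:even_faithfully_flat_maps_of_rings}, which follows from the connective case of \cref{proposition:fef_maps_are_hrw_eff}, and then applies \cref{theorem:comparison_between_ev_and_einfty_ev_filtrations}. Your added remark that completeness of the even filtration (via \cref{theorem:completness_of_the_even_filtration}) upgrades the comparison from an equivalence after completion to an equivalence on the nose is a correct and worthwhile clarification of a point the paper's proof leaves implicit.
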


\begin{proof}
The relevant comparison results with the $\mathbf{E}_{\infty}$-filtration are all proven in \cite{hahn2022motivic} using eff descent. By \cref{theorem:comparison_between_ev_and_einfty_ev_filtrations}, it is enough to verify that each of the maps appearing in the proof is in fact faithfully even flat in the sense of \cref{definiton:even_faithfully_flat_maps_of_rings}. This follows from \cref{proposition:fef_maps_are_hrw_eff} since all of the relevant $\mathbf{E}_{\infty}$-rings are connective. 
\end{proof}

\begin{remark}
One consequence of \cref{corollary:in_various_example_hrw_filtration_is_ev_filtration} is that the even filtration on $\THH(R)$ and its variants depends only on the $\mathbf{E}_{1}$-ring structure on topological Hochschild homology. Beware that if we think of $\THH(R)$ as a functor of $R$, this means that the even filtration on $\THH$ depends on the $\mathbf{E}_{2}$-ring structure of $R$.
\end{remark}

\begin{remark}
In \cite{hahn2022motivic}, Hahn-Raksit-Wilson also prove comparison results with motivic filtrations on $\TC^{-}(-)$, $\TC(-)$ and $\TP(-)$. However, in these cases their definition of an appropriate $\mathbf{E}_{\infty}$-even filtration is different from the one we recalled in \cref{definition:e_infty_even_filtration}, as one has to take the $S^{1}$-action into account. 

In upcoming joint work with Raksit \cite{motivic_cohomology_of_e2_rings}, we will show how the constructions in this paper can be applied to construct motivic filtrations on $\TC^{-}(-)$ and $\TP(-)$ of reasonable $\mathbf{E}_{2}$-rings. We expect that the same is possible in the case of $\TC(-)$, but it is not yet clear how to do so. 
 \end{remark}

Beware that for a general $\mathbf{E}_{\infty}$-ring, the $\mathbf{E}_{\infty}$-even and even filtrations can disagree. We learned the following instructive example from Robert Burklund:

\begin{example}
\label{example:roberts_example_of_even_and_einfty_even_being_different}
Let $R \colonequals \mathbb{F}_{2} \otimes \Sigma^{\infty}_{+} \Omega^{\infty} S^{1}$ be the free $\mathbb{F}_{2}$-$\mathbf{E}_{\infty}$-algebra on a class $x \in \pi_{1} R$. The homotopy groups of $R$ can be described in terms of Dyer-Lashof operations, namely 
 \[
\pi_{*} R \simeq \mathbb{F}_{2}[Q^{J}x]
 \]
 forms a polynomial ring in generators
 \[
 Q^{I}x \colonequals Q^{j_1} Q^{j_2} \ldots Q^{j_p} x
 \]
 satisfying $j_{i} \leq 2 j_{i+1}$ and $j_{1} - j_{2} - \ldots - j_{p} > | x | = 1$, see \cite[Example 1.5.10]{lawson2020n}. 

Observe that any $\mathbf{E}_{\infty}$-ring map $R \rightarrow S$ into a ring with $\pi_{*}S$ even necessarily sends $x$ to zero, from which it follows that it factors as 
 \[
R \rightarrow \mathbb{F}_{2} \rightarrow S 
 \]
so that all elements of positive degree are in the kernel of the induced map on homotopy. It follows that if $y \in \pi_{2n}R$ is an element of positive even degree, then it maps to zero in
\[
\gr^{n}_{\mathbf{E}_{\infty} \mhyphen ev}(R) \colonequals \varprojlim_{R \rightarrow S} \pi_{2n} S.
 \]
On the other hand, the structure of the even filtration of \cref{definition:even_filtration_of_an_r_module} is more straightforward; in particular, all polynomial generators in $\pi_{2n}R$ are detected in $\gr^{n}_{ev}(R)$. 

To see this, we will show the following more general statement: Let $A$ be an $\mathbf{E}_{2}$-$\mathbb{F}_{2}$-algebra such that $\pi_{*}A \simeq \mathbb{F}_{2}[x_{i}, y_{j}]$ is a polynomial algebra in odd degree variables $x_{i}$ and even degree variables $y_{j}$. Then 
 \begin{equation}
\label{equation:answer_for_even_coh_of_polys_over_f2}
\evencoh^{*, *}(A) \simeq \mathbb{F}_{2}[\widetilde{x}_{i}, \widetilde{y}_{j}]
\end{equation}
with $\widetilde{x}_{i} \in \evencoh^{1, \nicefrac{|x_{i}|+1}{2}}(A)$ and $\widetilde{y}_{j} \in \evencoh^{0, \nicefrac{|y_{j}|}{2}}(A)$ are detecting the corresponding elements of $\pi_{*}A$. In particular, the even spectral sequence of $A$ collapses. 
 
From now on, let us not distinguish in notation between even and odd degree generators and denote the totality of both by $(z_{i})_{i \in I}$, with the index set $I$ implicitly well-ordered. Observe that any finite subset $A \subseteq I$ determines a map of $\mathbf{E}_{1}$-$\mathbb{F}_{2}$-algebras
 \[
\bigotimes_{\mathbb{F}_{2}, i \in A} \mathbb{F}_{2}[z_{i}] \rightarrow \bigotimes_{\mathbb{F}_{2}} A \rightarrow A
 \]
from the tensor product of free $\mathbf{E}_{1}$-$\mathbb{F}_{2}$-algebras, where the second map is the multiplication of $A$, which is $\mathbf{E}_{1}$ as $A$ is $\mathbf{E}_{2}$. We deduce that as an $\mathbf{E}_{1}$-ring, $A$ can be written as
\begin{equation}
\label{equation:a_with_poly_htpy_a_filtered_colimit_of_frees}
 A \simeq \varinjlim _{A \subseteq I} (\bigotimes_{i \in A} \mathbb{F}_{2}[z_{i}]),
\end{equation}
a filtered colimit of finite tensor products. By \cref{proposition:even_cohomology_computed_using_cochain_complex_of_r_modules}, the even filtration of an $\mathbf{E}_{1}$-ring can be calculated by constructing an appropriate chain complex of modules
\[
E_{0} \rightarrow E_{1} \rightarrow \ldots,
\]
where each $E_{i}$ has even homotopy groups (among other properties, see \cref{construction:cochain_complex_computing_even_cohomology}), and taking cohomology of $\pi_{*} E_{\bullet}$. By  (\ref{equation:a_with_poly_htpy_a_filtered_colimit_of_frees}), the resolution of $A$ can be obtained as a filtered colimit of tensor products of resolutions of $\mathbb{F}_{2}[z_{i}]$. Note that here we use that we're working over $\mathbb{F}_{2}$, as this guarantees that a tensor product of modules with even homotopy groups also has even homotopy groups. We deduce that we have a K\"{u}nneth-style isomorphism
 \[
\evencoh^{*, *}(A) \simeq \varinjlim_{A \subseteq I} (\bigotimes_{i \in A, \mathbb{F}_{2}} \evencoh^{*, *}(\mathbb{F}_{2}[z_{i}]))
 \]
which reduces us to the case of a free $\mathbf{E}_{1}$-algebra on a single generator. If the generator $z$ is of even degree, then the even cohomology is as claimed in (\ref{equation:answer_for_even_coh_of_polys_over_f2}) by \cref{corollary:even_cohomology_of_even_modules}. If $z$ is of odd degree, then we have a resolution of the form 
\[
\begin{tikzcd}
	& k & {\Sigma^{a} k} & {\Sigma^{2a} k} & \ldots \\
	{k[z]} & {\Sigma^{a}k[z]} & {\Sigma^{2a} k[z]} & {\Sigma^{3a} k[z]}
	\arrow[from=1-2, to=2-2]
	\arrow[from=2-2, to=1-3]
	\arrow[from=1-2, to=1-3]
	\arrow[from=1-3, to=2-3]
	\arrow[from=2-3, to=1-4]
	\arrow[from=1-4, to=2-4]
	\arrow[from=1-3, to=1-4]
	\arrow[from=2-4, to=1-5]
	\arrow[from=1-4, to=1-5]
	\arrow[from=2-1, to=1-2]
\end{tikzcd},
\]
where $a = |z|+1$. Each of the diagonal arrows is surjective on homotopy groups, from which we deduce that the horizontal arrows are zero after passing to homotopy. We deduce that $\evencoh^{*, *}(\mathbb{F}_{2}[z]) \simeq \mathbb{F}_{2}[\widetilde{z}]$ with $|\widetilde{z}| = (1, \nicefrac{a}{2})$, as claimed in (\ref{equation:answer_for_even_coh_of_polys_over_f2}). 
\end{example}

\begin{remark}
In the context of \cref{example:roberts_example_of_even_and_einfty_even_being_different}, we observe that lax monoidality of the even filtration of \cref{theorem:even_filtration_is_lax_symmetric_monoidal} provides a canonical canonical map
\[
\bigotimes_{i \in A, \fil^{\ast}_{ev}(\mathbb{F}_{2})} \fil^{\ast}_{ev}(\mathbb{F}_{2}[z_{i}]) \rightarrow \fil^{\ast}_{ev}(A)
\]
of filtered $\mathbf{E}_{1}$-rings. We expect that the K\"{u}nneth-style isomorphism constructed ad hoc in \cref{example:roberts_example_of_even_and_einfty_even_being_different} using chain complexes of modules coincides with the one induced by the canonical comparison map, but we will not explore this matter in the current work. We believe that it would be a very interesting question to explore K\"unneth formulas for the even filtration in a wider context. 
\end{remark}
\begin{warning}
\label{warning:more_extreme_example_of_even_and_einfty_even-filtration_diverging}
The following variation on \cref{example:roberts_example_of_even_and_einfty_even_being_different} shows that the even and $\mathbf{E}_{\infty}$-even filtrations can diverge even more drastically once we leave the world of connective $\mathbf{E}_{\infty}$-rings.

Let $R$ be as in \cref{example:roberts_example_of_even_and_einfty_even_being_different}, so that $\pi_{*}R \simeq \mathbb{F}_{2}[Q^{j}x]$ is a polynomial algebra and let $e \in \pi_{2n} R$ be an even degree polynomial generator; for example, we can take $Q^{3}x \in \pi_{4}R$. As a localization, $R[e^{-1}]$ acquires a canonical $\mathbf{E}_{\infty}$-ring structure with $\pi_{*}(R[e^{-1}]) \simeq (\pi_{*}R)[e^{-1}]$. Observe that any map $f \colon R \rightarrow S$ into an even $\mathbf{E}_{\infty}$-ring sends $x$ to zero, so that also $f(e) = 0$. We deduce that the only map $R[e^{-1}] \rightarrow S$ from the localization to an even $\mathbf{E}_{\infty}$-ring is the zero map and consequently 
\[
\fil^{*}_{\mathbf{E}_{\infty} \mhyphen ev}(R[e^{-1}]) = 0. 
\]
On the other hand, an analysis analogous to the one given in \cref{example:roberts_example_of_even_and_einfty_even_being_different} shows that the the even filtration is complete and 
\[
\evencoh^{*, *}(R[e^{-1}]) \simeq \mathbb{F}_{2}[\widetilde{e}_{j}, \widetilde{v}_{j}][\widetilde{e}^{-1}],
\]
where $\widetilde{e}$ is the Hurewicz image of $e$.
\end{warning}

\begin{remark}
\label{remark:a_map_which_is_eff_but_not_fef}
Observe that the zero map $R[e^{-1}] \rightarrow 0$ from the $\mathbf{E}_{\infty}$-ring appearing in \cref{warning:more_extreme_example_of_even_and_einfty_even-filtration_diverging} is evenly faithfully flat in the sense of Hahn-Raksit-Wilson \cite[2.2.13]{hahn2022motivic}, but it is not faithfully even flat in the sense of \cref{definiton:even_faithfully_flat_maps_of_rings}. 
\end{remark} 

\section{Even cohomology of connective rings} 

In this section we study even cohomology of connective rings. We will use $R$ to denote an $\mathbf{E}_{1}$-ring and $M$ to denote an $R$-module in spectra. 

\subsection{Vanishing above the Milnor line} 

We first show that even cohomology of connective rings vanishes above the ``Milnor line'' $p = q$. 

\begin{theorem}
\label{theorem:vanishing_of_even_coh_of_connective_ring_above_milnor_line}
Let $R$ be connective and let $M$ be connective, homologically even. Then
\begin{enumerate}
\item $\evencoh^{0, 0}(R, M) \simeq \pi_{0} M$, 
\item $\evencoh^{p, q}(R, M) = 0$ for $p > q$. 
\end{enumerate}
\end{theorem}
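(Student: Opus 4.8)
The plan is to use the explicit cochain-complex description of even cohomology from \cref{proposition:even_cohomology_computed_using_cochain_complex_of_r_modules} together with the connectivity bookkeeping available in the connective case. The key observation is that by \cref{proposition:pi_star_even_envelopes_exist}, when $R$ and $M$ are connective we may choose a $\pi_{*}$-even envelope $M \to E_{0}$ with $E_{0}$ connective and $\pi_{0} M \xrightarrow{\sim} \pi_{0} E_{0}$; in particular the cofibre $C_{0} \colonequals \cofib(M \to E_{0})$ is $1$-connective. More is true: since the construction of the envelope in \cref{proposition:pi_star_even_envelopes_exist} only attaches cells $\Sigma^{k} R$ for $k$ odd and positive, and the lowest such is $k = 1$, the cofibre $C_{0}$ is in fact $2$-connective. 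Iterating, each $C_{i}$ is a homologically even connective module which is "$2(i+1)$-connective"—or more precisely, I would set up an induction showing that one can choose the envelopes in \cref{construction:cochain_complex_computing_even_cohomology} so that $E_{i}$ is $2i$-connective and $C_{i}$ is $2(i+1)$-connective, since the odd-degree homotopy of a $2i$-connective module starts in degree $2i+1$, and attaching cells in those degrees produces a cofibre that is $2i+2$-connective.

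**Next I would feed this into** \cref{proposition:even_cohomology_computed_using_cochain_complex_of_r_modules}: we have $\evencoh^{p,q}(R,M) \simeq \mathrm{H}^{p}(\pi_{2q} E_{\bullet})$, the cohomology in the $p$-th spot of the complex $\pi_{2q} E_{0} \to \pi_{2q} E_{1} \to \cdots$. Since $E_{p}$ is $2p$-connective, $\pi_{2q} E_{p} = 0$ whenever $2q < 2p$, i.e.\ whenever $q < p$. Hence the whole cochain complex vanishes in cohomological degrees $p > q$, giving $\evencoh^{p,q}(R,M) = 0$ for $p > q$, which is statement $(2)$. For statement $(1)$, the relevant complex at $q = 0$ is $\pi_{0} E_{0} \to \pi_{0} E_{1} \to \cdots$; since $E_{1}$ is $2$-connective we have $\pi_{0} E_{1} = 0$, so $\evencoh^{0,0}(R,M) \simeq \ker(\pi_{0}E_{0} \to \pi_{0}E_{1}) = \pi_{0} E_{0} \simeq \pi_{0} M$, using that $\pi_{0} M \xrightarrow{\sim} \pi_{0} E_{0}$ for the connective envelope.

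**The main obstacle** is making the connectivity induction precise and checking that the envelopes of the later terms $C_{i}$ can indeed be chosen connective with the claimed connectivity. The first envelope is handled directly by the last sentence of \cref{proposition:pi_star_even_envelopes_exist}, but one must verify that the same statement applies to $C_{i}$ in place of $M$: since $C_{i}$ is connective and $R$ is connective, \cref{proposition:pi_star_even_envelopes_exist} again provides a connective envelope $C_{i} \to E_{i+1}$ with $\pi_{0} C_{i} \xrightarrow{\sim} \pi_{0} E_{i+1}$, and since $C_{i}$ is $2(i+1)$-connective, $\pi_{0} C_{i} = \cdots = \pi_{2i+1} C_{i} = 0$, forcing $E_{i+1}$ to be $2(i+1)$-connective and (by the cell-attachment description, only cells in degrees $\geq 2i+3$ being attached) $C_{i+1} = \cofib(C_{i} \to E_{i+1})$ to be $2(i+2)$-connective. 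One small point to be careful about: the cells attached to produce $E_{i+1}$ from $C_{i}$ are indexed by the odd positive homotopy classes of $C_{i}$, the lowest of which lives in degree $2i+3$ (the first odd degree strictly above the connectivity $2i+2$), so the connectivity of $E_{i+1}$ is exactly $2i+2$ and that of $C_{i+1}$ is exactly $2i+4 = 2(i+2)$; this is the numerology that makes the vanishing line land precisely at $p = q$ rather than one unit off. Once this induction is in place, the conclusion is immediate from \cref{proposition:even_cohomology_computed_using_cochain_complex_of_r_modules}.
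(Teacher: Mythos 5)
Your proposal is correct and follows essentially the same route as the paper: the paper also invokes \cref{proposition:even_cohomology_computed_using_cochain_complex_of_r_modules} and observes, via \cref{proposition:pi_star_even_envelopes_exist}, that for connective $R$ and $M$ the envelopes in \cref{construction:cochain_complex_computing_even_cohomology} can be chosen so that $E_{i}$ is $2i$-connective (with $\pi_{0}M \simeq \pi_{0}E_{0}$), whence $\pi_{2q}E_{p}=0$ for $p>q$ and both claims follow. Your extra care about the exact connectivity of the cofibres $C_{i}$ is a harmless refinement of the same bookkeeping the paper does.
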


\begin{proof}
We recall from \cref{proposition:even_cohomology_computed_using_cochain_complex_of_r_modules} a recipe for calculating the even cohomology of $M$. Using \cref{construction:cochain_complex_computing_even_cohomology}, we can find a diagram 
\begin{equation}
\label{equation:cochain_complex_of_even_modules_computing_even_cohomology_in_proof_of_milnor_vanishing}
\begin{tikzcd}
	& {E_{0}} & {E_{1}} & {E_{2}} & \ldots \\
	{C_{-1}} & {C_{0}} & {C_{1}} & {C_{2}}
	\arrow[from=2-1, to=1-2]
	\arrow[from=1-2, to=2-2]
	\arrow[from=2-2, to=1-3]
	\arrow[from=1-2, to=1-3]
	\arrow[from=1-3, to=2-3]
	\arrow[from=2-3, to=1-4]
	\arrow[from=1-3, to=1-4]
	\arrow[from=1-4, to=2-4]
	\arrow[from=2-4, to=1-5]
	\arrow[from=1-4, to=1-5]
\end{tikzcd}
\end{equation}
with the properties that $C_{-1} = M$ and that each $C_{i} \rightarrow E_{i} \rightarrow C_{i+1}$ is a cofibre sequence with the first map a $\pi_{*}$-even envelope. We then have a canonical isomorphism 
\begin{equation}
\label{equation:even_cohomology_as_cohomology_of_chain_cpx_in_proof_of_milnor_vanishing}
\evencoh^{p, q}(M) \simeq \mathrm{H}^{p}(\pi_{2q} E_{\bullet}). 
\end{equation}

By \cref{proposition:pi_star_even_envelopes_exist}, a $\pi_{*}$-even envelope of a connective $R$-module can be chosen so that the cofibre is $2$-connective. Since $M$ is connective, it follows that we can choose a diagram (\ref{equation:cochain_complex_of_even_modules_computing_even_cohomology_in_proof_of_milnor_vanishing}) with the property that $E_{i}$ is $(2i)$-connective for each $i \geq 0$. For such a diagram, both parts follow from (\ref{equation:even_cohomology_as_cohomology_of_chain_cpx_in_proof_of_milnor_vanishing}), as the groups $\pi_{2q} E_{p}$ vanish for $p > q$ and we have an isomorphism $\pi_{0} M \simeq \pi_{0}E_{0}$. 
\end{proof}

\begin{remark}[Vanishing in terms of the Adams grading]
As we discussed in \cref{remark:p_q_grading_convention_for_even_cohomology}, the even cohomology groups can be often identified with the $E_{2}$-term of a suitable Adams spectral sequence in $R$-modules using the identification 
\[
E_{2}^{s, t} \simeq \Hrm_{ev}^{s, \frac{1}{2} \cdot t}(R).
\]
Since even cohomology of an $\mathbf{E}_{1}$-ring vanishes when $p < 0$ by \cref{remark:even_cohomology_vanishes_in_negative_degrees_and_this_is_consistent_with_identification_with_graded_of_the_even_filtration} or when $q$ is a half-integerby \cref{lemma:every_perfect_even_is_homogically_even}, for an arbitrary $\mathbf{E}_{1}$-ring $R$ we have that 
\begin{enumerate}
    \item $E_{2}^{s, t} = 0$ if $s < 0$,
    \item $E_{2}^{s, t} = 0$ if $s \not\equiv (t-s) \mod 2$. 
\end{enumerate}
If $R$ is moreover connective, \cref{theorem:vanishing_of_even_coh_of_connective_ring_above_milnor_line} implies that 
\begin{enumerate}
    \item[(3)] $E_{2}^{s, t} = 0$ if $s > (t-s)$; 
\end{enumerate}
that is, in the $(s, t-s)$ plane we have a vanishing line of slope $1$. 

The above are three familiar properties of the Adams-Novikov spectral sequence, which we can identify with the even spectral sequence of the sphere by \cref{corollary:in_various_example_hrw_filtration_is_ev_filtration}. Thus, \cref{theorem:vanishing_of_even_coh_of_connective_ring_above_milnor_line} can be interpreted as saying that the even spectral sequence of an arbitrary connective $\mathbf{E}_{1}$-ring has vanishing properties akin to that of the Adams-Novikov spectral sequence. 
\end{remark}

\begin{theorem}[Completness of the even filtration]
\label{theorem:completness_of_the_even_filtration}
Let $R$ be connective and let $M$ be connective, homologically even.Then 
\begin{enumerate}
    \item $\fil_{ev}^{q}(M) \simeq M$ for $q < 0$ and 
    \item the even filtration $\fil^{*}_{ev}(M)$ is complete; that is, $\varprojlim \fil^{*}_{ev}(M) \simeq 0$. 
\end{enumerate} 
\end{theorem}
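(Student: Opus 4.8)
Both statements should follow from the vanishing theorem (Theorem~\ref{theorem:vanishing_of_even_coh_of_connective_ring_above_milnor_line}) together with the homological‑resolution machinery developed above. Here is the plan.

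For part (1), I would first observe that $\evencoh^{p,q}(R,M)=0$ for \emph{all} $p$ whenever $q<0$: sheaf cohomology vanishes in negative cohomological degree, and for $p\geq 0$ one has $p>q$, so Theorem~\ref{theorem:vanishing_of_even_coh_of_connective_ring_above_milnor_line}(2) applies. Since $M$ is homologically even, Remark~\ref{remark:associated_graded_of_even_filtration_and_even_cohomology} identifies $\pi_{*}\gr^{q}_{ev}(M)$ with the groups $\evencoh^{2q-*,q}(R,M)$, so $\gr^{q}_{ev}(M)\simeq 0$ for $q<0$. Hence the transition map $\fil^{n}_{ev}(M)\to\fil^{n-1}_{ev}(M)$, whose cofibre is $\gr^{n-1}_{ev}(M)$, is an equivalence for every $n\leq 0$, so $\fil^{n}_{ev}(M)\simeq\fil^{0}_{ev}(M)$ for all $n\leq 0$. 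By exhaustivity (Proposition~\ref{proposition:connectivity_of_even_filtration_maps_and_exhaustivity_of_the_even_filtration}), $M\simeq\varinjlim_{n}\fil^{n}_{ev}(M)$, and this colimit is taken along a diagram whose transition maps are eventually all equivalences, hence equals $\fil^{0}_{ev}(M)$. Thus $\fil^{0}_{ev}(M)\simeq M$, and in particular $\fil^{n}_{ev}(M)\simeq M$ for $n<0$.

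For part (2), I would pass to a homological resolution chosen connectively. Running Construction~\ref{construction:cochain_complex_computing_even_cohomology} with the connective refinement of Proposition~\ref{proposition:pi_star_even_envelopes_exist}, one obtains a diagram~(\ref{equation:cochain_complex_of_even_modules_computing_even_cohomology}) in which each $E_{i}$ is connective and the $i$-th partial cofibre $C_{i}$ is $(2i+2)$-connective; repackaging it (equivalently, running Theorem~\ref{theorem:existence_of_pistar_even_adams_resolutions} with the same choices) gives a homological resolution $X\colon\Delta_{s,+}\to\Mod_{R}$ with $X_{-1}=M$, $\pi_{*}X_{m}$ even, and all $X_{m}$ connective. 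Because the partial cofibres become arbitrarily highly connected, the augmented resolution converges: the augmentation $M\to\varprojlim_{[m]\in\Delta_{s}}X_{m}$ is an equivalence. By Proposition~\ref{proposition:homological_resolutions_are_limits_on_even_filtrations_up_to_completion} (see also Remark~\ref{remark:even_filtration_of_a_homological_even_as_decalage_of_its_resolution}) the canonical map of filtered spectra
\[
\phi\colon\fil^{*}_{ev}(M)\longrightarrow G^{*},\qquad G^{n}\colonequals\varprojlim_{[m]\in\Delta_{s}}\tau_{\geq 2n}X_{m},
\]
is an equivalence after completion, and $G^{*}$ is itself complete (each $\tau_{\geq 2*}X_{m}$ is the Whitehead filtration of a bounded-below spectrum, hence complete, and limits of complete filtered spectra are complete). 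Therefore $\phi$ and the completion map $\fil^{*}_{ev}(M)\to\widehat{\fil^{*}_{ev}(M)}$ have the same fibre; in filtration degree $n$ that fibre is $\varprojlim_{N}\fil^{N}_{ev}(M)$, independent of $n$. Now evaluate in a degree $n\leq 0$: by part (1) $\fil^{n}_{ev}(M)\simeq M$, and since every $X_{m}$ is connective $\tau_{\geq 2n}X_{m}=X_{m}$, so $\phi^{n}$ is identified (by naturality of $\fil^{n}_{ev}(-)\to(-)$) with the augmentation $M\to\varprojlim_{[m]}X_{m}$, which is an equivalence. Hence $\varprojlim_{N}\fil^{N}_{ev}(M)\simeq 0$, i.e.\ the even filtration is complete.

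The main obstacle is the convergence of the augmented homological resolution in the connective case, that is, the claim $M\simeq\varprojlim_{[m]\in\Delta_{s}}X_{m}$: this rests on arranging the successive $\pi_{*}$-even envelopes so that their cofibres become more and more connected (the connective part of Proposition~\ref{proposition:pi_star_even_envelopes_exist}), and on checking that repackaging the cochain-complex resolution of Construction~\ref{construction:cochain_complex_computing_even_cohomology} into a semicosimplicial one keeps the terms connective; everything after that is formal manipulation of (co)limits and the vanishing theorem. One could alternatively avoid semicosimplicial language by showing directly from Theorem~\ref{theorem:vanishing_of_even_coh_of_connective_ring_above_milnor_line} that $\gr^{q}_{ev}(M)$ is $q$-connective, deducing that $\widehat{\fil^{n}_{ev}(M)}$ is $n$-connective, and proving $\fil^{n}_{ev}(M)$ is itself $n$-connective via Postnikov-completeness of bounded-below objects in $\sheaves_{\Sigma}(\Perf(R)_{ev},\spectra)$; I would nonetheless favour the resolution argument, as it keeps the dependence on background to a minimum.
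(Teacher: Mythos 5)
Your part (1) is correct and is essentially the paper's own argument. The problem lies in part (2): everything funnels through the claim that the augmentation $M \to \varprojlim_{[m]\in\Delta_{s}} X_{m}$ is an equivalence for a connectively chosen resolution, and this is precisely the step you have not proved. Nothing in the paper gives convergence of a homological resolution at the level of underlying spectra --- \cref{proposition:homological_resolutions_are_limits_on_even_filtrations_up_to_completion} and \cref{remark:even_filtration_of_a_homological_even_as_decalage_of_its_resolution} are statements about the filtrations up to completion, which is exactly what you are trying to establish --- and the justification you offer (``the partial cofibres become arbitrarily highly connected'') does not transfer from \cref{construction:cochain_complex_computing_even_cohomology} to the semicosimplicial object of \cref{theorem:existence_of_pistar_even_adams_resolutions}: that construction takes $\pi_{*}$-even envelopes of latching objects, not of the cofibres $C_{i}$, and connectivity of the terms $X_{m}$ alone is useless, since the fibres of the differences in the $\mathrm{Tot}$-tower are $\Omega^{m+1}X_{m}$, whose connectivity decreases with $m$. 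To make your route work you would have to prove an honest Adams-type convergence statement, e.g.\ arrange the choices so that $X_{m}$ agrees with the terms of the chain-complex resolution up to degenerate summands and identify $\mathrm{fib}\bigl(M \to \mathrm{Tot}_{m}X\bigr)$ with $\Omega^{m+1}C_{m}$, which is $(m+1)$-connective when $C_{m}$ is $(2m+2)$-connective. That comparison is classical but is genuine work, and it is the heart of your proof rather than a detail; your alternative sketch via Postnikov-completeness of $\sheaves_{\Sigma}(\Perf(R)_{ev},\spectra)$ is likewise an unproved input that the paper does not supply.

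For contrast, the paper's proof of (2) avoids any convergence statement. It first shows, for every connective homologically even module, that each $\fil^{n}_{ev}(M)$ is $(-1)$-connective (induction on $n$ starting from $\fil^{0}_{ev}(M)\simeq M$, using that $\gr^{n}_{ev}(M)$ is $0$-connective by \cref{theorem:vanishing_of_even_coh_of_connective_ring_above_milnor_line}), so that $\varprojlim\fil^{*}_{ev}(M)$ is $(-2)$-connective uniformly in $M$. It then bootstraps: choosing a single $\pi_{*}$-even envelope $M \to E$ with $2$-connective cofibre $C$, the limit of $\fil^{*}_{ev}(E)$ vanishes by \cref{lemma:pistar_even_means_homologically_even_and_even_filtration_is_whitehead}, giving $\varprojlim\fil^{*}_{ev}(M)\simeq\Sigma\,\varprojlim\fil^{*}_{ev}(\Sigma^{-2}C)$ via the shift $\fil^{n}_{ev}(\Sigma^{-2}C)\simeq\Sigma^{-2}\fil^{n+1}_{ev}(C)$; induction on the connectivity bound, applied to the connective module $\Sigma^{-2}C$, then shows the limit is $k$-connective for every $k$ and hence zero. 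If you want to repair your argument, either supply the convergence lemma above or switch to this connectivity bootstrap, which uses only ingredients already established in the paper.
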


\begin{proof}
We start with the first part. Recall from \cref{theorem:associated_graded_of_even_filtration_and_even_cohomology} that the associated graded object of the even filtration satisfies 
\[
\pi_{t} \gr^{q}_{ev} M \simeq \evencoh^{2q-t, q}(R, M).
\]
Since even cohomology vanishes when $p < 0$, as we observed in \cref{remark:even_cohomology_vanishes_in_negative_degrees_and_this_is_consistent_with_identification_with_graded_of_the_even_filtration}, from \cref{theorem:vanishing_of_even_coh_of_connective_ring_above_milnor_line} we deduce that the even cohomology of $M$ vanishes in negative weight, so that the maps 
\[
\fil^{0}_{ev}(M) \rightarrow \fil^{-1}_{ev}(M) \rightarrow \fil^{-2}_{ev}(M) \rightarrow \ldots
\]
are all equivalences. As their colimit is equivalent to $M$ by \cref{proposition:connectivity_of_even_filtration_maps_and_exhaustivity_of_the_even_filtration}, we deduce that $\fil^{q}_{ev}(M) \simeq M$ for all $q < 0$. 

We now show that $\varprojlim \fil^{q}_{ev}(M)$ is $(-2)$-connective. Using the Milnor exact sequence associated to an inverse limit, it is enough to show that $\fil^{q}_{ev}(M)$ is $(-1)$-connective for each $q > 0$. We've already seen that this is true when $q = 0$, and we'll prove it for positive $q$ via induction. 

The identification between even cohomology and homotopy groups of the associated graded object together with \cref{theorem:vanishing_of_even_coh_of_connective_ring_above_milnor_line} show that for each $q \geq 0$, $\gr_{ev}^{q}(M)$ is $0$-connective (in fact, $q$-connective, but we will not need it). Since we have a cofibre sequence 
\[
\Sigma^{-1} \gr_{ev}^{q}(M) \rightarrow \fil^{q+1}_{ev}(M) \rightarrow \fil^{q}_{ev}(M),
\]
the $(-1)$-connectivity of $\fil^{q+1}_{ev}$ follows from that of $\fil^{q}$, ending the inductive argument. This shows that $\varprojlim \fil^{q}_{ev}(M)$ is $(-2)$-connective, as claimed. 

We will now inductively show that for any $k \in \mathbb{Z}$, the spectrum $\varprojlim \fil^{q}_{ev}(M)$ is $k$-connective. The case $k = -2$ was proved above, so we assume that $k \geq -1$. Using \cref{proposition:pi_star_even_envelopes_exist}, we can find a cofibre sequence 
\[
M \rightarrow E \rightarrow C 
\]
where both $E$ and $C$ are also homologically even, $\pi_{*}E$ is even and $C$ is $2$-connective. This yields a cofibre sequence
\[
\varprojlim \fil^{*}_{ev}(M) \rightarrow \varprojlim \fil^{*}_{ev}(E) \rightarrow \varprojlim \fil^{*}_{ev}(C)
\]
The middle term vanishes by \cref{lemma:pistar_even_means_homologically_even_and_even_filtration_is_whitehead}, giving an equivalence
\begin{equation}
\label{equation:equivalences_of_limits_of_the_even_filtration}
\varprojlim \fil^{*}_{ev}(M) \simeq \Sigma^{-1} (\varprojlim \fil^{*}_{ev}(C)) \simeq \Sigma (\varprojlim \fil^{*}_{ev}(\Sigma^{-2} C))
\end{equation}
where we use that
\[
\fil^{q}_{ev}(\Sigma^{-2} C) \simeq \Sigma^{-2} \fil_{ev}^{q+1}(C)
\]
which follows immediately from the definition. As $\Sigma^{-2} C$ is connective, the right hand side of (\ref{equation:equivalences_of_limits_of_the_even_filtration}) is $k$-connective by the inductive assumption. We deduce that the same is true for the left hand side. It follows that $\varprojlim \fil^{q}_{ev}(M)$ is $k$-connective for all $k \in \mathbb{Z}$ and thus must vanish, proving the second part. 
\end{proof}

\begin{corollary}
\label{corollary:complete_convergence_of_the_even_spectral_sequence_for_homologically_even_connectives}
If $R$ is connective and $M$ is homologically even, bounded below, then the even spectral sequence 
\[
\evencoh^{p, q}(R, M) \Rightarrow \pi_{2q-p}(M)
\]
converges strongly in the sense of Boardman. 
\end{corollary}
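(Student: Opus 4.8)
The plan is to deduce complete convergence formally from the completeness, exhaustivity, and vanishing-line results already established for such $M$. By \cref{definition:even_spectral_sequence} the even spectral sequence is the spectral sequence of the filtered spectrum $\fil_{ev}^{*}(M)$, with $E_{2}$-page the even cohomology groups via \cref{remark:associated_graded_of_even_filtration_and_even_cohomology}. I would assemble three inputs: that $\varinjlim \fil_{ev}^{*}(M) \simeq M$ by \cref{proposition:connectivity_of_even_filtration_maps_and_exhaustivity_of_the_even_filtration}; that $\varprojlim \fil_{ev}^{*}(M) \simeq 0$ by \cref{theorem:completness_of_the_even_filtration}; and that $\evencoh^{p,q}(R,M) = 0$ for $p > q$ by \cref{theorem:vanishing_of_even_coh_of_connective_ring_above_milnor_line}.

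The first step is to translate the vanishing line into a statement about the associated graded. Using the identification $\pi_{t}\gr_{ev}^{q}(M) \simeq \evencoh^{2q-t,q}(R,M)$ of \cref{remark:associated_graded_of_even_filtration_and_even_cohomology}, together with the vanishing of even cohomology for $p>q$ — which in particular forces vanishing for $q<0$, and trivially for $p<0$ since $\Ext$ in negative degrees vanishes — one finds that for a fixed integer $n$ the group $\pi_{n}\gr_{ev}^{q}(M)$ can be nonzero only when $n/2 \le q \le n$. Thus on each total degree only finitely many even cohomology groups contribute.

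The second step is to upgrade this to a finite filtration on each homotopy group $\pi_{n}(M)$. For $q < 0$ we have $\fil_{ev}^{q}(M) \simeq M$ by \cref{theorem:completness_of_the_even_filtration}, so the induced filtration on $\pi_{n}(M)$ is constant equal to $\pi_{n}(M)$ there. For $q \ge n+2$, both $\pi_{n}\gr_{ev}^{q}(M)$ and $\pi_{n+1}\gr_{ev}^{q}(M)$ vanish by the previous step, so the fibre sequence $\fil_{ev}^{q+1}(M) \to \fil_{ev}^{q}(M) \to \gr_{ev}^{q}(M)$ shows that the tower $\{\pi_{n}\fil_{ev}^{q}(M)\}_{q \ge n+2}$ has all transition maps invertible; its inverse limit is then its common value, and by the Milnor exact sequence together with the completeness $\varprojlim_{q}\fil_{ev}^{*}(M) \simeq 0$ this common value must be $0$. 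Hence $\pi_{n}\fil_{ev}^{q}(M) = 0$ for $q \gg 0$, and the filtration on $\pi_{n}(M)$ is finite.

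Finally, a complete and exhaustive filtered spectrum whose induced filtration on each homotopy group is finite has a strongly — hence completely — convergent spectral sequence, by the standard convergence criterion; applying this to $\fil_{ev}^{*}(M)$ concludes the proof. I expect no serious obstacle here: the only point requiring care is combining the completeness and exhaustivity of $\fil_{ev}^{*}(M)$ with the Milnor-line vanishing so as to obtain finiteness of the induced filtration on each $\pi_{n}(M)$, rather than merely a vanishing line on the $E_{2}$-page — once that is in hand the convergence statement is formal.
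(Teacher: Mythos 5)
Your argument is correct, but it takes a different route from the paper. The paper's proof is shorter and more formal: completeness of $\fil^{*}_{ev}(M)$ (\cref{theorem:completness_of_the_even_filtration}) gives conditional convergence in the sense of Boardman, and then the vanishing line of \cref{theorem:vanishing_of_even_coh_of_connective_ring_above_milnor_line} together with the differential bidegree $|d_{r}| = (2r-1, r-1)$ shows that each bidegree supports and receives only finitely many differentials, whence complete convergence by Boardman's criterion. You instead upgrade the vanishing line to a statement about the filtration itself: using $\pi_{t}\gr_{ev}^{q}(M) \simeq \evencoh^{2q-t,q}(R,M)$, the fibre sequences $\fil_{ev}^{q+1}(M) \to \fil_{ev}^{q}(M) \to \gr_{ev}^{q}(M)$, and completeness via the Milnor sequence, you deduce that $\pi_{n}\fil_{ev}^{q}(M) = 0$ for $q \geq n+2$, while $\fil_{ev}^{q}(M) \simeq M$ for $q < 0$; this makes the filtration degreewise bounded, and such filtered spectra have strongly (hence completely) convergent spectral sequences. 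This buys slightly more explicit information (vanishing of the homotopy of the filtration stages in each degree, and strong convergence directly) at the cost of a longer argument; the paper's route stays entirely at the level of the $E_{2}$-page and Boardman's machinery. One small caveat: your closing appeal to ``the induced filtration on each homotopy group is finite'' is, taken literally, weaker than what the standard criterion requires (finiteness of the image filtration on $\pi_{n}M$ alone does not give $RE_{\infty}=0$); but since you actually proved the stronger statement that the tower $\pi_{n}\fil_{ev}^{q}(M)$ vanishes for $q \gg 0$ and is constant equal to $\pi_{n}M$ for $q < 0$, the convergence criterion you need does apply and the proof stands — just phrase the final step in terms of what you established.
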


\begin{proof}
By considering an appropriately large even suspension of $M$, we can assume that it is connective. Since the even spectral sequence of \cref{definition:even_spectral_sequence} is the spectral sequence associated to the filtered spectrum $\fil_{ev}^{*}(M)$, conditional convergence in the sense of Boardman follows from completeness of the even filtration, which we've shown in \cref{theorem:completness_of_the_even_filtration}.

As the differentials are of bidegree $|d_{r}| = (2r-1, r-1)$, the vanishing line of \cref{theorem:vanishing_of_even_coh_of_connective_ring_above_milnor_line} implies that for any fixed bidegree $(p, q)$, the group $E^{p, q}_{r}$ stabilizes for large $r$. This implies strong convergence by \cite[{Theorem 7.3}]{boardman1999conditionally}. 
\end{proof}

\begin{remark}[The case of the $\mathbf{E}_{\infty}$-even filtration]
The Hahn-Raksit-Wilson filtration attached to an $\mathbf{E}_{\infty}$-ring $R$ is always complete by construction, and the question of whether the associated spectral sequence converges to $\pi_{*}R$ instead depends on whether the filtration is exhaustive; that is, whether $\varinjlim \fil_{\mathbf{E}_{\infty} \mhyphen ev}^{*}(R) \simeq R$. This was shown to be the case if $R$ is connective by Achim Krause and Robert Burklund, giving an analogue of \cref{theorem:completness_of_the_even_filtration} also in this context\footnote{Private communication with Achim Krause and Robert Burklund.}.
\end{remark}

\subsection{Cohomology in low weights} 
\label{subsection:cohomology_in_low_weights}

In \cref{theorem:vanishing_of_even_coh_of_connective_ring_above_milnor_line}, we have shown that the weight zero even cohomology of a connective $R$-module is concentrated in cohomological degree zero, where $\evencoh^{0, 0}(M) \simeq \pi_{0} M$. In this section, we give a calculation of groups in weight one and deduce a calculation of $\evencoh^{2,2}(M)$. 

\begin{proposition}
\label{proposition:weight_one_cohomology_of_a_connective_homologically_even_module}
Let $R$ be connective and $M$ homologically even and connective. Then
\begin{enumerate}
    \item $\evencoh^{0, 1}(M) \simeq \mathrm{coker}(\pi_{1} R \otimes_{\mathbb{Z}} \pi_{1} M \rightarrow \pi_{2} M)$, cokernel of the multiplication map, 
    \item $\evencoh^{1, 1}(M) \simeq \pi_{1} M$.
\end{enumerate}
\end{proposition}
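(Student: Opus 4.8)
The plan is to read both groups off the cochain-complex description of even cohomology in \cref{proposition:even_cohomology_computed_using_cochain_complex_of_r_modules}, using connectivity to make the relevant complex as short as possible. First I would use \cref{proposition:pi_star_even_envelopes_exist} to build a diagram as in \cref{construction:cochain_complex_computing_even_cohomology} in which $E_{i}$ is $(2i)$-connective and $\pi_{0} M \xrightarrow{\sim} \pi_{0} E_{0}$; concretely one takes $E_{0}$ to be the envelope constructed there by iteratively attaching even cells along positive odd-degree homotopy classes, and similarly for the later stages. For weight $q=1$ the complex computing $\evencoh^{p,1}(R,M)$ is $\pi_{2} E_{0} \to \pi_{2} E_{1} \to \pi_{2} E_{2} \to \cdots$. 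Since $\pi_{*} E_{0}$ is even we have $\pi_{1} E_{0} = 0$, and since $E_{i}$ is $(2i)$-connective we have $\pi_{2} E_{i} = 0$ for $i \geq 2$; the complex therefore collapses to a single map $d \colon \pi_{2} E_{0} \to \pi_{2} E_{1}$ in cohomological degrees $0$ and $1$, so $\evencoh^{0,1}(M) \simeq \ker d$, $\evencoh^{1,1}(M) \simeq \coker d$, and all other weight-one groups vanish.

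Next I would identify $d$ concretely. Write $C_{0} \colonequals \cofib(M \xrightarrow{a} E_{0})$, which is $2$-connective by the connective case of \cref{proposition:pi_star_even_envelopes_exist}. Because $C_{0} \to E_{1}$ is a $\pi_{*}$-even envelope of a $2$-connective module and $R$ is connective, it only attaches cells of dimension $\geq 4$, so $\pi_{2} C_{0} \xrightarrow{\sim} \pi_{2} E_{1}$, and under this identification $d$ is the map $\pi_{2} E_{0} \to \pi_{2} C_{0}$ coming from the cofibre sequence $M \to E_{0} \to C_{0} \xrightarrow{\partial} \Sigma M$. From the long exact sequence of that cofibre sequence, $\pi_{1} E_{0} = 0$ forces $\partial \colon \pi_{2} C_{0} \to \pi_{1} M$ to be surjective, whence $\coker d \simeq \coker(\pi_{2} E_{0} \to \pi_{2} C_{0}) \simeq \pi_{1} M$; this gives part (2). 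Exactness at $\pi_{2} E_{0}$ gives $\ker d = \ker(\pi_{2} E_{0} \to \pi_{2} C_{0}) = \im(a) \simeq \pi_{2} M / \im(\partial \colon \pi_{3} C_{0} \to \pi_{2} M)$, so part (1) reduces to identifying this last image with $\im(\pi_{1} R \otimes_{\mathbb{Z}} \pi_{1} M \to \pi_{2} M)$.

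This identification is the step I expect to be the crux. The bottom of the cell structure of $C_{0}$ is $\bigoplus_{m} \Sigma^{2} R$, the sum running over the degree-one odd homotopy classes $m \in \pi_{1} M$ killed at the first stage $M \to M_{1}$ of the envelope construction: indeed $\cofib(M \to M_{1}) \simeq \Sigma\big(\bigoplus_{\alpha} \Sigma^{k_{\alpha}} R\big)$ with connecting map to $\Sigma M$ the suspension of the attaching map $\bigoplus_{\alpha} \Sigma^{k_{\alpha}} R \to M$, and all subsequent stages change $C_{0}$ only by cells of dimension $\geq 4$. Consequently the inclusion $\bigoplus_{m} \Sigma^{2} R \hookrightarrow C_{0}$ is surjective on $\pi_{3}$, and the composite $\bigoplus_{m} \pi_{1} R = \pi_{3}\big(\bigoplus_{m} \Sigma^{2} R\big) \twoheadrightarrow \pi_{3} C_{0} \xrightarrow{\partial} \pi_{2} M$ sends $(r_{m})_{m} \mapsto \sum_{m} r_{m} \cdot m$, using $R$-linearity of the attaching maps to see that the pairing is the $R$-module action; its image is precisely $\im(\pi_{1} R \otimes_{\mathbb{Z}} \pi_{1} M \to \pi_{2} M)$, giving $\evencoh^{0,1}(M) \simeq \coker(\pi_{1} R \otimes_{\mathbb{Z}} \pi_{1} M \to \pi_{2} M)$. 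The points requiring care are the naturality of the ``cofibre of a cofibre'' identification and its connecting map, the verification that cells of dimension $\geq 4$ contribute nothing to $\pi_{3} C_{0}$ or to the image in $\pi_{2} M$, and the bookkeeping showing the resulting bilinear pairing is the module multiplication rather than merely an abstract pairing; the hypothesis that $R$ is connective enters precisely here, ensuring the lowest even cells sit in degree $2$ and that attaching higher cells leaves $\pi_{2}$ untouched.
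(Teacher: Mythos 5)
Your proposal is correct, and it takes a genuinely different route from the paper's. The paper does not invoke the resolution machinery of \cref{proposition:even_cohomology_computed_using_cochain_complex_of_r_modules} at all: it kills only $\pi_{1}M$, forming the single cofibre sequence $M \to E \to C$ with $E = \cofib\bigl(\bigoplus_{m \in \pi_{1}M} \Sigma R \to M\bigr)$ and $C \simeq \bigoplus_{m}\Sigma^{2}R$, takes the resulting short exact sequence of even sheaves and the induced long exact sequence in even cohomology, identifies the weight-one cohomology of $E$ and $C$ with $\pi_{2}E$ and $\pi_{2}C$ concentrated in degree $0$ via \cref{theorem:vanishing_of_even_coh_of_connective_ring_above_milnor_line} and \cref{corollary:even_cohomology_of_even_modules}, and then reads off both groups from the homotopy long exact sequence of the defining cofibre sequence, where $\pi_{3}C = \bigoplus_{m}\pi_{1}R$ on the nose, so the pairing with $\pi_{1}R$ is visible with no analysis of higher cells. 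You instead run the full $\pi_{*}$-even envelope resolution with the connectivity normalization ($E_{i}$ being $(2i)$-connective, as in the proof of \cref{theorem:vanishing_of_even_coh_of_connective_ring_above_milnor_line}), collapse the weight-one complex to the single differential $\pi_{2}E_{0} \to \pi_{2}E_{1} \cong \pi_{2}C_{0}$, and then open up the cell structure of $C_{0}$ to see that only the bottom $\Sigma^{2}R$-cells contribute to $\pi_{3}C_{0}$ and that the boundary is the module action --- exactly the bookkeeping the paper's minimal one-step modification avoids. What your route buys in exchange: it works entirely with homotopy groups of modules (even cohomology enters only through \cref{proposition:even_cohomology_computed_using_cochain_complex_of_r_modules}) and never needs any statement about the even cohomology of the intermediate module $E$; in the paper that step is justified by asserting that $E$ is $2$-connective, which is not literally true (only $\pi_{1}E = 0$ holds, while $\pi_{0}E \cong \pi_{0}M$), so your argument sidesteps the one loosely justified point of the published proof. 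The only spot to state a bit more carefully is the first stage: $\cofib(M \to M_{1})$ also contains summands $\Sigma^{k_{\alpha}+1}R$ with $k_{\alpha} \geq 3$ coming from the higher odd classes of $M$, and one should note these have vanishing $\pi_{3}$ (since $R$ is connective) before concluding that $\bigoplus_{m}\Sigma^{2}R \to C_{0}$ is surjective on $\pi_{3}$; with that remark added, your identification of the pairing as the $R$-module action and the resulting computations of $\evencoh^{0,1}$ and $\evencoh^{1,1}$ are complete.
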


\begin{proof}
Each homotopy class $m \in \pi_{1} M$ determines a map $\Sigma R \rightarrow M$. We will consider the direct sum 
\[
\bigoplus_{m \in \pi_{1} M} \Sigma R \rightarrow M 
\]
of all of these maps and the corresponding cofibre sequence 
\[
M \rightarrow N \rightarrow C,
\]
where $C \simeq \bigoplus \Sigma^{2} R$ is the suspension of the direct sum above. This is a cofibre sequence of homologically even $R$-modules, so that we have a short exact sequence
\[
0 \rightarrow \evensheaf_{M} \rightarrow \evensheaf_{N} \rightarrow \evensheaf_{C} \rightarrow 0.
\]
This induces a long exact sequence of even cohomology 
\begin{equation}
\label{equation:long_exact_seq_of_even_coh_in_weight_1_calculation}
0 \rightarrow \evencoh^{0, 1}(M) \rightarrow \evencoh^{0, 1}(N) \rightarrow \evencoh^{0, 1}(C) \rightarrow \evencoh^{1, 1}(M) \rightarrow \evencoh^{1, 1}(N) \rightarrow 0.
\end{equation}

Since $C$ is $2$-connective, by a combination of \cref{remark:even_cohomology_of_suspension_in_terms_of_weights} and \cref{theorem:vanishing_of_even_coh_of_connective_ring_above_milnor_line} we know that its weight zero even cohomology vanishes and $\evencoh^{0, 1}(C) \simeq \pi_{2} C$. As the first possibly non-zero odd degree homotopy group of $N$ is $\pi_{3}N$, we can construct a $\pi_{\ast}$-even envelope 
\[
N \rightarrow E 
\]
whose cofibre is $4$-connective and so its even cohomology vanishes below weight two by another application of \cref{theorem:vanishing_of_even_coh_of_connective_ring_above_milnor_line}. It follows that 
\[
\evencoh^{0, 1}(N) \simeq \evencoh^{0, 1}(E) \simeq \pi_{2}E \simeq \pi_{2} N
\]
and 
\[
\evencoh^{1, 1}(N) \simeq \evencoh^{1, 1}(E) = 0. 
\]

Taking the previous paragraph into account, (\ref{equation:long_exact_seq_of_even_coh_in_weight_1_calculation}) becomes 
\[
0 \rightarrow \evencoh^{0, 1}(M) \rightarrow \pi_{2} N \rightarrow \pi_{2} C \rightarrow \evencoh^{1, 1}(M) \rightarrow 0.
\]
Since $C \simeq \bigoplus \Sigma^{2} R$ and 
\[
\ldots \rightarrow \pi_{3}C \rightarrow \pi_{2} M \rightarrow \pi_{2} N \rightarrow \pi_{2} C \rightarrow \pi_{1} M \rightarrow 0
\]
is exact we deduce that 
\[
\evencoh^{0, 1}(M) \simeq \ker(\pi_{2} N \rightarrow \pi_{2} C) \simeq \mathrm{coker}(\bigoplus \pi_{1} R \rightarrow \pi_{2} M) \simeq \mathrm{coker}(\pi_{1} R \otimes_{\mathbb{Z}} \pi_{1} M \rightarrow \pi_{2} M).
\]
This shows the first needed isomorphism. Similarly, we have 
\[
\evencoh^{1, 1}(M) \simeq \coker(\pi_{2} N \rightarrow \pi_{2} C) \simeq \pi_{1} M. 
\]
\end{proof}

\begin{corollary}
\label{corollary:bidegree_two_two_even_coh_of_connective_hom_even_module}
Let $R$ be connective and $M$ homologically even, connective. Then 
\[
\evencoh^{2, 2}(M) \simeq \mathrm{im} (\pi_{1} R \otimes_{\mathbb{Z}} \pi_{1} M \rightarrow \pi_{2} M),
\]
the image of the multiplication map. 
\end{corollary}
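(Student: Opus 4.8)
The plan is to run exactly the strategy by which \cref{proposition:weight_one_cohomology_of_a_connective_homologically_even_module} was deduced from the weight-zero vanishing of \cref{theorem:vanishing_of_even_coh_of_connective_ring_above_milnor_line}: replace $M$ by a $\pi_*$-even envelope and reduce the weight-two computation to a weight-one one. First I would choose, using \cref{proposition:pi_star_even_envelopes_exist}, a $\pi_*$-even envelope $M \to E$ with $E$ connective and $\pi_0 E \simeq \pi_0 M$, taking it to be the one built by the cell-attachment procedure in that proof: attach a wedge $\bigoplus_{m \in \pi_1 M} \Sigma R$ along a generating family of $\pi_1 M$, together with the higher odd cells $\Sigma^{k} R$ ($k \geq 3$), and iterate. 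Set $C \colonequals \mathrm{cofib}(M \to E)$. Then $\pi_* E$ is concentrated in even degrees, $C$ is even flat (hence homologically even by \cref{remark:even_flat_modules_are_homologically_even}), and $C$ is $2$-connective, since $\pi_0 C = \pi_1 C = 0$ because $\pi_0 M \simeq \pi_0 E$ and $\pi_1 E = 0$.

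Next I would feed the cofibre sequence $M \to E \to C$ into the long exact sequence in even cohomology. As all three terms are homologically even, \cref{remark:long_exact_sequence_of_even_sheaves} gives short exact sequences of even sheaves $0 \to \evensheaf_{M}(q) \to \evensheaf_{E}(q) \to \evensheaf_{C}(q) \to 0$ in every integral weight $q$, hence long exact sequences of even cohomology groups. Since $\pi_* E$ is even, \cref{corollary:even_cohomology_of_even_modules} forces $\evencoh^{p,q}(E) = 0$ for all $p \neq 0$; looking at weight $q = 2$ at the portion $\evencoh^{1,2}(E) \to \evencoh^{1,2}(C) \to \evencoh^{2,2}(M) \to \evencoh^{2,2}(E)$ therefore yields an isomorphism $\evencoh^{1,2}(C) \simeq \evencoh^{2,2}(M)$. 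Then I would apply the suspension (weight-shift) isomorphism of \cref{remark:concrete_description_of_even_sheaves_and_suspension_isomorphism} and \cref{remark:local_grading_of_even_sheaves_compatible_with_weight_notation} to rewrite $\evencoh^{1,2}(C) \simeq \evencoh^{1,1}(\Sigma^{-2} C)$, where $\Sigma^{-2} C$ is connective and homologically even, and invoke part $(2)$ of \cref{proposition:weight_one_cohomology_of_a_connective_homologically_even_module} to get $\evencoh^{1,1}(\Sigma^{-2} C) \simeq \pi_1(\Sigma^{-2} C) = \pi_3 C$.

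It remains to identify $\pi_3 C$ with $\mathrm{im}(\pi_1 R \otimes_{\mathbb{Z}} \pi_1 M \to \pi_2 M)$, and this is the step where the explicit cell model of $E$ enters and is the main obstacle. From the cofibre sequence $M \to E \to C$ together with $\pi_3 E = 0$ (as $\pi_* E$ is even), the connecting map $\pi_3 C \to \pi_2 M$ is injective with image $\ker(\pi_2 M \to \pi_2 E)$. In the cell model, $\pi_2$ is affected only by the first batch of attached cells: the $\Sigma^{k} R$ with $k \geq 3$ contribute nothing in degree $2$, since $\pi_2 \Sigma^{k} R = \pi_{2-k} R = 0$ by connectivity of $R$, and no further $\Sigma^1 R$ cells are attached because the first batch already surjects onto $\pi_1 M$. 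Hence $\ker(\pi_2 M \to \pi_2 E)$ equals the image of the attaching map $\bigoplus_{m \in \pi_1 M} \pi_1 R \to \pi_2 M$, $e_m \cdot r \mapsto r \cdot m$, which is precisely $\mathrm{im}(\pi_1 R \otimes_{\mathbb{Z}} \pi_1 M \to \pi_2 M)$. (Alternatively, one could avoid the explicit model by comparing directly with the computation $\evencoh^{0,1}(M) \simeq \mathrm{coker}(\pi_1 R \otimes_{\mathbb{Z}} \pi_1 M \to \pi_2 M)$ from \cref{proposition:weight_one_cohomology_of_a_connective_homologically_even_module}, but verifying that the resulting quotient of $\pi_2 M$ is the canonical one amounts to the same bookkeeping.) Composing the isomorphisms obtained above gives $\evencoh^{2,2}(M) \simeq \mathrm{im}(\pi_1 R \otimes_{\mathbb{Z}} \pi_1 M \to \pi_2 M)$, as desired.
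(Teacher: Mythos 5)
Your argument is correct, but it takes a genuinely different route from the paper. The paper's proof is spectral-sequence based: complete convergence (\cref{corollary:complete_convergence_of_the_even_spectral_sequence_for_homologically_even_connectives}) together with the differential bidegree $(2r-1,r-1)$ and the Milnor-line vanishing show that nothing in weight $q \leq 2$ supports or receives a differential, giving a two-step filtration $0 \to \evencoh^{2,2}(M) \to \pi_2 M \to \evencoh^{0,1}(M) \to 0$, after which part $(1)$ of \cref{proposition:weight_one_cohomology_of_a_connective_homologically_even_module} identifies the quotient and hence the kernel. You instead dimension-shift: take a $\pi_*$-even envelope with $2$-connective, even flat cofibre $C$, use the long exact sequence in even cohomology and \cref{corollary:even_cohomology_of_even_modules} to get $\evencoh^{2,2}(M) \simeq \evencoh^{1,2}(C) \simeq \evencoh^{1,1}(\Sigma^{-2}C) \simeq \pi_3 C$ via part $(2)$ of the weight-one proposition, and then compute $\pi_3 C \simeq \ker(\pi_2 M \to \pi_2 E)$ from the explicit cell model. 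Your route avoids any appeal to convergence of the even spectral sequence and is more hands-on, at the price of the cell-model bookkeeping in the last step; there you should spell out that $\pi_1 M_n = 0$ persists for all $n \geq 1$ (since $\pi_1 M_{n+1}$ is a quotient of $\pi_1 M_n$, as the attached cells have $\pi_0 \Sigma^k R = 0$), so that all degree-one attachments after the first stage are along nullhomotopic classes and the kernel of $\pi_2 M \to \pi_2 E$ is created entirely by the first batch of $\Sigma R$-cells; the higher odd cells contribute nothing in degree $2$ since $\pi_2 \Sigma^k R = \pi_{2-k} R = 0$ for $k \geq 3$. With that observation made explicit, both proofs are complete; the paper's is shorter given the convergence machinery already in place, while yours makes the identification of $\evencoh^{2,2}$ with a subgroup of $\pi_2 M$ visibly explicit.
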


\begin{proof}
By \cref{corollary:complete_convergence_of_the_even_spectral_sequence_for_homologically_even_connectives}, in the case at hand the even spectral sequence 
\[
E_{2}^{p, q} \simeq \evencoh^{p, q}(M) \Rightarrow \pi_{2q-p}(M)
\]
converges strongly. As the differentials are of bidegree $(2r-1, r-1)$ for $r \geq 2$, the vanishing line of \cref{theorem:vanishing_of_even_coh_of_connective_ring_above_milnor_line} together with the fact that even cohomology vanishes when $p < 0$ imply that all of the differentials originating or ending at $\evencoh^{2, 2}(M)$ or $\evencoh^{0, 1}(M)$ vanish. It follows that we have a short exact sequence 
\[
0 \rightarrow \evencoh^{2, 2}(M) \rightarrow \pi_{2}(M) \rightarrow \evencoh^{0, 1}(M) \rightarrow 0, 
\]
and the identification of the last group in \cref{proposition:weight_one_cohomology_of_a_connective_homologically_even_module} yields the needed result. 
\end{proof}

\subsection{Base-change around the Milnor line} 

As we have seen in \cref{theorem:vanishing_of_even_coh_of_connective_ring_above_milnor_line}, if $M$ is a homologically even, connective module over a connective $\mathbf{E}_{1}$-ring $R$, then $\gr^{n}_{ev}(M)$ is always $n$-connective, with lowest homotopy group given by
\[
\pi_{n} \gr^{n}_{ev}(M) \simeq \evencoh^{n, n}(R, M). 
\]
Since the $n$-th homotopy group detects effective epimorphisms in the $\infty$-category of $n$-connective spectra, the Milnor line group $\evencoh^{n, n}(R, M)$ plays a large role in the study of connective modules. In this section, we prove the following result about how the even cohomology groups vary in a neighbourhood of the Milnor line:  

\begin{theorem}
\label{theorem:base_change_around_milnor_diagonal}
Let $f \colon R \rightarrow S$ be a right homologically even map of connective $\mathbf{E}_{1}$-rings and let $M$ be an even flat $R$-module. Then the base-change of the canonical comparison map 
\[
\pi_{0}S \otimes_{\pi_{0} R} \evencoh^{p, q}(R, M) \rightarrow \evencoh^{p, q}(S, S \otimes_{R} M)
\]
is a surjection for $p \geq q-1$. 
\end{theorem}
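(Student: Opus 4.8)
The plan is to compute both sides by an Adams-type resolution of $M$ through $R$-modules with even homotopy, to observe that such a resolution base-changes along $f$ to a resolution of $S \otimes_{R} M$ over $S$ precisely because $M$ is even flat, and then to reduce the problem to a statement about the \emph{bottom} homotopy groups of connective spectra, where extension of scalars is governed by $\pi_{0}S \otimes_{\pi_{0}R} -$. The case $p > q$ is immediate: both $\evencoh^{p,q}(R,M)$ and $\evencoh^{p,q}(S, S\otimes_{R}M)$ vanish by \cref{theorem:vanishing_of_even_coh_of_connective_ring_above_milnor_line}, using that $S\otimes_{R}M$ is even flat over $S$ by \cref{lemma:extension_of_scalars_preserves_even_flat_modules_and_perfect_evens}, hence connective and homologically even over $S$ by \cref{remark:even_flat_modules_are_homologically_even}. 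So only the bidegrees $p = q$ and $p = q-1$ remain.

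\textbf{The base-changed resolution.} Since $M$ is even flat and $R$ is connective, \cref{proposition:pi_star_even_envelopes_exist} together with \cref{proposition:even_flat_modules_closed_under_extensions_retracts_and_filtered_colimits} lets me choose a diagram as in \cref{construction:cochain_complex_computing_even_cohomology} in which each $E_{i}$ is $2i$-connective, $\pi_{*}$-even and even flat, and each $C_{i}$ is even flat. Applying $S \otimes_{R} -$ produces a diagram of $S$-modules with the same formal features over $S$: each $S \otimes_{R} E_{i}$ is $\pi_{*}$-even --- by the right-module analogue of \cref{theorem:characterization_of_homologically_even_modules}$(3)$ applied to the (right $R$-module) $S$ and the even flat, $\pi_{*}$-even $E_{i}$ --- hence homologically even over $S$ by \cref{lemma:pistar_even_means_homologically_even_and_even_filtration_is_whitehead}; each $S\otimes_{R}C_{i}$ is even flat over $S$ by \cref{lemma:extension_of_scalars_preserves_even_flat_modules_and_perfect_evens}, hence homologically even; the cofibre sequences are preserved; and $S\otimes_{R}E_{i}$ is again $2i$-connective. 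Consequently the argument of \cref{proposition:even_cohomology_computed_using_cochain_complex_of_r_modules} --- which uses only that all modules are homologically even, that the $E$-terms are $\pi_{*}$-even, and the cofibre sequences --- applies verbatim over $S$, giving canonical isomorphisms
\[
\evencoh^{p,q}(R,M) \cong \mathrm{H}^{p}(\pi_{2q}E_{\bullet}), \qquad \evencoh^{p,q}(S, S\otimes_{R}M) \cong \mathrm{H}^{p}(\pi_{2q}(S\otimes_{R}E_{\bullet})),
\]
under which (using functoriality of the resolution picture and \cref{remark:even_filtration_of_a_homological_even_as_decalage_of_its_resolution}, \cref{proposition:homological_resolutions_are_limits_on_even_filtrations_up_to_completion}) the base-change map of \cref{construction:base_change_map_in_even_cohomology} is induced by the evident chain map $\pi_{2q}E_{\bullet} \to \pi_{2q}(S\otimes_{R}E_{\bullet})$.

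\textbf{The Milnor line $p = q$.} As $E_{i}$ is $2i$-connective, the cochain complex $\pi_{2q}E_{\bullet}$ terminates in spot $q$, so $\evencoh^{q,q}(R,M) = \coker\bigl(\pi_{2q}E_{q-1}\xrightarrow{d}\pi_{2q}E_{q}\bigr)$, and similarly over $S$. Since $E_{q}$ is $2q$-connective and $S$ is connective, $\pi_{2q}(S\otimes_{R}E_{q}) \cong \pi_{0}S\otimes_{\pi_{0}R}\pi_{2q}E_{q}$ is a bottom homotopy group, and naturality of the edge map of the Künneth spectral sequence \cite[7.2.1.19]{higher_algebra} (applied to $E_{q-1}\to E_{q}$) identifies the image of $\pi_{2q}(S\otimes_{R}E_{q-1})$ inside it with $\pi_{0}S\otimes_{\pi_{0}R}\im(d)$: the contributions of positive-degree homotopy of $S$ and of higher Tor die, because $\pi_{2q-2}E_{q}=0$ and $\pi_{*}E_{q}$ begins in degree $2q$. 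Hence $\evencoh^{q,q}(S, S\otimes_{R}M)\cong \pi_{0}S\otimes_{\pi_{0}R}\coker(d) \cong \pi_{0}S\otimes_{\pi_{0}R}\evencoh^{q,q}(R,M)$, an isomorphism (using right-exactness of $\pi_{0}S\otimes_{\pi_{0}R}-$).

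\textbf{The main obstacle: $p = q-1$.} This is where real work is needed, because one is now one homotopy degree above the bottom of the relevant $E$-terms, so the Künneth inputs no longer collapse. The plan is to run the Künneth spectral sequence for $\pi_{2q}(S\otimes_{R}E_{q-1})$ and $\pi_{2q}(S\otimes_{R}E_{q-2})$, whose inputs are concentrated in internal degrees $\geq 2q-2$ and $\geq 2q-4$ respectively: every class of $\pi_{2q}(S\otimes_{R}E_{q-1})$ outside the $\Tor_{0}$-piece in internal degree $2q$ is killed by the differential into $\pi_{2q}(S\otimes_{R}E_{q}) = \pi_{0}S\otimes_{\pi_{0}R}\pi_{2q}E_{q}$ (again because $\pi_{2q-2}E_{q}=0$), so it is a cycle; a parallel analysis of the map out of $\pi_{2q}(S\otimes_{R}E_{q-2})$, keeping careful track of the connectivities of all the $C_{i}$ and $E_{i}$, shows such classes are in fact boundaries. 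Thus $\widetilde{Z}^{q-1}/\widetilde{B}^{q-1}$ is generated by the image of the $\Tor_{0}$-piece, i.e.\ by the image of $\pi_{0}S\otimes_{\pi_{0}R}\bigl(\ker d \subseteq \pi_{2q}E_{q-1}\bigr)$, which yields surjectivity of the base-change map in bidegree $(q-1,q)$. As an independent cross-check, one can instead iterate the long exact sequences attached to $M \to E_{0}\to C_{0}$ together with the weight-shift isomorphism $\evencoh^{p,w}(R,\Sigma^{2}X)\cong\evencoh^{p,w-1}(R,X)$ to reduce the computation (compatibly with base-change) to weight $\leq 2$, and then invoke the explicit formulas of \cref{proposition:weight_one_cohomology_of_a_connective_homologically_even_module} and \cref{corollary:bidegree_two_two_even_coh_of_connective_hom_even_module}. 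The bookkeeping in this last step --- reconciling the Tor/ $\pi_{>0}S$ contributions that appear one degree above the bottom with the image of $\pi_{0}S\otimes_{\pi_{0}R}-$ --- is the technical heart of the argument.
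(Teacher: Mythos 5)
Your reduction to the cochain complex $\pi_{2q}E_{\bullet}$ and its base change is sound as far as it goes: even flatness of $M$ does propagate to all $C_{i}$ and $E_{i}$, right homological evenness of $S$ does make each $S\otimes_{R}E_{i}$ $\pi_{*}$-even, and the argument of \cref{proposition:even_cohomology_computed_using_cochain_complex_of_r_modules} does apply to the base-changed diagram, so the cases $p>q$ and $p=q$ are essentially in place (modulo the unproved assertion that the comparison map of \cref{construction:base_change_map_in_even_cohomology} is computed by the evident chain map $\pi_{2q}E_{\bullet}\to\pi_{2q}(S\otimes_{R}E_{\bullet})$; the base-changed diagram is not a diagram of $\pi_{*}$-even envelopes over $S$, so this compatibility needs an actual argument at the level of sheaves or filtrations, though it is a fixable point). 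The genuine gap is the case $p=q-1$, which you yourself identify as ``the technical heart'' and then do not carry out. Two distinct things are missing there. First, the claim that every class of $\pi_{2q}(S\otimes_{R}E_{q-1})$ of positive K\"{u}nneth filtration, or coming from $\pi_{>0}S\otimes\pi_{<2q}E_{q-1}$, is a boundary from $\pi_{2q}(S\otimes_{R}E_{q-2})$ is simply asserted; connectivity bookkeeping alone does not give it, and it is exactly here that the hypotheses (even flatness of the $C_{i}$, homological evenness of $S$) must be used in an essential way. Second, even granting that claim, a cycle lying in the $\Tor_{0}$-piece, i.e.\ in $\ker\bigl(\pi_{0}S\otimes_{\pi_{0}R}\pi_{2q}E_{q-1}\to\pi_{0}S\otimes_{\pi_{0}R}\pi_{2q}E_{q}\bigr)$, need not come from $\pi_{0}S\otimes_{\pi_{0}R}\ker(d)$, because $\pi_{0}S\otimes_{\pi_{0}R}-$ is only right exact; this $\Tor_{1}^{\pi_{0}R}$-type discrepancy is precisely the obstruction to base change in cohomological degree one below the Milnor line, and your sketch never confronts it.

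For comparison, the paper avoids the resolution entirely and argues by induction on the weight $q$: choose a single $\pi_{*}$-even envelope $M\to E$ with $2$-connective cofibre $C$ (both even flat), note that $\pi_{*}(S\otimes_{R}E)$ is even, and compare the two long exact sequences in even cohomology attached to $M\to E\to C$ and its base change. The vanishing of $\evencoh^{>0,*}$ of $\pi_{*}$-even modules (\cref{corollary:even_cohomology_of_even_modules}) gives compatible surjections from the even cohomology of $C$ onto that of $M$ in one higher cohomological degree; right exactness of $\pi_{0}S\otimes_{\pi_{0}R}-$ preserves the top surjection, and the inductive hypothesis applied to the connective even flat module $\Sigma^{-2}C$ in weight $q-1$ finishes. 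Your proposed ``cross-check'' via the long exact sequence and weight shift is in fact this argument in embryo, but you treat it as an aside and do not develop it; as written, the proposal does not prove the statement in the only range where it has content.
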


\begin{proof}
We will show this by induction on weight. The case of $q = 0$ follows from \cref{theorem:vanishing_of_even_coh_of_connective_ring_above_milnor_line}. 

Let us assume that $q > 0$ and let $M \rightarrow E$ be a $\pi_{*}$-even envelope, which by \cref{proposition:pi_star_even_envelopes_exist} we can choose so that the cofibre $C$ is $2$-connective. Since both $M$ and $C$ are even flat, so is $E$. Since $S$ is homologically even as a right $R$-module, it follows from \cref{theorem:characterization_of_homologically_even_modules} that $\pi_{*}(S \otimes_{R} E)$ is even. In particular, $S \otimes_{R} E$ is homologically even as an $S$-module by \cref{lemma:pistar_even_means_homologically_even_and_even_filtration_is_whitehead}. Consider the map of cofibre sequences 
\[
\begin{tikzcd}
	M & E & C \\
	{S \otimes_{R}M} & {S \otimes_{R} M} & {S \otimes_{R}C}
	\arrow[from=1-3, to=2-3]
	\arrow[from=1-1, to=2-1]
	\arrow[from=1-1, to=1-2]
	\arrow[from=1-2, to=1-3]
	\arrow[from=2-1, to=2-2]
	\arrow[from=2-2, to=2-3]
	\arrow[from=1-2, to=2-2]
\end{tikzcd}.
\]
Using functoriality of the even filtration, this induces a map of long exact sequences 
\[
\begin{tikzcd}
	{\evencoh^{p, q}(R, E) } & {\evencoh^{p, q}(R, C)} & {\evencoh^{p+1, q}(R, M)} & {\evencoh^{p+1, q}(R, E)} \\
	{\evencoh^{p, q}(S, S \otimes_{R} E) } & {\evencoh^{p, q}(S, S \otimes_{R} C)} & {\evencoh^{p+1, q}(S, S \otimes_{R} M)} & {\evencoh^{p+1, q}(S, S \otimes_{R} E)}
	\arrow[from=1-1, to=1-2]
	\arrow[from=1-2, to=1-3]
	\arrow[from=1-3, to=1-4]
	\arrow[from=2-1, to=2-2]
	\arrow[from=2-2, to=2-3]
	\arrow[from=2-3, to=2-4]
	\arrow[from=1-1, to=2-1]
	\arrow[from=1-2, to=2-2]
	\arrow[from=1-3, to=2-3]
	\arrow[from=1-4, to=2-4]
\end{tikzcd},
\]
As $p \geq q-1 \geq 0$, the two groups in the right-most column vanish by \cref{corollary:even_cohomology_of_even_modules}. Thus, in the square
\[
\begin{tikzcd}
	{\evencoh^{p,q}(R, C)} & {\evencoh^{p+1, q}(R, M)} \\
	{\evencoh^{p,q}(S, S \otimes_{R} C)} & {\evencoh^{p+1, q}(S, S \otimes_{R} M)}
	\arrow[two heads, from=1-1, to=1-2]
	\arrow[two heads, from=2-1, to=2-2]
	\arrow[from=1-2, to=2-2]
	\arrow[from=1-1, to=2-1]
\end{tikzcd}
\]
the two horizontal maps are surjective. As a base-change of an epimorphism is an epimorphism, the same is true in
\[
\begin{tikzcd}
	{\pi_{0} S \otimes_{\pi_{0} R} \evencoh^{p,q}(R, C)} & {\pi_{0} S \otimes_{\pi_{0}R} \evencoh^{p+1, q}(R, M)} \\
	{\evencoh^{p,q}(S, S \otimes_{R} C)} & {\evencoh^{p+1, q}(S, S \otimes_{R} M)}
	\arrow[two heads, from=1-1, to=1-2]
	\arrow[two heads, from=2-1, to=2-2]
	\arrow[from=1-2, to=2-2]
	\arrow[from=1-1, to=2-1]
\end{tikzcd}
\]
Since $\evencoh^{p, q}(R, C) \simeq \evencoh^{p, q-1}(R, \Sigma^{-2}C)$ by \cref{remark:even_cohomology_of_suspension_in_terms_of_weights} and $\Sigma^{-2} C$ is connective, the left vertical map is surjective by the inductive assumption. It follows that so is the right vertical map, ending the argument. 
\end{proof}

\bibliographystyle{amsalpha}
\bibliography{piotr_bibliography}

\end{document}